\date{\today}
\newtheorem{thm}{Theorem}[section]
\newtheorem*{thmA}{Theorem A}
\newtheorem*{thmB}{Theorem B}
\newtheorem*{thmC}{Theorem C}
\newtheorem{lem}[thm]{Lemma}
\newtheorem{lemdef}[thm]{Lemma-Definition}
\newtheorem{cor}[thm]{Corollary}
\newtheorem{prop}[thm]{Proposition}
\theoremstyle{definition} \newtheorem{rem}[thm]{Remark}}
\theoremstyle{definition} \newtheorem{defn}[thm]{Definition}}
\newcommand{\sha}{\shuffle}
\numberwithin{equation}{subsection}
\numberwithin{figure}{section}
\begin{document}

\baselineskip 16pt 

\title[Analogues of hyperlogarithm functions on affine complex curves]{Analogues of hyperlogarithm functions \\ on affine complex curves}

\author{Benjamin Enriquez}
\address{IRMA (UMR 7501) et D\'epartement de Math\'ematiques, Universit\'e de Strasbourg, 7 rue Ren\'e-Descartes, 67084 Strasbourg (France)}
\email{b.enriquez@math.unistra.fr}
\author{Federico Zerbini}
\address{University of Oxford, Mathematical Institute, Andrew Wiles Building, Radcliffe Observatory Quarter (550), Woodstock Road, Oxford, OX2 6GG (UK)}
\email{federico.zerbini@maths.ox.ac.uk}

\begin{abstract}
For $C$ a smooth affine complex curve, there is a unique minimal 
unital subalgebra $A_C$ of the algebra $\mathcal O_{hol}(\tilde C)$
of holomorphic functions on its universal cover $\tilde C$, which is 
stable under all the operations $f\mapsto \int f\omega$, for $\omega$ in the space $\Omega(C)$ of regular differentials on $C$. 
We identify $A_C$ with the image of the iterated integration map $I_{x_0} : \mathrm{Sh}(\Omega(C))\to\mathcal O_{hol}(\tilde C)$ 
based at any point $x_0$ of $\tilde C$ (here $\mathrm{Sh}(-)$ denotes the shuffle algebra of a vector space), as well as with  
the unipotent part, with respect to the action of $\mathrm{Aut}(\tilde C/C)$, of a subalgebra of $\mathcal O_{hol}(\tilde C)$ of 
moderate growth functions. 
We show that any regular Maurer-Cartan (MC) element $J$ on $C$ with values in the topologically free Lie algebra over 
$\mathrm H^1_{\mathrm{dR}}(C)^*$ gives rise to an isomorphism of $A_C$ with $\mathcal O(C) \otimes\mathrm{Sh}(\mathrm H^1_{\mathrm{dR}}
(C))$, where $\mathcal O(C)$ is the algebra of regular functions on $C$, leading to the assignment of a subalgebra 
$\mathcal H_C(J)$ of~$A_C$ (isomorphic to $\mathrm{Sh}(\mathrm H^1_{\mathrm{dR}}(C))$)
to any MC element. We also associate a MC element $J_\sigma$ to each section 
$\sigma$ of the projection $\Omega(C)\to \mathrm H^1_{\mathrm{dR}}(C)$;
when $C$ has genus $0$, we exhibit a particular section $\sigma_0$ for which 
$\mathcal H_C(J_{\sigma_0})$ is the algebra of hyperlogarithm functions (Poincaré, Lappo-Danilevsky).
\end{abstract}
\maketitle

{\small \setcounter{tocdepth}{2}
\tableofcontents}

\section{Introduction}

\subsection{The context}\label{sect:context}

To an inclusion $\mathcal O \subset \tilde{\mathcal O}$ of unital complex commutative algebras and a derivation $\partial$  
of $\tilde{\mathcal O}$ which is both surjective and with $\mathrm{ker}(\partial)=\mathbb C$, one may associate the smallest 
subalgebra of $\tilde{\mathcal O}$ which contains $\mathcal O$ and is stable under the antiderivation operation 
$\tilde{\mathcal O}\ni f\mapsto \partial^{-1}(f) \subset \tilde{\mathcal O}$. Two instances of this construction 
were studied in detail in the literature: 

$\bullet$ $\tilde{\mathcal O}=\mathcal O_{hol}(\mathfrak H)$ is the algebra of holomorphic functions on the 
complex upper half-plane $\mathfrak H=\{\tau \in \mathbb C\,|\,\mathrm{Im}(\tau)>0\}$, $\mathcal O=\mathrm{QM}_*$ is the algebra of 
quasi-modular forms for $\mathrm{SL}_2(\mathbb Z)$, and $\partial=d/d\tau$ (see \cite{Ma});

$\bullet$ $\mathcal O=\mathbb C[z,1/(z-s),s \in S_\infty]=:\mathcal O(\mathbb P^1_{\mathbb C}\smallsetminus S)$, 
where $S\subset\mathbb P^1_{\mathbb C}$ is a finite subset with $S\ni\infty$, $S_\infty$ denotes $S\smallsetminus\{\infty\}$,  $\tilde{\mathcal O}$ is the algebra of holomorphic functions on a 
universal cover of $\mathbb P^1_{\mathbb C}\smallsetminus S$, and $\partial=d/dz$ (see \cite{Br:these}).

In both cases, precise results were obtained on the structure of the said smallest subalgebra. 
Let us describe the results of the second case in more detail.  In that case, the conditions on the looked-for algebra 
are equivalent to requiring it being both unital and stable
%under the operation
%$f\mapsto (z\mapsto \int_{z_0}^z f(u)du)=:\int_{z_0}f$ (here $z_0$ is a fixed point in $\mathbb C\smallsetminus S_\infty$). It is 
%equivalent
%to require that this algebra be both unital and stable 
under all the operations $f\mapsto\int_{z_0} f\omega
:=(z\mapsto \int_{z_0}^z f\omega)$, where~$\omega$ runs over all the regular differentials on 
$\mathbb P^1_{\mathbb C}\smallsetminus S$; indeed, the latter condition implies that the algebra contains the functions 
$\int_{z_0} df$, where $f$ runs over $\mathcal O(\mathbb P^1_{\mathbb C}\smallsetminus S)$. 

Such an algebra necessarily contains all the iterated integrals of the differentials $d\log(z-s)$, $s\in S_\infty$, which 
are the hyperlogarithm (HL) functions $L_w$ indexed by $w\in \hat S_\infty^*$ (where 
$\hat S_\infty^*:=\sqcup_{n \geq 0} \hat S_\infty^n$ is the set of words in $\hat S_\infty$, which is $S_\infty$ viewed as an 
abstract set). The generating series $\mathbf L:=\sum_w L_w\cdot w$ is a 
multivalued holomorphic function on $\mathbb P^1_{\mathbb C}\smallsetminus S$ 
with values in the group of group-like elements of the algebra of noncommutative formal series
$\mathbb C\langle\langle \hat S_\infty\rangle\rangle$, 
such that $d\mathbf L(z)=\mathbf L(z)\cdot\sum_s (\hat s\cdot d\log(z-s))$. 
It was proven in~\cite{Br:these}, Cor. 5.6, that: 
\begin{itemize}
    \item the algebra $A_{\mathbb P^1_{\mathbb C}\smallsetminus S}:=\mathcal O(\mathbb P^1_{\mathbb C}\smallsetminus S)
    [L_w,w\in  \hat S_\infty^*]$ is stable under antiderivation, so that $A_{\mathbb P^1_{\mathbb C}\smallsetminus S}$ is the smallest 
    (for the inclusion) extension of $\mathcal O(\mathbb P^1_{\mathbb C}\smallsetminus S)$ with this property; 
    \item the map $\mathcal O(\mathbb P^1_{\mathbb C}\smallsetminus S)\otimes\mathrm{Sh}(\mathbb C\hat S_\infty)\to 
    A_{\mathbb P^1_{\mathbb C}\smallsetminus S}$, 
    $f\otimes w\mapsto f\cdot L_w$ is an algebra isomorphism, where $\mathrm{Sh}(V)$ is the shuffle algebra associated with 
    a vector space $V$; in particular, the family $(L_w)_w$ is linearly independent over
    $\mathcal O(\mathbb P^1_{\mathbb C}\smallsetminus S)$ (this was also proved in \cite{DDMS}).
\end{itemize}

The HL functions, and hence all the functions of $A_{\mathbb P^1_{\mathbb C}\smallsetminus S}$, have unipotent monodromies along the paths 
encircling 
the points of $S$, and one can show that $A_{\mathbb P^1_{\mathbb C}\smallsetminus S}$ is a union of unipotent modules (i.e. iterated extensions 
of
the trivial module) over $\pi_1(\mathbb P^1_{\mathbb C}\smallsetminus S)$.

The HL functions were introduced in \cite{Po}, motivated by monodromy computations. They were later applied in 
\cite{LD} to the Riemann-Hilbert problem, and subsequently in \cite{Br:these} to the identification of a 
set of periods arising from
the moduli space of marked stable genus-zero curves with the set of multiple zeta values (Goncharov-Manin conjecture). The HL techniques 
of \cite{Br:these} led 
in \cite{Pa:thesis} to an algorithm which can be used to express, in physics, a large class of Feynman integrals\footnote{Feynman integrals are a useful tool to obtain approximations 
of scattering amplitudes, which predict in quantum field theory the probability of interactions of elementary particles.} in terms of HLs; this was implemented in the software program \texttt{HyperInt}. 

Similar questions were studied replacing $\mathbb P^1_{\mathbb C}$ by a curve of genus one. 
To an elliptic curve~$\mathcal E$, one attaches an algebra~$\mathcal A_3$ containing the function field 
of $\mathcal E$ (see \cite{BDDT1}, three lines before (3.35)) using iterated integration. In {\it loc. cit,} §6, it is 
proved that $\mathcal A_3$ is stable under $f\mapsto\int_{z_0} f\omega_0$, where $\omega_0$ is a fixed non-zero regular differential over 
$\mathcal E$ and $z_0$ is any point in $\mathcal E$.  
One can derive from this the construction, for any finite subset $S$ of $\mathcal E$, of an 
algebra containing the algebra of regular 
functions on $\mathcal E\smallsetminus S$, which 
is stable under $f\mapsto\int_{z_0} f\omega_0$; this algebra is therefore stable under the operations 
$f\mapsto \int_{z_0} f \omega$, where 
$\omega$ runs over all the regular differentials on $\mathcal E\smallsetminus S$. 
Similarly to the genus-zero case, the functions from $\mathcal A_3$ arise naturally in the computation of Feynman 
integrals (see \cite{BDDT2}). 

It is a natural question to construct analogues of the HL functions associated to an arbitrary affine curve $C$. Such functions 
are likely to find an application in physics also when the genus of the curve is higher than one, such as for instance to compute hyperelliptic Feynman integrals (see \cite{MMPPW}), or the genus-two contribution to string 
theory amplitudes (see \cite{DGP}). 

In order to treat this problem, we fix a universal cover $p : \tilde C\to C$ and introduce the notion of {\it minimal stable 
subalgebra} (MSSA) of the algebra of holomorphic functions $\mathcal O_{hol}(\tilde C)$ of $\tilde C$ as follows: we call {\it stable 
subalgebra (SSA) of $\mathcal O_{hol}(\tilde C)$} 
a unital subalgebra $A$ of the algebra $\mathcal O_{hol}(\tilde C)$ such that 
for any $f\in A$, regular differential $\omega$ on $C$ and $z_0\in\tilde C$, the function $\int_{z_0} 
f\omega$ belongs to~$A$. The intersection $A_C$ of all SSA of $\mathcal O_{hol}(\tilde C)$ is again a SSA 
which is minimal for the inclusion, and which we call the MSSA of $\mathcal O_{hol}(\tilde C)$. 

The present paper is devoted to the study of $A_C$. We introduce the notion of a Maurer-Cartan element associated 
with the curve $C$, and show each such element gives rise via iterated integration to an algebra isomorphism 
$A_C \simeq\mathcal O(C) \otimes \mathrm{Sh}(\mathrm H^1_{\mathrm{dR}}(C))$. We also show that $A_C$ is a union of 
unipotent $\pi_1(C)$-modules, contained in an algebra of moderate growth functions over~$\tilde C$, and is maximal with respect to 
this property. All this shows that the properties of~$A_C$ are generalizations of those of 
$A_{\mathbb P^1_{\mathbb C}\smallsetminus S}$; the isomorphism $A_C \simeq\mathcal O(C) \otimes 
\mathrm{Sh}(\mathrm H^1_{\mathrm{dR}}(C))$ is also an analogue of the main result of \cite{Ma}. 
In the companion paper \cite{EZ}, we make $A_C$ explicit when 
$C=\mathcal E\smallsetminus S$, with $S$ a finite subset of an elliptic curve~$\mathcal E$, 
and we explicitly relate~$A_C$ with the algebra $\mathcal A_3$ from \cite{BDDT1}.  
The recent work \cite{DHS}, which introduces non-holomorphic variants of HL functions over one-punctured curves $C$ of arbitrary genus, could hopefully be related to the present work. 

\subsection{The main results}\label{pres:results:03:11}

\subsubsection{Conventions} 

The following conventions will be adopted throughout the paper. 
The base field of all the algebraic structures (vector spaces, Lie, Hopf or associative algebras, etc.) 
is $\mathbb C$. We denote\footnote{Except in Rem. \ref{rem:1:4:1611}(a), in \S\ref{sect:92:0711} and in the second half of 
\S\ref{sect:5:5:1611}, where $C$ takes a particular value.}
 by $C$ a smooth complex affine algebraic curve, as well as the underlying Riemann surface, by   
$p:\tilde C\to C$ a universal cover, and by $\mathcal O(C)$ the algebra of regular functions on $C$. 
Then $p^*: \mathcal O(C)\to \mathcal O_{hol}(\tilde C)$ is an injective algebra morphism. We denote by  $\Omega(C)$ the space of regular differentials on $C$, and we set 
$\mathrm H_C:=\Omega(C)/d\mathcal O(C)$ ($=\mathrm H^1_{\mathrm{dR}}(C)$ as $C$ is affine). 

\subsubsection{Maurer-Cartan (MC) elements and the associated isomorphisms}\label{sect:121:2710}

Denote by $\mathfrak g:=\mathbb L(\mathrm H_C^*)$ the free Lie algebra generated by $\mathrm H_C^*$; it is graded by 
the condition that $\mathrm H_C^*$ has degree 1, and we denote by $\hat{\mathfrak g}$ its degree completion. 

\begin{defn}\label{def:MC:0812}
(a) A \emph{Maurer-Cartan (MC) element} for $C$ is an element $J\in\Omega(C)\hat\otimes\,\hat{\mathfrak g}$. 

(b) $J$ is \emph{non-degenerate} iff $\mathrm{im}(J\in \Omega(C)\,\hat\otimes\,\hat{\mathfrak g}\to\mathrm{H}_C\otimes \mathrm{H}_C^*)=id$, the
map being given by the tensor product of the canonical projections. 

(c) $\mathrm{MC}(C)$ is the set of all MC elements for $C$, and 
$\mathrm{MC}_{nd}(C)$ is the subset of all non-degenerate elements. 
\end{defn}

Let $(J,x_0)\in\mathrm{MC}_{nd}(C)\times\tilde C$. One proves that there is a unique smooth function 
$\mathbf L_{J,x_0} : \tilde C\to \mathrm{exp}(\hat{\mathfrak g}):=\mathcal G((U\mathfrak g)^\wedge)$ 
(where $\mathcal G$ stands for the group of group-like elements of a topological Hopf algebra, and $(U\mathfrak g)^\wedge$
is the degree completion of the universal enveloping algebra of $\mathfrak g$) such that 
$d\mathbf L_{J,x_0}=\mathbf L_{J,x_0}\cdot J$ and $\mathbf L_{J,x_0}(x_0)=1$, which turns out to be holomorphic
(see Prop. \ref{prop:constr:L:0304}).  

Define then  $\tilde f_{J,x_0} : \mathrm{Sh}(\mathrm H_C)\to \mathcal O_{hol}(\tilde C)$ to be the map taking 
$a$ to the function 
\begin{equation}\label{def:tildef:0304}
\tilde f_{J,x_0}(a):=(\tilde C\ni x\mapsto \langle a,\mathbf L_{J,x_0}(x)\rangle\in\mathbb C),
\end{equation}
where $\langle,\rangle$ is the pairing 
$\mathrm{Sh}(\mathrm H_C)\times (U\mathfrak g)^\wedge\to\mathbb C$ induced by the composition 
$\mathrm{Sh}(\mathrm H_C)=\oplus_{n\geq 0}(U\mathfrak g)[n]^*\to 
(\prod_{n\geq 0}U\mathfrak g[n])^*=((U\mathfrak g)^\wedge)^*$. 

Similarly to the case of classical hyperlogarithms, $\mathbf L_{J,x_0}$ may be viewed as an element of 
$\mathcal O_{hol}(\tilde C) \hat\otimes (U\mathfrak g)^\wedge$. 
Hence $\mathbf L_{J,x_0}$ is a generating series of the image by $\tilde f_{J,x_0}$ of a basis of 
$\mathrm{Sh}(\mathrm H_C)$, which are multivalued functions on $C$ defined by \emph{iterated integrals}. 
By Lem.-Def. \ref{def:2:20:0908}, if $x\in \tilde C$ and $\omega_1,\ldots,\omega_k \in \Omega(C)$, 
then the iterated integral 
$\int_\gamma \omega_1\cdots\omega_k:=\int_{0\leq t_1\leq\ldots\leq t_k\leq 1}
\gamma^*\omega_1(t_1)  \wedge\cdots \wedge \gamma^*\omega_k(t_k)$ 
is independent of a path $\gamma$ from~$x_0$ to~$x$, and denoted by
$\int_{x_0}^x \omega_1\cdots\omega_k$. 

\begin{defn}\label{def:1:2:0311}
(a) $\Sigma_C$ denotes the set of sections $\sigma : \mathrm H_C\to\Omega(C)$ of the canonical projection. 

(b) $\sigma\mapsto J_\sigma$ is the map $\Sigma_C\to \mathrm{MC}_{nd}(C)$ such that 
$\sigma\mapsto J_\sigma:=\sum_i\sigma(h_i)\otimes h^i$, where $(h_i)_i$ is a basis of $\mathrm{H}_C$ and $(h^i)_i$ 
is the dual basis of $\mathrm{H}_C^*$.  
\end{defn}

\begin{lem}[see Lem. \ref{GR:ULJ}]\label{lem:1:3:2211} 
Let $(J,x_0)\in\mathrm{MC}_{nd}(C)\times\tilde C$.

(a) The map $\tilde f_{J,x_0} : \mathrm{Sh}(\mathrm H_C)\to \mathcal O_{hol}(\tilde C)$ is a morphism of algebras. 

(b) If $\sigma\in\Sigma_C$, $\tilde f_{J_\sigma,x_0}([h_1|\ldots|h_k])=(x\mapsto \int_{x_0}^x \sigma(h_1)\cdots \sigma(h_k))$,  
where $[h_1|\ldots|h_k]\in\mathrm{Sh}(\mathrm H_C)$ is the element corresponding to 
$h_1 \otimes \cdots\otimes h_k\in\mathrm H_C^{\otimes k}$. 
\end{lem}

\begin{thmA}[see \S\ref{sect:proof:of:thms}]\label{thm:A} 
(a) For any $J\in\mathrm{MC}_{nd}(C)$, the image 
$\mathrm{im}(\tilde f_{J,x_0} : \mathrm{Sh}(\mathrm H_C)\to\mathcal O_{hol}(\tilde C))$ is independent of $x_0\in\tilde C$; it will be 
denoted $\mathcal H_C(J)$. 

(b) Let $(J,x_0)\in\mathrm{MC}_{nd}(C)\times\tilde C$. The map $f_{J,x_0} : \mathrm{Sh}(\mathrm H_C)\otimes\mathcal O(C)
\to\mathcal O_{hol}(\tilde C)$, 
$a\otimes f\mapsto p^*(f)\cdot \tilde f_{J,x_0}(a)$ induces an algebra isomorphism 
$f_{J,x_0} : \mathrm{Sh}(\mathrm H_C)\otimes\mathcal O(C)\to A_C$. 
\end{thmA}

In particular, any $\mathbb C$-basis of $\mathcal H_C(J)$ (for example, the family $(\tilde f_{J,x_0}(w))_{w}$, 
where $w$ runs over a basis of $\mathrm{Sh}(\mathrm H_C)$) is linearly independent over $\mathcal O(C)$ and forms a basis of $A_C$ as a 
$\mathcal O(C)$-module.  

\begin{rem}\label{rem:1:4:1611}
(a) If $C=\mathbb P^1_{\mathbb C}\smallsetminus S$, then $\mathrm H_C\simeq\mathbb C\hat S_\infty$. 
A particular element of $\Sigma_C$ is $\sigma_0$ given by $\mathbb C\hat S_\infty\ni \hat s\mapsto d\log(z-s)
\in\Omega(\mathbb P^1_{\mathbb C}\smallsetminus S)$. Then 
$\mathcal H_{\mathbb P^1_{\mathbb C}\smallsetminus S}(J_{\sigma_0})$ is equal to $\mathbb C[L_w,w \in \hat S^*_\infty]$ 
(see \S\ref{sect:92:0711}). It follows that the algebras 
$\mathcal H_C(J)$, where $J\in\mathrm{MC}_{nd}(C)$, are generalizations of the algebra  of HL functions. 

(b)  While $\mathcal H_C(J)$ varies with $J$, Thm. A(b) says that 
the product $\mathcal O(C)\cdot \mathcal H_C(J)$ does not and is equal to $A_C$. 
\end{rem}

\subsubsection{Group aspects of the isomorphisms associated with the MC elements}

Let $\Gamma_C:=\mathrm{Aut}(\tilde C/C)$ be the automorphism group of the cover $p : \tilde C\to C$. 

%a) HA isos 

\begin{defn}\label{defn:1:5}
For $\Gamma$ a group, define $(\mathbb C\Gamma)'$ to be the subset of $(\mathbb C\Gamma)^*$ of all linear forms 
 which vanish on the union $\cup_{n\geq0}(\mathbb C\Gamma)_+^{n+1}$, where $(\mathbb C\Gamma)_+$
is the augmentation ideal of the group algebra~$\mathbb C\Gamma$.  
\end{defn}

If $\Gamma$ is finitely generated, it follows from Lem. \ref{lem:4:8:jeu} that 
$(\mathbb C\Gamma)'$ is a commutative Hopf algebra, 
which may be identified with the function algebra of the prounipotent completion $\Gamma^{\mathrm{un}}$ (see \S\ref{sect:D:2:0711}). 
This is in particular the case if $\Gamma=\Gamma_C$. 
The algebra $\mathrm{Sh}(\mathrm H_C)$ is also equipped with a commutative Hopf algebra structure, 
the coproduct $\Delta_{\mathrm{Sh}(\mathrm H_C)}$ being given by deconcatenation.

\begin{lem}[see Lem. \ref{HA:pairing} and Prop. \ref{prop:1508}]\label{lem:1:6:2211} 
For any $(J,x_0)\in\mathrm{MC}_{nd}(C)\times\tilde C$, the map
$$
p_{J,x_0} : \mathbb C\Gamma_C\times\mathrm{Sh}(\mathrm{H}_C)\to\mathbb C, \quad
\gamma\otimes a\mapsto \langle a,\mathbf L_{J,x_0}(\gamma x_0)\rangle
%=I_{x_0}( J_*(a))(\gamma\cdot x_0)
$$
is a Hopf algebra pairing. 
It gives rise to an isomorphism of commutative Hopf algebras 
$$
\nu(p_{J,x_0}) : \mathrm{Sh}(\mathrm{H}_C)\to (\mathbb C\Gamma_C)'. 
$$
\end{lem}
The relation of this result with Chen's ``$\pi_1$ de Rham theorem'' is discussed in Rem. \ref{rem:4:12:0712}. 

The group of $\mathbb C$-points of the spectrum of 
a commutative Hopf algebra $O$ is $\mathrm{Spec}(O)(\mathbb C):=\mathrm{Hom}(O,\mathbb C)$. 
Then\footnote{One checks that the bijection 
$\mathrm{Hom}_{\mathbb C-vec}(\mathrm{Sh}(\mathrm H_C),\mathbb C)
\simeq \hat T(\mathrm H_C^*)=(U\mathfrak g)^\wedge$ induces a bijection 
$\mathrm{Hom}_{\mathbb C-alg}(\mathrm{Sh}(\mathrm H_C),\mathbb C)
\simeq \mathcal G((U\mathfrak g)^\wedge)$.}
$\mathrm{Spec}(\mathrm{Sh}(\mathrm{H}_C))(\mathbb C)=\mathcal G((U\mathfrak g)^\wedge)
=\mathrm{exp}(\hat{\mathfrak g})$, 
and $\mathrm{Spec}((\mathbb C\Gamma_C)')(\mathbb C)
=\mathcal G((\mathbb C\Gamma_C)^\wedge)=\Gamma_C^{\mathrm{un}}(\mathbb C)$ (see \cite{BGF}, Thm. 3.224 and Appendix 
\ref{sect:D:2:0711}). The group isomorphism 
corresponding to $\nu(p_{J,x_0})$ is 
\begin{equation}\label{gp:iso:2209}
\Gamma_C^{\mathrm{un}}(\mathbb C)\to \mathrm{exp}(\hat{\mathfrak g}), \quad \gamma\to \mathbf L_{J,x_0}(\gamma x_0).    
\end{equation}
 
%b) HACA isos 

\begin{defn}[see Def. \ref{def:B1:0711}] 
A  {\it Hopf algebra with comodule-algebra} (HACA) is a pair $(O,A)$, with $O$ a Hopf algebra and 
$A$ an associative algebra, equipped with an algebra morphism $\Delta_A : A\to O\otimes A$, which turns $A$ into a comodule
over the coalgebra $O$. 
\end{defn}

The action of an algebraic group $G$ on a variety $V$ gives rise to a HACA $(\mathcal O(G),\mathcal O(V))$.
For any pair $(O,\mathbf a)$ of a commutative Hopf algebra $O$ and a commutative  
algebra $\mathbf a$, the pair $(O,O\otimes\mathbf a)$ is a HACA, with $\Delta_{O\otimes\mathbf a}:=\Delta_O\otimes id_{\mathbf a}$; 
it corresponds to the action of $G:=\mathrm{Spec}(O)$ on $V:=\mathrm{Spec}(O)\times\mathrm{Spec}(\mathbf a)$. 
In particular, the pair $(\mathrm{Sh}(\mathrm H_C),\mathrm{Sh}(\mathrm H_C)\otimes\mathcal O(C))$ is a HACA. 

\begin{lem}[see Lem. \ref{NEwlem}, Prop. \ref{lem:5:10:2811}, and Lem. \ref{lem:5:8:0307}]\footnote{
The combination of Lem. \ref{NEwlem} and Prop. \ref{lem:5:10:2811} implies that $A_C$ 
is a $\Gamma_C$-stable subalgebra of $F_\infty\mathcal O_{hol}(\tilde C)$ (the notation $F_\infty$ being as in 
Lem. \ref{lem:5:8:0307}), and therefore that $(\mathbb C\Gamma_C,A_C)$ is an object in $\mathbf{HAMA}_{fd}$ (see Def. 
\ref{def:HAMA:fd});  Lem. \ref{lem:1:8:2211} then follows from Lem. \ref{lem:5:8:0307}.
}\label{lem:1:8:2211} 
 The right action  $(f,\gamma) \mapsto f_{|\gamma}:=(x \mapsto f(\gamma x))$  
 of $\Gamma_C$ on $\mathcal O_{hol}(\tilde C)$ induces a HACA structure on $((\mathbb C\Gamma_C)',A_C)$. 
\end{lem}

% démo économique : A_C subset F_infty O_hol(tilde C) est Gamma_C invt // 

\begin{thmB}[see \S\ref{sect:proof:of:thms}]\label{thm:0:10:0510} 
Let $(J,x_0)\in\mathrm{MC}(C)\times\tilde C$. The pair 
$$
(\nu(p_{J,x_0}),f_{J,x_0}) : 
(\mathrm{Sh}(\mathrm H_C),\mathrm{Sh}(\mathrm H_C)\otimes\mathcal O(C)) \to ((\mathbb C\Gamma_C)',A_C)
$$
is a HACA isomorphism. 
\end{thmB}

To the HACA structure $(\mathrm{Sh}(\mathrm H_C),\mathrm{Sh}(\mathrm H_C)\otimes\mathcal O(C))$ (resp. $((\mathbb C\Gamma_C)',A_C)$), 
one associates an action of the group $\mathrm{exp}(\hat{\mathfrak g})$ (resp. $\Gamma^{\mathrm{un}}(\mathbb C)$) on 
$\mathrm{Sh}(\mathrm H_C)\otimes\mathcal O(C)$ (resp. on $A_C$).  Thm. B can then be translated into an equivariance statement: 
%d) rem : interpr groupique 
the algebra isomorphism $f_{J,x_0}$ is compatible with the group isomorphism 
\eqref{gp:iso:2209} and with the action of its source and target on the target and source of $f_{J,x_0}$. 

One can also introduce the notion of a connection over a HACA, which generalizes the notion of connection over a principal bundle  
in the case of the HACA $(\mathcal O(G),\mathcal O(V))$, with $V$ a principal $G$-bundle (see \S\ref{sect:6:0212}). We construct a 
connection $\nabla$ on the HACA $((\mathbb C\Gamma_C)',A_C)$ and 
compute its pull-back by $(\nu(p_{J,x_0}),f_{J,x_0})$, which is independent of $x_0$, for any $(J,x_0)\in \mathrm{MC}_{nd}(C) \times \tilde C$. % and denoted $\nabla_J$. 

\subsubsection{Isomorphisms of filtrations}

Let $\overline C$ be the smooth compactification of $C$, and  
$S:=\overline C\smallsetminus C$ be the complement of $C$ in $\overline C$, 
which is a finite set; then $\tilde C$ is a universal cover of $\tilde{\overline C}\smallsetminus p^{-1}(S)$. A function of 
$\mathcal O_{hol}(\tilde C)$ is called {\it moderate growth} if its 
growth at the neighborhood of $p^{-1}(S)$ is moderate in the sense of \cite{Ph}, §IX.1 (see Def. \ref{def:1:8:0208}). 

\begin{lem}[see Prop. \ref{lem:1:7:toto}]\label{lem:1:9:2211} 
 The subset $\mathcal O_{mod}(\tilde C)\subset \mathcal O_{hol}(\tilde C)$ of moderate growth functions is a subalgebra, 
 which is stable under the action of $\Gamma_C$. 
\end{lem}

One attaches a filtration of $\mathcal O_{mod}(\tilde C)$ to the action of 
$\Gamma_C$ on $\mathcal O_{mod}(\tilde C)$ as follows. 

\begin{defn}\label{def:f:infty:o:mod:0311}
For $n\geq 0$, we set $F_n\mathcal O_{mod}(\tilde C):=\{f\in \mathcal O_{mod}(\tilde C)|f_{|(\mathbb C\Gamma_C)_+^{n+1}}=0\}$. 
\end{defn}

\begin{lem}[see Prop. \ref{lem:2:12:1508} and Lem. \ref{LEM:HACA}]\label{lem:1:11:2211} 
$F_{\bullet}\mathcal O_{mod}(\tilde C)$ is an increasing algebra filtration of
$\mathcal O_{mod}(\tilde C)$ with $F_0\mathcal O_{mod}(\tilde C)=\mathcal O(C)$, stable under the
action of $\Gamma_C$. 
\end{lem}

Inspired by \cite{Chen}, we also define two ``differential'' filtrations of $\mathcal O_{hol}(\tilde C)$.

\begin{defn}\label{def:filtr:0911}
(a) $F_0^\delta\mathcal O_{hol}(\tilde C):=\mathbb C$.

(b) For $n\geq 0$, 
$F_{n+1}^\delta\mathcal O_{hol}(\tilde C):=\{f\in 
\mathcal O_{hol}(\tilde C)|d(f)\in \Omega(C)\cdot F_n^\delta\mathcal O_{hol}(\tilde C)\}$.

(c) For $n\geq 0$, $F_n^\mu\mathcal O_{hol}(\tilde C):=\mathcal O(C)\cdot F_n^\delta\mathcal O_{hol}(\tilde C)$.
\end{defn}

\begin{lem}[see Prop. \ref{new:cor}]\label{lem:intro} 
Set $F_\bullet^{\delta/\mu}:=F_\bullet^{\delta/\mu}\mathcal O_{hol}(\tilde C)$. 

(a) $F_\bullet^\delta$ and $F_\bullet^\mu$ are increasing algebra filtrations of $\mathcal O_{hol}(\tilde C)$. 

(b) $F_0^\delta\subset F_0^\mu\subset F_1^\delta\subset F_1^\mu\subset\cdots$
\end{lem}

Moreover, let us set $F_n\mathrm{Sh}(V):=\oplus_{k\leq n}\mathrm{Sh}_n(V)$ for $n\geq 0$ and $V$ any vector space, 
and let us remark that, for $x_0\in \tilde C$, the map $I_{x_0} : \mathrm{Sh}(\Omega(C))\to \mathcal O_{hol}(\tilde C)$
given by $[\omega_1|\cdots|\omega_n]\mapsto (x\mapsto \int_{x_0}^x \omega_1\cdots\omega_n)$
is an algebra morphism (see Lem. \ref{lem:221:3108}). Then one has the following.

\begin{lem}[see Prop. \ref{(a1)}]\label{lem:1:14:2211} 
$F_\bullet\mathrm{Sh}(\Omega(C))$ is an algebra filtration of $\mathrm{Sh}(\Omega(C))$ and  
$I_{x_0}(F_\bullet\mathrm{Sh}(\Omega(C)))$ is an algebra filtration of $\mathcal O_{hol}(\tilde C)$, which is independent 
of $x_0$.    
\end{lem}

These various algebra filtrations can be compared as follows. 

\begin{thmC}[see \S\ref{sect:proof:of:thms}] 
Let $(J,x_0)\in\mathrm{MC}_{nd}(C)\times\tilde C$. 

(a) One has the following equalities of algebra filtrations of $\mathcal O_{hol}(\tilde C)$: 
\begin{equation}\label{thm:c:1st:line}
F_\bullet\mathcal O_{mod}(\tilde C)=f_{J,x_0}(F_\bullet\mathrm{Sh}(\mathrm H_C)\otimes\mathcal O(C))=F_\bullet^\mu
\mathcal O_{hol}(\tilde C);
\end{equation}
\begin{equation}\label{thm:c:2nd:line}
I_{x_0}(F_\bullet\mathrm{Sh}(\Omega(C)))
=F_\bullet^\delta\mathcal O_{hol}(\tilde C)=f_{J,x_0}( 
F_\bullet\mathrm{Sh}(\mathrm H_C)\otimes\mathbb C+F_{\bullet-1}\mathrm{Sh}(\mathrm H_C)\otimes\mathcal O(C)). 
\end{equation}

(b) One has the equalities 
\begin{equation}\label{thm:c:3rd:line}
A_C=I_{x_0}(\mathrm{Sh}(\Omega(C)))=F_\infty^\delta\mathcal O_{hol}(\tilde C)
=F_\infty^\mu\mathcal O_{hol}(\tilde C)=f_{J,x_0}(\mathrm{Sh}(\mathrm H_C)\otimes\mathcal O(C))
=F_\infty\mathcal O_{mod}(\tilde C)
\end{equation}
of subalgebras of $\mathcal O_{hol}(\tilde C)$, where $F_\infty X:=\cup_{n\geq 0}F_nX$ for $F_\bullet X$ a filtration on a vector space~$X$. 
\end{thmC}

\subsubsection{Filtered formality for HACA structures}

Thm. B leads to the following extension of the notion of filtered formality (\cite{SW}) to the setting of 
HACAs. Any Hopf algebra~$O$ is equipped with a Hopf algebra filtration $F_\bullet O$, given when  
$O=(\mathbb C\Gamma)'$ by $F_nO=((\mathbb C\Gamma)_+^{n+1})^\perp$ for $n\geq 0$ 
(see Lem. \ref{sect:A2:0711}, \cite{BGF}, §3.3.2 and \cite{Fr} §7.2), which gives rise to a graded Hopf algebra $\mathrm{gr}(O)$. We 
say that $O$ is {\it filtered formal} if there exists an isomorphism of filtered Hopf algebras $O\to\mathrm{gr}(O)$, compatible 
with the filtrations and whose associated graded is the identity (see Def. \ref{def:FF:HA:3110}); we show that if a finitely generated group 
$\Gamma$ is filtered formal in the sense of~\cite{SW}, then the Hopf algebra $(\mathbb C\Gamma)'$ is filtered formal (see 
Prop. \ref{prop:C3:3110}). These notions extend to HACAs as follows. Any HACA $(O,A)$ is equipped with a HACA filtration 
$(F_\bullet O,F_\bullet A)$, whose first term is the above filtration of $O$ (see Lem. \ref{lem:5:3:0307}), and therefore 
gives rise to a graded HACA $(\mathrm{gr}(O),\mathrm{gr}(A))$ (see Lem. \ref{lem:10:12:2410}). We say that the HACA $(O,A)$ 
is {\it filtered formal} if there is an isomorphism of filtered HACAs $(O,A)\to(\mathrm{gr}(O),\mathrm{gr}(A))$, whose associated 
graded is the identity (see Def.~\ref{def:C4:0711}). 

\begin{prop}[see Prop. \ref{3:17:0711}] \label{repetition:prop:317} 
The HACA $((\mathbb C\Gamma)',F_\infty\mathcal O_{mod}(\tilde C))$ is filtered formal;  
the associated graded HACA is $(\mathrm{gr}((\mathbb C\Gamma)'),\mathrm{gr}(\mathcal O_{mod}(\tilde C)))$, where 
the components are the graded spaces associated to the filtrations of 
$(\mathbb C\Gamma)'$ and $F_\infty\mathcal O_{mod}(\tilde C)$. 
\end{prop}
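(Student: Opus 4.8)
The plan is to reduce the statement to Theorem~\ref{thm:03:0711}(c), which furnishes a HACA isomorphism
$\Phi:=(\nu(p_{\sigma,x_0}),f_{\sigma,x_0})$ from the HACA
$S:=(\mathrm{Sh}(\mathrm{H}_C^{\mathrm{dR}}),\mathrm{Sh}(\mathrm{H}_C^{\mathrm{dR}})\otimes\mathcal O(C))$
onto $T:=((\mathbb C\Gamma_C)',F_\infty\mathcal O_{mod}(\tilde C))$. First I would isolate two general facts about the notion of filtered formality of Def.~\ref{def:C4:0711}: (i) any HACA whose canonical HACA filtration (Lem.~\ref{lem:5:3:0307}) coincides with a grading is filtered formal, the witnessing isomorphism being the identity $S\to\mathrm{gr}\,S=S$; and (ii) filtered formality is invariant under HACA isomorphism. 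For (ii), given a witnessing isomorphism $\alpha:S\to\mathrm{gr}\,S$ and a HACA isomorphism $\Phi:S\to T$ — which is automatically a \emph{filtered} HACA isomorphism, since the HACA filtration is defined intrinsically from the coproduct, counit and coaction and is therefore preserved by any HACA isomorphism — one sets $\beta:=\mathrm{gr}(\Phi)\circ\alpha\circ\Phi^{-1}:T\to\mathrm{gr}\,T$ and checks $\mathrm{gr}(\beta)=\mathrm{id}$, so that $\beta$ witnesses the filtered formality of $T$.

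The heart of the argument is then the claim that $S$ is a graded HACA, i.e. that its canonical HACA filtration is the obvious grading. For the Hopf part I would show that $F_\bullet^\heartsuit\mathrm{Sh}(\mathrm{H}_C^{\mathrm{dR}})$ agrees with the word-length (degree) filtration $F_\bullet\mathrm{Sh}(\mathrm{H}_C^{\mathrm{dR}})$. Since $F_\bullet^\heartsuit$ is the coradical filtration, that is, the filtration by the kernels of the iterated reduced coproduct, this is a direct computation with the deconcatenation coproduct of $\mathrm{Sh}(\mathrm{H}_C^{\mathrm{dR}})$: its $(n{+}1)$-fold reduced coproduct annihilates exactly the words of length $\le n$. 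For the comodule part, Theorem~\ref{thm:03:0711}(b) together with the definition of the module term of the HACA filtration in Lem.~\ref{lem:5:3:0307} identifies the filtration with $F_\bullet\mathrm{Sh}(\mathrm{H}_C^{\mathrm{dR}})\otimes\mathcal O(C)$, namely the grading in which all of $\mathcal O(C)$ sits in degree $0$; as the coaction $\Delta_{\mathrm{Sh}(\mathrm{H}_C^{\mathrm{dR}})}\otimes\mathrm{id}_{\mathcal O(C)}$ preserves this grading, $S$ is a graded HACA.

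Combining the key claim with (i) and (ii) yields that $T=((\mathbb C\Gamma_C)',F_\infty\mathcal O_{mod}(\tilde C))$ is filtered formal. Finally, applying $\mathrm{gr}$ to $\Phi$ produces a graded HACA isomorphism $\mathrm{gr}(\Phi):S=\mathrm{gr}\,S\to\mathrm{gr}\,T$, which identifies the associated graded HACA $\mathrm{gr}\,T$ with $(\mathrm{gr}^\heartsuit((\mathbb C\Gamma_C)'),\mathrm{gr}\mathcal O_{mod}(\tilde C))$, as asserted. I expect the only genuine obstacle to be the verification that the intrinsic Hopf-algebra filtration $F_\bullet^\heartsuit$ on the shuffle algebra coincides with its degree filtration; everything else is the formal transport of a grading along an isomorphism, or a direct appeal to Theorem~\ref{thm:03:0711}. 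A secondary point I would check with care is that the module-filtration of the HACA $S$ supplied by Lem.~\ref{lem:5:3:0307} really is $F_\bullet\mathrm{Sh}(\mathrm{H}_C^{\mathrm{dR}})\otimes\mathcal O(C)$, and not some a priori coarser filtration — this is exactly the content of the first equality in \eqref{toto:0311}.
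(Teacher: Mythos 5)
Your argument is correct, and it reaches the conclusion by a genuinely leaner route than the paper. The paper's proof is a one-line reduction: it checks that the data $(\mathrm{Sh}(\mathrm H_C^{\mathrm{dR}}),\mathcal O(C))$, $(\mathcal O_{mod}(\tilde C),\mathbb C\Gamma_C)$, $(p_{\sigma,x_0},f_{\sigma,x_0})$ satisfy the hypotheses of Prop.~\ref{lem:B:15:1508}(b),(c) and then invokes the general Prop.~\ref{prop:10:14:3110}, whose proof assembles the witnessing isomorphism explicitly as a composition involving the natural transformation $\mathrm{nat}_{(H',F_\infty B)}$ and verifies that its associated graded is the identity by means of the relation \eqref{key:0407}. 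You instead take the HACA isomorphism of Thm.~\ref{thm:03:0711}(c) as a black box and combine it with two formal observations --- a graded HACA is filtered formal via the identity, and filtered formality transports along HACA isomorphisms because the canonical filtrations are intrinsic and hence automatically preserved --- together with the computation (Lem.~\ref{lem:comp:Sh:heart:1628}(a)) that $F_\bullet^\heartsuit\mathrm{Sh}(\mathrm H_C^{\mathrm{dR}})$ is the word-length filtration. What this buys is a proof that bypasses $\mathrm{nat}$ and \eqref{key:0407} entirely; what it costs is generality, since your fact (i) requires the source HACA to be graded, which holds for $\mathrm{Sh}(\mathrm H_C^{\mathrm{dR}})\otimes\mathcal O(C)$ but not for the arbitrary $(O^\heartsuit,O^\heartsuit\otimes\mathbf a)$ treated by Prop.~\ref{prop:10:14:3110}. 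Two small adjustments of references: the cleanest source for the module term of the canonical filtration of $(\mathrm{Sh}(\mathrm H_C^{\mathrm{dR}}),\mathrm{Sh}(\mathrm H_C^{\mathrm{dR}})\otimes\mathcal O(C))$ being $F_\bullet\mathrm{Sh}(\mathrm H_C^{\mathrm{dR}})\otimes\mathcal O(C)$ is Prop.~\ref{lem:B:15:1508}(a), which computes $F_n(O\otimes\mathbf a)=F_n^\heartsuit O\otimes\mathbf a$ intrinsically, rather than the first equality of \eqref{toto:0311}, which concerns the HAMA filtration on the target; and to match $\mathrm{gr}$ of the target's intrinsic filtration (Def.~\ref{def:FnA}) with $\mathrm{gr}\mathcal O_{mod}(\tilde C)$ taken with respect to \eqref{def:fil:30:01}, one should note that these two filtrations coincide, which follows from Lem.~\ref{IDFILT} and Lem.~\ref{lem:5:8:0307}(a).
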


\subsection{Organization of the paper}

In Part 1, we prove the results announced in \S\ref{pres:results:03:11}. 
In §\ref{sect;7;0212} (resp. \S\ref{sect:2:1309}), we introduce the necessary material on iterated integrals, Maurer-Cartan elements, 
and hyperlogarithm functions (resp. on moderate growth functions). In \S\ref{sect:3:2908}, we prove that $f_{J,x_0}$ 
is an isomorphism of filtered algebras; the argument relies on techniques of HACAs, which are explained in the appendices. 
In §\ref{sect:5:1412}, we prove results on the filtrations on $\mathcal O_{hol}(\tilde C)$ and their relation with the minimal 
stable subalgebra $A_C$, which  lead to the proof of Theorems A, B and~C. 

Part 2 is devoted to complementary results. In §\ref{sect:6:0212}, we introduce an study the notion of connections on HACAs. 
In \S\ref{sect:4:0711}, we give precisions on the local behavior of elements of~$A_C$, which may be viewed, since 
$A_C\subset\mathcal O_{mod}(\tilde C)$, as functions on a universal cover of $\tilde{\overline C}\smallsetminus p^{-1}(S)$ with moderate 
growth near each element of $p^{-1}(S)$ (see Prop. \ref{toto:0409}); in particular, the germs of the functions of $A_C$ near each such 
element are \emph{Nilsson-class functions} (in the sense of \cite{Ph}, p.~154) of a particular kind.
In \S\ref{sect:54:2811}, we relate $A_C$ to the minimal acyclic extension of the differential graded algebra (dga) 
$\Omega^\bullet(C):=(\mathcal O(C) \oplus \Omega(C),d)$ (see Prop. \ref{prop:5:16:2811}). 
In \S\ref{sect:8:0212}, we identify $\mathrm{Ker}(I_{x_0})$ with the image of an explicit map (see Thm. \ref{thm:7:1:0409}(a)), 
in the spirit of bar-complex theory (see \cite{H}), and we associate to each 
section $\sigma : \mathrm{H}_C^{\mathrm{dR}}\to\Omega(C)$ a complement $\mathrm{Sub}_\sigma$ of $\mathrm{Ker}(I_{x_0})$ in 
$\mathrm{Sh}(\Omega(C))$ (Thm. \ref{thm:7:1:0409}(b)). 

Part 3 is divided into four appendices, dealing with Hopf algebras and module or comodule algebras (HAMAs/HACAs). 

\part{Theorems A,B,C and their proofs}

\section{Iterated integrals, Maurer-Cartan elements, %the morphisms $\tilde f_{J,x_0}$ 
and hyperlogarithm functions}\label{sect;7;0212}

In §\ref{subsect:IIs:1008}, we introduce the iterated integral morphism $I_{x_0}$ attached to a point 
$x_0\in\tilde C$ and review its standard properties. In §\ref{subsect:mu:J}, we attach to each $J\in\mathrm{MC}(C)$ an algebra 
morphism $J_* : \mathrm{Sh}(\mathrm H_C)\to\mathrm{Sh}(\Omega(C))$. In §\ref{sect:91:0711} (see Prop. \ref{prop:2:15:0812}), we show 
that it gives rise to the morphism $\tilde f_{J,x_0} : \mathrm{Sh}(\mathrm H_C)\to \mathcal O_{hol}(\tilde C)$ from 
§\ref{pres:results:03:11}. We show that the image of $\tilde f_{J,x_0}$ is independent of $x_0$, and denote by 
$\mathcal H_C(J)$ its image. When $\sigma\in\Sigma_C$ (see Def. \ref{def:1:2:0311}), we set 
$\mathcal H_C(\sigma):=\mathcal H_C(J_\sigma)$. In §\ref{sect:92:0711}, we consider the case where 
$C=\mathbb P^1_{\mathbb C}\smallsetminus S$, and we exhibit a section $\sigma_0$ such that 
$\mathcal H_{\mathbb P^1_{\mathbb C}\smallsetminus S}(\sigma_0)$ coincides with the algebra of hyperlogarithm functions 
(Prop.~\ref{prop:mor:0:moreover}). 

\subsection{Iterated integrals}\label{subsect:IIs:1008}

Recall that the shuffle algebra $(\mathrm{Sh}(V),\sha)$ associated with a vector space~$V$ is isomorphic to the tensor algebra 
$\oplus_{d\geq 0}V^{\otimes d}$ as a vector space and that $\sha$ is commutative. It has the 
decomposition $\mathrm{Sh}(V)=\oplus_{d\geq 0}\mathrm{Sh}_d(V)$, where $\mathrm{Sh}_d(V)$ is the degree $d$ component $V^{\otimes d}$, 
which is an algebra grading; an element $v_1\otimes\cdots\otimes v_d\in V^{\otimes d}$ will be denoted $[v_1|\cdots|v_d]\in \mathrm{Sh}_d(V)$. 

%Let $\pi_{\mathrm{Sh}(V)} : \mathrm{Sh}(V)\to V$ be the projection on the component of degree 1, and 
%define $\partial^V : \mathrm{Sh}(V)\to \mathrm{Sh}(V) \otimes V$ by 
%$\partial^V:=(id \otimes \pi_{\mathrm{Sh}(V)}) \circ \Delta_{\mathrm{Sh}(V)}$. Then $\partial^V$ is given by
%$[v_1|\ldots,|v_n]\mapsto [v_1|\ldots,|v_{n-1}]\otimes v_n$ if $n>0$, $1\mapsto 0$ if $n=0$ and 
%$v_1,\ldots,v_n\in V$.
%Then $\partial^V$ is a derivation of the commutative and associative algebra  $\mathrm{Sh}(V)$
%with values in the free module $\mathrm{Sh}(V) \otimes V$ (with action $a\sha(b\otimes v):=(a\sha b)\otimes v$), namely 
%\begin{equation}\label{DER:partial}
%    \partial^V(a\sha b)= a \sha \partial^V(b)+b \sha \partial^V(a)
%\end{equation}
%for any $a,b\in \mathrm{Sh}(V)$. 

\begin{lemdef}\label{def:2:20:0908}
Let $x_0\in\tilde C$. 

(a) 
 For $x\in \tilde C$ and $\omega_1,\ldots,\omega_k \in \Omega(C)$, the iterated integral 
$$
\textstyle\int_\gamma \omega_1\cdots\omega_k:=\int_{0\leq t_1\leq\ldots\leq t_k\leq 1}
\gamma^*\omega_1(t_1)  \wedge\cdots \wedge \gamma^*\omega_k(t_k)
$$ 
is independent of a path $\gamma$ from~$x_0$ to~$x$; it will be denoted $\int_{x_0}^x \omega_1\cdots\omega_k$. 

(b) For any $n\geq 0$, there exists a unique linear map 
$I_{x_0}^{(n)} : \mathrm{Sh}_n(\Omega(C))\to\mathcal O_{hol}(\tilde C)$ such that 
$I_{x_0}^{(0)}(1)=1$, and for any $\omega_1,\ldots,\omega_n\in\Omega(C)$, one has 
$I_{x_0}^{(n)}([\omega_1|\ldots|\omega_n])=(x\mapsto\int_{x_0}^x \omega_1\ldots\omega_n)$. 

(c) The linear map $I_{x_0} : \mathrm{Sh}(\Omega(C))\to \mathcal O_{hol}(\tilde C)$ is the direct sum $\oplus_{n\geq 0}I_{x_0}^{(n)}$.  
\end{lemdef}

\begin{proof}
Let $\Omega_{hol}(\tilde C)$ be the space of 
holomorphic differentials on $\tilde C$ and $\mathrm{int}_{x_0} : \Omega_{hol}(\tilde C)\to\mathcal O_{hol}(\tilde C)$
be the linear map $\omega\mapsto (x\mapsto \int_{x_0}^x\omega)$, which is well-defined since 
 $\tilde C$ is simply-connected and since the elements of $\Omega_{hol}(\tilde C)$ are closed. Statement (a) follows from the equality $\int_\gamma \omega_1\cdots\omega_k=\mathrm{int}_{x_0}(\omega_k\cdot\mathrm{int}_{x_0}(\omega_{k-1}\cdots \omega_2\cdot
\mathrm{int}_{x_0}(\omega_1)))(x)$. Statements (b) and (c) are obvious. 
\end{proof}

One has therefore for $n\geq 1$ and $\omega_1,\ldots,\omega_n \in \Omega(C)$ the equality (in $\mathcal O_{hol}(\tilde C)$)
\begin{equation}\label{IND:1008}
I_{x_0}([\omega_1|\ldots|\omega_n])
=\mathrm{int}_{x_0}(I_{x_0}([\omega_1|\ldots|\omega_{n-1}])\cdot p^*\omega_n).
\end{equation}

\begin{lem}\label{lem:221:3108}
For any $x_0\in\tilde C$, $I_{x_0} : \mathrm{Sh}(\Omega(C))\to\mathcal O_{hol}(\tilde C)$ is an algebra morphism. 
\end{lem}

\begin{proof} See for example \cite{BGF}, Thm. 3.19, (3.22). 
\end{proof}

\begin{lem}\label{lem:invce:2308}
For any $x_0,x_1\in\tilde C$, $\gamma\in\Gamma_C$ and $a\in \mathrm{Sh}(\Omega(C))$, $I_{x_0}(a)(x_1)=I_{\gamma x_0}(a)(\gamma x_1)$. 
\end{lem}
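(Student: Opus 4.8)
The plan is to prove the identity by induction on the degree of $a$, after reducing to the case of a homogeneous element. Since $I_{x_0}$ is linear and the asserted equality is linear in $a$, it suffices to treat $a=[\omega_1|\cdots|\omega_n]$ for some $n\geq 0$ and $\omega_1,\ldots,\omega_n\in\Omega(C)$. The base case $n=0$ is immediate: by Def. \ref{def:2:20:0908}(a), both $I_{x_0}(a)$ and $I_{\gamma x_0}(a)$ are the constant function with value $a\in\mathbb C$, so the two sides coincide.

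For the inductive step, I would assume the statement for all homogeneous elements of degree $<n$ and take $a=[\omega_1|\cdots|\omega_n]$ with $n\geq 1$. Using \eqref{IND:1008} together with the definition of $\mathrm{int}_{\gamma x_0}$, the right-hand side is
$$
I_{\gamma x_0}(a)(\gamma x_1)=\int_{\gamma x_0}^{\gamma x_1} I_{\gamma x_0}([\omega_1|\cdots|\omega_{n-1}])\cdot p^*\omega_n.
$$
As $\tilde C$ is simply connected, this integral is independent of the path, so I would choose the integration path from $\gamma x_0$ to $\gamma x_1$ to be the image under $\gamma$ of a path from $x_0$ to $x_1$ and apply the change-of-variables formula for the pullback of a $1$-form, transforming the integral into
$$
\int_{x_0}^{x_1}\gamma^*\bigl(I_{\gamma x_0}([\omega_1|\cdots|\omega_{n-1}])\bigr)\cdot\gamma^*(p^*\omega_n).
$$

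Two simplifications then finish the argument. First, since $\gamma\in\Gamma_C=\mathrm{Aut}(\tilde C/C)$ is a deck transformation one has $p\circ\gamma=p$, whence $\gamma^*(p^*\omega_n)=(p\circ\gamma)^*\omega_n=p^*\omega_n$. Second, $\gamma^*\bigl(I_{\gamma x_0}([\omega_1|\cdots|\omega_{n-1}])\bigr)$ is the function $z\mapsto I_{\gamma x_0}([\omega_1|\cdots|\omega_{n-1}])(\gamma z)$, which by the induction hypothesis applied to the degree $n-1$ element $[\omega_1|\cdots|\omega_{n-1}]$ equals $I_{x_0}([\omega_1|\cdots|\omega_{n-1}])$. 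Substituting both and invoking \eqref{IND:1008} once more gives
$$
I_{\gamma x_0}(a)(\gamma x_1)=\int_{x_0}^{x_1} I_{x_0}([\omega_1|\cdots|\omega_{n-1}])\cdot p^*\omega_n=I_{x_0}(a)(x_1),
$$
completing the induction.

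The only delicate point is the change-of-variables step: one must check that reparametrizing the path by the biholomorphism $\gamma$ and pulling back the integrand is legitimate, which rests precisely on the path-independence guaranteed by the simple connectivity of $\tilde C$ together with the naturality of $\mathrm{int}$ under pullback. The structural reason the differentials are unaffected is the deck-transformation invariance $\gamma^*p^*=p^*$, and it is this invariance, rather than any analytic estimate, that drives the whole argument.
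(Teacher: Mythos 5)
Your proof is correct and follows essentially the same route as the paper's: induction on the degree of a homogeneous element, with the inductive step driven by \eqref{IND:1008} and the deck-transformation invariance $\gamma^*p^*\omega=p^*\omega$; you merely spell out the change-of-variables step that the paper leaves implicit.
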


\begin{proof}
It suffices to prove this for $a$ homogeneous. One argues by induction on $\mathrm{deg}(a)$. The identity is obvious for $\mathrm{deg}(a)=0$, 
and the identity for degree $n$ follows from the identity for degree $n-1$, \eqref{IND:1008} and the invariance of $p^*\omega$ for 
$\omega \in\Omega(C)$. 
\end{proof}

\begin{defn}\label{DEFSHV}
If $V$ is a vector space, then $\Delta_{\mathrm{Sh}(V)}$ is the deconcatenation coproduct on $\mathrm{Sh}(V)$, defined by 
$[v_1|\cdots|v_n]\mapsto \sum_{k=0}^n[v_1|\cdots|v_k]\otimes[v_{k+1}|\cdots|v_n]$ for $v_1,\ldots,v_n\in V$; it equips
$\mathrm{Sh}(V)$ with a commutative Hopf algebra structure. 
\end{defn}

\begin{lem}\label{lem:chain:rule:2308}
For $x_0,x_1 \in \tilde C$ and $a \in \mathrm{Sh}(\Omega(C))$, one has 
\begin{equation}\label{EQN:1408}
I_{x_0}(a)=I_{x_0}(a^{(1)})(x_1)I_{x_1}(a^{(2)}), 
\end{equation}
where $a^{(1)} \otimes a^{(2)}$ is Sweedler's notation for $\Delta_{\mathrm{Sh}(\Omega(C))}(a)$. 
\end{lem}

\begin{proof}
%The fact that the two sides of this equality belong to $\mathcal O_{mod}(\tilde C)$ follows from Prop. \ref{lem:INOMOD:1408}. 
Let us prove \eqref{EQN:1408} by induction on the degree of a homogeneous element $a$ in $\mathrm{Sh}(\Omega(C))$. Eq. \eqref{EQN:1408}
is obvious if $a$ has degree $0$. Assume that \eqref{EQN:1408} is proved for any $a$ of degree $<n$ and let us prove it in degree $n$. 
Let $\omega_1,\ldots,\omega_n \in \Omega(C)$. Then 
\begin{align*}
    &  dI_{x_0}([\omega_1|\ldots|\omega_n])=I_{x_0}([\omega_1|\ldots|\omega_{n-1}])\cdot p^*\omega_n
    {\displaystyle =\sum_{k=0}^{n-1} I_{x_0}([\omega_1|\ldots|\omega_k])(x_1)I_{x_1}([\omega_{k+1}|\ldots|\omega_{n-1}])\cdot p^*\omega_n}
    \\ & =\sum_{k=0}^{n-1} I_{x_0}([\omega_1|\ldots|\omega_k])(x_1)dI_{x_1}([\omega_{k+1}|\ldots|\omega_n]) 
    =d(\sum_{k=0}^{n-1} I_{x_0}([\omega_1|\ldots|\omega_k])(x_1)I_{x_1}([\omega_{k+1}|\ldots|\omega_n])).
\end{align*}
where the first and third equalities follow from \eqref{IND:1008}, and the second equality from \eqref{EQN:1408} in degree $n-1$. 

It follows that $I_{x_0}([\omega_1|\ldots|\omega_n])-\sum_{k=0}^{n-1} I_{x_0}([\omega_1|\ldots|\omega_k])(x_1)
I_{x_1}([\omega_{k+1}|\ldots|\omega_n]) \in\mathcal O_{hol}(\tilde C)$ is a constant function. 
Its value at $x_1$ is $I_{x_0}([\omega_1|\ldots|\omega_n])(x_1)$, therefore $I_{x_0}([\omega_1|\ldots|\omega_n])=\sum_{k=0}^{n} I_{x_0}([\omega_1|\ldots|\omega_k])(x_1)I_{x_1}([\omega_{k+1}|\ldots|\omega_n])$, proving \eqref{EQN:1408} in degree $n$. 
\end{proof}

\begin{lem}\label{NEwlem}
If $x_0\in\tilde C$, then $I_{x_0}(\mathrm{Sh}(\Omega(C)))$ is a $\Gamma_C$-stable 
subalgebra of $\mathcal O_{hol}(\tilde C)$, and for any $f\in I_{x_0}(\mathrm{Sh}(\Omega(C)))$ there exists $n\geq 0$ 
such that $f_{|(\mathbb C\Gamma_C)_+^{n+1}}=0$.  
\end{lem}

\begin{proof}
It follows from Lem. \ref{lem:chain:rule:2308} that the map $a\otimes\gamma\mapsto a_{|\gamma}:=
I_{x_0}(a^{(1)})(\gamma\cdot x_0)
a^{(2)}$ defines a right action of $\Gamma_C$ on the algebra $\mathrm{Sh}(\Omega(C))$, and that the algebra 
morphism $I_{x_0} : \mathrm{Sh}(\Omega(C))\to\mathcal O_{hol}(\tilde C)$ is $\Gamma_C$-equivariant.  
This implies that $I_{x_0}(\mathrm{Sh}(\Omega(C)))$ is a $\Gamma_C$-stable subalgebra of $\mathcal O_{hol}(\tilde C)$. 
The second statement follows from the equivariance of $I_{x_0}$ and from 
$a_{|(\mathbb C\Gamma_C)_+^{n+1}}=0$ for any $a\in\mathrm{Sh}_n(\Omega(C))$. 
\end{proof}

\subsection{Maurer-Cartan elements and associated morphisms}\label{subsect:mu:J}

Let $O$ be a Hopf algebra with coproduct~$\Delta_O$. For $a\geq 1$, let 
$\Delta_O^{(a)} : O\to O^{\otimes a}$ be the morphism obtained by iteration of 
$\Delta_O$. Let $pr_O : O\to O/\mathbb C$ be the canonical projection. 

\begin{defn}[see \cite{Q} B3, \cite{BGF}, \S3.3.2  or \cite{Fr} \S7.2] \label{conilp:filtra}
For $n\geq 0$, we define $F_n  O:=\mathrm{Ker}(pr_O^{\otimes n+1}\circ\Delta_O^{(n+1)})$. 
\end{defn}
We also set $F_{-1}  O=\{0\}$. Note that $F_0  O=\mathbb C1$.

\begin{lem}\label{lem:12:0309}
Let $O$ be a Hopf algebra such that $O=F_\infty O$, with $F_\bullet O$ as in Def. \ref{conilp:filtra}, let~$V$ be a vector space 
and let $\mu : O\to V$ be a linear map which is a derivation with respect to the counit $\epsilon$ of $O$, i.e. satisfying the 
identity $\mu(fg)=\mu(f)\epsilon(g)+\epsilon(f)\mu(g)$. Then the map $\mu_* : O\to \mathrm{Sh}(V)$ given by 
$f\mapsto \sum_{r\geq 0}[\mu(f^{(1)})|\ldots|\mu(f^{(r)})]$ is well-defined (using Sweedler's notation for the iterated 
coproduct of $O$, which is the counit if $r=0$), and is a Hopf algebra morphism. 
\end{lem}

\begin{proof}
Since $\mu(1)=0$, the map $f\mapsto \sum_{r\geq 0}[\mu(f^{(1)})|\ldots|\mu(f^{(r)})]$ 
takes $F_n%^{\mathrm{hpf}} 
O$ to $F_n\mathrm{Sh}(V)$ for any $n\geq0$, which implies that $\mu_*$ is well-defined.
For $f\in O$, and denoting by $\Delta_X$ the coproduct of~$X$ for $X$ any of the Hopf algebras
$O$ and $\mathrm{Sh}(V)$, one has 
\begin{align*}
& \mu_*^{\otimes2}\circ\Delta_O(f)=\mu_*(f^{(1)})\otimes \mu_*(f^{(2)})
=\sum_{r,s\geq0} [\mu(f^{(1)(1)})|\ldots|\mu(f^{(1)(r)})]\otimes [\mu(f^{(2)(1)})|\ldots|\mu(f^{(2)(s)})]
\\ & =\sum_{r,s\geq0} [\mu(f^{(1)})|\ldots|\mu(f^{(r)})]\otimes [\mu(f^{(r+1)})|\ldots|\mu(f^{(r+s)})]
=\Delta_{\mathrm{Sh}(V)}\circ\mu_*(f). 
\end{align*}
For $f,g\in O$, one has 
\begin{align*}
&\mu_*(fg)=\sum_{n\geq 0}[\mu(f^{(1)}g^{(1)})|\ldots|\mu(f^{(n)}g^{(n)})]
\\ & =\sum_{n\geq 0}[\epsilon(f^{(1)})\mu(g^{(1)})+\mu(f^{(1)})\epsilon(g^{(1)})|\ldots|
\epsilon(f^{(n)})\mu(g^{(n)})+\mu(f^{(n)})\epsilon(g^{(n)})]
\\ & =\sum_{n\geq 0}\sum_{n=k+l}\sum_{\substack{K,L||K|=l,|L|=l\\ K\sqcup L=[\![1,n]\!]}}
\prod_{a=1}^k \mu(f^{(i_a)})^{i_a}\prod_{b=1}^l\mu(g^{(j_b)})^{j_b}\prod_{a=1}^k\epsilon(g^{(i_a)})\prod_{b=1}^l\epsilon(f^{(j_b)})
\\ &  =\sum_{n\geq 0}\sum_{n=k+l}\sum_{\substack{K,L||K|=l,|L|=l\\ K\sqcup L=[\![1,n]\!]}}
[\mu(f^{(1)})|\ldots|\mu(f^{(k)})]^K
\cdot 
[\mu(g^{(1)})|\ldots|\mu(g^{(l)})]^L
\\ &  =\sum_{n\geq 0}\sum_{n=k+l}
[\mu(f^{(1)})|\ldots|\mu(f^{(k)})]\sha
[\mu(g^{(1)})|\ldots|\mu(g^{(l)})]
=\mu_*(f)\mu_*(g). 
\end{align*}
where the transport to $\mathrm{Sh}(V)$ of the product in the tensor algebra $T(V)$ is denoted by  
the two first product signs in the third line and by $\cdot$ in the fourth line; 
in the third line, we denote by $v\mapsto v^i$ the map $V\to \mathrm{Sh}(V)$, $v\mapsto 
[\underbrace{1\ldots 1}_{i-1}|v|\underbrace{1\ldots 1}_{n-i}]$, and by $(i_1,\ldots,i_k)$ and 
$(j_1,\ldots,j_l)$ the increasing sequences such that $K=\{i_1,\ldots,i_k\}$ and $L=\{j_1,\ldots,j_l\}$; 
in the fourth line, $x\mapsto x^K$ is the map $\mathrm{Sh}_k(V)\to\mathrm{Sh}(V)$, 
$[v_1|\ldots,|v_k]\mapsto \prod_{a=1}^k v_a^{i_a}$ and $y\mapsto y^L$ has the similar meaning. 

This proves that $\mu_*$ is compatible with the products and coproducts; one checks that it is compatible with the other 
aspects of the Hopf algebra structure (unit, counit, antipode). 
\end{proof}

%Let $\mathfrak G$ be a $\mathbb Z_{>0}$-graded Lie algebra, so $\mathfrak g=\oplus_{n\geq 1}\mathfrak G_n$, such that 
%$\mathfrak G$ is generated by $\mathfrak G_1$ and $\mathfrak G_1$ is finite dimensional. 
%Let $\hat{\mathfrak G}:=\hat\oplus_{n\geq 1}\mathfrak G_n$ be the degree 
%completion of $\mathfrak G$. For $V$ a $\mathbb Z_{>0}$-graded vector space and $W$ a vector space, set
%$\hat V\hat\otimes W:=\hat\oplus_{n\geq 0}V_n\otimes W$, and $W\hat\otimes\hat V:=\hat\oplus_{n\geq 0}W\otimes V_n$. 

%\begin{defn}
%The set $\mathrm{MC}(C,\hat{\mathfrak G})$ is the completed tensor product $\Omega(C)\hat\otimes\hat{\mathfrak G}$; 
%we call the elements of this set Maurer-Cartan elements for the pair $(C,\hat{\mathfrak G})$. 
%\end{defn}

%One then has $\mathrm{MC}(C)=\mathrm{MC}(C,\hat{\mathbb L}(\mathrm H_C^*))$.  

\begin{lem}\label{lem:14:0309}\label{lem:comp:Sh:heart:1628}
For $V$ a vector space, $\mathrm{Sh}(V)=F_\infty(\mathrm{Sh}(V))$, where $F_\bullet \mathrm{Sh}(V)$ 
is\footnote{This ``Hopf algebra'' filtration of $\mathrm{Sh}(V)$ can be shown to coincide with the 
``degree'' filtration, which a posteriori justifies denoting them in the same way.} 
as in Def. \ref{conilp:filtra}.   
\end{lem}

\begin{proof}
Since $\mathrm{Sh}(V)$ is a connected graded Hopf algebra, this follows from Prop. \ref{prop:LA:filt}(d).
\end{proof}

Let $J\in \mathrm{MC}(C)$ (see Def. \ref{def:MC:0812}). Let $\mu_J : \mathrm{Sh}(\mathrm H_C)\to\Omega(C)$
be the composition $\mathrm{Sh}(\mathrm H_C)\simeq\oplus_{n\geq 0}T(\mathrm H_C^*)[n]^*\to 
\oplus_{n\geq 0}\mathbb L(\mathrm H_C^*)[n]^*\to\Omega(C)$, where the second map is dual to the inclusion 
$\mathbb L(\mathrm H_C^*)[n]\subset T(\mathrm H_C^*)[n]$ and the last map is induced by $J$. 

\begin{lem}\label{lem:02:0208}
Let $J\in \mathrm{MC}(C)$.
%$J\in \mathrm{MC}(C,\hat{\mathfrak G})$. 
Then the linear map $\mu_J : \mathrm{Sh}(\mathrm H_C)\to\Omega(C)$
%$\mu_J : U(\mathfrak G)'\to\Omega(C)$ 
is a derivation with respect to the 
counit of $\mathrm{Sh}(\mathrm H_C)$. 
%$U(\mathfrak G)'$. The restriction of $\mu_J$ to $U(\mathfrak g)[n]^*\cdot U(\mathfrak g)[m]^*$ is zero whenever $n,m>0$ or $n=m=0$. 
\end{lem}

\begin{proof}
For $f,g \in \mathrm{Sh}(\mathrm H_C)$, %(U\mathfrak G)'$, 
one has $\mu_J(fg)=(fg \otimes id)(J)
=(f \otimes g \otimes id)((\Delta_{\hat T(\mathrm H_C^*)} \otimes id)(J))
=(f \otimes g \otimes id)(J^{13}+J^{23})
=\mu_J(f)\epsilon(g)+\epsilon(f)\mu_J(g)$, 
where the third equality follows from the primitiveness of the first component of $J$, and $x\mapsto x^{13},x^{23}$
are the maps $\hat T(\mathrm H_C^*)\hat\otimes\Omega(C)\to \hat T(\mathrm H_C^*)^{\hat\otimes2}\hat\otimes\Omega(C)$
induced by $t \otimes \omega\mapsto t \otimes 1 \otimes \omega,1 \otimes t \otimes \omega$.
\end{proof}

\begin{cor}
    If $J\in\mathrm{MC}(C)$, %$J\in\mathrm{MC}(C,\hat{\mathfrak G})$, 
    then the map 
\begin{equation}\label{DEF:J*}
    %\underline 
J_* : \mathrm{Sh}(\mathrm H_C)%U(\mathfrak G)'
\to \mathrm{Sh}(\Omega(C)),\quad \xi\mapsto \sum_{r\geq 0}[\mu_J(\xi^{(1)})|\cdots|\mu_J(\xi^{(r)})]
\end{equation}
 is well-defined, and is an algebra morphism. 
\end{cor}

\begin{proof}
    This follows from Lems. \ref{lem:12:0309}, \ref{lem:14:0309} with $V=\mathrm H_C$, and \ref{lem:02:0208}. 
\end{proof}

\begin{lemdef}\label{lem:def:sigma:J:1012}
For $J\in\mathrm{MC}_{nd}(C)$, there is a unique $\sigma\in\Sigma_C$ such that 
the degree $1$ component of $J$ (for the degree of $\hat{\mathbb L}(\mathrm H_C)^*$) is equal to $J_\sigma$. 
This defines a map $\mathrm{MC}_{nd}(C)\to\Sigma_C$, $J\mapsto\sigma_J$. It satisfies the identity $\sigma_{J_\sigma}=\sigma$.  
\end{lemdef}

\begin{proof} Obvious.\end{proof}  

\begin{lem}\label{GR:ULJ}
(a) For $\sigma\in\Sigma_C$, the morphism $(J_\sigma)_* : \mathrm{Sh}(\mathrm{H}_C)\to\mathrm{Sh}(\Omega(C))$ 
(see Def. \ref{def:1:2:0311}) is graded, and coincides with the morphism $\sigma_*$ functorially induced by the 
linear map $\sigma$.  

(b) For $J\in\mathrm{MC}_{nd}(C)$, the morphism $J_* : \mathrm{Sh}(\mathrm{H}_C)\to\mathrm{Sh}(\Omega(C))$ is compatible with the 
filtrations $F_\bullet$ of both sides, and the associated graded morphism coincides with the 
morphism attached to $\sigma_J$ so $\mathrm{gr}(J_*)=(\sigma_J)_*$. 
\end{lem}

\begin{proof} (a) is obvious. For (b), 
let $n\geq 1$, $h_1,\ldots,h_n \in \mathrm H_C$ and $\xi:=[h_1|\ldots|h_n]\in
\mathrm{Sh}(\mathrm H_C)$. Then for $r \geq n$, 
$$
\xi^{(1)} \otimes\ldots\otimes \xi^{(r)} \in \delta_{r,n}h_1 \otimes\ldots \otimes h_n+
\oplus_{i=1}^r \mathrm{Sh}(\mathrm H_C)^{\otimes i-1} \otimes 1 \otimes 
\mathrm{Sh}(\mathrm H_C)^{\otimes r-i},
$$
which since $\mu_J(1)=0$ (see Lem. \ref{lem:02:0208}) and $\mu_J(h)=\sigma_J(h)$ for $h \in \mathrm H_C$ 
implies $[\mu_J(\xi^(1))|\ldots|\mu_J(\xi^{(r)}]=\delta_{r,n}[\sigma_J(h_1)|\ldots|\sigma_J(h_n)]$. The statement follows by combining this with \eqref{DEF:J*}.  
\end{proof}  

\subsection{The algebras $\mathcal H_C(J)$, $\mathcal H_C(\sigma)$}\label{sect:91:0711}

Let $J\in\mathrm{MC}_{nd}(C)$, $d:=\mathrm{dim}(\mathrm H_C)$ and $(h_i)_{i\in[\![1,d]\!]}$ 
be\footnote{We set $[\![1,d]\!]:=\{1,2,\ldots,d\}$.} a basis of $\mathrm H_C$. Then  
$$
(I_{x_0}\circ J_*([h_{i_1}|\ldots|h_{i_k}]))_{\substack{k\geq 0,i_1,\ldots,i_k \in [\![1,d]\!]}}. 
$$
is a family of elements of $\mathcal H_C(J)$. When $J=J_\sigma$ with $\sigma\in\Sigma_C$, this family is equal to
$$
(I_{x_0}([\sigma(h_{i_1})|\ldots|\sigma(h_{i_k})]))_{\substack{k\geq 0,i_1,\ldots,i_k \in [\![1,d]\!]}}. 
$$
The following statement is a generalization of Cor. 5.6 in~\cite{Br:these}. 

\begin{prop}\label{prop:constr:L:0304}
For $(J,x_0)\in\mathrm{MC}_{nd}(C)\times\tilde C$, the element 
$$
\mathbf L_{J,x_0}:=\sum_{k\geq 0,i_1,\ldots,i_k\in[\![1,d]\!]}
I_{x_0} \circ J_*([h_{i_1}|\ldots|h_{i_k}])
\otimes (h^{i_1}\otimes\cdots\otimes h^{i_k})\in \mathcal O_{hol}(\tilde C)\hat\otimes\hat T((\mathrm H_C)^*), 
$$
where $(h^i)_{i\in[\![1,d]\!]}$ is the basis of $(\mathrm H_C)^*$ dual to $(h_i)_{i\in[1,d]}$,
satisfies the equality 
\begin{equation}\label{diff:eq:J}
(d\otimes \mathrm{id})(\mathbf L_{J,x_0})=\mathbf L_{J,x_0}\cdot J%\Big(\sum_{i\in[\![1,d]\!]}p^*\sigma(h_i)\otimes h^i\Big)
\end{equation}
(equality in $\Omega_{hol}(\tilde C)\hat\otimes\hat T((\mathrm H_C)^*)$) as well as $\mathbf L_{J,x_0}(x_0)=1$; it is the only element of the algebra
$\mathcal O_{hol}(\tilde C)\hat\otimes\hat T((\mathrm H_C)^*)$ satisfying these conditions. 
\end{prop}
 
\begin{proof} 
It follows from the definition of $\mu_J$ that 
$J=\sum_{r\geq 0}\sum_{(i_1,\ldots,i_r)\in[\![1,d]\!]^r}\mu_J([h_{i_1}|\ldots|h_{i_r}]) 
\otimes h^{i_1} \otimes\cdots \otimes h^{i_r}$ 
(equality in $\Omega(C) \hat\otimes \hat T(\mathrm H_C^*)$, and therefore, expanding $J$ as a sum  
$\sum_{\alpha} \omega_\alpha \otimes x^\alpha$  (convergent for the topology of $\Omega(C) \hat\otimes \hat T(\mathrm{H}_C^*)$), 
that $\mathbf L_{J,x_0}=\sum_{r\geq 0} \sum_{\alpha_1,\ldots,\alpha_r}
I_{x_0}([\omega_{\alpha_1}|\ldots|\omega_{\alpha_r}]) \otimes x^{\alpha_1}\cdots x^{\alpha_r}$. 
Then 
$(d\otimes \mathrm{id})\mathbf L_{J,x_0}=\sum_{r\geq 1} \sum_{\alpha_1,\ldots,\alpha_r}
I_{x_0}([\omega_{\alpha_1}|\ldots|\omega_{\alpha_{r-1}}])\omega_{\alpha_r} \otimes x^{\alpha_1}\cdots x^{\alpha_r}
=\mathbf L_{J,x_0}\cdot \sum_\alpha \omega_\alpha \otimes x^\alpha=\mathbf L_{J,x_0}\cdot J$.  
%\eqref{diff:eq:J} follows from $d(I_{x_0}([t_1|\ldots|t_n]))=I_{x_0}([t_1|\ldots|t_{n-1}])\cdot p^*t_n$ for $n\geq 1$
%and $t_1,\ldots,t_n\in \Omega(C)$. 

The second statement follows from the fact that $t\mapsto I_{x_0}(t)(x_0)$ is the augmentation map 
$\mathrm{Sh}(\Omega(C))\to\mathbb C$. The uniqueness follows from the fact the ratio of two solutions must be equal to 1 at $x_0$ and  
be killed by $d$, hence be 1. 
\end{proof}

Eq. \eqref{diff:eq:J} is a generalization of eq. (5.1) in \cite{Br:these}, and $\mathbf L_{J,x_0}$ is a generalization of the 
solution $L(z)$ of this equation constructed in {\it loc. cit.,} Prop. 5.1.

\begin{prop}\label{prop:2:15:0812}
For any $(J,x_0)\in\mathrm{MC}_{nd}(C)\times\tilde C$, the algebra morphism $I_{x_0} \circ J_* : \mathrm{Sh}(H_C)\to
\mathcal O_{hol}(\tilde C)$ is such that for any $a \in \mathrm{Sh}(\mathrm H_C)$,   
$$
I_{x_0} \circ J_*(a)=\langle\mathrm{id}\otimes a,\mathbf L_{J,x_0}\rangle. 
$$
Therefore
$I_{x_0} \circ J_*=\tilde f_{J,x_0}$ (see \eqref{def:tildef:0304}); in particular
$\tilde f_{J,x_0} : \mathrm{Sh}(\mathrm H_C)\to\mathcal O_{hol}(\tilde C)$ 
is an algebra morphism. 
\end{prop}

\begin{proof}
Follows from the fact that $(k,i_1,\ldots,i_k)\mapsto [h_{i_1}|\ldots|h_{i_k}]$ and 
$(k,i_1,\ldots,i_k)\mapsto h^{i_1}\otimes\cdots\otimes h^{i_k}$ are dual bases of 
$\mathrm{Sh}(\mathrm H_C)$ and $T(\mathrm H_C^*)$. 
\end{proof}

\begin{lem}\label{lem:bibli:3005}
Let $J\in\mathrm{MC}_{nd}(C)$. The image 
$\tilde f_{J,x_0}(\mathrm{Sh}(\mathrm H_C))$ is independent of $x_0\in\tilde C$. 
We denote by $\mathcal H_C(J)\subset \mathcal O_{hol}(\tilde C)$ this subalgebra, and set 
$\mathcal H_C(\sigma):=\mathcal H_C(J_\sigma)$ for any $\sigma\in\Sigma_C$.   
\end{lem}

\begin{proof} Let $x_0,x_1$ in $\tilde C$. Let $\mu_{x_0}^{x_1} : \mathrm{Sh}(\mathrm H_C)\to\mathbb C$ be the map 
$t\mapsto I_{x_0}(J_*(t))(x_1)$. This is an algebra morphism, as it is the composition of the algebra morphism
$\tilde f_{J,x_0}$ and of the morphism $\mathcal O_{mod}(\tilde C)
\stackrel{\mathrm{ev}_{x_1}}{\to}\mathbb C$ of evaluation at $x_1$. Set 
$a_{x_0}^{x_1}:=(\mu_{x_0}^{x_1}\otimes\mathrm{id}) \circ \Delta_{\mathrm{Sh}(\mathrm H_C)}$; 
this is an algebra endomorphism of $\mathrm{Sh}(\mathrm H_C)$ as it is a composition of algebra morphisms. It is also a vector 
space automorphism of $\mathrm{Sh}(\mathrm H_C)$ since it is compatible with the filtration and the associated graded
endomorphism of $\mathrm{gr}(\mathrm{Sh}(\mathrm H_C))$ is the identity. Therefore $a_{x_0}^{x_1}$ is an algebra automorphism of 
$\mathrm{Sh}(\mathrm H_C)$. 

Eq. \eqref{EQN:1408} implies the 
identity $I_{x_1}(J_*(t))=I_{x_1}(J_*(t^{(1)}))(x_0)I_{x_0}(J_*(t^{(2)}))$ 
(equality in $\mathcal O_{hol}(\tilde C)$) for any $t\in \mathrm{Sh}(\mathrm H_C)$, therefore the equality 
$I_{x_1}\circ J_*=I_{x_0}\circ J_*\circ a_{x_0}^{x_1}$ (equality of maps 
$\mathrm{Sh}(\mathrm H_C)\to \mathcal O_{hol}(\tilde C)$), i.e. $\tilde f_{J,x_1}=\tilde f_{J,x_0}\circ a_{x_0}^{x_1}$,
which together with the bijectivity of $a_{x_0}^{x_1}$
implies the statement.  
\end{proof}

\subsection{The algebra $\mathcal H_C(\sigma)$ in the genus $0$ case}\label{sect:92:0711}

Let $S\subset\mathbb P^1_{\mathbb C}$ be a finite set containing $0$ and~$\infty$, let $C:=\mathbb P^1_{\mathbb C}\smallsetminus  S$. 
Recall the linear isomorphism $\mathrm H_C\simeq\mathbb C \hat S_\infty$, where 
$S_\infty=S\smallsetminus \{\infty\}$ and let $\sigma_0\in\Sigma_C$ be such that for any 
$s\in S_\infty$, $\sigma_0(\hat s)=\mathrm{dlog}(z-s)$. By Lem.  \ref{lem:bibli:3005}, one attaches to it the subalgebra 
$\mathcal H_{\mathbb P^1_{\mathbb C}\smallsetminus S}(\sigma_0)\subset \mathcal O_{hol}(\tilde C)$. 

Set $\mathrm H_C^{(0)}:=\oplus_{s\in S_\infty\smallsetminus \{0\}}\mathbb C\hat s\subset \mathrm H_C$ and 
$\mathrm{Sh}^*(\mathrm H_C):=\mathbb C\oplus[\mathrm H_C^{(0)}|\mathrm{Sh}(\mathrm H_C)]$ (recall that 
$[-|-]$ denotes the concatenation in $\mathrm{Sh}(\mathrm H_C)$). Then $\mathrm{Sh}^*(\mathrm H_C)$ is a subalgebra of $\mathrm{Sh}(\mathrm H_C)$.

\begin{lem}[see \cite{Pa:thesis}, \S3.3] \label{lem:2124:3005} 
(a) Denote by $\mathrm{Sh}^*(\mathrm H_C)[X]$ the polynomial algebra in one variable over the algebra $\mathrm{Sh}^*(\mathrm H_C)$. 
The combination of the canonical injection $\mathrm{Sh}^*(\mathrm H_C)\hookrightarrow\mathrm{Sh}(\mathrm H_C)$ and of the assignment 
$X\mapsto[\hat 0]$ gives rise to an algebra isomorphism $\mathrm{Sh}^*(\mathrm H_C)[X]\to \mathrm{Sh}(\mathrm H_C)$. 

(b) Let $\delta>0$ be such that $]0,\delta[ \subset C$. Fix a connected component $K$ of $p^{-1}(]0,\delta[)$.  The restriction of $p$ is 
a bijection $K\to]0,\delta[$; denote by $q_K\! :\, ]0,\delta[\to K$ the inverse bijection. For 
$t\in \mathrm{Sh}^*(\mathrm H_C)$ and $z\in \tilde C$, the limit 
$\lim_{\epsilon\to 0}I_{q_K(\epsilon)}((\sigma_0)_*(t))(z)$ exists; the function  
$$
\tilde f^*_{\sigma_0,0}(t):=(z\mapsto \lim_{\epsilon\to 0}I_{q_K(\epsilon)}((\sigma_0)_*(t))(z))
$$
belongs to $\mathcal O_{hol}(\tilde C)$; the map 
$\tilde f^*_{\sigma_0,0} : \mathrm{Sh}^*(\mathrm H_C)\to\mathcal O_{hol}(\tilde C)$, $t\mapsto \tilde f^*_{\sigma_0,0}(t)$ is an algebra morphism. 
\end{lem}

\begin{defn}[see \cite{Pa:thesis}, \S3.3] \label{def:2011} 
We denote by $\tilde f_{\sigma_0,0} : \mathrm{Sh}(\mathrm H_C)\to\mathcal O_{hol}(\tilde C)$ the composition with the inverse of the isomorphism of Lem. \ref{lem:2124:3005}(a)
of the morphism $\mathrm{Sh}^*(\mathrm H_C)[X]\to \mathcal O_{hol}(\tilde C)$ extending $\tilde f^*_{\sigma_0,0}$ by $X\mapsto \mathrm{log}$. 
\end{defn}

The map $\tilde f_{\sigma_0,0}$ is an algebra morphism. For $w\in \hat{S}_\infty^*\subset \mathrm{Sh}(\mathrm H_C)$, the element $\tilde f_{\sigma_0,0}(w)\in\mathcal O_{hol}(\tilde C)$ is denoted 
$L_w$ in \cite{Pa:thesis}, \S3.3, and called the hyperlogarithm function associated to $w$.

\begin{prop}\label{prop:mor:0:moreover}
One has $\tilde f_{\sigma_0,0}(\mathrm{Sh}(\mathrm H_C))=\mathbb C[L_w|w \in \hat S^*_\infty]=
\mathcal H_{\mathbb P^1_{\mathbb C}\smallsetminus S}(\sigma_0)$. 
\end{prop}

\begin{proof} Fix $z_0\in\tilde C$. 
The subalgebra $\mathrm{Sh}^*(\mathrm H_C)$ of $\mathrm{Sh}(\mathrm H_C)$ is a right coideal for the coalgebra structure, so that 
$\Delta_{\mathrm{Sh}(\mathrm H_C)}$ induces an algebra morphism $\Delta_{\mathrm{Sh}^*(\mathrm H_C)} : \mathrm{Sh}^*(\mathrm H_C)
\to\mathrm{Sh}^*(\mathrm H_C)\otimes \mathrm{Sh}(\mathrm H_C)$. Composing with the tensor product of the composition 
$\mathrm{Sh}^*(\mathrm H_C)\stackrel{\tilde f^*_{\sigma_0,0}}{\to}\mathcal O_{hol}(\tilde C)
\stackrel{\mathrm{ev}_{z_0}}{\to}\mathbb C$ with the identity, one gets an algebra morphism 
$((\mathrm{ev}_{z_0}\circ \tilde f^*_{\sigma_0,0})\otimes\mathrm{id})\circ \Delta_{\mathrm{Sh}^*(\mathrm H_C)} : \mathrm{Sh}^*(\mathrm H_C)
\to\mathrm{Sh}(\mathrm H_C)$. By Lem. \ref{lem:2124:3005}, there exists a unique algebra endomorphism $a_0^{z_0}$ of $\mathrm{Sh}(\mathrm H_C)$, 
whose restriction to $\mathrm{Sh}^*(\mathrm H_C)$ coincides with $((\mathrm{ev}_{z_0}\circ \tilde f^*_{\sigma_0,0})\otimes\mathrm{id})\circ 
\Delta_{\mathrm{Sh}^*(\mathrm H_C)}$ and such that $[\hat 0]\mapsto[\hat 0]+\mathrm{log}(z_0)$. One checks that $a_0^{z_0}$ is 
compatible with the filtration of 
$\mathrm{Sh}(\mathrm H_C)$ and that the 
associated graded endomorphism of $\mathrm{gr}(\mathrm{Sh}(\mathrm H_C))$ is the identity, so that $a_0^{z_0}$ is an algebra automorphism of 
$\mathrm{Sh}(\mathrm H_C)$. 

Specializing \eqref{EQN:1408} for $t\in (\sigma_0)_*(\mathrm{Sh}^*(\mathrm H_C))$ and taking its limit for $z\to 0$, 
one obtains 
the equality $\tilde f^*_{\sigma_0,0}(t)(z'')=\tilde f^*_{\sigma_0,0}(t^{(1)})(z')I_{z'}(\sigma_0(t^{(2)}))(z'')$ for $t\in \mathrm{Sh}^*(\mathrm H_C)$ and any 
$z',z''\in\tilde C$. Setting in this identity $z':=z_0$ and viewing both sides as a function of $z''$, one
obtains the identity 
$$
\tilde f^*_{\sigma_0,0}(t)=\tilde f^*_{\sigma_0,0}(t^{(1)})(z_0)I_{z_0}((\sigma_0)_*(t^{(2)}))
$$
(in $\mathcal O_{hol}(\tilde C)$) which is equivalent to the statement that the restrictions to $\mathrm{Sh}^*(\mathrm H_C)$
of $\tilde f_{\sigma_0,0}$ and $I_{z_0}\circ(\sigma_0)_* \circ a_0^{z_0}$, which are algebra morphisms 
$\mathrm{Sh}(\mathrm H_C)\to\mathcal O_{hol}(\tilde C)$ are equal.
The images by these morphisms of $[\hat 0]\in \mathrm{Sh}(\mathrm H_C)$ are also equal, since $\tilde f_{\sigma_0,0}([\hat 0])
=(z\mapsto \mathrm{log}(z))$
while $a_0^{z_0}([\hat 0])=[\hat 0]+\mathrm{log}(z_0)$ and $I_{z_0}\circ(\sigma_0)_*([\hat 0])=(z\mapsto \mathrm{log}(z)-\mathrm{log}(z_0))$. 
As $\mathrm{Sh}(\mathrm H_C)$ is generated by $\mathrm{Sh}^*(\mathrm H_C)$ and $[\hat 0]$, the algebra morphism status of both
$\tilde f_{\sigma_0,0}$ and $I_{z_0}\circ(\sigma_0)_* \circ a_0^{z_0}$ implies that
\begin{equation}\label{equality:endos:I}
\tilde f_{\sigma_0,0}=I_{z_0}\circ(\sigma_0)_* \circ a_0^{z_0}=\tilde f_{\sigma_0,0}\circ a_0^{z_0}    
\end{equation}
(equality of algebra morphisms $\mathrm{Sh}(\mathrm H_C)\to\mathcal O_{hol}(\tilde C)$). 

One then has $\tilde f_{\sigma_0,0}(\mathrm{Sh}(\mathrm H_C))
=\tilde f_{\sigma_0,z_0} \circ a_0^{z_0}(\mathrm{Sh}(\mathrm H_C))
=\tilde f_{\sigma_0,z_0}(\mathrm{Sh}(\mathrm H_C))=\mathcal H_{\mathbb P^1_{\mathbb C}\smallsetminus S}(\sigma_0)$, where the first 
equality follows from \eqref{equality:endos:I}, the second follows from the automorphism status of $a_0^{z_0}$, and the last from 
Lem. \ref{lem:bibli:3005}. 

One also has $\mathbb C[L_w|w \in \hat S^*_\infty]=\tilde f_{\sigma_0,0}(\mathbb C[w|w \in \hat S_\infty^*])
=\tilde f_{\sigma_0,0}(\mathrm{Sh}(\mathrm H_C))$. 
\end{proof}

\begin{rem}\label{rem:2:20}
The element $J_{\sigma_0}=\sum_{s\in S_\infty}d\log(z-s)\otimes h^s$
%This connection 
is related to the Knizhnik-Zamolodchikov (KZ) connection as follows. 
Recall that this connection, denoted $\nabla_{\mathrm{KZ}}=d+A_{\mathrm{KZ}}$, 
is a $\mathrm{exp}(\mathfrak t_n)$-connection over the configuration space $C_n(\mathbb C)$ of $n$ points in $\mathbb C$, 
where $\mathfrak t_n$ is the topological Lie algebra with generators $t_{ij}$, $i\neq j\in [\![1,n]\!]$ and relations $t_{ji}=t_{ij}$ for 
$|\{i,j\}|=2$, $[t_{ik}+t_{jk},t_{ij}]=0$ for $|\{i,j,k\}|=3$ and $[t_{ij},t_{kl}]=0$ for $|\{i,j,k,l\}|=4$, 
and that $A_{\mathrm{KZ}}=\sum_{i \neq j}d\mathrm{log}(z_i-z_j)\otimes t_{ij}$. 
Let $(s_1,\ldots,s_n)\in C_n(\mathbb C)$ and $ S_\infty:=\{s_1,\ldots,s_n\}$; then $\mathbb C\smallsetminus S_\infty$ is the preimage of 
$(s_1,\ldots,s_n)$ by the projection $C_{n+1}(\mathbb C)\to C_n(\mathbb C)$. Let $\mathrm{inj}_{ S_\infty} : \mathbb C\smallsetminus  S_\infty 
\hookrightarrow C_{n+1}(\mathbb C)$ be the canonical injection, and let $\iota : \mathbb L((\mathrm H_C)^*)
\to \mathfrak t_{n+1}$ be the Lie algebra morphism induced by $h^i\mapsto t_{i,n+1}$ for $i=1,\ldots,n$. Then 
$$
\iota_*(J_{\sigma_0})=(\mathrm{inj}_{ S_\infty})^*(A_{\mathrm{KZ}}). 
$$    
\end{rem}

\section{Moderate growth functions}\label{sect:2:1309}

We introduce in §\ref{subset:2:1:3108} the algebra $\mathcal O_{mod}(\tilde C)$ of moderate growth functions on $\tilde C$, and in 
§\ref{subsect:Omega:mod:2811} the space of moderate growth differentials $\Omega_{mod}(\tilde C)$, which is a module over it. 
In \S\ref{sect:iis:mod:0411}, we study the relations of the iterated integral morphism $I_{x_0}$ with moderate growth functions.

\subsection{Moderate growth functions on $\tilde C$}\label{subset:2:1:3108}

\subsubsection{Moderate growth functions on a disc}\label{mgfoad:0709}

Set $D:=\{z\in\mathbb C\,\,|\,\,|z|<1\}$, $D^\times:=D\smallsetminus\{0\}$ and $\tilde D^\times:=\{u\in\mathbb C\,|\,\Im(u)>0\}$. Let 
$e : \tilde D^\times\to D^\times$ be the map defined by $e(u):=\mathrm{exp}(2\pi \mathrm i u)$ (we set $\mathrm i:=\sqrt{-1}$), and let $\theta$ 
be the automorphism of $\tilde D^\times$ given by $\theta(u)=u+1$. For $f : M\to N$ a morphism of complex manifolds, we 
denote by $f^* : \mathcal O_{hol}(N)\to \mathcal O_{hol}(M)$ the induced morphism between algebras of holomorphic functions. 
For $F=D,D^\times,\tilde D^\times$, we denote by $\mathcal O_{hol}(F)$ the algebra of holomorphic functions on $F$.  

\begin{defn}[see \cite{Ph}, Ch. VIII, Def. 1.2] \label{1529:0908} 
%(a) For $n\geq0$, 
Define $\mathcal O_{mod}%^{(n)}
(\tilde D^\times)\subset \mathcal O_{hol}(\tilde D^\times)$ as the set of functions 
$f$ such that there exists an integer $n>0$ and a function $\{(a,b)\in\mathbb R^2\,|\,a\leq b\}\ni (a,b)\mapsto C_{a,b} 
\in \mathbb R_+$ such that 
$|f(x+\mathrm iy)|\leq C_{a,b} e^{2\pi ny}$ for $(x,y)\in [a,b]\times\mathbb R_+$. 
\end{defn}

\begin{rem}
If $f$ satisfies these conditions for the pair $(n,(a,b)\mapsto C_{a,b})$, then it satisfies it also for 
$(n+1,(a,b)\mapsto C_{a,b})$. 
\end{rem}

\begin{lem}\label{lem:debut:0924}
$\mathcal O_{mod}%^{(n)}
(\tilde D^\times)$ is a subalgebra of %vector subspace of 
$\mathcal O_{hol}(\tilde D^\times)$, equipped with an action of $\mathbb Z$ where $1$ acts by $\theta^*$. 
\end{lem}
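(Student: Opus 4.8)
The plan is to verify the two assertions of Lemma~\ref{lem:debut:0924} in turn: first that $\mathcal O_{mod}(\tilde D^\times)$ is a subalgebra of $\mathcal O_{hol}(\tilde D^\times)$, and then that it is stable under $\theta^*$ so as to carry the stated $\mathbb Z$-action. For the subalgebra claim, I would check closure under the algebra operations directly from Definition~\ref{1529:0908}. Suppose $f,g\in\mathcal O_{mod}(\tilde D^\times)$ satisfy the growth bounds with data $(n,(a,b)\mapsto C_{a,b})$ and $(m,(a,b)\mapsto C'_{a,b})$ respectively. For the sum $f+g$, the triangle inequality gives $|(f+g)(x+\mathrm iy)|\le C_{a,b}e^{2\pi ny}+C'_{a,b}e^{2\pi my}$; using the Remark to raise both exponents to $N:=\max(n,m)$ (since $e^{2\pi ny}\le e^{2\pi Ny}$ for $y\ge 0$), one bounds this by $(C_{a,b}+C'_{a,b})e^{2\pi Ny}$, so $f+g$ lies in $\mathcal O_{mod}(\tilde D^\times)$ with integer $N$ and constant $C_{a,b}+C'_{a,b}$. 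For the product $fg$, the estimate $|(fg)(x+\mathrm iy)|\le C_{a,b}C'_{a,b}e^{2\pi(n+m)y}$ exhibits the required bound with integer $n+m$ and constant $C_{a,b}C'_{a,b}$. Closure under scalar multiplication is immediate by rescaling the constants, and the constant function $1$ trivially satisfies the bound (take $n=1$, $C_{a,b}=1$), so $\mathcal O_{mod}(\tilde D^\times)$ contains the unit; combined with the fact that it is by definition a subset of the algebra $\mathcal O_{hol}(\tilde D^\times)$, this establishes it as a unital subalgebra.

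For the $\mathbb Z$-action, the point is that $\theta^*$ preserves the subspace $\mathcal O_{mod}(\tilde D^\times)$. Since $\theta(u)=u+1$ and $1\in\mathbb R$, we have $(\theta^* f)(x+\mathrm iy)=f((x+1)+\mathrm iy)$, which shifts the real coordinate by $1$ while leaving the imaginary coordinate $y$ unchanged. Hence if $f$ obeys the bound $|f(x+\mathrm iy)|\le C_{a,b}e^{2\pi ny}$ on $[a,b]\times\mathbb R_+$, then $\theta^* f$ obeys the analogous bound on $[a-1,b-1]\times\mathbb R_+$ with the same integer $n$ and the constant $C_{(a-1)+1,(b-1)+1}=C_{a,b}$; more precisely, defining the reindexed constant function $(a,b)\mapsto C'_{a,b}:=C_{a+1,b+1}$ shows $\theta^* f\in\mathcal O_{mod}(\tilde D^\times)$. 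Thus $\theta^*$ restricts to an algebra automorphism of $\mathcal O_{mod}(\tilde D^\times)$, and since $\theta$ generates the deck group $\mathbb Z$ of the cover $e:\tilde D^\times\to D^\times$, the assignment $n\mapsto (\theta^*)^n$ defines the desired $\mathbb Z$-action with $1$ acting by $\theta^*$.

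I do not expect any genuine obstacle here, as the statement is a routine bookkeeping verification; the only point requiring the smallest care is the treatment of the sum, where one must invoke the Remark to align the exponential rates before adding, rather than naively adding bounds with different exponents. One should also note that $\theta^*$ is invertible with inverse $(\theta^{-1})^*$ (the shift by $-1$), which by the same argument preserves the moderate-growth condition, so that $\theta^*$ is indeed an automorphism and not merely an endomorphism; this is what makes the $\mathbb Z$-action (rather than merely an $\mathbb N$-action) well defined.
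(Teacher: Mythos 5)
Your proof is correct and follows essentially the same route as the paper's: closure under sum via $\max(n,m)$ and the sum of constants, closure under product via $n+m$ and the product of constants, and stability under $\theta^*$ via the reindexed constants $(a,b)\mapsto C_{a+1,b+1}$. The extra remarks on aligning exponential rates before adding and on the invertibility of $\theta^*$ are sound but not needed beyond what the paper records.
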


\begin{proof}
The constant function $c$ satisfies the conditions from Def. \ref{1529:0908} with $n=0$ and $(a,b)\mapsto C_{a,b}=|c|$; 
if the function $f$ (resp. $g$) satisfies them with $(n,(a,b)\mapsto C_{a,b})$ (resp. $(m,(a,b)\mapsto D_{a,b})$), then the 
function $f+g$ satisfies them with $(\mathrm{max}(n,m),(a,b)\mapsto C_{a,b}+D_{a,b})$, the function $fg$ satisfies them with 
$(n+m,(a,b)\mapsto C_{a,b}D_{a,b})$, and the function $\theta^*f$ satisfies them with $(n,(a,b)\mapsto C_{a+1,b+1})$.
\end{proof}

\begin{defn}
Set $\mathcal O%_{hol}%^{(n)}
(D^\times):=\{f\in\mathcal O_{hol}(D^\times)\,|\,\exists\, n\geq 0, z^nf\in \mathcal O_{hol}(D)\}$.
\end{defn}
Then $\mathcal O(D^\times)$ is the algebra of meromorphic functions on $D$ with only possible poles at $0$. 

\begin{lem}\label{lem:1:5:1304:0208}
$\mathcal O_{mod}(\tilde D^\times)^{\mathbb Z}=\mathcal O(D^\times)$. 
\end{lem}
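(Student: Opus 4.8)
The plan is to use the covering $e : \tilde D^\times \to D^\times$ to identify $\theta^*$-invariant functions upstairs with functions on $D^\times$, and then to translate the moderate growth bound of Def. \ref{1529:0908} into the pole condition defining $\mathcal O(D^\times)$.

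First I would record that $e$ is the universal covering of $D^\times$ with deck group the infinite cyclic group generated by $\theta$ (indeed $e\circ\theta = e$ since $\exp(2\pi\mathrm i(u+1)) = \exp(2\pi\mathrm iu)$). Consequently pullback along $e$ gives an isomorphism from $\mathcal O_{hol}(D^\times)$ onto the subalgebra of $\theta^*$-invariant elements of $\mathcal O_{hol}(\tilde D^\times)$: a function $f$ with $\theta^* f = f$, i.e. $f(u+1) = f(u)$, descends to a function $g$ on $D^\times = \tilde D^\times/\mathbb Z$ which is holomorphic on the interior and continuous up to the boundary circle $e(\mathbb R)$ because $e$ is a local biholomorphism. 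By Lem. \ref{lem:debut:0924} the $\mathbb Z$-action on $\mathcal O_{mod}(\tilde D^\times)$ is the restriction of this $\theta^*$-action, so $\mathcal O_{mod}(\tilde D^\times)^{\mathbb Z}$ consists precisely of the moderate growth $f = e^*g$. It then remains to check that $e^*g \in \mathcal O_{mod}(\tilde D^\times)$ is equivalent to $g \in \mathcal O(D^\times)$.

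Next I would translate the growth condition. Writing $u = x+\mathrm iy$ and $z = e(u)$ one has $|z| = e^{-2\pi y}$, hence $e^{2\pi ny} = |z|^{-n}$. Since $f = e^*g$ is $1$-periodic, the variable $x$ in Def. \ref{1529:0908} may be restricted to $[0,1]$, so the moderate growth condition is equivalent to the existence of $n>0$ and $C>0$ with $|f(x+\mathrm iy)| \leq C e^{2\pi ny}$ for all $x\in\mathbb R$, $y\geq 0$; in terms of $g$ this reads $|z^n g(z)| \leq C$ on $D^\times$. If this holds, then $z^n g$ is holomorphic on the open punctured disc and bounded near $0$, so by Riemann's removable singularity theorem it extends holomorphically across $0$; being continuous up to $|z| = 1$ it lies in $\mathcal O_{hol}(D)$, whence $g\in\mathcal O(D^\times)$. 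Conversely, if $g\in\mathcal O(D^\times)$ then $z^ng\in\mathcal O_{hol}(D)$ for some $n\geq 0$ is bounded on the compact disc $D$, say by $M$, so $|f(x+\mathrm iy)| = |g(z)| \leq M|z|^{-n} = M e^{2\pi ny}$ and $f$ has moderate growth (replacing $n$ by $\max(n,1)$ to meet $n>0$). This yields the claimed equality.

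The only non-formal ingredient is Riemann's removable singularity theorem, which converts the boundedness of $z^ng$ near the puncture into a holomorphic extension; the remaining care lies in matching the boundary-continuity conventions of $\mathcal O_{hol}(\tilde D^\times)$ and $\mathcal O_{hol}(D^\times)$ under $e$, and in exploiting periodicity to make the exponential bound uniform in $x$.
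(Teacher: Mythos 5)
Your proposal is correct and follows essentially the same route as the paper: use the $\mathbb Z$-invariance to descend $f$ to a function on $D^\times$, observe via $|z|=e^{-2\pi y}$ that the moderate growth bound translates into boundedness of $z^n f$ near the puncture, and conclude with the removable singularity theorem. The paper treats the reverse inclusion as evident and only needs the growth bound on $[0,1]\times\mathbb R_+$ (periodicity making this sufficient), but these are cosmetic differences from your write-up.
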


\begin{proof} The inclusion $\mathcal O_{mod}%^{(n)}
(\tilde D^\times)^{\mathbb Z}\supset\mathcal O%_{hol}^{(n)}
(D^\times)$ is evident, let us show the opposite inclusion. Let $f\in \mathcal O_{mod}%^{(n)}
(\tilde D^\times)^{\mathbb Z}$. Then there exists $n>0$ and $C_{0,1}\in\mathbb R_+$, such that 
$|f(x+\mathrm iy)|\leq C_{0,1}e^{2\pi n y}$ for any $(x,y) \in [0,1]\times\mathbb R_+$. Since $f$ is $\mathbb Z$-invariant, 
$f\in\mathcal O_{hol}(D^\times)$, therefore $z^nf\in\mathcal O_{hol}(D^\times)$. For $z\in D^\times$, there exists a unique 
$(x,y) \in [0,1[\times\mathbb R_+$ such that $z=e(x+\mathrm iy)$. Then 
$|z^nf(z)|\leq C_{0,1} e^{-2 \pi n y}e^{2 \pi n y}=C_{0,1}$. By the removable singularity theorem (see \cite{L}, Thm. 3.1), $z^nf$ 
is the restriction to $D^\times$ of a function of $\mathcal O_{hol}(D)$, therefore $f\in\mathcal O%_{hol}^{(n)}
(D^\times)$.    
\end{proof}

\subsubsection{The algebra $\mathcal O_{mod}(\tilde C)$ of moderate growth functions}\label{subsect:mod:growth}

\begin{defn}
(a) $(\overline C,S)$ is the pair of a nonsingular projective algebraic curve $\overline C$ and a finite set $S$ of complex points of $C$ 
such that $C=\overline C\smallsetminus S$.

(b) For $s\in S$,  $\varphi_s : D\to \overline C$ is an injective holomorphic map, such that 
$0\mapsto s$ and $\varphi_s(D)\cap S=\{s\}$; we set $U_s:=\varphi_s(D)\subset\overline C$, so that $\varphi_s$ corestricts 
to a biholomorphic map  $D\stackrel{\sim}{\to}U_s$.

(c) For $s\in S$, $\varphi_s^\times : D^\times\to C$ is the injective holomorphic map obtained by restriction of $\varphi_s$; we set 
$U_s^\times:=U_s\smallsetminus \{s\}$, so that $\varphi_s^\times$ corestricts to a biholomorphic map $D^\times\stackrel{\sim}{\to}U_s^\times$.

(d) For $s\in S$, we set $\tilde U_s^\times:=p^{-1}(U_s^\times)$ and $X_s:=\pi_0(\tilde U_s^\times)$; for $x\in X_s$, 
we define $\tilde U_{s,x}^\times$ as the connected component of $\tilde U_s^\times$ corresponding to $x$. 

(e) For $s\in S$ and $x\in X_s$, $\tilde\varphi_{s,x}^\times : \tilde D^\times\to\tilde C$ is a holomorphic map with image 
contained in~$\tilde U^\times_{s,x}$, such that $p \circ \tilde\varphi_{s,x}^\times=\varphi_s^\times \circ e$. Then 
$\tilde\varphi_{s,x}^\times$ corestricts to a biholomorphic map $\tilde D^\times\to\tilde U^\times_{s,x}$. 
\end{defn}

The choice of $(\tilde\varphi_{s,x}^\times)_{s\in S,x\in X_s}$ is not unique, but any two choices are related by 
$\tilde\psi_{s,x}^\times=\tilde\varphi_{s,x}^\times\circ\theta^{a_{s,x}}$, where $(a_{s,x})_{s\in S,x\in  X_s}$ is in 
$\oplus_{s\in S}\mathbb Z^{X_s}$. 

For any $s\in S$, the group $\Gamma_C$ acts on $\tilde U_s^\times$, and therefore also on the set $X_s$. 
The latter action is transitive, and the stabilizer of any element $x\in X_s$ is a cyclic group, 
generated by an element $\theta_{s,x}\in\Gamma_C$ which restricts to an automorphism of $\tilde U_{s,x}^\times$, equal to the 
conjugation of $\theta$ by the corestriction of $\tilde\varphi_{s,x}^\times$ to an isomorphism $\tilde D^\times\to\tilde U^\times_{s,x}$. 

\begin{lem}\label{lem15:0208}
There exists a map $c : \Gamma_C\times X_s\to\mathbb Z$ satisfying the identity 
$c(\gamma'\gamma,x)=c(\gamma',\gamma x)+c(\gamma,x)$, such that for any $(\gamma,x)\in\Gamma_C\times
 X_s$, one has $\tilde\varphi_{s,\gamma x}^\times\circ\theta^{c(\gamma,x)}
=\gamma\circ \tilde\varphi_{s,x}^\times$ (equality of holomorphic maps $\tilde D^\times\to\tilde C$).  
\end{lem}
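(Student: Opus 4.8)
The plan is to construct $c(\gamma,x)$ pointwise, for each pair $(\gamma,x)$, by identifying $\gamma\circ\tilde\varphi_{s,x}^\times$ with $\tilde\varphi_{s,\gamma x}^\times$ up to a deck transformation of the exponential covering $e$, and then to read off the cocycle identity from the uniqueness of this construction.

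First I would fix $(\gamma,x)\in\Gamma_C\times X_s$ and compare the two holomorphic maps $\gamma\circ\tilde\varphi_{s,x}^\times$ and $\tilde\varphi_{s,\gamma x}^\times$ from $\tilde D^\times$ to $\tilde C$. Since $\gamma$ carries the component $\tilde U_{s,x}^\times$ to $\tilde U_{s,\gamma x}^\times$ (this is the very definition of the $\Gamma_C$-action on $X_s$), both maps have image contained in $\tilde U_{s,\gamma x}^\times$. Moreover, using $p\circ\gamma=p$ (as $\gamma\in\mathrm{Aut}(\tilde C/C)$) together with the defining relation $p\circ\tilde\varphi_{s,x}^\times=\varphi_s^\times\circ e$, both maps compose with $p$ to yield $\varphi_s^\times\circ e$; hence they are two lifts through $p$ of the same map $\tilde D^\times\to C$, landing in the same connected component.

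Next I would form the biholomorphism $g:=(\tilde\varphi_{s,\gamma x}^\times)^{-1}\circ\gamma\circ\tilde\varphi_{s,x}^\times$ of $\tilde D^\times$, which makes sense because $\tilde\varphi_{s,\gamma x}^\times$ corestricts to an isomorphism $\tilde D^\times\stackrel{\sim}{\to}\tilde U_{s,\gamma x}^\times$. Composing with $p$ and invoking $p\circ\tilde\varphi_{s,\gamma x}^\times=\varphi_s^\times\circ e$ gives $\varphi_s^\times\circ e\circ g=\varphi_s^\times\circ e$; since $\varphi_s^\times$ is injective this forces $e\circ g=e$, so $g$ is a deck transformation of the covering $e:\tilde D^\times\to D^\times$. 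As the deck group of $e$ is the infinite cyclic group generated by $\theta$, there is a unique integer $m$ with $g=\theta^m$, and I set $c(\gamma,x):=m$. By construction this yields $\tilde\varphi_{s,\gamma x}^\times\circ\theta^{c(\gamma,x)}=\gamma\circ\tilde\varphi_{s,x}^\times$, and $m$ is uniquely determined because $\theta$ has infinite order.

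Finally I would verify the cocycle identity by composition. Applying the defining relation for $(\gamma,x)$ and then for $(\gamma',\gamma x)$ gives $\gamma'\gamma\circ\tilde\varphi_{s,x}^\times=\tilde\varphi_{s,\gamma'\gamma x}^\times\circ\theta^{c(\gamma',\gamma x)+c(\gamma,x)}$; comparing this with the defining relation for $(\gamma'\gamma,x)$, using $(\gamma'\gamma)x=\gamma'(\gamma x)$, the injectivity of the corestriction of $\tilde\varphi_{s,\gamma'\gamma x}^\times$, and the infinite order of $\theta$, one gets $c(\gamma'\gamma,x)=c(\gamma',\gamma x)+c(\gamma,x)$. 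The only step requiring genuine care is the second paragraph, namely recognizing $g$ as a deck transformation of the exponential covering $e$; this rests on the injectivity of $\varphi_s^\times$ and the explicit description of the deck group of $e$, whereas the remaining verifications are formal manipulations of the defining identities.
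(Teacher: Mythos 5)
Your proof is correct and takes essentially the same route as the paper: both define $c(\gamma,x)$ by observing that $\gamma\circ\tilde\varphi_{s,x}^\times$ and $\tilde\varphi_{s,\gamma x}^\times$ are lifts of $\varphi_s^\times\circ e$ with image in the same component $\tilde U_{s,\gamma x}^\times$, hence differ by a unique power of $\theta$, and then obtain the cocycle identity by composing the defining relations and cancelling. Your write-up is merely more explicit than the paper's about why the comparison map is a deck transformation of $e$ and about the same-component point, but the substance is identical.
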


\begin{proof}
The existence of a map $c$ satisfying the identity $\tilde\varphi_{s,\gamma x}^\times\circ\theta^{c(\gamma,x)}
=\gamma\circ \tilde\varphi_{s,x}^\times$ follows from the fact that for any pair of holomorphic maps 
$\alpha,\beta : \tilde D^\times\to\tilde C$ such that $p\circ\alpha=\varphi_s^\times\circ e=p\circ\beta$, there exists 
$n\in\mathbb Z$ such that $\beta=\alpha\circ\theta^n$. Then $\tilde\varphi_{s,\gamma'\gamma x}^\times\circ\theta^{c(\gamma'\gamma,x)}
=\gamma'\gamma\circ \tilde\varphi_{s,x}^\times=\gamma'\circ \tilde\varphi_{s,\gamma x}^\times\circ\theta^{c(\gamma,x)}=
\tilde\varphi_{s,\gamma' \gamma x}^\times\circ\theta^{c(\gamma',\gamma x)}\circ\theta^{c(\gamma,x)}$ which implies 
the identity satisfied by $c$. 
\end{proof}

\begin{lem}\label{lem:1:8:0308}
(a) For any $s\in S$, the space $X_s\times\tilde D^\times$ is equipped with an action of $\Gamma_C$ given by 
$\gamma\cdot(x,d):=(\gamma x,\theta^{c(\gamma,x)}d)$. The induced right action of $\Gamma_C$ on 
$\prod_{x\in X_s}\mathcal O_{hol}(\tilde D^\times)$ is given by $(f_x)_{x\in X_s} \cdot \gamma=(g_x)_{x \in  X_s}$,  
where $g_x:=(\theta^{c(\gamma,x)})^*(f_{\gamma x})$. 

(b) The map $\coprod_{s\in S}X_s\times\tilde D^\times\to \tilde C$, $(s,x,d)\mapsto \tilde\varphi_{s,x}^\times(d)$ is $\Gamma_C$-equivariant, 
its source being 
equipped with the direct sum of the actions of $\Gamma_C$ defined in (a). It induces a $\Gamma_C$-equivariant algebra morphism  
\begin{equation}\label{alg:mor:0208}
\bigoplus_{s\in S}\prod_{x\in X_s}(\tilde\varphi_{s,x}^\times)^* : \mathcal O_{hol}(\tilde C)\to \bigoplus_{s\in S}\prod_{x\in 
 X_s}\mathcal O_{hol}(\tilde D^\times), 
\end{equation}
in which the target is equipped with the direct sum over $s\in S$ of the right actions from (a). 
\end{lem}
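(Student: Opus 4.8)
The plan is to deduce everything from Lemma~\ref{lem15:0208}, whose cocycle identity $c(\gamma'\gamma,x)=c(\gamma',\gamma x)+c(\gamma,x)$ is precisely what is needed to turn the proposed formula into a genuine group action. For part (a), I would first check that $\gamma\cdot(x,d):=(\gamma x,\theta^{c(\gamma,x)}d)$ defines a left action of $\Gamma_C$ on $X_s\times\tilde D^\times$. Applying $\gamma'$ to $\gamma\cdot(x,d)$ yields $(\gamma'\gamma x,\theta^{c(\gamma',\gamma x)}\theta^{c(\gamma,x)}d)$, and since powers of $\theta$ compose additively in the exponent this equals $(\gamma'\gamma x,\theta^{c(\gamma',\gamma x)+c(\gamma,x)}d)$; by the cocycle identity the exponent is $c(\gamma'\gamma,x)$, so this coincides with $(\gamma'\gamma)\cdot(x,d)$. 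The identity axiom follows from $c(1,x)=0$, itself a consequence of the cocycle identity with $\gamma'=\gamma=1$.

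Still in (a), I would then record the standard recipe by which a left action on a space induces a right action on its functions: identifying $(f_x)_{x\in X_s}\in\prod_{x\in X_s}\mathcal O_{hol}(\tilde D^\times)$ with the function $(x,d)\mapsto f_x(d)$ on $X_s\times\tilde D^\times$, the right action $(F\cdot\gamma)(y):=F(\gamma\cdot y)$ gives $(F\cdot\gamma)_x(d)=f_{\gamma x}(\theta^{c(\gamma,x)}d)=((\theta^{c(\gamma,x)})^*f_{\gamma x})(d)$, which is exactly the asserted formula $g_x=(\theta^{c(\gamma,x)})^*(f_{\gamma x})$.

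For part (b), I would first establish the $\Gamma_C$-equivariance of $\Phi:\coprod_{s\in S}X_s\times\tilde D^\times\to\tilde C$, $(s,x,d)\mapsto\tilde\varphi_{s,x}^\times(d)$. With the direct-sum action on the source one has $\gamma\cdot(s,x,d)=(s,\gamma x,\theta^{c(\gamma,x)}d)$, so $\Phi(\gamma\cdot(s,x,d))=\tilde\varphi_{s,\gamma x}^\times(\theta^{c(\gamma,x)}d)$; evaluating at $d$ the equality $\tilde\varphi_{s,\gamma x}^\times\circ\theta^{c(\gamma,x)}=\gamma\circ\tilde\varphi_{s,x}^\times$ from Lemma~\ref{lem15:0208} turns this into $\gamma\cdot\tilde\varphi_{s,x}^\times(d)=\gamma\cdot\Phi(s,x,d)$, proving equivariance. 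I would then observe that pullback along $\Phi$ is automatically an algebra morphism, and that since the source is a disjoint union of copies of $\tilde D^\times$ indexed by the pairs $(s,x)$ — with $S$ finite, so that the product over $s$ coincides with the direct sum, while the inner product over the possibly infinite $X_s$ stays a product — the pullback $\Phi^*$ is identified with $\bigoplus_{s\in S}\prod_{x\in X_s}(\tilde\varphi_{s,x}^\times)^*$. Finally, equivariance of the pullback is the formal contravariant consequence of equivariance of $\Phi$: for $f\in\mathcal O_{hol}(\tilde C)$ one computes $\Phi^*(f_{|\gamma})(y)=f(\gamma\cdot\Phi(y))=f(\Phi(\gamma\cdot y))=(\Phi^*f)(\gamma\cdot y)=((\Phi^*f)\cdot\gamma)(y)$, matching the right action of part~(a) summed over $s$.

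The only real subtlety is bookkeeping: keeping straight the passage from the left action on spaces to the induced right action on functions, and checking that the conventions in (a) and the definition $f_{|\gamma}=f\circ\gamma$ on $\mathcal O_{hol}(\tilde C)$ are mutually compatible under $\Phi^*$. No analytic input is required beyond the fact that each $\tilde\varphi_{s,x}^\times$ is holomorphic, so that $(\tilde\varphi_{s,x}^\times)^*$ indeed lands in $\mathcal O_{hol}(\tilde D^\times)$; all the substantive content is already packaged in Lemma~\ref{lem15:0208}.
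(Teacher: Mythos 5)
Your proposal is correct and follows exactly the route the paper takes: its proof of this lemma is the single sentence ``This follows from Lem.~\ref{lem15:0208}'', and your argument is precisely the expansion of that reduction — the cocycle identity gives the action axioms, the intertwining relation $\tilde\varphi_{s,\gamma x}^\times\circ\theta^{c(\gamma,x)}=\gamma\circ\tilde\varphi_{s,x}^\times$ gives the equivariance of the map into $\tilde C$, and the rest is the standard contravariant passage to functions. Nothing to add.
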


\begin{proof}
This follows from Lem. \ref{lem15:0208}. 
\end{proof}

\begin{defn}\label{def:1:8:0208}
$\mathcal O_{mod}%^{(n)}
(\tilde C)$ denotes the subset of $\mathcal O_{hol}(\tilde C)$ of all functions $f$ such that, for 
any $s\in S$ and $x\in X_s$, one has $(\tilde\varphi_{s,x}^\times)^*(f)\in
\mathcal O_{mod}%^{(n)}
(\tilde D^\times)$.
\end{defn}

\begin{prop}\label{lem:1:7:toto}
$\mathcal O_{mod}%^{(n)}
(\tilde C)$ is a subalgebra  of $\mathcal O_{hol}(\tilde C)$, stable under the 
action of $\Gamma_C$. 
\end{prop}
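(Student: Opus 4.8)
The plan is to verify the two assertions of Proposition~\ref{lem:1:7:toto} separately, reducing both to the already-established local statement of Lemma~\ref{lem:debut:0924}. First I would prove that $\mathcal O_{mod}(\tilde C)$ is a subalgebra. The key observation is that Definition~\ref{def:1:8:0208} characterizes membership by a family of local conditions indexed by $(s,x)$, namely that $(\tilde\varphi_{s,x}^\times)^*(f)$ lies in $\mathcal O_{mod}(\tilde D^\times)$. Since each pullback $(\tilde\varphi_{s,x}^\times)^*$ is an algebra morphism $\mathcal O_{hol}(\tilde C)\to\mathcal O_{hol}(\tilde D^\times)$, and since the collected map \eqref{alg:mor:0208} is an algebra morphism by Lemma~\ref{lem:1:8:0308}(b), the set $\mathcal O_{mod}(\tilde C)$ is precisely the preimage under \eqref{alg:mor:0208} of the subalgebra $\bigoplus_{s\in S}\prod_{x\in X_s}\mathcal O_{mod}(\tilde D^\times)$ of the target. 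The latter is a subalgebra because $\mathcal O_{mod}(\tilde D^\times)$ is one by Lemma~\ref{lem:debut:0924}, and direct sums and direct products of subalgebras are subalgebras. The preimage of a subalgebra under an algebra morphism is a subalgebra (it contains the unit and is closed under sums and products), which gives the first claim.

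Next I would establish $\Gamma_C$-stability. Again the point is that $\mathcal O_{mod}(\tilde C)$ is the preimage under \eqref{alg:mor:0208} of the indicated subobject of the target, and that the map \eqref{alg:mor:0208} is $\Gamma_C$-equivariant by Lemma~\ref{lem:1:8:0308}(b). Therefore it suffices to check that the target subalgebra $\bigoplus_{s\in S}\prod_{x\in X_s}\mathcal O_{mod}(\tilde D^\times)$ is stable under the right action described in Lemma~\ref{lem:1:8:0308}(a). Concretely, for $(f_x)_{x\in X_s}$ with each $f_x\in\mathcal O_{mod}(\tilde D^\times)$, the component of $(f_x)\cdot\gamma$ at $x$ is $(\theta^{c(\gamma,x)})^*(f_{\gamma x})$. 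Since $\mathcal O_{mod}(\tilde D^\times)$ carries the $\mathbb Z$-action by $\theta^*$ under which it is stable (Lemma~\ref{lem:debut:0924}), applying $(\theta^{c(\gamma,x)})^*$ to the element $f_{\gamma x}\in\mathcal O_{mod}(\tilde D^\times)$ stays in $\mathcal O_{mod}(\tilde D^\times)$; the reindexing $x\mapsto\gamma x$ merely permutes the product factors. Hence the target subalgebra is preserved, and by equivariance so is its preimage $\mathcal O_{mod}(\tilde C)$.

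The only genuinely delicate point is the interaction between the global and local descriptions: one must be sure that membership in $\mathcal O_{mod}(\tilde C)$ really is equivalent to the image under \eqref{alg:mor:0208} landing in the factorwise moderate-growth subspace, which is immediate from Definition~\ref{def:1:8:0208}, and that the $\mathbb Z$-stability of $\mathcal O_{mod}(\tilde D^\times)$ supplied by Lemma~\ref{lem:debut:0924} is exactly what controls the cocycle twists $\theta^{c(\gamma,x)}$ appearing in the $\Gamma_C$-action. I expect no serious obstacle here, since all the hard analytic content (the growth estimates and their behavior under products, sums, and the shift $\theta$) has been packaged into Lemma~\ref{lem:debut:0924}; the present proof is a purely formal transfer of those properties through the equivariant algebra morphism \eqref{alg:mor:0208}. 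In writing it up I would simply invoke Lemma~\ref{lem:1:8:0308} for the morphism and its equivariance, and Lemma~\ref{lem:debut:0924} for the local subalgebra and $\mathbb Z$-stability statements, and conclude.
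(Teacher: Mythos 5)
Your proposal is correct and follows essentially the same route as the paper: the paper's proof likewise identifies $\mathcal O_{mod}(\tilde C)$ as the preimage under the equivariant algebra morphism \eqref{alg:mor:0208} of $\oplus_{s\in S}\prod_{x\in X_s}\mathcal O_{mod}(\tilde D^\times)$ and observes that this target is a $\Gamma_C$-stable subalgebra. You merely spell out the details (via Lem.~\ref{lem:debut:0924} and Lem.~\ref{lem:1:8:0308}) that the paper leaves implicit.
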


\begin{proof} 
This follows from the equality of $\mathcal O_{mod}%^{(n)}
(\tilde C)$ with the preimage of $\oplus_{s\in S}\prod_{x\in 
 X_s}\mathcal O_{mod}(\tilde D^\times)$ %$V_n$ 
under \eqref{alg:mor:0208}, and from the fact that this is a $\Gamma_C$-stable subalgebra of the target of  
\eqref{alg:mor:0208}. 
 \end{proof}

\subsubsection{Computation of $\mathcal O_{mod}(\tilde C)^{\Gamma_C}$}

\begin{lem}\label{lem:1331:0208}
For any $s\in S$, the diagonal embedding 
$\mathcal O(D^\times)\hookrightarrow 
\prod_{x\in  X_s}\mathcal O_{mod}(\tilde D^\times)$ gives rise to an isomorphism 
$\mathcal O(D^\times)\simeq(\prod_{x\in  X_s}\mathcal O_{mod}(\tilde D^\times))^{\Gamma_C}$. 
\end{lem}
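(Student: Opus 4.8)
The plan is to exploit the fact that, by the discussion preceding Lem.~\ref{lem15:0208}, the $\Gamma_C$-action on $X_s$ is transitive with cyclic stabilizers, so that the twisted product action of Lem.~\ref{lem:1:8:0308}(a) behaves like a coinduction from the stabilizer of a single point. Concretely, I would fix a base point $x_0\in X_s$ and show that an invariant tuple is completely determined by, and in fact constant equal to, its $x_0$-component, which is forced to lie in $\mathcal O(D^\times)$.

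First I would check that the diagonal map is well defined, i.e.\ that it lands in the invariants. An element $h\in\mathcal O(D^\times)=\mathcal O_{mod}(\tilde D^\times)^{\mathbb Z}$ (Lem.~\ref{lem:1:5:1304:0208}) satisfies $(\theta^n)^*h=h$ for all $n\in\mathbb Z$, so the constant tuple $(h)_{x\in X_s}$ is unchanged by the action $(f_x)\cdot\gamma=((\theta^{c(\gamma,x)})^*f_{\gamma x})_x$; injectivity of the diagonal map is immediate. For surjectivity, let $(f_x)_{x\in X_s}$ be an invariant tuple, so that $f_x=(\theta^{c(\gamma,x)})^*f_{\gamma x}$ for all $\gamma,x$. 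I would argue in two steps. Taking $\gamma=\theta_{s,x}$, the generator of the stabilizer of $x$, gives $\gamma x=x$ and hence $f_x=(\theta^{c(\theta_{s,x},x)})^*f_x$; once we know that $c(\theta_{s,x},x)=1$ (see below), this reads $\theta^*f_x=f_x$, so $f_x\in\mathcal O_{mod}(\tilde D^\times)^{\mathbb Z}=\mathcal O(D^\times)$ for every $x$. Next, using transitivity, for each $x$ choose $\gamma$ with $\gamma x_0=x$; invariance gives $f_{x_0}=(\theta^{c(\gamma,x_0)})^*f_x$, and since $f_x$ is now $\mathbb Z$-invariant the operator $(\theta^{c(\gamma,x_0)})^*$ acts trivially, whence $f_x=f_{x_0}$. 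Thus the tuple is the diagonal image of $f_{x_0}\in\mathcal O(D^\times)$.

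The one point requiring care, and the main obstacle, is the normalization $c(\theta_{s,x},x)=1$. I would read this off the defining relation $\tilde\varphi_{s,\gamma x}^\times\circ\theta^{c(\gamma,x)}=\gamma\circ\tilde\varphi_{s,x}^\times$ of Lem.~\ref{lem15:0208}, specialized to $\gamma=\theta_{s,x}$ and combined with the description of $\theta_{s,x}$ (given just before that lemma) as the conjugate of $\theta$ by the corestriction of $\tilde\varphi_{s,x}^\times$ to an isomorphism $\tilde D^\times\to\tilde U^\times_{s,x}$. This yields $\tilde\varphi_{s,x}^\times\circ\theta^{c(\theta_{s,x},x)}=\theta_{s,x}\circ\tilde\varphi_{s,x}^\times=\tilde\varphi_{s,x}^\times\circ\theta$, and cancelling the injective map $\tilde\varphi_{s,x}^\times$ forces $c(\theta_{s,x},x)=1$. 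With this identity the two steps above close up and give the claimed isomorphism $\mathcal O(D^\times)\simeq(\prod_{x\in X_s}\mathcal O_{mod}(\tilde D^\times))^{\Gamma_C}$.
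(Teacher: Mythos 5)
Your proof is correct and follows essentially the same route as the paper's: use the stabilizer generator with $c(\theta_{s,x},x)=1$ to get $\theta$-invariance of each component (hence membership in $\mathcal O(D^\times)$ via Lem.~\ref{lem:1:5:1304:0208}), then use transitivity and this invariance to show the tuple is constant. Your explicit verification of the normalization $c(\theta_{s,x},x)=1$ from the conjugation description of $\theta_{s,x}$ is a detail the paper merely asserts, and it is carried out correctly.
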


\begin{proof}
The inclusion of the image of the diagonal embedding in  $(\prod_{x\in  X_s}
\mathcal O_{mod}(\tilde D^\times))^{\Gamma_C}$ is evident, let us prove the opposite inclusion. 
Let $y\in X_s$. The stabilizer of $y$ for the action of $\Gamma_C$ on $X_s$ is a cyclic 
group, generated by an element $\theta_y\in\Gamma_C$ such that $c(\theta_y,y)=1$. If now $\underline f:=
(f_x)_{x\in X_s}\in (\prod_{x\in X_s}\mathcal O_{mod}(\tilde D^\times))^{\Gamma_C}$, then 
$\underline f=\underline f\cdot \theta_y$, therefore for any $x\in X_s$,
$f_x=(\theta^{c(\theta_y,x)})^*(f_{\theta_y x})$, which for $x=y$ implies $f_x=\theta^*f_x$. By Lem. \ref{lem:1:5:1304:0208}(b), this 
implies $f_x\in\mathcal O%_{hol}^{(n)}
(D^\times)$. For any $\gamma\in\Gamma$, one has $\underline f=\underline f\cdot \gamma$, 
which given the $\theta$-invariance of each $f_x$, $x\in X_s$ implies $f_x=f_{\gamma x}$ for any 
$x\in  X_s$. Since the action of $\Gamma_C$ on $X_s$ is transitive, this implies that the map
$x\mapsto f_x$ is constant. 
\end{proof}

\begin{prop}\label{lem:2:12:1508}
$\mathcal O_{mod}(\tilde C)^{\Gamma_C}=\mathcal O(C)$. 
\end{prop}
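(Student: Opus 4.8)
The plan is to prove the two inclusions separately, the substantial one being $\mathcal O_{mod}(\tilde C)^{\Gamma_C}\subseteq\mathcal O(C)$. I would first record a purely topological reduction: since $p:\tilde C\to C$ is the universal cover with deck group $\Gamma_C$, a $\Gamma_C$-invariant holomorphic function on $\tilde C$ is constant on the fibres of $p$ and hence descends to a holomorphic function on the Riemann surface $C$; concretely $\mathcal O_{hol}(\tilde C)^{\Gamma_C}=p^*\mathcal O_{hol}(C)$. Thus any $f\in\mathcal O_{mod}(\tilde C)^{\Gamma_C}$ may be written $f=p^*\bar f$ for a unique $\bar f\in\mathcal O_{hol}(C)$, and it remains to show that $\bar f$ is algebraic, i.e.\ that it extends meromorphically, with at worst a pole, across each $s\in S$.

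To control $\bar f$ near $s$, I would feed $f$ into the $\Gamma_C$-equivariant algebra morphism \eqref{alg:mor:0208}. Since $f\in\mathcal O_{mod}(\tilde C)$, Def.~\ref{def:1:8:0208} places the image of $f$ in $\oplus_{s\in S}\prod_{x\in X_s}\mathcal O_{mod}(\tilde D^\times)$; as $f$ is $\Gamma_C$-invariant and the morphism is equivariant, and as the $\Gamma_C$-action preserves each $s$-summand (Lem.~\ref{lem:1:8:0308}(b)), this image lies in $\oplus_{s\in S}\big(\prod_{x\in X_s}\mathcal O_{mod}(\tilde D^\times)\big)^{\Gamma_C}$. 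By Lem.~\ref{lem:1331:0208} the $s$-component equals a single $g_s\in\mathcal O(D^\times)$, embedded diagonally. On the other hand, the defining relation $p\circ\tilde\varphi_{s,x}^\times=\varphi_s^\times\circ e$ gives $(\tilde\varphi_{s,x}^\times)^*f=e^*\big((\varphi_s^\times)^*\bar f\big)$; comparing this with the diagonal value $e^*g_s$ and using the injectivity of $e^*$ (which holds since $e$ is surjective) yields $(\varphi_s^\times)^*\bar f=g_s\in\mathcal O(D^\times)$. Since $\varphi_s$ is a holomorphic chart sending $0$ to $s$, this says precisely that $\bar f$ is meromorphic near $s$ with at most a pole there.

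The key step is then to pass from this local information to global algebraicity, and this is the main obstacle of the argument — the one place where the compactness of $\overline C$ is genuinely used, everything else being a translation through the local charts. The function $\bar f$ is holomorphic on $C=\overline C\smallsetminus S$ and, by the previous paragraph, meromorphic with at worst poles at the points of the finite set $S$, so it extends to a meromorphic function on the compact Riemann surface $\overline C$. By the classical identification of the field of meromorphic functions on a compact Riemann surface with the function field of the corresponding algebraic curve (equivalently, by GAGA), $\bar f$ is rational; having poles only along $S$, it lies in $\mathcal O(C)=\mathrm H^0(C,\mathcal O_C)$, as desired.

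For the reverse inclusion $\mathcal O(C)\subseteq\mathcal O_{mod}(\tilde C)^{\Gamma_C}$, I would observe that $p^*g$ is manifestly $\Gamma_C$-invariant for $g\in\mathcal O(C)$, and that the moderate-growth condition is checked chart by chart: $(\tilde\varphi_{s,x}^\times)^*p^*g=e^*\big((\varphi_s^\times)^*g\big)$, where $(\varphi_s^\times)^*g\in\mathcal O(D^\times)$ because $g$, being regular on $C$, has at worst a pole at $s$. Membership of $e^*\big((\varphi_s^\times)^*g\big)$ in $\mathcal O_{mod}(\tilde D^\times)$ then follows from Lem.~\ref{lem:1:5:1304:0208}, completing the proof.
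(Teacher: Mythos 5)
Your proof is correct and follows essentially the same route as the paper's: both push a $\Gamma_C$-invariant moderate-growth function through the equivariant morphism \eqref{alg:mor:0208}, identify the invariants of the target via Lem.~\ref{lem:1331:0208}, use $\mathcal O_{hol}(\tilde C)^{\Gamma_C}=p^*\mathcal O_{hol}(C)$, and conclude that a holomorphic function on $C$ whose pullback to each punctured disc lies in $\mathcal O(D^\times)$ is regular. The only difference is that you make explicit, via compactness of $\overline C$ and GAGA, the final identification of this preimage with $\mathcal O(C)$, which the paper asserts without comment.
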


\begin{proof}
Let $n\geq0$. It follows from Prop. \ref{lem:1:7:toto} that \eqref{alg:mor:0208} induces a linear and $\Gamma_C$-equivariant algebra
morphism
\begin{equation}\label{titi:0409}
\mathcal O_{mod}%^{(n)}
(\tilde C)\to %V_n.
\bigoplus_{s\in S}\prod_{x\in X_s}\mathcal O_{mod}(\tilde D^\times). 
\end{equation}
This map restricts to a linear map $\mathcal O_{mod}(\tilde C)^{\Gamma_C}\to (\oplus_{s\in S}\prod_{x\in X_s}
\mathcal O_{mod}(\tilde D^\times))^{\Gamma_C}$. The target is equal to $\oplus_{s\in S}(\prod_{x\in X_s}
\mathcal O_{mod}(\tilde D^\times))^{\Gamma_C}$, which by Lem. \ref{lem:1331:0208} is equal to $\oplus_{s\in S}\mathcal O(D^\times)$. 

On the other hand, $\mathcal O_{mod}(\tilde C)^{\Gamma_C}\subset \mathcal O_{hol}(\tilde C)^{\Gamma_C}=\mathcal O_{hol}(C)$. All this 
implies that $\mathcal O_{mod}(\tilde C)^{\Gamma_C}$ is the preimage of $\oplus_{s\in S}\mathcal O(D^\times)$ by the map 
$$
\oplus_{s\in S}(\varphi_{s}^\times)^* : \mathcal O_{hol}(C)\to\oplus_{s\in S}\mathcal O_{hol}(D^\times), 
$$
which is equal to $\mathcal O(C)$. 
\end{proof}

One checks that the subalgebra $\mathcal O_{mod}(\tilde D)\subset\mathcal O_{hol}(\tilde C)$ is independent of the choice of
the family $(\varphi_s)_{s\in S}$. 

\subsection{The module $\Omega_{mod}(\tilde C)$ of moderate growth differentials}\label{subsect:Omega:mod:2811}

If $M$ is a complex manifold, let $\Omega^\bullet_{hol}(M)$ be the dga of holomorphic differential forms on $M$. Then 
$\Omega^0_{hol}(M)=\mathcal O_{hol}(M)$. The assignment $M\mapsto\Omega^\bullet_{hol}(M)$ is a contravariant functor, so 
a morphism $f : M\to N$ of complex manifolds gives rise to a dga morphism  $f^* : \Omega^\bullet_{hol}(N)\to \Omega^\bullet_{hol}(M)$. 
If $M$ is 1-dimensional, we set $\Omega_{hol}(M):=\Omega^1_{hol}(M)$; then $\Omega_{hol}(M)$ is a $\mathcal O_{hol}(M)$-module, 
equipped with a derivation $d : \mathcal O_{hol}(M)\to\Omega_{hol}(M)$. 

\begin{lem}\label{lem:1151:0908}
$\Omega_{hol}(\tilde D^\times)$ is a free rank $1$ module over $\mathcal O_{hol}(\tilde D^\times)$ generated by $e^*(dz/z)$, so 
$f\mapsto f\cdot e^*(dz/z)$ gives rise to a $\mathcal O_{hol}(\tilde D^\times)$-module isomorphism 
$\mathcal O_{hol}(\tilde D^\times)\stackrel{\sim}{\to}\Omega_{hol}(\tilde D^\times)$. 
\end{lem}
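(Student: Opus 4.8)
The plan is to reduce everything to the observation that $\tilde D^\times$ carries a single global holomorphic coordinate, namely the function $u$ obtained by restricting the identity map of $\mathbb C$, so that the module of holomorphic $1$-forms is freely generated by $du$; the form $e^*(dz/z)$ will then turn out to be merely a nonzero scalar multiple of $du$.

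First I would compute the generator explicitly. Writing $z=e(u)=\exp(2\pi\mathrm i u)$, one has $dz=2\pi\mathrm i\,\exp(2\pi\mathrm i u)\,du=2\pi\mathrm i\,z\,du$, whence $e^*(dz/z)=2\pi\mathrm i\,du$. Since $u\in\mathcal O_{hol}(\tilde D^\times)$ (the restriction of the identity of $\mathbb C$ is continuous on the closed half-plane and holomorphic in its interior), the form $du=d(u)$ indeed lies in $\Omega_{hol}(\tilde D^\times)$, and $e^*(dz/z)$ differs from it only by the invertible scalar $2\pi\mathrm i$.

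Next I would establish that $f\mapsto f\cdot du$ is an $\mathcal O_{hol}(\tilde D^\times)$-module isomorphism $\mathcal O_{hol}(\tilde D^\times)\stackrel{\sim}{\to}\Omega_{hol}(\tilde D^\times)$. Injectivity is immediate, since $du$ is nowhere vanishing and so $f\cdot du=0$ forces $f=0$. For surjectivity, given $\omega\in\Omega_{hol}(\tilde D^\times)$ I would set $f:=\omega/du$: on the interior of $\tilde D^\times$, an open domain in $\mathbb C$ with coordinate $u$, every holomorphic $1$-form is uniquely of the form $f\,du$ with $f$ holomorphic, and since both $\omega$ and the nowhere-vanishing form $du$ are continuous up to the boundary, the quotient $f$ extends continuously there as well; hence $f\in\mathcal O_{hol}(\tilde D^\times)$ and $\omega=f\cdot du$. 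Composing this isomorphism with multiplication by the unit $2\pi\mathrm i$ then shows that $f\mapsto f\cdot e^*(dz/z)$ is likewise an isomorphism, so that $e^*(dz/z)$ is a free generator of rank $1$.

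The only point requiring care is the behavior on the boundary $\{\Im(u)=0\}$ of $\tilde D^\times$: one must verify that dividing a form of $\Omega_{hol}(\tilde D^\times)$ by $du$ yields a function that is not merely holomorphic in the interior but continuous up to the boundary, as demanded by the convention defining $\mathcal O_{hol}$ for manifolds with boundary. This is the main (though mild) obstacle, and it is settled precisely because $du$ is continuous and nowhere zero on all of $\tilde D^\times$, so that the quotient $\omega/du$ inherits its continuity on the boundary directly from $\omega$.
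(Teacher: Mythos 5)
Your proof is correct and rests on the same essential point as the paper's: the form $e^*(dz/z)$ is nowhere vanishing on $\tilde D^\times$, hence a free generator. The paper phrases this as "$dz/z$ is invertible in $\Omega_{hol}(D^\times)$ and invertibility is preserved under pullback," whereas you make it concrete via the identity $e^*(dz/z)=2\pi\mathrm i\,du$ and the global coordinate $u$, with an appropriately careful check of continuity up to the boundary; this is only a more explicit rendering of the same argument.
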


\begin{proof}
This follows from the fact that $dz/z$ is an invertible differential in $\Omega_{hol}(D^\times)$, which implies the same about its 
pull-back by $\tilde D^\times\to D^\times$.  
\end{proof}

\begin{defn}\label{214:1408}
Define $\Omega_{mod}(\tilde D^\times):=\mathcal O_{mod}(\tilde D^\times)\cdot e^*(dz/z)$ as the 
image of $\mathcal O_{mod}(\tilde D^\times)$ under the isomorphism from Lem. \ref{lem:1151:0908}. 
\end{defn}

\begin{lem}\label{1302:0908}
$\Omega_{mod}(\tilde D^\times)$ is a free rank $1$ module over $\mathcal O_{mod}(\tilde D^\times)$. 
\end{lem}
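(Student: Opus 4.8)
The plan is to upgrade the $\mathcal O_{hol}(\tilde D^\times)$-module isomorphism $\phi : \mathcal O_{hol}(\tilde D^\times)\stackrel{\sim}{\to}\Omega_{hol}(\tilde D^\times)$, $f\mapsto f\cdot e^*(dz/z)$, supplied by Lemma \ref{lem:1151:0908}, to an isomorphism of $\mathcal O_{mod}(\tilde D^\times)$-modules by restricting it to the subalgebra $\mathcal O_{mod}(\tilde D^\times)$. Concretely, I would first observe that, since $\mathcal O_{mod}(\tilde D^\times)$ is a subalgebra of $\mathcal O_{hol}(\tilde D^\times)$ (Lemma \ref{lem:debut:0924}), the map $\phi$ is in particular $\mathcal O_{mod}(\tilde D^\times)$-linear: for $g\in\mathcal O_{mod}(\tilde D^\times)$ and $f\in\mathcal O_{hol}(\tilde D^\times)$ one has $\phi(gf)=gf\cdot e^*(dz/z)=g\cdot\phi(f)$.

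Next I would check that $\Omega_{mod}(\tilde D^\times)$ is genuinely an $\mathcal O_{mod}(\tilde D^\times)$-submodule of $\Omega_{hol}(\tilde D^\times)$: each of its elements has the form $f\cdot e^*(dz/z)$ with $f\in\mathcal O_{mod}(\tilde D^\times)$, and for $g\in\mathcal O_{mod}(\tilde D^\times)$ one has $g\cdot(f\cdot e^*(dz/z))=(gf)\cdot e^*(dz/z)$ with $gf\in\mathcal O_{mod}(\tilde D^\times)$ by the closure of $\mathcal O_{mod}(\tilde D^\times)$ under products (Lemma \ref{lem:debut:0924}). By Definition \ref{214:1408}, $\Omega_{mod}(\tilde D^\times)$ is exactly the image $\phi(\mathcal O_{mod}(\tilde D^\times))$, so the corestriction $\phi|_{\mathcal O_{mod}(\tilde D^\times)} : \mathcal O_{mod}(\tilde D^\times)\to\Omega_{mod}(\tilde D^\times)$ is surjective; it is injective as a restriction of the injective map $\phi$. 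Hence it is an isomorphism of $\mathcal O_{mod}(\tilde D^\times)$-modules.

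Finally, since $\mathcal O_{mod}(\tilde D^\times)$ is unital (the constant functions lie in it, again by Lemma \ref{lem:debut:0924}), it is free of rank $1$ over itself, generated by $1$; transporting this through the isomorphism exhibits $\Omega_{mod}(\tilde D^\times)$ as free of rank $1$ over $\mathcal O_{mod}(\tilde D^\times)$, with generator $\phi(1)=e^*(dz/z)$. I do not expect any real obstacle here: the substantive content is entirely contained in Lemma \ref{lem:1151:0908}, and the only points requiring care are the elementary verifications that restriction preserves linearity and that the generating differential $e^*(dz/z)$ itself lies in $\Omega_{mod}(\tilde D^\times)$, the latter following from $1\in\mathcal O_{mod}(\tilde D^\times)$.
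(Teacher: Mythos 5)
Your argument is correct and is essentially the paper's own proof: Definition \ref{214:1408} realizes $\Omega_{mod}(\tilde D^\times)$ as the image of $\mathcal O_{mod}(\tilde D^\times)$ under the injective $\mathcal O_{hol}(\tilde D^\times)$-linear map $f\mapsto f\cdot e^*(dz/z)$ of Lemma \ref{lem:1151:0908}, and freeness of rank $1$ with generator $e^*(dz/z)$ follows. The paper's proof is just a one-line appeal to that injectivity; you have merely spelled out the same routine verifications.
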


\begin{proof}
The follows from the injectivity of the map $\mathcal O_{hol}(\tilde D^\times)\to
\Omega_{hol}(\tilde D^\times)$, $f\mapsto f\cdot e^*(dz/z)$. 
\end{proof}

The morphism 
$\prod_{s\in S}\prod_{x\in X_s}\varphi_{s,x} : \prod_{s\in S}\prod_{x\in X_s}\tilde D^\times\to\tilde C$ gives rise to a 
morphism 
\begin{equation}\label{mor:1207:0908}
\bigoplus_{s\in S}\prod_{x\in X_s}(\tilde\varphi_{s,x}^\times)^* : 
\Omega_{hol}(\tilde C)\to\bigoplus_{s\in S}\prod_{x\in X_s}\Omega_{hol}(\tilde D^\times).     
\end{equation}

\begin{defn}\label{defn:omega:mod:1408}
Define $\Omega_{mod}(\tilde C)$ as the preimage of $\oplus_{s\in S}\prod_{x\in X_s}\Omega_{mod}(\tilde D^\times)$
under  \eqref{mor:1207:0908}.
\end{defn}

\begin{lem}\label{lem:mod:1408}
$\Omega_{mod}(\tilde C)$ is a module over $\mathcal O_{mod}(\tilde C)$. 
\end{lem}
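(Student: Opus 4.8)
The plan is to reduce the global statement to the local model on the disc, where the module structure is transparent. First I would recall the definitions in play: by Def.~\ref{defn:omega:mod:1408}, $\Omega_{mod}(\tilde C)$ is the preimage under the morphism \eqref{mor:1207:0908} of the direct sum $\oplus_{s\in S}\prod_{x\in X_s}\Omega_{mod}(\tilde D^\times)$, and by Def.~\ref{def:1:8:0208}, $\mathcal O_{mod}(\tilde C)$ is the preimage under \eqref{alg:mor:0208} of $\oplus_{s\in S}\prod_{x\in X_s}\mathcal O_{mod}(\tilde D^\times)$. Both morphisms are restrictions of the pull-back maps on holomorphic functions and differentials induced by the same family of maps $(\tilde\varphi_{s,x}^\times)$, and pull-back is compatible with the module action (that is, $f^*(g\cdot\omega)=f^*(g)\cdot f^*(\omega)$). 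So the first step is to note that the diagram relating these pull-backs to the $\mathcal O_{hol}(\tilde C)$-module structure on $\Omega_{hol}(\tilde C)$ commutes.

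Next I would establish the local statement: for each $s\in S$ and each $x\in X_s$, the subspace $\Omega_{mod}(\tilde D^\times)$ is a module over $\mathcal O_{mod}(\tilde D^\times)$ inside $\Omega_{hol}(\tilde D^\times)$. This is essentially immediate from Def.~\ref{214:1408}, since $\Omega_{mod}(\tilde D^\times)=\mathcal O_{mod}(\tilde D^\times)\cdot e^*(dz/z)$ is the image of the subalgebra $\mathcal O_{mod}(\tilde D^\times)$ under the module isomorphism of Lem.~\ref{lem:1151:0908}; multiplying by a further element of $\mathcal O_{mod}(\tilde D^\times)$ stays inside $\mathcal O_{mod}(\tilde D^\times)\cdot e^*(dz/z)$ precisely because $\mathcal O_{mod}(\tilde D^\times)$ is closed under multiplication (Lem.~\ref{lem:debut:0924}). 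Taking products over $x\in X_s$ and direct sums over $s\in S$, the target $\oplus_{s\in S}\prod_{x\in X_s}\Omega_{mod}(\tilde D^\times)$ is a module over the target algebra $\oplus_{s\in S}\prod_{x\in X_s}\mathcal O_{mod}(\tilde D^\times)$.

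Finally I would assemble the global conclusion by a standard preimage argument. Take $f\in\mathcal O_{mod}(\tilde C)$ and $\omega\in\Omega_{mod}(\tilde C)$; then $f\cdot\omega\in\Omega_{hol}(\tilde C)$ since the latter is already a module over $\mathcal O_{hol}(\tilde C)$. Applying \eqref{mor:1207:0908} and using the multiplicativity of pull-back, each component is $(\tilde\varphi_{s,x}^\times)^*(f)\cdot(\tilde\varphi_{s,x}^\times)^*(\omega)$, which lies in $\Omega_{mod}(\tilde D^\times)$ by the local module property, so $f\cdot\omega$ lands in the distinguished subspace and hence belongs to $\Omega_{mod}(\tilde C)$. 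I do not expect any serious obstacle here: the only point requiring a small check is the compatibility of the two pull-back morphisms \eqref{alg:mor:0208} and \eqref{mor:1207:0908} with the respective module structures, which follows from the functoriality of $\Omega^\bullet_{hol}(-)$ and the fact that both are induced by the same underlying maps $\tilde\varphi_{s,x}^\times$. The argument is formally parallel to the proof of Prop.~\ref{lem:1:7:toto}, with the algebra statement replaced by its module analogue.
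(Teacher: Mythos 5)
Your proposal is correct and follows essentially the same route as the paper's proof: pull back $f\omega$ along each $\tilde\varphi_{s,x}^\times$, use multiplicativity of pull-back, and invoke the local fact that $\Omega_{mod}(\tilde D^\times)=\mathcal O_{mod}(\tilde D^\times)\cdot e^*(dz/z)$ is stable under multiplication by $\mathcal O_{mod}(\tilde D^\times)$ because the latter is an algebra. The paper's version is terser (it leaves the local module property implicit), but the argument is the same.
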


\begin{proof}
If $f\in\mathcal O_{mod}(\tilde C)$, $\omega\in\Omega_{mod}(\tilde C)$, then $f\omega\in\Omega_{hol}(\tilde C)$. 
If $s\in S$ and $x\in X_s$, then $i_{s,x}^*(f\omega)=i_{s,x}^*(f)i_{s,x}^*(\omega)\in \Omega_{mod}(\tilde D^\times)$
where the last relation follows from $i_{s,x}^*(f)\in\mathcal O_{mod}(\tilde D^\times)$, $i_{s,x}^*(\omega)\in
\Omega_{mod}(\tilde D^\times)$. Therefore $f\omega\in\Omega_{mod}(\tilde C)$. 
\end{proof}

Since $\tilde D^\times$ is simply-connected, the assignment $\omega\mapsto (z\mapsto \int_{\mathrm i}^z\omega)$ is a well-defined linear map 
$\mathrm{int}_{\mathrm i} : \Omega_{hol}(\tilde D^\times)\to\mathcal O_{hol}(\tilde D^\times)$. 

\begin{lem}\label{lem:1750:0908}
The map $\mathrm{int}_{\mathrm i}$ takes $\Omega_{mod}(\tilde D^\times)$ to $\mathcal O_{mod}(\tilde D^\times)$. 
\end{lem}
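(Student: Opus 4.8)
The plan is to reduce the statement to an explicit estimate on the upper half-plane. First I would compute the pullback $e^*(dz/z)$: since $z=e(u)=\exp(2\pi\mathrm i u)$ one has $dz=2\pi\mathrm i\, z\, du$, hence $e^*(dz/z)=2\pi\mathrm i\, du$. Consequently every element of $\Omega_{mod}(\tilde D^\times)$ has the form $f\cdot 2\pi\mathrm i\, du$ with $f\in\mathcal O_{mod}(\tilde D^\times)$, and $\mathrm{int}_{\mathrm i}(f\cdot 2\pi\mathrm i\, du)$ is the function $g$ given by $g(z)=2\pi\mathrm i\int_{\mathrm i}^z f(u)\, du$. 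This $g$ is holomorphic on $\tilde D^\times$ (it is the primitive of a holomorphic form on the simply-connected domain $\tilde D^\times$, as already recorded just before the statement), so it remains only to check that $g$ satisfies the moderate-growth bound of Def. \ref{1529:0908}.

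For the growth bound, fix an interval $[a,b]$ and let $n>0$, $(a',b')\mapsto C_{a',b'}$ be data witnessing $f\in\mathcal O_{mod}(\tilde D^\times)$. For $z=x+\mathrm iy$ with $x\in[a,b]$ and $y\geq 0$ I would integrate along the L-shaped path from $\mathrm i$ to $z$ consisting of the horizontal segment from $\mathrm i$ to $x+\mathrm i$ followed by the vertical segment from $x+\mathrm i$ to $x+\mathrm iy$ (path-independence permits this choice). On the horizontal segment the real part stays in the fixed compact interval $[\min(0,a),\max(0,b)]$ and the imaginary part equals $1$, so $|f|$ is bounded there by $C_{\min(0,a),\max(0,b)}e^{2\pi n}$; since the segment has length $\leq\max(|a|,|b|)$, this contribution is bounded by a constant depending only on $[a,b]$. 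On the vertical segment the real part equals $x\in[a,b]$, so $|f(x+\mathrm is)|\leq C_{a,b}e^{2\pi ns}$, and
$$
\Bigl|2\pi\mathrm i\int_{1}^{y} f(x+\mathrm is)\,\mathrm i\,ds\Bigr|\leq 2\pi\int_{1}^{y}C_{a,b}e^{2\pi ns}\,ds\leq \frac{C_{a,b}}{n}\,e^{2\pi ny}
$$
(with the obvious modification when $0\leq y<1$, where the vertical leg is traversed downward and its contribution is bounded by a constant). Combining the two estimates and using $e^{2\pi ny}\geq 1$ to absorb the constant coming from the horizontal segment yields $|g(x+\mathrm iy)|\leq C'_{a,b}e^{2\pi ny}$ for a new constant $C'_{a,b}$, which is exactly the bound of Def. \ref{1529:0908} with the same integer $n$. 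Hence $g\in\mathcal O_{mod}(\tilde D^\times)$.

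There is no genuine obstacle here; the only points requiring a little care are the choice of path guaranteeing that (i) along the horizontal leg the argument of $f$ ranges over a \emph{fixed} compact real interval containing both the real part $0$ of the basepoint $\mathrm i$ and $[a,b]$, so that a single constant controls $|f|$ there, and (ii) along the vertical leg the exponential $e^{2\pi ns}$ is integrated, so that the resulting bound is proportional to $e^{2\pi ny}$ rather than to $y\cdot e^{2\pi ny}$; this is what lets us keep the same exponent $n$ (a straight-line path would instead produce a spurious polynomial factor, forcing a harmless increase of $n$ to $n+1$ via the Remark following Def. \ref{1529:0908}).
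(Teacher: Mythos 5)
Your proof is correct and follows essentially the same strategy as the paper's: choose an axis-parallel L-shaped path from $\mathrm i$ to $x+\mathrm iy$, bound $|f|$ on each leg via Def.~\ref{1529:0908}, and integrate the exponential on the vertical leg to retain the same exponent $n$. The only (immaterial) difference is the order of the legs — the paper goes vertically along the imaginary axis first and then horizontally at height $y$, whereas you go horizontally at height $1$ and then vertically at abscissa $x$ — and both choices yield a bound of the form $C'_{a,b}e^{2\pi ny}$.
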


\begin{proof}
Let $\omega\in\Omega_{mod}(\tilde D^\times)$. Then there exists $f\in\mathcal O_{mod}(\tilde D^\times)$ such that 
$\omega=f\cdot e^*(dz/z)$. Let $n>0$ and $(a,b)\mapsto C_{a,b}$ be the integer and function associated with $f$
(see Def. \ref{1529:0908}). Then $\mathrm{int}_{\mathrm i}(\omega)\in\mathcal O_{hol}(\tilde D^\times)$ is the
function $u\mapsto 2\pi\mathrm i\int_{\mathrm i}^u f(u')du'$. Let $A\geq 0$ and $u=x+\mathrm iy$ with $x\in[-A,A]$; set 
$C_A:=C_{-A,A}$. 
Then the path of integration may be chosen as the sequence of paths $\mathrm i\to \mathrm iy\to u=x+\mathrm iy$, therefore 
$\int_{\mathrm i}^u f(u')du'=\int_{\mathrm i}^{\mathrm iy} f(u')du'+\int_{\mathrm iy}^{x+\mathrm iy} f(u')du'
=\mathrm i\int_1^y f(\mathrm it)dt+\int_{0}^{x} f(t+\mathrm iy)dt$. 

One has $|f(\mathrm it)|\leq C_{A}e^{2\pi nt}$ for $t\in[1,y]$, while $|f(t+\mathrm iy)|\leq C_A e^{2\pi ny}$
for $y\in[0,x]$. Therefore $|\mathrm i\int_1^y f(\mathrm it)dt|\leq C_{A}|\int_1^y e^{2\pi n t}dt|=C_A|e^{2\pi ny}-e^{2\pi n}|/(2\pi n)$, 
and $|\int_{0}^{x} f(t+\mathrm iy) e^{2\pi\mathrm i t}dt|\leq
AC_A e^{2\pi ny}$. Therefore 
\begin{equation}\label{1714:0908}
\bigg|\mathrm i\int_{\mathrm i}^u f(u')du'\bigg|\,\leq\, C_A\,\frac{|e^{2\pi ny}-e^{2\pi n}|}{2\pi n}+AC_A e^{2\pi ny}. 
\end{equation}
If $y\geq 1$, the right-hand side of \eqref{1714:0908} is $\leq C_A(A+1/(2\pi n))e^{2\pi ny}$ (expressing the absolute value as its
argument as the latter is $\geq 0$ and bounding the resulting expression from above by removing the negative term); if 
$y<1$, the right-hand side of \eqref{1714:0908} is $\leq C_Ae^{2\pi n}/(2\pi n)+AC_A e^{2\pi ny}$ (expressing the absolute value as 
the negative of its argument as the latter is $\leq 0$ and bounding the resulting expression form above by removing the negative term)
which is $\leq C_A(e^{2\pi n}/(2\pi n))e^{2\pi ny}+AC_A e^{2\pi ny}$ (as $e^{2\pi ny}\geq 1$), therefore 
the right-hand side of \eqref{1714:0908} is $\leq (A+(e^{2\pi n}/(2\pi n)))C_Ae^{2\pi ny}$. 

Set $D_A:=(A+(e^{2\pi n}/(2\pi n)))C_A$, then one obtains $|\mathrm i\int_{\mathrm i}^u f(u')du'|\leq D_Ae^{2\pi ny}$
for every $u\in[-A,A]+\mathrm i\mathbb R_+^\times$, and therefore 
$|\int_{\mathrm i}^u\omega|\leq 2\pi  D_Ae^{2\pi ny}$
for any $u\in[-A,A]+\mathrm i\mathbb R_+^\times$. 
This shows that $u\mapsto\int_{\mathrm i}^u\omega$ satisfies the condition of Def. \ref{1529:0908} with the pair 
$(n,(a,b)\mapsto 2\pi D_{\mathrm{max}(|a|,|b|)})$, and therefore belongs to $\mathcal O_{mod}(\tilde D^\times)$. 
\end{proof}

Fix a point $x_0\in\tilde C$. 

\begin{lem}\label{lem:2:19:0908}
The map $\mathrm{int}_{x_0}$ (see \S\ref{sect:121:2710}) takes $\Omega_{mod}(\tilde C)$ to $\mathcal O_{mod}(\tilde C)$. 
\end{lem}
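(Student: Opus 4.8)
The plan is to reduce the statement about $\tilde C$ to the already-proven local statement on the disc, Lemma~\ref{lem:1750:0908}, by exploiting the compatibility of integration with pullback under the charts $\tilde\varphi_{s,x}^\times$. More precisely, let $\omega\in\Omega_{mod}(\tilde C)$ and set $f:=\mathrm{int}_{x_0}(\omega)$, so $f\in\mathcal O_{hol}(\tilde C)$. To show $f\in\mathcal O_{mod}(\tilde C)$, by Definition~\ref{def:1:8:0208} I must verify that $(\tilde\varphi_{s,x}^\times)^*(f)\in\mathcal O_{mod}(\tilde D^\times)$ for every $s\in S$ and $x\in X_s$. So first I would fix such a pair $(s,x)$ and write $i:=\tilde\varphi_{s,x}^\times : \tilde D^\times\to\tilde C$ for the chart, and compute $i^*(f)$ explicitly.

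The key computation is the following. For $u\in\tilde D^\times$, one has $f(i(u))=\int_{x_0}^{i(u)}\omega$. Choosing the integration path from $x_0$ to $i(u)$ to pass through a fixed base point $i(\mathrm i)$ of the image chart, I would split the integral as $\int_{x_0}^{i(\mathrm i)}\omega+\int_{i(\mathrm i)}^{i(u)}\omega$. The first term is a \emph{constant} $c:=\int_{x_0}^{i(\mathrm i)}\omega\in\mathbb C$, which lies in $\mathcal O_{mod}(\tilde D^\times)$ as a constant function. For the second term, since $i$ maps the segment $[\mathrm i,u]$ in $\tilde D^\times$ to a path from $i(\mathrm i)$ to $i(u)$ inside $\tilde U^\times_{s,x}$, the change-of-variables formula for the pullback of a holomorphic $1$-form gives
\begin{equation*}
\int_{i(\mathrm i)}^{i(u)}\omega=\int_{\mathrm i}^u i^*(\omega)=\mathrm{int}_{\mathrm i}(i^*\omega)(u).
\end{equation*}
Hence $i^*(f)=c+\mathrm{int}_{\mathrm i}(i^*\omega)$ as elements of $\mathcal O_{hol}(\tilde D^\times)$.

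Now I would invoke the hypotheses and the prior results to finish. Since $\omega\in\Omega_{mod}(\tilde C)$, Definition~\ref{defn:omega:mod:1408} gives $i^*(\omega)\in\Omega_{mod}(\tilde D^\times)$. By Lemma~\ref{lem:1750:0908}, $\mathrm{int}_{\mathrm i}(i^*\omega)\in\mathcal O_{mod}(\tilde D^\times)$. Since $\mathcal O_{mod}(\tilde D^\times)$ is a subalgebra containing the constants (Lemma~\ref{lem:debut:0924}), the sum $c+\mathrm{int}_{\mathrm i}(i^*\omega)=i^*(f)$ again lies in $\mathcal O_{mod}(\tilde D^\times)$. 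As $(s,x)$ was arbitrary, this proves $f\in\mathcal O_{mod}(\tilde C)$, as desired.

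The one point requiring genuine care — and the main potential obstacle — is the independence of the result from the choice of integration path and, relatedly, the legitimacy of splitting the path through $i(\mathrm i)$. This is where simply-connectedness of $\tilde C$ is essential: it guarantees that $\mathrm{int}_{x_0}$ is well-defined (path-independent), so that $f$ is a genuine single-valued holomorphic function, and it is what lets me freely deform the path to factor through the chart. The remaining subtlety is purely bookkeeping: ensuring that the specific path I choose on $\tilde D^\times$ (namely the straight segment $[\mathrm i,u]$, or the broken path $\mathrm i\to\mathrm iy\to u$ used in the proof of Lemma~\ref{lem:1750:0908}) lies in $\tilde D^\times$ and maps correctly under $i$; since $i$ is a biholomorphism onto $\tilde U^\times_{s,x}$ and both domains are simply-connected, this causes no difficulty, and the additive constant $c$ absorbs any ambiguity in the choice of base point.
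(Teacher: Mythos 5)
Your proof is correct and follows exactly the paper's own argument: decompose $(\tilde\varphi_{s,x}^\times)^*(\mathrm{int}_{x_0}(\omega))$ as the constant $\int_{x_0}^{\tilde\varphi_{s,x}^\times(\mathrm i)}\omega$ plus $\mathrm{int}_{\mathrm i}((\tilde\varphi_{s,x}^\times)^*\omega)$, then apply Lemma \ref{lem:1750:0908} and the algebra structure of $\mathcal O_{mod}(\tilde D^\times)$. No discrepancies to report.
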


\begin{proof}
Let $\omega\in\Omega_{mod}(\tilde C)$. Let $s\in S$ and $x\in X_s$. By additivity of the integral w.r.t. the composition of paths, 
one has 
\begin{equation}\label{1749:0908}
\textstyle (\tilde\varphi_{s,x}^\times)^*(\mathrm{int}_{x_0}(\omega))=\int_{x_0}^{\tilde\varphi_{s,x}^\times(\mathrm i)}\omega
+\mathrm{int}_{\mathrm i}((\tilde\varphi_{s,x}^\times)^*\omega) 
\end{equation}
(equality in $\mathcal O_{hol}(\tilde D^\times)$), where $\int_{x_0}^{\tilde\varphi_{s,x}^\times(\mathrm i)}\omega$ belongs to $\mathbb C$. 
Since $\omega\in\Omega_{mod}(\tilde C)$, one has $(\tilde\varphi_{s,x}^\times)^*\omega\in\Omega_{mod}(\tilde D^\times)$; it then follows from 
Lem. \ref{lem:1750:0908} that $\mathrm{int}_{\mathrm i}((\tilde\varphi_{s,x}^\times)^*\omega)\in\mathcal O_{mod}(\tilde D^\times)$. 
Since $\mathcal O_{mod}(\tilde D^\times)$ is an algebra containing $\mathbb C$, it follows that the right-hand side of \eqref{1749:0908}  
belongs to $\mathcal O_{mod}(\tilde D^\times)$. Eq. \eqref{1749:0908} then implies that $(\tilde\varphi_{s,x}^\times)^*
(\mathrm{int}_{x_0}(\omega))\in \mathcal O_{mod}(\tilde D^\times)$. As this holds for any $s\in S$ and $x\in X_s$, one derives 
$\mathrm{int}_{x_0}(\omega) \in \mathcal O_{mod}(\tilde C)$.  
\end{proof}

\subsection{Iterated integrals and moderate growth functions}\label{sect:iis:mod:0411}

\begin{lem}\label{lem:2:22:1408}
(a) The maps  $e^* : \mathcal O(D^\times)\to\mathcal O_{hol}(\tilde D^\times)$ and $e^* : \Omega(D^\times)\to\Omega_{hol}(\tilde D^\times)$ 
have their images respectively contained in $\mathcal O_{mod}(\tilde D^\times)$ and $\Omega_{mod}(\tilde D^\times)$. 

(b) The maps $p^* : \mathcal O(C)\to\mathcal O_{hol}(\tilde C)$ and $p^* : \Omega(C)\to\Omega_{hol}(\tilde C)$ 
have their images respectively contained in $\mathcal O_{mod}(\tilde C)$ and $\Omega_{mod}(\tilde C)$. 
\end{lem}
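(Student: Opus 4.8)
The plan is to reduce all four assertions to a single elementary growth estimate on the disc, namely the first statement of (a), and then to deduce the remaining three by functoriality of pull-back together with the module structures already in place. For the first statement of (a), I would start from the description of $\mathcal O(D^\times)$ as the meromorphic functions on $D^\times$ with at worst a pole at $0$: for $f\in\mathcal O(D^\times)$ there is an integer $n_0\geq0$ with $z^{n_0}f\in\mathcal O_{hol}(D)$, and since $D$ is compact this extension is bounded, say $|z^{n_0}f(z)|\leq M$ on $D$, whence $|f(z)|\leq M|z|^{-n_0}$ on $D^\times$. Writing $u=x+\mathrm iy\in\tilde D^\times$ and $z=e(u)=\exp(2\pi\mathrm iu)$, so that $|z|=e^{-2\pi y}$ depends only on $y$, one obtains $|e^*(f)(u)|=|f(z)|\leq M e^{2\pi n_0 y}$, a bound uniform in $x$. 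Taking $n:=\max(n_0,1)$, to respect the requirement $n>0$ in Def. \ref{1529:0908}, and the constant function $(a,b)\mapsto C_{a,b}:=M$, this is exactly the moderate growth condition, so $e^*(f)\in\mathcal O_{mod}(\tilde D^\times)$.

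For the second statement of (a), I would first record the computation $e^*(dz/z)=2\pi\mathrm i\,du$, which exhibits $e^*(dz/z)$ as a nowhere-vanishing form consistent with Lem. \ref{lem:1151:0908} and which is the generator of $\Omega_{mod}(\tilde D^\times)$ fixed in Def. \ref{214:1408}. Any $\omega\in\Omega(D^\times)$ can be written $\omega=h\cdot(dz/z)$ with $h\in\mathcal O(D^\times)$, multiplication by $z$ being an automorphism of $\mathcal O(D^\times)$ since both $z$ and $1/z$ lie in it. Then $e^*(\omega)=e^*(h)\cdot e^*(dz/z)$, and since $e^*(h)\in\mathcal O_{mod}(\tilde D^\times)$ by the first statement, $e^*(\omega)$ lies in $\mathcal O_{mod}(\tilde D^\times)\cdot e^*(dz/z)=\Omega_{mod}(\tilde D^\times)$.

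For part (b), the key identity is the defining relation $p\circ\tilde\varphi_{s,x}^\times=\varphi_s^\times\circ e$, which upon taking pull-backs gives $(\tilde\varphi_{s,x}^\times)^*\circ p^*=e^*\circ(\varphi_s^\times)^*$. For $f\in\mathcal O(C)$, the pull-back $(\varphi_s^\times)^*f$ is holomorphic on $D^\times$ with at worst a pole at $0$, because $f$ is regular on $C=\overline C\smallsetminus S$ while $\varphi_s(D)$ meets $S$ only at $s$; hence $(\varphi_s^\times)^*f\in\mathcal O(D^\times)$, and the first statement of (a) yields $(\tilde\varphi_{s,x}^\times)^*(p^*f)=e^*((\varphi_s^\times)^*f)\in\mathcal O_{mod}(\tilde D^\times)$ for every $s\in S$ and $x\in X_s$. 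Since $\mathcal O_{mod}(\tilde C)$ is defined by precisely these local conditions (Def. \ref{def:1:8:0208}), this gives $p^*f\in\mathcal O_{mod}(\tilde C)$. The same argument applies verbatim to $\omega\in\Omega(C)$: the restriction $(\varphi_s^\times)^*\omega$ is a meromorphic differential on $D^\times$ with at worst a pole at $0$, hence lies in $\Omega(D^\times)$, so the second statement of (a) gives $(\tilde\varphi_{s,x}^\times)^*(p^*\omega)\in\Omega_{mod}(\tilde D^\times)$, and the defining local conditions for $\Omega_{mod}(\tilde C)$ (Def. \ref{defn:omega:mod:1408}) give $p^*\omega\in\Omega_{mod}(\tilde C)$.

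I do not expect a genuine obstacle here: the entire content sits in the estimate of the first paragraph, where a pole of order $n_0$ in the coordinate $z$ is converted, through $|z|=e^{-2\pi y}$, into the admissible exponential growth $e^{2\pi n_0 y}$ in the coordinate $u$. The only points requiring mild care are that this bound is uniform in $x$, so that a constant $C_{a,b}$ suffices, and that $n_0$ must be replaced by $\max(n_0,1)$ to honor the convention $n>0$ of Def. \ref{1529:0908}; everything else is bookkeeping with the relation $p\circ\tilde\varphi_{s,x}^\times=\varphi_s^\times\circ e$ and the rank-one module structures.
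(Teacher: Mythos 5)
Your proof is correct and follows essentially the same route as the paper: the growth estimate $|f(z)|\leq M|z|^{-n_0}$ converted via $|z|=e^{-2\pi y}$ into the bound $Me^{2\pi n_0 y}$ for part (a), then the identity $(\tilde\varphi_{s,x}^\times)^*\circ p^*=e^*\circ(\varphi_s^\times)^*$ to reduce (b) to (a). The only (immaterial) difference is that the paper writes $f=g/z^n$ with $n\geq 1$ from the outset rather than adjusting $n_0$ to $\max(n_0,1)$ afterwards.
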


\begin{proof}
(a) Let $f\in\mathcal O(D^\times)$. There there exist $n\geq1$ and $g\in\mathcal O(D)$ such that $f=g/z^n$. Then $e^*f=e^*g/(u\mapsto 
e(nu))$. The function $g$ is bounded on $D$, therefore there exists $C\in\mathbb R_+$ such that for any $u\in\tilde D^\times$, 
$|(e^*f)(u)|\leq C|1/e(nu)|=C e^{2\pi n y}$. So $e^*f$ satisfies the condition of Def.~\ref{1529:0908} with the pair $(n,(a,b)\mapsto C)$, 
so $e^*f\in\mathcal O_{mod}(D^\times)$. 

Let $\omega\in\Omega(D^\times)$. There exists $f\in\mathcal O(D^\times)$ such that $\omega=f\cdot (dz/z)$. Then $e^*\omega=e^*f
\cdot e^*(dz/z) \in \mathcal O_{mod}(\tilde D^\times)\cdot e^*(dz/z)=\Omega_{mod}(\tilde D^\times)$ (see Def. \ref{214:1408}). 

(b) Let $f\in\mathcal O(C)$ and $s\in S$, $x\in X_t$. Then it follows from $\varphi_s^\times\circ e=p\circ\tilde\varphi_{s,x}^\times$
that $(\tilde\varphi_{s,x}^\times)^*p^*f=e^*(\varphi_s^\times)^*f$. Then $(\varphi_s^\times)^*f\in \mathcal O(D^\times)$, and (a) then implies 
$e^*(\varphi_s^\times)^*f\in \mathcal O_{mod}(\tilde D^\times)$. It follows that $(\tilde\varphi_{s,x}^\times)^*p^*f\in 
\mathcal O_{mod}(\tilde D^\times)$ for any pair $(s,x)$, therefore by Def. \ref{def:1:8:0208}, that $p^*f\in\mathcal O_{mod}(\tilde C)$. 

Similarly, let $\omega\in\Omega(C)$ and $s\in S$, $x\in X_s$. Then $(\tilde\varphi_{s,x}^\times)^*p^*\omega=e^*(\varphi_s^\times)^*\omega$, 
and since $(\varphi_s^\times)^*\omega\in \Omega(D^\times)$, (a) implies $e^*(\varphi_s^\times)^*\omega\in \Omega_{mod}(\tilde D^\times)$. 
It follows that $(\tilde\varphi_{s,x}^\times)^*p^*\omega\in \Omega_{mod}(\tilde D^\times)$ for any pair $(s,x)$, therefore by 
Def. \ref{defn:omega:mod:1408}, that $p^*f\in\Omega_{mod}(\tilde C)$. 
\end{proof}

\begin{prop}\label{lem:INOMOD:1408}
For any $x_0\in\tilde C$, the image of $I_{x_0} : \mathrm{Sh}(\Omega(C))\to\mathcal O_{hol}(\tilde C)$ is contained 
in $\mathcal O_{mod}(\tilde C)$, therefore it induces an algebra morphism $I_{x_0} : \mathrm{Sh}(\Omega(C))\to\mathcal O_{mod}(\tilde C)$. 
\end{prop}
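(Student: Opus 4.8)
The plan is to prove by induction on the degree $n$ that $I_{x_0}^{(n)}(\mathrm{Sh}_n(\Omega(C)))\subset\mathcal O_{mod}(\tilde C)$, since $I_{x_0}$ is the direct sum of the $I_{x_0}^{(n)}$ and since $\mathcal O_{mod}(\tilde C)$ is a subalgebra of $\mathcal O_{hol}(\tilde C)$ (Prop. \ref{lem:1:7:toto}), it suffices to establish this containment on each homogeneous component. The base case $n=0$ is immediate, as $I_{x_0}^{(0)}$ sends $\mathbb C$ to the constant functions, which lie in $\mathcal O_{mod}(\tilde C)$.

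For the inductive step, I would use the recursive formula \eqref{IND:1008}, namely
$$
I_{x_0}([\omega_1|\ldots|\omega_n])=\mathrm{int}_{x_0}\big(I_{x_0}([\omega_1|\ldots|\omega_{n-1}])\cdot p^*\omega_n\big).
$$
By the induction hypothesis, $I_{x_0}([\omega_1|\ldots|\omega_{n-1}])\in\mathcal O_{mod}(\tilde C)$. By Lem. \ref{lem:2:22:1408}(b), $p^*\omega_n\in\Omega_{mod}(\tilde C)$. Since $\Omega_{mod}(\tilde C)$ is a module over $\mathcal O_{mod}(\tilde C)$ (Lem. \ref{lem:mod:1408}), the product $I_{x_0}([\omega_1|\ldots|\omega_{n-1}])\cdot p^*\omega_n$ lies in $\Omega_{mod}(\tilde C)$.

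Finally, I would invoke Lem. \ref{lem:2:19:0908}, which asserts that $\mathrm{int}_{x_0}$ maps $\Omega_{mod}(\tilde C)$ into $\mathcal O_{mod}(\tilde C)$; applying it to the moderate growth differential just obtained yields $I_{x_0}([\omega_1|\ldots|\omega_n])\in\mathcal O_{mod}(\tilde C)$, completing the induction. Since the hard analytic work—establishing that integration preserves moderate growth—has already been carried out in Lems. \ref{lem:1750:0908} and \ref{lem:2:19:0908}, this proof is essentially a formal assembly of previously proved facts, and I do not expect any genuine obstacle; the only point requiring minor care is confirming that the recursive definition interacts correctly with the module and integration maps, but this is exactly what \eqref{IND:1008} encodes.
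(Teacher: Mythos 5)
Your proof is correct and follows essentially the same route as the paper's: induction on the degree, the recursion \eqref{IND:1008}, Lem. \ref{lem:2:22:1408}(b) for $p^*\omega_n\in\Omega_{mod}(\tilde C)$, the module structure from Lem. \ref{lem:mod:1408}, and Lem. \ref{lem:2:19:0908} for the integration step. No issues.
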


\begin{proof}
We prove inductively on $n\geq 0$ that $\mathrm{im}(I_{x_0}^{(n)})\subset \mathcal O_{mod}(\tilde C)$. This is obvious if $n=0$. 
Assume that $\mathrm{im}(I_{x_0}^{(n-1)})\subset\mathcal O_{mod}(\tilde C)$. Let $\omega_1,\ldots,\omega_n\in\Omega(C)$. 
By the induction hypothesis, $I_{x_0}^{(n-1)}([\omega_1|\ldots|\omega_{n-1}])\in\mathcal O_{mod}(\tilde C)$, and by Lem. 
\ref{lem:2:22:1408}(b), 
$p^*\omega_n\in\Omega_{mod}(\tilde C)$; since $\Omega_{mod}(\tilde C)$ is a module over $\mathcal O_{mod}(\tilde C)$ 
(see Lem. \ref{lem:mod:1408}), then $I_{x_0}^{(n-1)}([\omega_1|\ldots|\omega_{n-1}])\cdot p^*\omega_n\in\Omega_{mod}(\tilde C)$. Lem.~\ref{lem:2:19:0908} then implies $\mathrm{int}_{x_0}(I_{x_0}^{(n-1)}([\omega_1|\ldots|\omega_{n-1}]) \cdot p^*\omega_n) 
\in\mathcal  O_{mod}(\tilde C)$. By \eqref{IND:1008}, the latter term is $I_{x_0}^{(n)}([\omega_1|\ldots|\omega_n])$, therefore 
$I_{x_0}^{(n)}([\omega_1|\ldots|\omega_n])\in\mathcal O_{mod}(\tilde C)$, so $\mathrm{im}(I_{x_0}^{(n)})\subset \mathcal O_{mod}(\tilde C)$. 
\end{proof}

\section{The isomorphism of filtered algebras $f_{J,x_0} : 
F_\bullet\mathrm{Sh}(\mathrm{H^{dR}_C})\otimes\mathcal O(C) \to F_\bullet\mathcal O_{mod}(\tilde C)$}\label{sect:3:2908}

We start this section by reminders on filtrations (§\ref{background:filtrations}). We compute the restricted dual 
$(\mathbb C\Gamma_C)'$ of 
the Hopf algebra $\mathbb C\Gamma_C$ in §\ref{sect:32:2811}. 
%and $F_\infty\mathrm{Sh}(\mathrm H_C)$ of $\mathrm{Sh}(\mathrm H_C)$ in §\ref{sect:33:2811}. 
We define a pairing~$p_{J,x_0}$ between this Hopf algebra and $F_\infty\mathrm{Sh}(\mathrm H_C)=\mathrm{Sh}(\mathrm H_C)$
in §\ref{proof:prop:iso:B:a:1} and prove in §\ref{sect:35:2811} that the induced Hopf algebra morphism $\nu(p_{J,x_0})$ is an 
isomorphism (Prop. \ref{prop:1508}). We define a filtered algebra morphism $f_{J,x_0}$ in §\ref{sect:36:2811} and use Prop. \ref{prop:1508} 
and Prop. \ref{lem:B:15:1508}(b) to show in Prop. \ref{prop:3:15:0409} that it is an isomorphism. In~§\ref{sect:37:2811}, we use the material 
of this proof together with Prop. \ref{lem:B:15:1508}(c) to prove a filtered formality statement. 

\subsection{Background on filtrations}\label{background:filtrations}

A {\it vector space filtration} of a $\mathbb C$-vector space $M$ is an increasing collection $F_\bullet M=(F_nM)_{n\geq 0}$ of vector 
subspaces of $M$. The filtration $F_\bullet M$ is called {\it exhaustive} if and only if $\cup_{n\geq 0}F_nM=M$. 
If $F_\bullet M$ is a filtration of $M$, then $F_\infty M:=\cup_{n\geq 0}F_nM$ is a vector subspace of $M$, called the {\it total vector 
space} of the filtration; then $F_\bullet M$ is an exhaustive filtration of this vector subspace. The {\it associated graded} of 
$F_\bullet M$ is the graded $\mathbb C$-vector space $\mathrm{gr}_\bullet(M):=\oplus_{n\geq 0}\mathrm{gr}_n(M)$, where 
$\mathrm{gr}_n(M)=F_nM/F_{n-1}M$ (with $F_{-1}M:=0$). 

Let $f : M\to N$ be a morphism of $\mathbb C$-vector spaces and $F_\bullet M$ be a filtration of $M$. Then~$f$ is said to be 
{\it compatible} with a filtration $F_\bullet N$ on $N$ if and only if $f(F_nM)\subset F_nN$ for any $n\geq0$. This is in particular the 
case if $F_\bullet N$ is the {\it image of $F_\bullet M$ by $f$} (denoted $f(F_\bullet M)$), defined by $F_nN:=f(F_nM)$ for any 
$n\geq 0$; one then has $F_\infty N=f(F_\infty M)$.   

\begin{lem}\label{graded:crit:iso}
If $M,N$ are filtered vector spaces and $f : M\to N$ is a linear map compatible with the filtrations, and such that  
$\mathrm{gr}_\bullet(f) : \mathrm{gr}_\bullet(M)\to\mathrm{gr}_\bullet(N)$ is an isomorphism of graded vector spaces,
then the maps $F_nf : F_nM\to F_nN$ for any $n\geq 0$, as well as $F_\infty f : F_\infty M\to F_\infty N$, are linear isomorphisms. 
\end{lem}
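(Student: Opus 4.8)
The plan is to prove that each map $F_n f : F_n M \to F_n N$ is an isomorphism by induction on $n$, and then to deduce the statement for $F_\infty f$ by passing to the union of the filtration steps. The whole argument is a bounded-below ``five lemma'' induction, so the substance lies entirely in organizing it correctly; there is no analytic or geometric input.

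First I would settle the base case $n=0$. By the convention $F_{-1}M=0$ recalled above, one has $\mathrm{gr}_0(M)=F_0M/F_{-1}M=F_0M$, and likewise $\mathrm{gr}_0(N)=F_0N$; under these identifications $\mathrm{gr}_0(f)$ is exactly $F_0 f$. Since $\mathrm{gr}_\bullet(f)$ is assumed to be an isomorphism of graded vector spaces, its degree-zero component $\mathrm{gr}_0(f)$ is an isomorphism, hence so is $F_0 f$.

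For the inductive step I would fix $n\geq 1$ and assume that $F_{n-1}f$ is an isomorphism. Compatibility of $f$ with the filtrations makes the diagram with exact rows
$$
\begin{CD}
0 @>>> F_{n-1}M @>>> F_n M @>>> \mathrm{gr}_n(M) @>>> 0 \\
@. @VV{F_{n-1}f}V @VV{F_n f}V @VV{\mathrm{gr}_n(f)}V @. \\
0 @>>> F_{n-1}N @>>> F_n N @>>> \mathrm{gr}_n(N) @>>> 0
\end{CD}
$$
commute, the rows being the canonical sequences defining the associated graded. The left vertical map is an isomorphism by the induction hypothesis, and the right vertical map $\mathrm{gr}_n(f)$ is an isomorphism by hypothesis, so the short five lemma gives that the middle map $F_n f$ is an isomorphism. (If one prefers to avoid invoking the five lemma, the same conclusion follows by arguing directly: an element of $\ker(F_n f)$ maps to $0$ in $\mathrm{gr}_n(M)$ by injectivity of $\mathrm{gr}_n(f)$, hence lies in $F_{n-1}M$, where $F_{n-1}f$ is injective; and surjectivity follows by lifting a given target element through $\mathrm{gr}_n(f)$ and correcting the discrepancy, which lands in $F_{n-1}N$, by an element supplied by surjectivity of $F_{n-1}f$.)

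Finally I would treat $F_\infty f$. Since $F_\infty M=\cup_{n\geq 0}F_n M$ and $F_\infty N=\cup_{n\geq 0}F_n N$, and each $F_n f$ is the restriction of $F_\infty f$, the map $F_\infty f$ is the direct limit of the isomorphisms $F_n f$: any $x\in\ker(F_\infty f)$ lies in some $F_n M$, where $F_n f$ is injective, so $x=0$; and any $y\in F_\infty N$ lies in some $F_n N$, where $F_n f$ is surjective, so $y$ has a preimage. The only point requiring genuine care is the bottom of the induction: one must invoke the convention $F_{-1}=0$ to guarantee a valid base case, equivalently to ensure that the filtrations are bounded below, without which the induction would have nothing to start from. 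Everything else is routine diagram-chasing.
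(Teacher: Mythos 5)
Your proof is correct and follows essentially the same route as the paper's: an induction on $n$ whose step is a short-five-lemma diagram chase (the paper writes out exactly the direct kernel/lifting argument you give in parentheses), followed by passing to the union of the filtration steps for $F_\infty f$. No gaps.
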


\begin{proof} Let us prove the first statement by induction on $n\geq0$. The map $F_0f : F_0M\to F_0N$ is the composition of the 
isomorphisms $F_0M\simeq \mathrm{gr}_0M$, $\mathrm{gr}_0N\simeq F_0N$ and $\mathrm{gr}_0(f)$, which is an isomorphism, therefore 
$F_0f : F_0M\to F_0N$ is an isomorphism of vector spaces. Let $n\geq0$ and assume that $F_nf$ is an isomorphism of vector spaces. 
The image of $\mathrm{Ker}(F_{n+1}f)\subset F_{n+1}M\to \mathrm{gr}_{n+1}M$ is contained in $\mathrm{Ker}(\mathrm{gr}_{n+1}f)$ which is 
$0$ by assumption, so this image is $0$, which implies that $\mathrm{Ker}(F_{n+1}f)\subset F_nM$; the restriction of $F_{n+1}f$ to 
$F_nM$ coincides with $F_nf$, which by the induction hypothesis is injective, therefore $\mathrm{Ker}(F_{n+1}f)=0$, so $F_{n+1}f$ is 
injective. For $y\in F_{n+1}N$, let $\overline y$ be its image in $\mathrm{gr}_{n+1}N$. By the surjectivity of 
$\mathrm{gr}_{n+1}(f)$, there exists $\alpha\in \mathrm{gr}_{n+1}M$ with image $\overline y$ by $\mathrm{gr}_{n+1}(f)$. 
Then if $x\in F_{n+1}M$ is any lift of $\alpha$, one has $F_{n+1}(x)\equiv y$ mod $F_nN$. Then $y-F_{n+1}(x)\in F_nN$. 
Since $F_nf : F_nM\to F_nN$ is surjective, there exists $x_0\in F_nM$ such that $F_n(x_0)=y-F_{n+1}(x)$. Then 
$y=F_n(x+x_0)$, which implies the surjectivity of $F_{n+1}f$. It follows that $F_{n+1}f$ is an isomorphism, proving the induction.  

One has $\mathrm{Ker}(F_\infty f)=\cup_{n\geq 0}(\mathrm{Ker}(F_\infty f)\cap F_nM)
=\cup_{n\geq 0}\mathrm{Ker}(F_n f)=0$, where the first equality follows from $F_\infty M=\cup_{n\geq0}F_nM$ and the last equality 
follows from the injectivity of $F_n f$ for $n\geq 0$; this shows the injectivity of $F_\infty f$. 

For any $n\geq 0$, one has $\mathrm{Im}(F_\infty M)\supset\mathrm{Im}(F_nM)$, and $\mathrm{Im}(F_nM)=F_nN$ by the surjectivity of $F_nf$. Then 
$\mathrm{Im}(F_\infty M)\supset\cup_{n\geq 0}F_nN=F_\infty N$, which shows the surjectivity of $F_\infty f$. It follows that $F_\infty f$
is a linear isomorphism. 
\end{proof}

If $F_\bullet M$ and $F_\bullet N$ are filtrations of $\mathbb C$-vector spaces $M$ and $N$, then a filtration $F_\bullet(M\otimes N)$
of their tensor product $M\otimes N$ is defined by $F_n(M\otimes N):=\sum_{p+q=n}F_pM\otimes F_qN$; we denote it by 
$F_\bullet M\otimes F_\bullet N$, and we call it the {\it tensor product of $F_\bullet M$ and $F_\bullet N$}. 

An {\it algebra filtration} of a $\mathbb C$-algebra $A$ is a vector space filtration $F_\bullet A$ of $A$, such that 
$F_nA\cdot F_mA\subset F_{n+m}A$ for $n,m\geq 0$. Then $F_\infty A$ is a subalgebra of $A$, called the {\it total algebra} of 
the filtration; $\mathrm{gr}_\bullet(A)$ is then a graded algebra. 
If $f : A\to B$ is an algebra morphism  and $F_\bullet A$ is a filtration of $A$, then $f(F_\bullet A)$ is an algebra 
filtration of $B$. If $F_\bullet A$ and $F_\bullet B$ are filtrations of $\mathbb C$-algebras $A$ and $B$, then $F_\bullet A\otimes F_\bullet B$
is an algebra filtration of $A\otimes B$. 

An example of a filtration of an algebra $A$ is the {\it trivial filtration}
$F^{triv}_\bullet A$ defined by $F^{triv}_nA=A$ for any $n\geq0$. If $A$ has a unit, another example is the {\it unit filtration}
$F^{unit}_\bullet A$ defined by $F^{unit}_0A=\mathbb C1$ and $F^{unit}_nA=A$ for any $n>0$. 

Similarly, a Hopf algebra filtration of a Hopf algebra $H$ with coproduct $\Delta_H$ is a vector space filtration $F_\bullet H$ of $H$, 
which is an algebra filtration and such that $\Delta_H(F_nH) \subset \sum_{p+q=n}F_pH\otimes F_qH$ for any $n\geq0$. 
Then $F_\infty H$ is a Hopf subalgebra of $H$ and $\mathrm{gr}_\bullet(H)$ is a graded Hopf algebra. 

\subsection{Computation of $(\mathbb C\Gamma_C)'$}\label{sect:32:2811}

In \S\ref{app:C:1:2308}, we recall the category $\mathbf{CHA}$ of complete Hopf algebras (CHAs) 
and the functor $\mathbf{HA}_{coco}\to\mathbf{CHA}$, $H\mapsto H^\wedge$ with source the category $\mathbf{HA}_{coco}$ of cocommutative 
Hopf algebras. 

\begin{lem}\label{lem:iso:1530:2807}
If $\Gamma$ is a free group, there is an isomorphism $(\mathbb C\Gamma)^\wedge\simeq\hat T(\Gamma^{ab}\otimes\mathbb C)$ of CHAs, where
$\Gamma^{ab}$ is the abelianization of $\Gamma$ and for $V$ a vector space, $\hat T(V)$ is the CHA defined as the degree completion of 
the tensor algebra of $V$, where the elements of $V$ are primitive. 
\end{lem}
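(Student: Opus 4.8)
The plan is to establish the isomorphism $(\mathbb C\Gamma)^\wedge\simeq\hat T(\Gamma^{ab}\otimes\mathbb C)$ for a free group $\Gamma$ by exhibiting it explicitly on generators and then invoking a universal property. Write $\Gamma=F(x_i)_{i\in I}$ for a free generating set, so that $\Gamma^{ab}\otimes\mathbb C$ has basis $(\bar x_i)_{i\in I}$ and $\hat T(\Gamma^{ab}\otimes\mathbb C)$ is the completed free associative algebra $\mathbb C\langle\langle X_i\rangle\rangle$ on variables $X_i:=\bar x_i$. I would first recall that $(\mathbb C\Gamma)^\wedge$ is the completion of the group algebra with respect to the powers of the augmentation ideal $I_\Gamma:=\ker(\varepsilon:\mathbb C\Gamma\to\mathbb C)$, and that this completion carries a CHA structure in which each group element $\gamma$ is grouplike.

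First I would define a map $\hat T(\Gamma^{ab}\otimes\mathbb C)\to(\mathbb C\Gamma)^\wedge$ by sending each primitive generator $X_i$ to $x_i-1\in I_\Gamma$; since $\hat T(\Gamma^{ab}\otimes\mathbb C)$ is the completed \emph{free} complete Hopf algebra on the vector space $\Gamma^{ab}\otimes\mathbb C$ (its universal property being that a continuous CHA morphism out of it is the same as a choice of topologically nilpotent primitive image for each basis vector), and since each $x_i-1$ is primitive-free but lies in $I_\Gamma$, I must check that $x_i-1$ is sent to a \emph{primitive} element only after passing to the associated graded, or rather construct the morphism in the opposite direction where grouplikeness is automatic. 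Cleaner is to go the other way: define $\Psi:(\mathbb C\Gamma)^\wedge\to\hat T(\Gamma^{ab}\otimes\mathbb C)$ by $x_i\mapsto\exp(X_i)=\sum_{k\geq0}X_i^k/k!$, which is grouplike in the completed tensor algebra, hence extends (by the universal property of the group algebra, $\Gamma$ being free so that any assignment of grouplike elements to generators extends to a group homomorphism into the grouplike elements, then $\mathbb C$-linearly and by continuity) to a morphism of complete Hopf algebras.

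The key step is then to show $\Psi$ is an isomorphism. I would do this by comparing the two natural filtrations: the $I_\Gamma$-adic filtration on $(\mathbb C\Gamma)^\wedge$ and the degree filtration on $\hat T(\Gamma^{ab}\otimes\mathbb C)$. Under $\Psi$ one has $x_i-1\mapsto\exp(X_i)-1=X_i+(\text{higher degree})$, so $\Psi(I_\Gamma)\subset(\text{degree}\geq1)$ and more precisely $\Psi$ is filtered. Passing to associated graded, $\mathrm{gr}\,\Psi$ sends the class of $x_i-1$ to $X_i$; the associated graded of $(\mathbb C\Gamma)^\wedge$ for the augmentation filtration is, for $\Gamma$ free, the free associative algebra on $\Gamma^{ab}\otimes\mathbb C$ (this is the classical theorem of Magnus--Witt identifying $\mathrm{gr}(\mathbb C\Gamma)$ with the tensor algebra on the abelianization for a free group), so $\mathrm{gr}\,\Psi$ is precisely the identity of the free associative algebra on the $X_i$. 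An isomorphism on associated graded between complete filtered spaces, together with completeness and exhaustiveness of both filtrations, yields that $\Psi$ itself is an isomorphism (the filtered analogue of Lem.~\ref{graded:crit:iso}).

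The main obstacle is the input from the Magnus--Witt theory: identifying $\mathrm{gr}_\bullet(\mathbb C\Gamma)\cong T(\Gamma^{ab}\otimes\mathbb C)$ for $\Gamma$ free, i.e.\ that the augmentation-ideal filtration has associated graded the free associative algebra generated in degree one by the abelianization. This is where freeness of $\Gamma$ is essential and where all the content lies; everything else is formal manipulation of grouplike and primitive elements and a filtered isomorphism criterion. I would cite the Magnus embedding / the structure of the free group algebra completion rather than reprove it. A secondary point to verify carefully is that $\Psi$ is a morphism of \emph{Hopf} algebras and not merely of algebras — but this is immediate from the fact that $\exp(X_i)$ is grouplike and that $\Gamma$-grouplikeness is preserved, so $\Psi$ intertwines the coproducts on grouplike generators and hence everywhere by continuity and multiplicativity.
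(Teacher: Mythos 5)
Your proposal is correct, and your forward map is exactly the one the paper uses: $\gamma_x\mapsto\exp(v_x)$ sends generators of the free group to grouplike elements of $\hat T(\Gamma^{ab}\otimes\mathbb C)$, hence extends to a CHA morphism out of $(\mathbb C\Gamma)^\wedge$. Where you diverge is in proving this morphism is an isomorphism. The paper constructs an explicit inverse: the assignment $v_x\mapsto\log(\gamma_x)=\sum_{n\geq1}(-1)^{n+1}(\gamma_x-1)^n/n$ lands in the primitive elements of $(\mathbb C\Gamma)^\wedge$, so it extends to a CHA morphism $\hat T(\Gamma^{ab}\otimes\mathbb C)\to(\mathbb C\Gamma)^\wedge$, and the two composites are checked to be the identity on (topological) generators. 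You instead pass to associated gradeds for the $I_\Gamma$-adic and degree filtrations, observe that $\mathrm{gr}\,\Psi$ is the identity of the tensor algebra once one knows $\mathrm{gr}(\mathbb C\Gamma)\cong T(\Gamma^{ab}\otimes\mathbb C)$ for $\Gamma$ free (Magnus--Witt), and conclude by the complete-filtered analogue of Lem.~\ref{graded:crit:iso}. Both routes work; the trade-off is that the paper's argument is self-contained (it never needs to know $\mathrm{gr}(\mathbb C\Gamma)$ beyond degree $1$, since the log map supplies the inverse directly), whereas yours outsources the essential content to the classical identification of the associated graded of the free group algebra --- which is precisely where freeness enters, as you correctly point out. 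Your initial remark that the naive map $X_i\mapsto x_i-1$ fails to respect primitivity, and that one must therefore either exponentiate on one side or take logarithms on the other, is exactly the point; had you pursued the log direction you would have recovered the paper's explicit inverse.
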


\begin{proof}
By assumption, $\Gamma$ is the free group over a set $X$. Let $(\gamma_x)_{x\in X}$ be the corresponding generating family. 
Then $\Gamma^{ab}\otimes\mathbb C=\mathbb CX$;  let $(v_x)_{x\in X}$ be the canonical generating family of $\mathbb CX$. 
The assignment $\gamma_x\mapsto \mathrm{exp}(v_x)$ (defined as $\sum_{n\geq 0}v_x^{\otimes n}/n!$) for $x\in X$ defines a group morphism 
$\Gamma\to \hat T(\mathbb CX)^\times$, therefore an 
algebra morphism $\mathbb C\Gamma\to \hat T(\mathbb CX)$, which is checked to the compatible with coproducts. It is compatible with 
augmentations, therefore gives rise to a CHA morphism $(\mathbb C\Gamma)^\wedge\to \hat T(\mathbb CX)$. The assignment
$v_x\mapsto \mathrm{log}(\gamma_x)$ (defined as $\sum_{n\geq1}(-1)^{n+1}(\gamma_x-1)^n/n$) for $x\in X$ defines a linear map 
$\mathbb CX\to(\mathbb C\Gamma)^\wedge$, therefore an algebra morphism $T(\mathbb CX)\to(\mathbb C\Gamma)^\wedge$, which 
is checked to be compatible with coproducts. It is compatible with augmentations, therefore giving rise to a CHA morphism 
 $\hat T(\mathbb CX)\to(\mathbb C\Gamma)^\wedge$. The two constructed CHA morphisms can be checked to be inverses of each other. 
\end{proof}

In \S\ref{app:C:1:2308}, we define a subcategory $\mathbf{HA}_{fd}$ of finite-dimensional Hopf algebras of the category $\mathbf{HA}$ 
of Hopf algebras, and a duality functor $\mathbf{HA}_{fd}\to \mathbf{HA}$, $H\mapsto H'$. 
When $H$ is the group algebra of a finitely generated group, $H'$ is as in Def. \ref{defn:1:5}. 

\begin{lem}\label{comp:CGamma:prime} 
Let $\Gamma$ be the free group over a finite set of generators (for example, $\Gamma=\Gamma_C$). 
Then $\mathbb C\Gamma$ is an object in $\mathbf{HA}_{fd}$
and there is a Hopf algebra isomorphism  $(\mathbb C\Gamma)'\simeq\mathrm{Sh}((\Gamma^{ab}\otimes\mathbb C)^*)$ (we denote by 
$V^*$ the dual of a vector space $V$). 
\end{lem}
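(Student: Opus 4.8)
The plan is to combine Lemma \ref{lem:iso:1530:2807}, which identifies the complete Hopf algebra $(\mathbb C\Gamma)^\wedge$ with $\hat T(V)$ where $V:=\Gamma^{ab}\otimes\mathbb C$, with the classical duality between the completed concatenation Hopf algebra $\hat T(V)$ and the shuffle Hopf algebra $\mathrm{Sh}(V^*)$. Throughout, write $I\subset\mathbb C\Gamma$ for the augmentation ideal; since $I^{n+1}$ is spanned by the products $(\gamma_1-1)\cdots(\gamma_{n+1}-1)$, one has by definition $F_n(\mathbb C\Gamma)^*=(\mathbb C\Gamma/I^{n+1})^*$, so that $(\mathbb C\Gamma)'=\cup_{n\geq0}(\mathbb C\Gamma/I^{n+1})^*$ is the continuous dual of the $I$-adic completion $(\mathbb C\Gamma)^\wedge$.

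The first step is membership in $\mathbf{HA}_{fd}$. The isomorphism of Lemma \ref{lem:iso:1530:2807} carries the $I$-adic filtration on $\mathbb C\Gamma$ to the filtration of $\hat T(V)$ by powers of its augmentation ideal, whose associated graded is $T(V)=\oplus_{n\geq0}V^{\otimes n}$. As $\Gamma$ is free of finite rank, $V=\Gamma^{ab}\otimes\mathbb C$ is finite-dimensional, so each $V^{\otimes n}$, and hence each quotient $\mathbb C\Gamma/I^{n+1}$, is finite-dimensional. This is exactly the defining finiteness condition for $\mathbf{HA}_{fd}$ (finite-dimensionality of the quotients by powers of the augmentation ideal), so $\mathbb C\Gamma$ is an object of $\mathbf{HA}_{fd}$.

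The second step is to identify $(\mathbb C\Gamma)'$ with the graded dual of $\hat T(V)$. The Hopf structure on $(\mathbb C\Gamma)'$ (cf. Lem. \ref{lem:4:8:jeu}) is obtained by transposing the product and coproduct of $(\mathbb C\Gamma)^\wedge$; since the isomorphism $(\mathbb C\Gamma)^\wedge\simeq\hat T(V)$ preserves filtrations, it induces an isomorphism on continuous duals, giving a Hopf algebra isomorphism $(\mathbb C\Gamma)'\simeq\hat T(V)'$, where the prime now denotes the graded dual. The grading $\hat T(V)=\prod_{n\geq0}V^{\otimes n}$ has finite-dimensional pieces, so its graded dual is $\oplus_{n\geq0}(V^{\otimes n})^*=\oplus_{n\geq0}(V^*)^{\otimes n}=\mathrm{Sh}(V^*)$ as a graded vector space. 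Under this identification the concatenation product of $\hat T(V)$ transposes to the deconcatenation coproduct $\Delta_{\mathrm{Sh}(V^*)}$ of Definition \ref{DEFSHV}, while the coproduct of $\hat T(V)$ — for which the elements of $V$ are primitive — transposes to the shuffle product $\shuffle$ on $\mathrm{Sh}(V^*)$. This yields the desired isomorphism $(\mathbb C\Gamma)'\simeq\mathrm{Sh}(V^*)=\mathrm{Sh}((\Gamma^{ab}\otimes\mathbb C)^*)$.

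The main obstacle is carefully matching the Hopf-algebra structures under duality: checking that the concatenation product and the primitive-generator coproduct on $\hat T(V)$ dualize precisely to $\Delta_{\mathrm{Sh}(V^*)}$ and $\shuffle$, and that the abstract graded dual computed for $\hat T(V)$ agrees with the restricted dual $(\mathbb C\Gamma)'$ defined via the augmentation filtration. The finite-dimensionality established in the first step is exactly what guarantees that the pairings $V^{\otimes n}\times(V^*)^{\otimes n}\to\mathbb C$ are perfect in each degree, hence that these transposed operations are well-defined and mutually inverse to the expected ones.
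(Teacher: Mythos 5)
Your proof is correct and follows essentially the same route as the paper's: identify $(\mathbb C\Gamma)'$ with the continuous dual of $(\mathbb C\Gamma)^\wedge$, transport the isomorphism of Lem.~\ref{lem:iso:1530:2807} to the duals, and compute $\hat T(V)'\simeq\mathrm{Sh}(V^*)$, the only difference being that you spell out this last duality step (concatenation dualizing to deconcatenation, the primitive coproduct dualizing to the shuffle product), which the paper asserts without detail. One minor remark: the paper's definition of $\mathbf{HA}_{fd}$ only requires $\mathrm{gr}^1H=J_H/J_H^2$ to be finite-dimensional --- equivalent to your condition that all quotients $H/J_H^{n+1}$ be finite-dimensional, since $\mathrm{gr}^\bullet H$ is generated in degree one --- and for the group algebra this is immediate from $\mathrm{gr}^1(\mathbb C\Gamma)=\Gamma^{ab}\otimes\mathbb C$ and finite generation, without any need to invoke the isomorphism with $\hat T(V)$.
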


\begin{proof} In this proof, we set $V:=\Gamma^{ab}\otimes\mathbb C$. The first statement follows from the finite generation of $\Gamma$. 
It implies that $(\mathbb C\Gamma)^\wedge$ is an object in
$\mathbf{CHA}_{fd}$, and by Lem. \ref{lem:5:12:1927}, the duals $(\mathbb C\Gamma)'$ and $((\mathbb C\Gamma)^\wedge)'$ are well-defined 
isomorphic objects in $\mathbf{HA}_{coco}$. 

By Lem. \ref{lem:iso:1530:2807}, the CHAs $(\mathbb C\Gamma)^\wedge$ and $\hat T(V)$ are isomorphic. Since 
$(\mathbb C\Gamma)^\wedge$ is an object in $\mathbf{CHA}_{fd}$, so is $\hat T(V)$, so $(\mathbb C\Gamma)^\wedge$ and $\hat T(V)$
are isomorphic objects in $\mathbf{CHA}_{fd}$. By Lem. \ref{2237:1107}(b), this gives rise to an isomorphism  
$((\mathbb C\Gamma)^\wedge)'\simeq \hat T(V)'$ in $\mathbf{HA}_{coco}$. Since $V$ is finite dimensional, there is 
an isomorphism $\hat T(V)'\simeq\mathrm{Sh}(V^*)$. 

The result follows by composition of these isomorphisms. One knows that $\Gamma_C$  is a free group over a finite set of 
generators, which implies that it gives an example of the above statements. 
\end{proof}

\begin{rem}
Lem. \ref{comp:CGamma:prime} is proved in \cite{BGF}, Example 3.229 when $|X|=2$. 
\end{rem}

\subsection{A Hopf pairing $p_{J,x_0} : \mathrm{Sh}(\mathrm H_C) \otimes\mathbb C\Gamma_C\to
\mathbb C$}\label{proof:prop:iso:B:a:1}

Until the end of \S\ref{sect:3:2908}, an element $(J,x_0)\in \mathrm{MC}_{nd}(C)\times\tilde C$ will be fixed.

\begin{defn}\label{def:pairing:sigma:x_0:2308}
$p_{J,x_0}$ is the linear map $\mathbb C\Gamma_C\otimes\mathrm{Sh}(\mathrm H_C)\to\mathbb C$ such that 
$\gamma\otimes a\mapsto I_{x_0}(J_*(a))(\gamma x_0)$
for $\gamma\in\Gamma_C$, $a\in\mathrm{Sh}(\mathrm H_C)$.
\end{defn}

\begin{lem}\label{HA:pairing}
The pairing $p_{J,x_0} : \mathbb C\Gamma_C\otimes\mathrm{Sh}(\mathrm H_C)\to\mathbb C$
is a Hopf pairing (in the sense of \S\ref{sect:A:3:3108}).
\end{lem}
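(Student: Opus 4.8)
The plan is to verify directly the defining axioms of a Hopf pairing for $p:=p_{\sigma,x_0}$ (as set out in \S\ref{sect:A:3:3108}): compatibility of the unit of each factor with the counit of the other, and compatibility of the product in each factor with the coproduct in the other; the antipode compatibility then follows formally, since the antipode of a Hopf algebra is determined by its bialgebra structure. Throughout I use that the section $\sigma : \mathrm H_C^{\mathrm{dR}}\to\Omega(C)$ induces, by functoriality of the shuffle construction, a morphism of Hopf algebras $\mathrm{Sh}(\sigma) : \mathrm{Sh}(\mathrm H_C^{\mathrm{dR}})\to\mathrm{Sh}(\Omega(C))$, which I again denote $\sigma$; in particular $\sigma(a\sha b)=\sigma(a)\sha\sigma(b)$ and $\Delta_{\mathrm{Sh}(\Omega(C))}(\sigma(a))=(\sigma\otimes\sigma)(\Delta_{\mathrm{Sh}(\mathrm H_C^{\mathrm{dR}})}(a))$.

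First I would dispatch the counit conditions. For $\gamma\in\Gamma_C$ one has $p(\gamma\otimes 1)=I_{x_0}(1)(\gamma x_0)=1=\epsilon(\gamma)$, since $I_{x_0}(1)$ is the constant function $1$; and for $a\in\mathrm{Sh}(\mathrm H_C^{\mathrm{dR}})$ one has $p(1\otimes a)=I_{x_0}(\sigma(a))(x_0)$, which vanishes in positive degree by construction of $\mathrm{int}_{x_0}$ (an iterated integral based at $x_0$ vanishes at $x_0$) and equals $1$ in degree $0$, hence equals $\epsilon(a)$. Next, the compatibility of the shuffle product with the (group-like) coproduct of $\mathbb C\Gamma_C$ follows by combining the facts that both $\sigma$ and $I_{x_0}$ are algebra morphisms (Lem. \ref{lem:221:3108}): indeed
$$
p(\gamma\otimes a\sha b)=I_{x_0}\big(\sigma(a)\sha\sigma(b)\big)(\gamma x_0)=\big(I_{x_0}(\sigma(a))\cdot I_{x_0}(\sigma(b))\big)(\gamma x_0)=p(\gamma\otimes a)\,p(\gamma\otimes b),
$$
which is exactly the required identity, since $\Delta(\gamma)=\gamma\otimes\gamma$.

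The heart of the argument is the compatibility of the product of $\mathbb C\Gamma_C$ with the deconcatenation coproduct of $\mathrm{Sh}$, namely $p(\gamma_1\gamma_2\otimes a)=p(\gamma_1\otimes a^{(1)})\,p(\gamma_2\otimes a^{(2)})$, where $\Delta_{\mathrm{Sh}}(a)=a^{(1)}\otimes a^{(2)}$. Here I would apply the chain rule for iterated integrals (Lem. \ref{lem:chain:rule:2308}) to $\sigma(a)$ with intermediate point $x_1:=\gamma_1 x_0$, using $\Delta_{\mathrm{Sh}(\Omega(C))}(\sigma(a))=\sigma(a^{(1)})\otimes\sigma(a^{(2)})$, to obtain
$$
I_{x_0}(\sigma(a))(\gamma_1\gamma_2 x_0)=I_{x_0}(\sigma(a^{(1)}))(\gamma_1 x_0)\cdot I_{\gamma_1 x_0}(\sigma(a^{(2)}))(\gamma_1\gamma_2 x_0).
$$
The first factor is $p(\gamma_1\otimes a^{(1)})$; for the second I would invoke the $\Gamma_C$-invariance Lem. \ref{lem:invce:2308} with $\gamma=\gamma_1$ and evaluation point $\gamma_2 x_0$ to rewrite $I_{\gamma_1 x_0}(\sigma(a^{(2)}))(\gamma_1\gamma_2 x_0)=I_{x_0}(\sigma(a^{(2)}))(\gamma_2 x_0)=p(\gamma_2\otimes a^{(2)})$, which yields the claim.

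I expect the only delicate point to be keeping the bookkeeping of base points and evaluation points straight in the last step: the interplay between the chain rule (which shifts the base point while introducing the intermediate value) and the $\Gamma_C$-invariance (which simultaneously translates base point and evaluation point) must be applied in exactly the right order. Once those two lemmas are aligned, the computation is purely formal, and, as noted, the antipode compatibility requires no separate verification.
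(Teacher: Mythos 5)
Your proof is correct and follows essentially the same route as the paper's: the product/deconcatenation compatibility via the chain rule (Lem. \ref{lem:chain:rule:2308}) combined with the $\Gamma_C$-invariance (Lem. \ref{lem:invce:2308}), and the shuffle/group-like compatibility via the algebra-morphism property of $I_{x_0}\circ\sigma$. The only difference is that you also spell out the counit conditions, which the paper leaves implicit.
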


\begin{proof}
Let $p_{J,x_0} : \mathbb C\Gamma_C\otimes\mathrm{Sh}(\mathrm H_C)\to\mathbb C$ be the map defined in 
Def. \ref{HA:pairing}. %Prop. \ref{prop:form:2905}, (a). 
For any $\gamma,\gamma'\in\Gamma_C$ and $a\in\mathrm{Sh}(\mathrm H_C)$, one has 
\begin{align}\label{Hopf:pairing:a}
&p_{J,x_0}(\gamma\gamma',a)=I_{x_0}(%\underline 
J_*(a))(\gamma\gamma'x_0)
=I_{x_0}(%\underline 
J_*(a^{(1)}))(\gamma x_0)I_{\gamma x_0}(%\underline 
J_*(a^{(2)}))(\gamma\gamma'x_0)
\\ & \nonumber =I_{x_0}(%\underline 
J_*(a^{(1)}))(\gamma x_0)I_{x_0}(%\underline 
J_*(a^{(2)}))(\gamma'x_0)
=p_{J,x_0}(\gamma,a^{(1)})p_{J,x_0}(\gamma',a^{(2)}), 
\end{align}
where the second equality follows from Lem.  \ref{lem:chain:rule:2308} %\ref{lem:IC:copdt} 
and the fact that $%\underline 
J_* : \mathrm{Sh}(\mathrm H_C)\to \mathrm{Sh}(\Omega(C))$
is a Hopf algebra morphism, the third equality follows from the invariance of the image of $I_{x_0}$ by the diagonal action of 
$\Gamma_C$ (see Lem. \ref{lem:invce:2308}), % \ref{lem:IC:Gamma:invt}), 
and the first and last equalities follow from definitions. 

Denote by $\shuffle$ the product in the algebra $\mathrm{Sh}(\mathrm H_C)$. Let $\gamma\in\Gamma_C$ and $a,a'\in \mathrm{Sh}(\mathrm H_C)$. 
Then 
\begin{align}\label{Hopf:pairing:b}
& p_{J,x_0}(\gamma,a\shuffle a')=I_{x_0}(%\underline 
J_*(a\shuffle a'))(\gamma x_0)
=I_{x_0}(%\underline 
J_*(a))(\gamma x_0)I_{x_0}(%\underline 
J_*(a'))(\gamma x_0)
\\ & \nonumber 
=p_{J,x_0}(\gamma,a)p_{J,x_0}(\gamma,a')
=p_{J,x_0}(\gamma^{(1)},a)p_{J,x_0}(\gamma^{(2)},a')
\end{align}
where the first and third equalities follow from definitions, the second equality follows from the facts that 
$%\underline 
J_* : \mathrm{Sh}(\mathrm H_C)\to \mathrm{Sh}(\Omega(C))$ and $I_{x_0} : \mathrm{Sh}(\Omega(C))\to
\mathcal O_{hol}(\tilde C)$ are algebra morphisms, and the last equality follows from the group-likeness of $\gamma$ for 
the coproduct of $\mathbb C\Gamma_C$. 

The equalities \eqref{Hopf:pairing:a} and \eqref{Hopf:pairing:b} imply the statement. 
\end{proof}

\subsection{Proof that $\nu(p_{J,x_0}) : F_\infty\mathrm{Sh}(\mathrm H_C)\to(\mathbb C\Gamma_C)'$ is a Hopf algebra isomorphism}\label{sect:35:2811}

By Lem. \ref{lem:4:11:0307}, the Hopf algebra pairing $p_{J,x_0}$ (see Lem. \ref{HA:pairing}) gives rise to a  
Hopf algebra morphism $\nu(p_{J,x_0}) : F_\infty\mathrm{Sh}(\mathrm H_C)\to(\mathbb C\Gamma_C)'$, which we now study. 

\subsubsection{Construction of a Hopf algebra morphism $\mathrm{Sh}(\mathrm H_C)\to\mathrm{Sh}((\mathrm H_C^{\mathrm{B}})^*)$}

Let $\mathrm{H}_1(C,\mathbb Z)$ be the first singular homology group of $C$ with integer coefficients, and 
let us set $\mathrm{H}_C^{\mathrm{B}}:=\mathrm{H}_1(C,\mathbb Z)\otimes\mathbb C$. 

\begin{lem}\label{2123:3107}
There is a Hopf algebra isomorphism $(\mathbb C\Gamma_C)'\simeq \mathrm{Sh}((\mathrm{H}_C^{\mathrm{B}})^*)$. 
\end{lem}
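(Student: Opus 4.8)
The plan is to reduce the statement to Lemma \ref{comp:CGamma:prime}, which already computes the restricted dual of the group algebra of a finitely generated free group, and then to identify the relevant abelianization with Betti homology. The one genuinely geometric input required is that $\Gamma_C$ is a free group of finite rank; everything else is formal.

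First I would recall that, since $p : \tilde C\to C$ is a universal cover, the deck transformation group $\Gamma_C=\mathrm{Aut}(\tilde C/C)$ is canonically isomorphic to the fundamental group $\pi_1(C)$. Next I would establish that this group is a finitely generated free group. Writing $C=\overline C\smallsetminus S$ as in the setup, with $\overline C$ a smooth projective curve of genus $g$ and $S$ a finite set which is nonempty (as $C$ is affine, hence noncompact), the Riemann surface $C$ is homotopy equivalent to a finite wedge of circles; equivalently, it admits a deformation retraction onto a finite connected graph. Hence $\pi_1(C)$ is free of finite rank $2g+|S|-1=1-\chi(C)$. This is the step I expect to require the most care, since it is where the geometry of affine curves enters; the remaining steps are purely algebraic.

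With $\Gamma_C$ identified as a finitely generated free group, I would apply Lemma \ref{comp:CGamma:prime} directly: it yields that $\mathbb C\Gamma_C$ is an object of $\mathbf{HA}_{fd}$ together with a Hopf algebra isomorphism $(\mathbb C\Gamma_C)'\simeq\mathrm{Sh}((\Gamma_C^{ab}\otimes\mathbb C)^*)$.

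It then remains to identify the vector space $\Gamma_C^{ab}\otimes\mathbb C$ with $\mathrm{H}_C^{\mathrm{B}}$. By the Hurewicz theorem, the abelianization $\Gamma_C^{ab}=\pi_1(C)^{ab}$ is naturally isomorphic to the integral singular homology group $\mathrm{H}_1(C,\mathbb Z)$; tensoring with $\mathbb C$ gives a natural isomorphism $\Gamma_C^{ab}\otimes\mathbb C\simeq\mathrm{H}_1(C,\mathbb Z)\otimes\mathbb C=\mathrm{H}_C^{\mathrm{B}}$, whence dually $(\Gamma_C^{ab}\otimes\mathbb C)^*\simeq(\mathrm{H}_C^{\mathrm{B}})^*$. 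Substituting this isomorphism into the conclusion of Lemma \ref{comp:CGamma:prime} and using the functoriality of the assignment $V\mapsto\mathrm{Sh}(V)$ yields the desired Hopf algebra isomorphism $(\mathbb C\Gamma_C)'\simeq\mathrm{Sh}((\mathrm{H}_C^{\mathrm{B}})^*)$.
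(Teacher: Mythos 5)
Your proposal is correct and follows essentially the same route as the paper: identify $\Gamma_C$ with $\pi_1(C)$, use that an affine curve has free (finitely generated) fundamental group, apply Lemma \ref{comp:CGamma:prime}, and identify $\Gamma_C^{ab}\otimes\mathbb C$ with $\mathrm{H}_C^{\mathrm{B}}$ via Hurewicz. If anything, you are slightly more explicit than the paper about the finite generation of $\Gamma_C$ (via the rank computation $2g+|S|-1$), which is indeed needed to invoke Lemma \ref{comp:CGamma:prime}.
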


\begin{proof}
Since $C$ is an affine curve, the group $\Gamma_C$ is free. Lem. \ref{comp:CGamma:prime} then implies that $(\mathbb C\Gamma_C)'$
is isomorphic to $\mathrm{Sh}((\Gamma_C^{ab}\otimes\mathbb C)^*)$. The choice of a point $x_0$ in $\tilde C$ induces an isomorphism
$\Gamma_C\simeq\pi_1(C,x_0)$, whose conjugation class is independent on this choice; this isomorphism induces an isomorphism 
$\Gamma_C^{ab}\simeq\pi_1(C,x_0)^{ab}=\mathrm{H}_1^{\mathrm{B}}(C,\mathbb Z)$ also independent on this choice, from which one derives 
an isomorphism $\Gamma_C^{ab}\otimes\mathbb C\simeq \mathrm{H}_C^{\mathrm{B}}$. 
\end{proof}

\begin{defn}\label{def:5:13:0308}
$q_{J,x_0} : \mathrm{Sh}(\mathrm H_C)\to\mathrm{Sh}((\mathrm{H}_C^{\mathrm{B}})^*)$ is the Hopf algebra 
morphism obtained by composition of 

(a) the Hopf algebra isomorphism $\mathrm{Sh}(\mathrm H_C)\stackrel{\sim}{\to}F_\infty\mathrm{Sh}(\mathrm H_C)$ (see Lem. \ref{lem:comp:Sh:heart:1628} with $V=\mathrm H_C$), 

(b) the Hopf algebra morphism $\nu(p_{J,x_0}) : F_\infty\mathrm{Sh}(\mathrm H_C)\to(\mathbb C\Gamma_C)'$, 
%arising by Lem. \ref{lem:4:11:0307} from the Hopf algebra pairing $p_{J,x_0}$ (see Lem. \ref{HA:pairing}), 

(c) the Hopf algebra isomorphism $(\mathbb C\Gamma_C)'\simeq \mathrm{Sh}((\mathrm{H}_C^{\mathrm{B}})^*)$ (see Lem. \ref{2123:3107}). 
\end{defn}

\subsubsection{A criterion for a Hopf algebra morphism $\mathrm{Sh}(V)\to\mathrm{Sh}(W)$ to be an isomorphism}

\begin{lem}\label{lem:514:0108}
Let $V,W$ be vector spaces and let $f : \mathrm{Sh}(V)\to \mathrm{Sh}(W)$ be a Hopf algebra morphism. 
Then $f(V)\subset W$, where $V,W$ are the degree $1$ subspaces of $\mathrm{Sh}(V),\mathrm{Sh}(W)$. Denote by 
$\mathrm{gr}_1(f) : V\to W$ the corresponding linear map. Then $f$ is a Hopf algebra isomorphism if and only if 
$\mathrm{gr}_1(f)$ is a vector space isomorphism. 
\end{lem}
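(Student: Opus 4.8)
The plan is to reduce the whole statement to the coalgebra structure of the shuffle Hopf algebra, using Lem. \ref{lem:comp:Sh:heart:1628} to identify the filtration $F_\bullet^\heartsuit$ with the grading. I will use the maps $\phi_n:=(id-\epsilon\eta)^{\otimes n+1}\circ\Delta^{(n+1)}$ defining $F_\bullet^\heartsuit$ (Def. \ref{conilp:filtra}), write $\Delta^{(n)}:\mathrm{Sh}(V)\to\mathrm{Sh}(V)^{\otimes n}$ for the iterated coproduct, and let $\pi_V:\mathrm{Sh}(V)\to V$ be the projection onto the degree-$1$ part.

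I would first establish $f(V)\subset W$. A direct computation shows $\ker\phi_1=\mathbb C1\oplus\mathrm{Prim}(\mathrm{Sh}(V))$, so that $F_1^\heartsuit\mathrm{Sh}(V)=\mathbb C1\oplus\mathrm{Prim}(\mathrm{Sh}(V))$; by Lem. \ref{lem:comp:Sh:heart:1628}(a) this space equals $\mathbb C1\oplus V$, and intersecting with $\ker\epsilon$ gives $\mathrm{Prim}(\mathrm{Sh}(V))=V$, and likewise $\mathrm{Prim}(\mathrm{Sh}(W))=W$. Since $f$ intertwines $\Delta,\epsilon,\eta$ it preserves primitive elements, whence $f(V)\subset W$ and $\mathrm{gr}_1^\heartsuit(f):V\to W$ is well defined, equal to the restriction $f|_V$.

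Next I would record that $f$ is compatible with the heart filtrations: the same intertwining yields $\phi_n^{\mathrm{Sh}(W)}\circ f=f^{\otimes n+1}\circ\phi_n^{\mathrm{Sh}(V)}$, hence $f(F_n^\heartsuit\mathrm{Sh}(V))\subset F_n^\heartsuit\mathrm{Sh}(W)$ for all $n$, and by Lem. \ref{lem:comp:Sh:heart:1628} these are the degree filtrations $F_\bullet$. The forward implication is then immediate: if $f$ is a Hopf algebra isomorphism, then $f^{-1}$ is also a Hopf algebra morphism and preserves primitives, so $f|_V:V\to W$ is a linear isomorphism with inverse $f^{-1}|_W$, and $f|_V=\mathrm{gr}_1^\heartsuit(f)$.

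For the converse I would pass to associated graded maps. By Lem. \ref{lem:comp:Sh:heart:1628} one has $\mathrm{gr}^\heartsuit\mathrm{Sh}(V)=\mathrm{Sh}(V)$ as graded Hopf algebras, and $\mathrm{gr}^\heartsuit(f):\mathrm{Sh}(V)\to\mathrm{Sh}(W)$ is a graded coalgebra morphism with degree-$1$ component $\mathrm{gr}_1^\heartsuit(f)$. The decisive point is a rigidity computation. Setting $q_V^{(n)}:=\pi_V^{\otimes n}\circ\Delta^{(n)}:\mathrm{Sh}(V)\to V^{\otimes n}$, one checks on a word $[v_1|\cdots|v_m]$ that only the splitting into $n$ singletons survives the projection, so $q_V^{(n)}$ vanishes on $\mathrm{Sh}_m(V)$ for $m\neq n$ and restricts to the identity of $V^{\otimes n}=\mathrm{Sh}_n(V)$ for $m=n$. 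Coalgebra-naturality of $\mathrm{gr}^\heartsuit(f)$ gives $q_W^{(n)}\circ\mathrm{gr}^\heartsuit(f)=(\pi_W\circ\mathrm{gr}^\heartsuit(f))^{\otimes n}\circ\Delta^{(n)}=(\mathrm{gr}_1^\heartsuit(f))^{\otimes n}\circ q_V^{(n)}$, and restricting to $\mathrm{Sh}_n$ yields $\mathrm{gr}_n^\heartsuit(f)=(\mathrm{gr}_1^\heartsuit(f))^{\otimes n}$. Thus if $\mathrm{gr}_1^\heartsuit(f)$ is an isomorphism, so is each $\mathrm{gr}_n^\heartsuit(f)$, i.e. $\mathrm{gr}^\heartsuit(f)$ is an isomorphism of graded vector spaces; Lem. \ref{graded:crit:iso} together with $F_\infty\mathrm{Sh}(V)=\mathrm{Sh}(V)$ and $F_\infty\mathrm{Sh}(W)=\mathrm{Sh}(W)$ (Lem. \ref{lem:comp:Sh:heart:1628}(b)) then shows that $f$ is a linear isomorphism, and a bijective Hopf algebra morphism is a Hopf algebra isomorphism. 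The main obstacle is precisely this rigidity step, which expresses the cofreeness of the deconcatenation coalgebra — a graded coalgebra morphism is determined by its degree-$1$ part; it is here, in the coalgebra and not the algebra structure, that the argument has its content.
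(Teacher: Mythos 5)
Your proof is correct and follows essentially the same route as the paper: compatibility of $f$ with the filtrations $F_\bullet^\heartsuit$, identification of these with the degree filtrations via Lem.~\ref{lem:comp:Sh:heart:1628}, and the criterion of Lem.~\ref{graded:crit:iso} applied to the associated graded. Your explicit cofreeness computation $\mathrm{gr}_n^\heartsuit(f)=(\mathrm{gr}_1^\heartsuit(f))^{\otimes n}$ merely spells out the step the paper states as "$\mathrm{gr}^\heartsuit_\bullet f$ can be identified with $\mathrm{Sh}(\mathrm{gr}_1^\heartsuit(f))$", and your use of primitives to get $f(V)\subset W$ is equivalent to the paper's unit/counit argument for $F_1^\heartsuit f$.
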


\begin{proof}
 Since $f$ is an Hopf algebra morphism, Lem. \ref{prop:LA:filt}(c) implies that it induces a linear map
 $F_nf : F_n\mathrm{Sh}(V)\to F_n\mathrm{Sh}(W)$ for any $n\geq 0$. 
 When $n=1$, $F_1f$ is a linear map $\mathbb C\oplus V\to\mathbb C\oplus W$. The compatibility 
 of $F_1f$ with the units and counits on both sides implies that $F_1f$ is the direct 
 sum of $id_{\mathbb C} : \mathbb C\to \mathbb C$ and a linear map $V\to W$, which can be identified with the 
 associated graded of $f$ for the filtration $F_\bullet$. 

If $f$ is an Hopf algebra morphism, then for each $n\geq 0$, $F_nf : F_n\mathrm{Sh}(V)\to 
F_n\mathrm{Sh}(W)$ is a linear isomorphism, which when $n=1$ implies the same for $id_{\mathbb C}
\oplus \mathrm{gr}_1(f)$, which implies that $\mathrm{gr}_1(f)$ is a linear isomorphism. 

Assume now that $f : \mathrm{Sh}(V)\to \mathrm{Sh}(W)$ is a Hopf algebra morphism such that $\mathrm{gr}_1(f)$ 
is a linear isomorphism. The associated graded map $\mathrm{gr}_\bullet f : \mathrm{gr}_\bullet\mathrm{Sh}(V)
\to \mathrm{gr}_\bullet\mathrm{Sh}(W)$ can be identified, under the canonical isomorphisms 
$\mathrm{gr}_\bullet\mathrm{Sh}(X)\simeq \mathrm{Sh}(X)$ for $X=V,W$ (see Lem. \ref{lem:comp:Sh:heart:1628}), 
with $\mathrm{Sh}(\mathrm{gr}_1(f))$, 
which is an isomorphism of graded vector spaces. Since the filtrations $F_\bullet$ in the 
source and target are exhaustive and by Lem. \ref{graded:crit:iso}, this implies that $f$ is an isomorphism. 
\end{proof}

\subsubsection{Isomorphism status of the linear map $\mathrm{gr}_1(q_{J,x_0}) : 
\mathrm H_C\to (\mathrm{H}_C^{\mathrm{B}})^*$}

For $H$ a Hopf algebra, denote by $H_+$ the kernel of its counit morphism. 

\begin{lem}\label{lem:basic:pairing}
(a) The pairing $(\mathbb C\Gamma_C)_+\otimes \Omega(C)\to\mathbb C$ given by  
$(\gamma-1)\otimes\omega\mapsto \int_{x_0}^{\gamma x_0}\omega$ for $\gamma\in\Gamma_C$, $\omega\in\Omega$, 
is independent on $x_0$. It factors through a pairing 
\begin{equation}\label{basic:pairing}
\mathrm{H}_C^{\mathrm{B}}
\otimes \mathrm H_C=(\mathbb C\Gamma_C)_+/(\mathbb C\Gamma_C)_+^2\otimes (\Omega(C)/d\mathcal O(C))\to\mathbb C.     
\end{equation}

(b) The linear map $\mathrm H_C\to (\mathrm{H}_C^{\mathrm{B}})^*$ induced by \eqref{basic:pairing} is equal to 
$\mathrm{gr}_1(q_{J,x_0}) : \mathrm H_C\to (\mathrm{H}_C^{\mathrm{B}})^*$.  

(c) The linear map $\mathrm{gr}_1(q_{J,x_0}) : \mathrm H_C\to (\mathrm{H}_C^{\mathrm{B}})^*$ 
is an isomorphism.
\end{lem}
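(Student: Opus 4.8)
The plan is to treat the three parts in order, with the essential mathematical content concentrated in (c). For (a), I would fix $\omega\in\Omega(C)$ and consider the map $\Phi_\omega:\mathbb C\Gamma_C\to\mathbb C$ defined on the basis $\Gamma_C$ by $\gamma\mapsto\int_{x_0}^{\gamma x_0}p^*\omega$ and extended linearly; its restriction to $J_{\mathbb C\Gamma_C}$ realises the proposed pairing. The additivity of $\gamma\mapsto\int_{x_0}^{\gamma x_0}p^*\omega$ as a map $\Gamma_C\to(\mathbb C,+)$ follows from the $\Gamma_C$-invariance of $p^*\omega$ (which gives $\int_{\gamma x_0}^{\gamma\gamma'x_0}p^*\omega=\int_{x_0}^{\gamma'x_0}p^*\omega$), and this same invariance yields the independence of the pairing from $x_0$ by the cocycle computation $\int_{x_0'}^{\gamma x_0'}p^*\omega=\int_{x_0}^{\gamma x_0}p^*\omega$. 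The additivity immediately gives $\Phi_\omega\big((\gamma-1)(\gamma'-1)\big)=0$, so the pairing factors through $J_{\mathbb C\Gamma_C}/J_{\mathbb C\Gamma_C}^2=\mathrm H_C^{\mathrm B}$; and for $\omega=df$ with $f\in\mathcal O(C)$ one has $\int_{x_0}^{\gamma x_0}p^*(df)=(p^*f)(\gamma x_0)-(p^*f)(x_0)=0$ because $p\circ\gamma=p$, so it also factors through $\Omega(C)/d\mathcal O(C)=\mathrm H_C^{\mathrm{dR}}$. This produces the pairing \eqref{basic:pairing}.

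For (b), I would unwind Def. \ref{def:5:13:0308} in degree $1$. By Lem. \ref{lem:comp:Sh:heart:1628} the $\heartsuit$-filtration on a shuffle algebra is the grading filtration, so $\mathrm{gr}_1^\heartsuit\mathrm{Sh}(\mathrm H_C^{\mathrm{dR}})=\mathrm H_C^{\mathrm{dR}}$ and $\mathrm{gr}_1^\heartsuit\mathrm{Sh}((\mathrm H_C^{\mathrm B})^*)=(\mathrm H_C^{\mathrm B})^*$, while the degree-$1$ graded piece of $(\mathbb C\Gamma_C)'$ is given by restriction to $J_{\mathbb C\Gamma_C}/J_{\mathbb C\Gamma_C}^2$, identifying it with $(\mathrm H_C^{\mathrm B})^*$. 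The outer two isomorphisms in Def. \ref{def:5:13:0308} are the canonical identifications on degree $1$, so it remains to compute $\mathrm{gr}_1^\heartsuit\nu(p_{\sigma,x_0})$. For $a\in\mathrm H_C^{\mathrm{dR}}$, the functional $\nu(p_{\sigma,x_0})(a)$ sends $\gamma$ to $I_{x_0}(\sigma(a))(\gamma x_0)=\int_{x_0}^{\gamma x_0}p^*\sigma(a)$, the length-one iterated integral being an ordinary integral; its class modulo $F_0^\heartsuit$, i.e. its restriction to $J_{\mathbb C\Gamma_C}$, is $(\gamma-1)\mapsto\int_{x_0}^{\gamma x_0}p^*\sigma(a)$. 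Since $\sigma(a)$ represents $a$ and the pairing of (a) factors through $\mathrm H_C^{\mathrm{dR}}$, this is exactly the image of $a$ under the map induced by \eqref{basic:pairing}, proving the identification.

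For (c), by (b) it suffices to show the map $\mathrm H_C^{\mathrm{dR}}\to(\mathrm H_C^{\mathrm B})^*$ induced by \eqref{basic:pairing} is an isomorphism, i.e. that the pairing is perfect. The key observation is that, under the isomorphism $\Gamma_C\simeq\pi_1(C,p(x_0))$ used in Lem. \ref{2123:3107} and the resulting identification $\mathrm H_C^{\mathrm B}\simeq\mathrm H_1(C,\mathbb C)$, the quantity $\int_{x_0}^{\gamma x_0}p^*\omega$ equals the period $\int_{c_\gamma}\omega$ of $\omega$ over the loop $c_\gamma$ in $C$ obtained by projecting a path from $x_0$ to $\gamma x_0$; thus \eqref{basic:pairing} is the classical period pairing $\mathrm H_1(C,\mathbb C)\otimes\mathrm H^1_{\mathrm{dR}}(C)\to\mathbb C$. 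The main obstacle, and the only nonformal input, is the perfectness of this pairing: for a smooth affine curve the algebraic de Rham cohomology $\mathrm H^1_{\mathrm{dR}}(C)=\Omega(C)/d\mathcal O(C)$ computes the singular cohomology $\mathrm H^1(C,\mathbb C)$ via integration (Grothendieck's algebraic de Rham theorem), and the latter is dual to $\mathrm H_1(C,\mathbb C)$ under the evaluation pairing. I would invoke this comparison, using that $C$ is smooth affine (so $\mathrm H_C^{\mathrm{dR}}\simeq\mathrm H^1_{\mathrm{dR}}(C)$, as noted before Thm. \ref{thm:03:0711}), to conclude that the induced map is an isomorphism.
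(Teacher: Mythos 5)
Your proposal is correct and follows essentially the same route as the paper: part (a) by the $\Gamma_C$-invariance of $p^*\omega$ (you additionally spell out the factorization through $d\mathcal O(C)$, which the paper leaves implicit), part (b) by identifying $\mathrm{gr}_1^\heartsuit(\nu(p_{\sigma,x_0}))$ with the map induced by the restriction of $p_{\sigma,x_0}$ to degree one (the paper derives this from a general statement about Hopf pairings, you compute it directly, but the content is identical), and part (c) by recognizing \eqref{basic:pairing} as the period pairing and invoking its nondegeneracy via the algebraic de Rham comparison for smooth affine curves. No gaps.
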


\begin{proof} (a) If $x_0,x_1\in\tilde C$, $\gamma\in\Gamma_C$ and $\omega\in\Omega(C)$, then 
$\int_{x_1}^{\gamma x_1}\omega-\int_{x_0}^{\gamma x_0}\omega
=(\int_{x_1}^{x_0}-\int_{\gamma x_1}^{\gamma x_0})\omega=0$ by the $\Gamma_C$-invariance of $\omega$; this implies the 
claimed independence.  
If $\gamma,\gamma'\in\Gamma_C$ and $\omega\in\Omega(C)$, then $(\int_{x_0}^{\gamma\gamma'x_0}-\int_{x_0}^{\gamma x_0}
-\int_{x_0}^{\gamma'x_0})\omega=(\int_{\gamma x_0}^{\gamma\gamma'x_0}-\int_{x_0}^{\gamma' x_0})\omega=0$
by the same reason; since the elements $(\gamma-1)(\gamma'-1)$ generate $(\mathbb C\Gamma_C)_+^2$, this implies the 
claimed factorization. The equality follows from $\mathrm{gr}^1(\mathbb C\Gamma_C)=\Gamma_C^{ab}\otimes\mathbb C
=\mathrm H_C^{\mathrm{B}}$ (see \cite{Q2}). 

(b) Let $O,H$ be Hopf algebras with $\mathrm{gr}^1(H)$ finite dimensional and let $p : O\otimes H\to\mathbb C$ be a Hopf algebra pairing. 
By Lem. \ref{lem:4:11:0307}, $p$ gives rise to a Hopf algebra morphism $\nu(p) : F_\infty O\to H'$, and by Prop. \ref{prop:LA:filt}(c), 
this morphism is 
compatible with the filtrations $F_\bullet$ on both sides. For any $n\geq 0$, the restriction of $p$ to 
$F_nO\otimes H$ induces a pairing $F_nO\otimes (H/F^{n+1}H)\to\mathbb C$, which gives rise to a linear map
$F_nO\to(H/F^{n+1}H)^*=F_nH^*$; composing this linear map with the identification from Lem. \ref{IDFILT} gives rise to 
a linear map $F_nO\to F_nH'$, which is equal to $F_n\nu(p)$. The morphism $\nu(p)$ is compatible 
the augmentation maps $\epsilon_O,\epsilon_{H'}$, and therefore gives rise to a linear map  
$F_nO\cap O_+\to F_nH'\cap (H')_+$, which for $n=1$
coincides with $\mathrm{gr}_1(\nu(p))$. It follows that $\mathrm{gr}_1(\nu(p))$ may be constructed as follows: 
the restriction of $p$ to $(F_1 O\cap J_O)\otimes F^1H$ induces a pairing 
$(F_1 O\cap O_+)\otimes (F^1H/F^2H)\to\mathbb C$; then $\mathrm{gr}_1(\nu(p))$
is the induced map $\mathrm{gr}_1(O)=F_1 O\cap O_+\to (F^1H/F^2H)^*
=\mathrm{gr}_1(H')$. 

It follows that $\mathrm{gr}_1(\nu(p_{J,x_0})) : \mathrm{H}_C
=\mathrm{gr}_1(\mathrm{Sh}(\mathrm{H}_C))\to \mathrm{gr}_1((\mathbb C\Gamma_C)')=(\Gamma_C^{ab}
\otimes\mathbb C)^*$ is induced by the restriction of $p_{J,x_0}$ to $\mathrm{H}_C\otimes 
(\mathbb C\Gamma_C)_+$. 
This restriction coincides with the lift of \eqref{basic:pairing} by Lem. \ref{GR:ULJ}(b), which implies the statement.
%{\color{red}by the properties of $J$}.  
%since $\sigma$ is a lift of the projection $\Omega(C)\to \mathrm{H}_C$.  

(c) The pairing \eqref{basic:pairing} coincides with the period pairing for $C$, which is nondegenerate. It follows that the map 
$\mathrm H_C\to (\mathrm H_C^{\mathrm{B}})^*$ induced by \eqref{basic:pairing} is an isomorphism. The statement then 
follows from (b). 
\end{proof}

\subsubsection{Proof that $\nu(p_{J,x_0}) : \mathrm{Sh}(\mathrm H_C)\to(\mathbb C\Gamma_C)'$ is a Hopf algebra isomorphism}

\begin{prop} \label{prop:1508} %(will be used in order to apply Lem 4.15)
$\nu(p_{J,x_0}) : \mathrm{Sh}(\mathrm H_C)\to(\mathbb C\Gamma_C)'$ is a Hopf algebra isomorphism. 
\end{prop}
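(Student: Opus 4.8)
The plan is to exploit the factorization of $q_{\sigma,x_0}$ through $\nu(p_{\sigma,x_0})$ recorded in Def.~\ref{def:5:13:0308}, combined with the isomorphism criterion of Lem.~\ref{lem:514:0108}. The strategy is to prove that the composite $q_{\sigma,x_0}$ is a Hopf algebra isomorphism, and then to peel off the two flanking isomorphisms to deduce the same for the middle map $\nu(p_{\sigma,x_0})$.

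First I would apply Lem.~\ref{lem:514:0108} to the Hopf algebra morphism $q_{\sigma,x_0} : \mathrm{Sh}(\mathrm H_C^{\mathrm{dR}})\to\mathrm{Sh}((\mathrm H_C^{\mathrm B})^*)$. That criterion reduces the question of whether $q_{\sigma,x_0}$ is an isomorphism to whether its degree-one associated graded $\mathrm{gr}_1^\heartsuit(q_{\sigma,x_0}) : \mathrm H_C^{\mathrm{dR}}\to(\mathrm H_C^{\mathrm B})^*$ is a linear isomorphism of the degree-one subspaces. This is exactly the content of Lem.~\ref{lem:basic:pairing}(c), which identifies $\mathrm{gr}_1^\heartsuit(q_{\sigma,x_0})$ with the map induced by the period pairing \eqref{basic:pairing} and invokes its nondegeneracy. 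Hence $q_{\sigma,x_0}$ is a Hopf algebra isomorphism.

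Next I would unwind the definition of $q_{\sigma,x_0}$. By Def.~\ref{def:5:13:0308} it is the composite of the canonical isomorphism $\mathrm{Sh}(\mathrm H_C^{\mathrm{dR}})\stackrel{\sim}{\to}\mathrm{Sh}(\mathrm H_C^{\mathrm{dR}})^\heartsuit$ (Lem.~\ref{lem:comp:Sh:heart:1628}), the morphism $\nu(p_{\sigma,x_0})$, and the isomorphism $(\mathbb C\Gamma_C)'\simeq\mathrm{Sh}((\mathrm H_C^{\mathrm B})^*)$ (Lem.~\ref{2123:3107}). Since the outer two factors are isomorphisms and the total composite $q_{\sigma,x_0}$ has just been shown to be an isomorphism, the middle factor $\nu(p_{\sigma,x_0}) : \mathrm{Sh}(\mathrm H_C^{\mathrm{dR}})^\heartsuit\to(\mathbb C\Gamma_C)'$ is obtained from $q_{\sigma,x_0}$ by pre- and post-composition with inverses of isomorphisms, hence is itself an isomorphism. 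Identifying $\mathrm{Sh}(\mathrm H_C^{\mathrm{dR}})^\heartsuit$ with $\mathrm{Sh}(\mathrm H_C^{\mathrm{dR}})$ via Lem.~\ref{lem:comp:Sh:heart:1628}(b) then yields the assertion as stated.

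There is essentially no computational obstacle at this stage: the proposition is the assembly point for the three preceding subsections, and every substantive ingredient has already been isolated. The only genuinely nontrivial input is the nondegeneracy of the period pairing, which enters solely through Lem.~\ref{lem:basic:pairing}(c); the remainder is the formal principle that a Hopf algebra morphism between shuffle algebras is an isomorphism as soon as it is an isomorphism in degree one, together with the elementary observation that in a composite of three maps, an isomorphism flanked by two isomorphisms forces the middle map to be an isomorphism.
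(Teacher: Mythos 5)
Your proposal is correct and follows exactly the paper's own argument: establish that $q_{\sigma,x_0}$ is a Hopf algebra isomorphism via Lem.~\ref{lem:514:0108} together with Lem.~\ref{lem:basic:pairing}(c), then peel off the two flanking isomorphisms from the factorization in Def.~\ref{def:5:13:0308} to conclude for $\nu(p_{\sigma,x_0})$. No differences worth noting.
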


\begin{proof}
By Lem. \ref{lem:basic:pairing}(c) and Lem. \ref{lem:514:0108}, $q_{J,x_0}$ is a Hopf algebra isomorphism. 
By Def. \ref{def:5:13:0308}, $\nu(p_{J,x_0})$ is obtained from $q_{J,x_0}$ by pre- and post-composition with Hopf algebra 
isomorphisms, which implies that it is a Hopf algebra isomorphism.
\end{proof}

\begin{rem}\label{rem:4:12:0712}
Prop. \ref{prop:1508} may be related to Chen's $\pi_1$ theorem as follows. Let $(\mathrm H^\bullet_{\mathrm{dR}}(C),
\varepsilon_{triv})$ be the augmented dga with $\mathrm H^\bullet_{\mathrm{dR}}(C):=\mathbb C\oplus \mathrm{H}_C$ with zero 
differential and $\varepsilon_{triv} :\mathrm H^\bullet_{\mathrm{dR}}(C)\to\mathbb C$ be the projection in degree 0, and let 
$(\mathcal E^\bullet(C),\varepsilon_{x_0})$ be the augmented dga of smooth differential forms on $C$, with $\varepsilon_{x_0}$
given by evaluation at $x_0$. Then the dga morphism $\mathrm H^\bullet_{\mathrm{dR}}(C)\to\mathcal E^\bullet(C)$ induced by $J$ is 
compatible with the augmentations, therefore it induces an isomorphism of commutative Hopf algebras  
$H^0(B(\mathrm H^\bullet_{\mathrm{dR}}(C),\varepsilon_{triv}))\to H^0(B(\mathcal E^\bullet(C),\varepsilon_{x_0}))$, 
where $H^0(B(-))$ is the zero-th cohomology of the bar-construction of an augmented dga. 
One easily constructs a Hopf algebras isomorphism 
$H^0(B(\mathrm H^\bullet_{\mathrm{dR}}(C),\varepsilon_{triv}))\simeq \mathrm{Sh}(\mathrm H_C)$. 
The combination of these isomorphisms with the Hopf algebra isomorphism 
$H^0(B(\mathcal E^\bullet(C),\varepsilon_{x_0})) \simeq (\mathbb C\Gamma_C)'$ from Chen's ``$\pi_1$ de Rham theorem'' (\cite{BGF}, Thm. 3.264) is the Hopf algebra isomorphism from Prop. \ref{prop:1508}.   
\end{rem}

\subsection{The isomorphism of filtered algebras $f_{J,x_0} : 
F_\bullet\mathrm{Sh}(\mathrm{H^{dR}_C})\otimes\mathcal O(C) \to F_\bullet\mathcal O_{mod}(\tilde C)$}\label{sect:36:2811}

\begin{lemdef}\label{lem:3:13:0409}
There is a unique linear map $f_{J,x_0} :  \mathrm{Sh}(\mathrm{H}_C)\otimes \mathcal O(C)\to
\mathcal O_{mod}(\tilde C)$ such that $f_{J,x_0}(a\otimes f):=I_{x_0}\circ J_*(a)\cdot p^*f$; it is an algebra 
morphism. 
\end{lemdef}

\begin{proof}
The fact that $f_{J,x_0}$ is well-defined as a linear map follows from Prop. \ref{lem:INOMOD:1408} and from the inclusion 
$\mathcal O(C) \subset\mathcal O_{mod}(\tilde C)$, which follows from Prop. \ref{lem:2:12:1508}.  
Follows from the decomposition of $f_{J,x_0}$ as 
$\mathrm{Sh}(\mathrm{H}_C)\otimes \mathcal O(C)\stackrel{(I_{x_0}\circ J_*)\otimes p^* }{\to}
\mathcal O_{mod}(\tilde C)^{\otimes 2}\stackrel{m_{
\mathcal O_{mod}(\tilde C)}}{\to}
\mathcal O_{mod}(\tilde C)$, where $m_{\mathcal O_{mod}(\tilde C)}$ is the product map of $\mathcal O_{mod}(\tilde C)$,
and from the algebra morphism status of $I_{x_0},p^*,J_*$ and $m_{\mathcal O_{mod}(\tilde C)}$ (the latter coming from the
commutativity of $\mathcal O_{mod}(\tilde C)$). 
\end{proof}

In App. \ref{sect:B:0512}, we introduce the notions of Hopf algebras with comodule algebra (HACA) and Hopf algebra with module algebra
(HAMA).  Then, by Prop. \ref{lem:B:15:1508}(a), the algebra $\mathcal O(C)$ and the Hopf algebra $\mathrm{Sh}(\mathrm{H}_C)$ 
give rise to a HACA 
structure $(\mathrm{Sh}(\mathrm{H}_C),\mathrm{Sh}(\mathrm{H}_C)\otimes\mathcal O(C))$; on the other hand,
a HAMA structure is constructed as follows.  

\begin{lem}\label{LEM:HAMA:0311}
The pair $(\mathcal O_{mod}(\tilde C),\mathbb C\Gamma_C)$ is equipped with a HAMA structure.  
\end{lem}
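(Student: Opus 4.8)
The plan is to read off the HAMA structure directly from data already in place, the substance being contained in Prop. \ref{lem:1:7:toto}. Recall that a HAMA structure on the pair amounts to the Hopf algebra $\mathbb C\Gamma_C$ together with a structure of module-algebra on $\mathcal O_{mod}(\tilde C)$ over it, that is, an action of $\mathbb C\Gamma_C$ on $\mathcal O_{mod}(\tilde C)$ compatible with the algebra structure in the manner prescribed in App. \ref{sect:B:0512}.

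First I would equip $\mathbb C\Gamma_C$ with its standard cocommutative Hopf algebra structure, in which every $\gamma\in\Gamma_C$ is group-like: $\Delta(\gamma)=\gamma\otimes\gamma$, $\epsilon(\gamma)=1$ and $S(\gamma)=\gamma^{-1}$. The action to be used is the one already at hand: by Prop. \ref{lem:1:7:toto}, $\mathcal O_{mod}(\tilde C)$ is a commutative unital subalgebra of $\mathcal O_{hol}(\tilde C)$ stable under the right action $f_{|\gamma}=f\circ\gamma$ of $\Gamma_C$, and extending this action $\mathbb C$-linearly yields an action of the algebra $\mathbb C\Gamma_C$ on $\mathcal O_{mod}(\tilde C)$.

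It then remains to verify the module-algebra axioms. Since every $\gamma$ is group-like, the compatibility of the action with the product collapses to the assertion that each $\gamma$ acts by an algebra automorphism, namely $(fg)_{|\gamma}=(f\circ\gamma)(g\circ\gamma)=f_{|\gamma}\,g_{|\gamma}$ and $1_{|\gamma}=1=\epsilon(\gamma)\cdot 1$; both equalities are immediate, as precomposition with $\gamma$ respects pointwise multiplication and fixes constants, and they extend $\mathbb C$-linearly to all of $\mathbb C\Gamma_C$. The only point deserving care is the handedness convention: the natural $\Gamma_C$-action is a right action, whereas the formalism of App. \ref{sect:B:0512} is phrased for module-algebras of a fixed handedness; I would reconcile the two using the antipode $\gamma\mapsto\gamma^{-1}$, which interchanges left and right actions of the group Hopf algebra. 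No genuine obstacle arises here: once Prop. \ref{lem:1:7:toto} is available, the lemma amounts to repackaging that statement, the verification of the axioms being purely formal.
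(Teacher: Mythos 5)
Your proof is correct and takes essentially the same route as the paper, whose proof is the one-line observation that the HAMA structure is induced by the right $\Gamma_C$-action of Prop.~\ref{lem:1:7:toto}; you merely spell out the (purely formal) verification of the module-algebra axioms for group-like elements. The only superfluous step is the final antipode twist: the definition of a HAMA in App.~\ref{sect:B:0512} is already phrased for \emph{right} actions, and $f_{|\gamma}=f\circ\gamma$ is a right action, so no reconciliation of handedness is needed.
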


\begin{proof}
The HAMA structure is induced by the right $\Gamma_C$-action on $\mathcal O_{mod}(\tilde C)$ (see Prop. \ref{lem:1:7:toto} and Def. \ref{defn:B6}). 
\end{proof}

In App. \ref{sect:B:0512} (see Def. \ref{def:5:9:toto}), we also introduce the notion of pairing-morphism from a HACA $(O,A)$ 
to a HAMA $(B,H)$, and denote by $\mathbf{PM}((O,A),(B,H))$ the set of such structures. 
 
Recall the Hopf algebra pairing $p_{J,x_0} : \mathrm{Sh}(\mathrm{H}_C) \otimes \mathbb C\Gamma_C\to\mathbb C$ 
(see Lem. \ref{HA:pairing}). 

 \begin{lem}\label{lemA:1508}
$(p_{J,x_0},f_{J,x_0}) \in \mathbf{PM}((\mathrm{Sh}(\mathrm{H}_C),
\mathrm{Sh}(\mathrm{H}_C)\otimes\mathcal O(C)),
(\mathcal O_{mod}(\tilde C),\mathbb C\Gamma_C))$. 
\end{lem}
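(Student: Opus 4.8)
The plan is to unwind Def.~\ref{def:5:9:toto}: a pairing-morphism from the HACA $(O,A)$ to the HAMA $(B,H)$ is a pair $(p,f)$ consisting of a Hopf algebra pairing $p:O\otimes H\to\mathbb C$ and an algebra morphism $f:A\to B$, subject to the compatibility that the two maps $A\otimes H\to B$ sending $\alpha\otimes h$ to $f(\alpha)_{|h}$ and to $\sum p(\alpha^{[-1]},h)\,f(\alpha^{[0]})$ coincide, where $\Delta_A(\alpha)=\sum\alpha^{[-1]}\otimes\alpha^{[0]}$ is the coaction of the comodule-algebra $A$. In our situation $O=\mathrm{Sh}(\mathrm H_C^{\mathrm{dR}})$, $A=\mathrm{Sh}(\mathrm H_C^{\mathrm{dR}})\otimes\mathcal O(C)$, $B=\mathcal O_{mod}(\tilde C)$, $H=\mathbb C\Gamma_C$, with $p=p_{\sigma,x_0}$ and $f=f_{\sigma,x_0}$. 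The Hopf pairing property of $p_{\sigma,x_0}$ is Lem.~\ref{HA:pairing} and the algebra morphism property of $f_{\sigma,x_0}$ is Lem.~\ref{lem:3:13:0409}, so the whole content of the statement reduces to the displayed equivariance identity, and that is what I would verify.

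First I would fix $\alpha=a\otimes g$ with $a\in\mathrm{Sh}(\mathrm H_C^{\mathrm{dR}})$, $g\in\mathcal O(C)$, and $h=\gamma\in\Gamma_C$ (the group-likes spanning $\mathbb C\Gamma_C$, so it suffices to treat them). By the HACA structure of Thm.~\ref{thm:03:0711}(c) the coaction is $\Delta_{\mathrm{Sh}(\mathrm H_C^{\mathrm{dR}})}\otimes\mathrm{id}$, so $\sum\alpha^{[-1]}\otimes\alpha^{[0]}=\sum a^{(1)}\otimes(a^{(2)}\otimes g)$, where $\Delta_{\mathrm{Sh}(\mathrm H_C^{\mathrm{dR}})}(a)=\sum a^{(1)}\otimes a^{(2)}$. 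On the left-hand side I would use that $f_{\sigma,x_0}(a\otimes g)=I_{x_0}(\sigma(a))\cdot p^*g$ is a product and that the right $\Gamma_C$-action is precomposition by $\gamma$, giving $f_{\sigma,x_0}(a\otimes g)_{|\gamma}=\big(I_{x_0}(\sigma(a))\big)_{|\gamma}\cdot(p^*g)_{|\gamma}$; here $(p^*g)_{|\gamma}=p^*g$ since $p^*g$ is pulled back from $C$ and hence $\Gamma_C$-invariant.

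The core step is to rewrite $\big(I_{x_0}(\sigma(a))\big)_{|\gamma}$, i.e. the function $z\mapsto I_{x_0}(\sigma(a))(\gamma z)$. Applying the path-composition formula of Lem.~\ref{lem:chain:rule:2308} with intermediate point $x_1=\gamma x_0$ yields $I_{x_0}(\sigma(a))(\gamma z)=\sum I_{x_0}(\sigma(a)^{(1)})(\gamma x_0)\,I_{\gamma x_0}(\sigma(a)^{(2)})(\gamma z)$; the $\Gamma_C$-invariance of iterated integrals (Lem.~\ref{lem:invce:2308}) turns the last factor into $I_{x_0}(\sigma(a)^{(2)})(z)$; and since $\mathrm{Sh}(\sigma)$ is a Hopf algebra morphism one has $\sum\sigma(a)^{(1)}\otimes\sigma(a)^{(2)}=\sum\sigma(a^{(1)})\otimes\sigma(a^{(2)})$. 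Recognizing $I_{x_0}(\sigma(a^{(1)}))(\gamma x_0)=p_{\sigma,x_0}(a^{(1)},\gamma)$ by Def.~\ref{def:pairing:sigma:x_0:2308}, this gives $\big(I_{x_0}(\sigma(a))\big)_{|\gamma}=\sum p_{\sigma,x_0}(a^{(1)},\gamma)\,I_{x_0}(\sigma(a^{(2)}))$. Multiplying by $p^*g$ produces exactly $\sum p_{\sigma,x_0}(a^{(1)},\gamma)\,f_{\sigma,x_0}(a^{(2)}\otimes g)=\sum p_{\sigma,x_0}(\alpha^{[-1]},\gamma)\,f_{\sigma,x_0}(\alpha^{[0]})$, which is the required right-hand side.

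I do not expect a serious obstacle: the statement is essentially a dictionary identifying the right $\Gamma_C$-action on $\mathcal O_{mod}(\tilde C)$ with the deconcatenation coaction on $\mathrm{Sh}(\mathrm H_C^{\mathrm{dR}})\otimes\mathcal O(C)$ through the pairing $p_{\sigma,x_0}$, and it is driven entirely by Lems.~\ref{lem:chain:rule:2308} and~\ref{lem:invce:2308}. The only points requiring care are bookkeeping rather than substance: the argument order of the pairing (Def.~\ref{def:pairing:sigma:x_0:2308} writes the $\Gamma_C$-argument first, whereas the HACA/HAMA formalism pairs $O$ first) and the suppressed occurrences of $\sigma$ inside $f_{\sigma,x_0}$ and $I_{x_0}$; neither affects the identity, but both should be tracked so that the conclusion matches Def.~\ref{def:5:9:toto} verbatim.
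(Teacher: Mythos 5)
Your proposal is correct and follows essentially the same route as the paper's proof: both reduce the statement to the equivariance identity of Def.~\ref{def:5:9:toto}, dispose of the $\mathcal O(C)$-factor by $\Gamma_C$-invariance of $p^*g$, and establish $\big(I_{x_0}(\sigma(a))\big)_{|\gamma}=p_{\sigma,x_0}(\gamma\otimes a^{(1)})\,I_{x_0}(\sigma(a^{(2)}))$ via Lem.~\ref{lem:chain:rule:2308} with intermediate point $\gamma x_0$ followed by Lem.~\ref{lem:invce:2308}. Your explicit tracking of $\sigma$ (via the Hopf-morphism property of $\mathrm{Sh}(\sigma)$) is a detail the paper suppresses notationally but does not change the argument.
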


\begin{proof}
By Def. \ref{def:5:9:toto}, the identity to check is 
$$ 
(I_{x_0}(%\underline 
J_*(a))p^*(f))_{|\gamma}=I_{x_0}(%\underline 
J_*(a^{(2)}))p^*(f)p_{J,x_0}(\gamma\otimes a^{(1)})
$$
for any $a\in\mathrm{Sh}(\mathrm H_C)$, $f \in \mathcal O(C)$ and $\gamma \in \Gamma_C$, using the notation 
$\Delta_{\mathrm{Sh}(\mathrm H_C)}(a)=a^{(1)}\otimes a^{(2)}$. 
This follows from the invariance de $p^*f$ under the action of $\Gamma_C$ and from the identity 
$I_{x_0}(%\underline 
J_*(a))_{|\gamma}=I_{x_0}(%\underline 
J_*(a^{(2)}))p_{J,x_0}(\gamma\otimes a^{(1)})$ which is proved as follows: 
for any  $x\in\tilde C$, one has 
$$
I_{x_0}(%\underline 
J_*(a))_{|\gamma}(x)
=I_{x_0}(%\underline 
J_*(a))(\gamma x)
=I_{x_0}(%\underline 
J_*(a^{(1)}))(\gamma x_0)I_{\gamma x_0}(%\underline 
J_*(a^{(2)}))(\gamma x) 
=p_{J,x_0}(\gamma\otimes a^{(1)})I_{x_0}(%\underline 
J_*(a^{(2)}))(x). 
$$
where the second identity follows from Lem. \ref{lem:chain:rule:2308} and the third identity follows from the definition of 
$p_{J,x_0}$ and Lem. \ref{lem:invce:2308}. 
\end{proof}

\begin{lem}\label{LEM:HACA}
    The HAMA structure $(\mathcal O_{mod}(\tilde C),\mathbb C\Gamma_C)$ gives rise to an algebra 
    filtration $F_\bullet\mathcal O_{mod}(\tilde C)$ of $\mathcal O_{mod}(\tilde C)$, which fits in a HACA 
    $((\mathbb C\Gamma_C)',F_\infty\mathcal O_{mod}(\tilde C))$. 
\end{lem}

\begin{proof}
 The construction of the said HACA from this HAMA 
follows from Lem. \ref{1008:jeu} and the fact that $\mathbb C\Gamma_C$ is an 
object in $\mathbf{HA}_{fd}$ (see proof of Lem. \ref{comp:CGamma:prime}). 
The filtration $F_\bullet\mathcal O_{mod}(\tilde C)$ is then as in the Introduction 
(see Def. \ref{def:f:infty:o:mod:0311}).
\end{proof}

\begin{prop}\label{prop:3:15:0409}
(a) $(\nu(p_{J,x_0}),f_{J,x_0})$ induces an isomorphism of HACAs 
$(\mathrm{Sh}(\mathrm{H}_C),\mathrm{Sh}(\mathrm{H}_C)\otimes \mathcal O(C))\to 
((\mathbb C\Gamma_C)',F_\infty\mathcal O_{mod}(\tilde C))$. 

(b) $f_{J,x_0}$
induces an isomorphism of algebra filtrations $F_\bullet \mathrm{Sh}(\mathrm{H}_C)\otimes\mathcal O(C)  \to 
F_\bullet\mathcal O_{mod}(\tilde C)$. 
\end{prop}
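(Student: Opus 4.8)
The plan is to obtain both statements as a single application of the abstract HACA criterion Prop. \ref{lem:B:15:1508}(b) to the pairing-morphism $(p_{\sigma,x_0},f_{\sigma,x_0})$ furnished by Lem. \ref{lemA:1508}, which goes from the HACA $(\mathrm{Sh}(\mathrm H_C^{\mathrm{dR}}),\mathrm{Sh}(\mathrm H_C^{\mathrm{dR}})\otimes\mathcal O(C))$ to the HAMA $(\mathcal O_{mod}(\tilde C),\mathbb C\Gamma_C)$. First I would recall that the target HACA $((\mathbb C\Gamma_C)',F_\infty\mathcal O_{mod}(\tilde C))$ is precisely the one attached to this HAMA by the duality construction, with $(\mathbb C\Gamma_C)'$ the restricted dual of $\mathbb C\Gamma_C$ and $F_\infty\mathcal O_{mod}(\tilde C)$ the unipotent (conilpotent) part of $\mathcal O_{mod}(\tilde C)$ for the $\Gamma_C$-action.

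To apply the criterion I must check its hypotheses. The principal one is that the induced Hopf algebra morphism $\nu(p_{\sigma,x_0}):\mathrm{Sh}(\mathrm H_C^{\mathrm{dR}})\to(\mathbb C\Gamma_C)'$ be an isomorphism; this is exactly Prop. \ref{prop:1508}. The remaining hypothesis concerns the ground level: the restriction of $f_{\sigma,x_0}$ to the $\mathrm{Sh}$-degree-zero part $\mathbb C\otimes\mathcal O(C)=\mathcal O(C)$ is $g\mapsto p^*(g)$, i.e. the pullback inclusion $\mathcal O(C)\hookrightarrow\mathcal O_{mod}(\tilde C)$, and by Prop. \ref{lem:2:12:1508} this identifies $\mathcal O(C)$ with the invariants $\mathcal O_{mod}(\tilde C)^{\Gamma_C}$, which form the ground-level piece of $F_\infty\mathcal O_{mod}(\tilde C)$. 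With these inputs, Prop. \ref{lem:B:15:1508}(b) yields that $(\nu(p_{\sigma,x_0}),f_{\sigma,x_0})$ is a HACA isomorphism onto $((\mathbb C\Gamma_C)',F_\infty\mathcal O_{mod}(\tilde C))$, proving (a); in particular $f_{\sigma,x_0}$ is an algebra isomorphism onto $F_\infty\mathcal O_{mod}(\tilde C)$.

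For (b), I would use that a HACA isomorphism is by construction compatible with the canonical HACA filtrations and induces isomorphisms on each filtration step. It then remains to identify these two filtrations with the concrete ones in the statement. On the source, the HACA coaction is $\Delta_{\mathrm{Sh}(\mathrm H_C^{\mathrm{dR}})}\otimes\mathrm{id}_{\mathcal O(C)}$, whose associated comodule filtration is the conilpotency filtration $F_\bullet^\heartsuit$ of $\mathrm{Sh}(\mathrm H_C^{\mathrm{dR}})$; by Lem. \ref{lem:comp:Sh:heart:1628}(a) this equals the degree filtration, so the HACA filtration of the source is $F_\bullet\mathrm{Sh}(\mathrm H_C^{\mathrm{dR}})\otimes\mathcal O(C)$. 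On the target, the HACA filtration of $F_\infty\mathcal O_{mod}(\tilde C)$ coincides, via Lem. \ref{1008:jeu}, with the filtration $F_\bullet\mathcal O_{mod}(\tilde C)$ defined in \eqref{def:fil:30:01}. The isomorphism from (a) therefore restricts to linear isomorphisms $f_{\sigma,x_0}:F_n\mathrm{Sh}(\mathrm H_C^{\mathrm{dR}})\otimes\mathcal O(C)\stackrel{\sim}{\to}F_n\mathcal O_{mod}(\tilde C)$ for every $n$, which is the assertion of (b); alternatively one concludes via Lem. \ref{graded:crit:iso} from the compatibility together with the isomorphism on associated gradeds.

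The step I expect to be the main obstacle is the precise matching of filtrations in (b): showing that the abstract HACA filtration induced on $F_\infty\mathcal O_{mod}(\tilde C)$ by the duality construction literally coincides with the explicit filtration \eqref{def:fil:30:01} defined by the vanishing conditions $f_{|(\gamma_1-1)\cdots(\gamma_{n+1}-1)}=0$. This hinges on unwinding the HAMA-induced filtration (Lem. \ref{1008:jeu}) and on the duality between the conilpotency filtration $F_\bullet^\heartsuit(\mathbb C\Gamma_C)'$ and the powers of the augmentation ideal of $\mathbb C\Gamma_C$; verifying that the coaction of $(\mathbb C\Gamma_C)'$ on $\mathcal O_{mod}(\tilde C)$ detects membership in $F_n$ exactly through those $(n+1)$-fold difference operators is where the real work lies, the remainder being formal consequences of Prop. \ref{prop:1508} and the general criterion.
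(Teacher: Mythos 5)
Your proposal is correct and follows essentially the same route as the paper: both (a) and (b) are obtained by applying Prop.~\ref{lem:B:15:1508}(b) to the pairing-morphism of Lem.~\ref{lemA:1508}, with the two hypotheses supplied by Prop.~\ref{prop:1508} and Prop.~\ref{lem:2:12:1508}. The filtration matching you flag as the main obstacle is in fact dispatched just before the proposition in the paper (via Lem.~\ref{1008:jeu} and the observation that $J_{\mathbb C\Gamma_C}^{n+1}$ is spanned by the products $(\gamma_1-1)\cdots(\gamma_{n+1}-1)$), so part (b) is read off directly from the ``filtered algebra isomorphism'' clause of Prop.~\ref{lem:B:15:1508}(b).
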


\begin{proof}
Let $\mathbf a:=\mathcal O(C)$, $O:=\mathrm{Sh}(\mathrm{H}_C)$, $B:=\mathcal O_{mod}(\tilde C)$, $H:=\mathbb C\Gamma_C$, 
$(B,H)$ be the HAMA structure induced by the action of $\Gamma_C$ on $\tilde C$; it is an object in $\mathbf{HAMA}_{fd}$
(see Def. \ref{def:HAMA:fd}) since~$\Gamma_C$ is finitely generated.  Set $p:=p_{J,x_0} \in \mathbf{Pair}(O,H)$ (see Def. \ref{defn:A9}), 
$f:=f_{J,x_0}$. By Lem.~\ref{lemA:1508}, $(p,f) \in \mathbf{PM}((O,O\otimes a),(B,H))$. By Prop. \ref{prop:1508}, 
$\nu(p_{J,x_0}) : \mathrm{Sh}(\mathrm{H}_C)\to(\mathbb C\Gamma_C)'$ is a Hopf algebra isomorphism
and by Prop. \ref{lem:2:12:1508}, $f_{J,x_0}$ induces an algebra isomorphism $\mathbb C\otimes\mathcal O(C)  \to
\mathcal O_{mod}(\tilde C)^{\mathbb C\Gamma_C}$. The assumptions of Prop. \ref{lem:B:15:1508}(b) are therefore satisfied; 
the result is then a consequence of this statement. 
\end{proof}

\subsection{Filtered formality of the HACA $((\mathbb C\Gamma_C)',F_\infty\mathcal O_{mod}(\tilde C))$}\label{sect:37:2811}

In \S\ref{B'2:3110}, we introduce the definition of a filtered formal HACA. 
\begin{prop}\label{3:17:0711}
The pair  $((\mathbb C\Gamma_C)',F_\infty\mathcal O_{mod}(\tilde C))$ is filtered formal. 
\end{prop}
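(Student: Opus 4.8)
The plan is to deduce the statement directly from Thm.~\ref{thm:03:0711}, by exploiting the fact that the source HACA of the isomorphism furnished there is \emph{graded}. Write $\Phi:=(\nu(p_{\sigma,x_0}),f_{\sigma,x_0})$ for the HACA isomorphism of Thm.~\ref{thm:03:0711}(c), from $(\mathrm{Sh}(\mathrm H_C^{\mathrm{dR}}),\mathrm{Sh}(\mathrm H_C^{\mathrm{dR}})\otimes\mathcal O(C))$ to $((\mathbb C\Gamma_C)',F_\infty\mathcal O_{mod}(\tilde C))$. The guiding principle is the general fact that a HACA admitting a \emph{filtered} isomorphism from a graded HACA is filtered formal; applying it to $\Phi$ will give the result. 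First I would record that $\Phi$ is indeed an isomorphism of filtered HACAs: on the Hopf algebra side $\nu(p_{\sigma,x_0})$ is an isomorphism (Prop.~\ref{prop:1508}) compatible with the $\heartsuit$-filtrations, while on the comodule-algebra side $f_{\sigma,x_0}$ matches the filtrations by Thm.~\ref{thm:03:0711}(b).

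Next I would verify that the source of $\Phi$ is graded in the sense required for the $\heartsuit$- and $F_\bullet$-filtrations. On the Hopf algebra $\mathrm{Sh}(\mathrm H_C^{\mathrm{dR}})$, Lem.~\ref{lem:comp:Sh:heart:1628} identifies the canonical filtration $F_\bullet^\heartsuit$ with the grading filtration, so $\mathrm{gr}^\heartsuit\mathrm{Sh}(\mathrm H_C^{\mathrm{dR}})\simeq\mathrm{Sh}(\mathrm H_C^{\mathrm{dR}})$ canonically as graded Hopf algebras. On the comodule algebra, the filtration $F_\bullet\mathrm{Sh}(\mathrm H_C^{\mathrm{dR}})\otimes\mathcal O(C)$ is the filtration associated with the grading placing $\mathrm{Sh}_n(\mathrm H_C^{\mathrm{dR}})\otimes\mathcal O(C)$ in degree $n$, so $\mathrm{gr}(\mathrm{Sh}(\mathrm H_C^{\mathrm{dR}})\otimes\mathcal O(C))\simeq\mathrm{Sh}(\mathrm H_C^{\mathrm{dR}})\otimes\mathcal O(C)$. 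Since the coaction $\Delta_{\mathrm{Sh}(\mathrm H_C^{\mathrm{dR}})}\otimes\mathrm{id}_{\mathcal O(C)}$ preserves this total degree (the deconcatenation coproduct is graded), these identifications are compatible with the full HACA structure, yielding a canonical HACA isomorphism $c$ from $(\mathrm{Sh}(\mathrm H_C^{\mathrm{dR}}),\mathrm{Sh}(\mathrm H_C^{\mathrm{dR}})\otimes\mathcal O(C))$ onto its own associated graded, with $\mathrm{gr}(c)=\mathrm{id}$.

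Finally I would assemble the filtered-formality isomorphism and check the condition of Def.~\ref{def:C4:0711}. Applying the functor $\mathrm{gr}$ to the filtered HACA isomorphism $\Phi$ gives a graded isomorphism $\mathrm{gr}(\Phi)$ onto $(\mathrm{gr}^\heartsuit(\mathbb C\Gamma_C)',\mathrm{gr}\mathcal O_{mod}(\tilde C))$, and I then set
\[
\Psi:=\mathrm{gr}(\Phi)\circ c\circ\Phi^{-1}:((\mathbb C\Gamma_C)',F_\infty\mathcal O_{mod}(\tilde C))\longrightarrow(\mathrm{gr}^\heartsuit(\mathbb C\Gamma_C)',\mathrm{gr}\mathcal O_{mod}(\tilde C)),
\]
an isomorphism of filtered HACAs. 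Using functoriality of $\mathrm{gr}$, together with $\mathrm{gr}(\mathrm{gr}(\Phi))=\mathrm{gr}(\Phi)$ (a graded morphism is its own associated graded), $\mathrm{gr}(c)=\mathrm{id}$ and $\mathrm{gr}(\Phi^{-1})=\mathrm{gr}(\Phi)^{-1}$, one computes $\mathrm{gr}(\Psi)=\mathrm{gr}(\Phi)\circ\mathrm{gr}(\Phi)^{-1}=\mathrm{id}$, which is precisely the filtered-formality condition.

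The bookkeeping with $\mathrm{gr}$ is routine; the main point to get right is the verification that the source HACA of $\Phi$ is genuinely graded and that the canonical identification $c$ with its associated graded respects \emph{all} of the HACA data — the shuffle product, the deconcatenation coproduct, and the coaction — so that $c$ is a HACA morphism with $\mathrm{gr}(c)=\mathrm{id}$. Once the compatibility of the deconcatenation coproduct and of the tensor-product filtration (with $\mathcal O(C)$ concentrated in degree $0$) with the grading is in place, the functoriality argument closes immediately.
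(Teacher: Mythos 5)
Your proof is correct, and the formality isomorphism you build is in fact the same one the paper obtains: the paper's proof of this proposition simply invokes the general Prop.~\ref{prop:10:14:3110}, whose isomorphism \eqref{(****):0610:GAL} is (up to inversion) the composite of $(\nu(p),F_\infty f)$ with the canonical identification of the graded source and with $\mathrm{nat}_{(H',F_\infty B)}^{-1}\circ(\mathrm{gr}^\heartsuit\nu(p)\otimes f_0)$, which by \eqref{key:0407} is exactly $\mathrm{gr}(f_{\sigma,x_0})$, i.e.\ your $\mathrm{gr}(\Phi)\circ c$. Where the two arguments genuinely differ is in how they certify that the associated graded of the composite is the identity: the paper does this through the explicit relation \eqref{key:0407} together with the injectivity of the natural transformation $\mathrm{nat}$ (Lem.~\ref{lem:5:14:0407}), whereas you get it for free from functoriality of $(\mathrm{gr}^\heartsuit,\mathrm{gr})$, $\mathrm{gr}(c)=\mathrm{id}$ and $\mathrm{gr}(\Phi^{-1})=\mathrm{gr}(\Phi)^{-1}$. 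Your route is cleaner and stays entirely at the level of Thm.~\ref{thm:03:0711}, but two points buried in your ``routine bookkeeping'' deserve explicit references: (i) the intrinsic HACA filtration of Def.~\ref{def:FnA} on the source $(\mathrm{Sh}(\mathrm H_C^{\mathrm{dR}}),\mathrm{Sh}(\mathrm H_C^{\mathrm{dR}})\otimes\mathcal O(C))$ coincides with the degree filtration --- this is Prop.~\ref{lem:B:15:1508}(a), and it is what makes $c$ a morphism in $\mathbf{HACA}$ rather than merely of algebras; and (ii) the identification $\mathrm{gr}\circ\mathrm{gr}\simeq\mathrm{gr}$ needed to assert that a graded morphism is its own associated graded, which on the Hopf-algebra side is Lem.~\ref{lem:10:1:a:2909}(a) and on the comodule side follows by transporting the degree filtration along the graded isomorphism $\mathrm{gr}(\Phi)$. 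With those two references supplied, your argument is complete and recovers the paper's proof without the $\mathrm{nat}$ machinery.
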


\begin{proof} 
It follows from the proof of Lem. \ref{prop:3:15:0409} that the data 
\begin{equation}\label{data}
    (O,\mathbf a):=(\mathrm{Sh}(\mathrm H_C),\mathcal O(C)),\quad (B,H)=(\mathcal O_{mod}(\tilde C),\mathbb C\Gamma_C),
    \quad (p,f)=(p_{J,x_0},f_{J,x_0}). 
\end{equation}
satisfy the hypotheses of Prop. \ref{lem:B:15:1508}(b),(c).  The statement is then a consequence of Prop.~\ref{prop:10:14:3110}.
\end{proof}

\section{Filtrations on $\mathcal O_{hol}(\tilde C)$, and the minimal stable subalgebra $A_C$}\label{sect:5:1412}

In \S\ref{sect:51:2811}, we study the filtration of $\mathcal O_{hol}(\tilde C)$ given by the image by $I_{x_0}$ of the filtration 
$F_\bullet(\mathrm{Sh}(\Omega(C))$ of $\mathrm{Sh}(\Omega(C))$, and identify it with the image by 
$f_{J,x_0}$ of the filtration $F_\bullet \mathrm{Sh}(\mathrm H_C)\otimes F_\bullet^{unit}\mathcal O(C)$ of 
$\mathrm{Sh}(\mathrm H_C)\otimes\mathcal O(C)$ (Prop. \ref{(a2)}). In §\ref{sect:52:2811}, we introduce and 
study the filtrations $F_\bullet^\delta\mathcal O_{hol}(\tilde C)$ and $F_\bullet^\mu\mathcal O_{hol}(\tilde C)$ of 
$\mathcal O_{hol}(\tilde C)$ inspired by \cite{Chen} and identify the latter with the image by $f_{J,x_0}$ of the filtration 
$F_\bullet \mathrm{Sh}(\mathrm H_C)\otimes F_\bullet^{triv}\mathcal O(C)$ of 
$\mathrm{Sh}(\mathrm H_C)\otimes\mathcal O(C)$ (Prop. \ref{(a3)}). 
We study the relation of the total space of these filtrations with the MSSA $A_C$ of $\mathcal O_{hol}(\tilde C)$ in 
§\ref{sect:53:2811}. In \S\ref{sect:proof:of:thms}, we prove Theorems A, B and C, and in §\ref{sect:5:5:1611}, we draw consequences 
of Theorem A on the algebras $\mathcal H_C(J)$ constructed in §\ref{sect:91:0711}, namely we show that each such algebra is a free 
$\mathcal O(C)$-module with an explicit basis. In §\ref{sect:55:2811}, we discuss the relation of this material with the study 
in \cite{Chen} of Picard-Vessiot extensions of the function algebra of a smooth manifold.

\subsection{An algebra filtration of $\mathcal O_{hol}(\tilde C)$ defined by $I_{x_0}$}\label{sect:51:2811}

In the present \S\ref{sect:51:2811}, a point $x_0\in\tilde C$ is fixed. 
Recall the algebra morphism $I_{x_0} : \mathrm{Sh}(\Omega(C))\to\mathcal O_{hol}(\tilde C)$ (Lem.-Def. \ref{def:2:20:0908} and 
Lem. \ref{lem:221:3108}) and the algebra filtration $F_\bullet\mathrm{Sh}(V)$ for an arbitrary vector space $V$
(see \S\ref{subsect:IIs:1008}). By \S\ref{background:filtrations}, these data give rise to an algebra filtration $I_{x_0}(F_\bullet\mathrm{Sh}(\Omega(C)))$ of $\mathcal O_{hol}(\tilde C)$, which we study in Prop. \ref{(a2)}.

\begin{lem}\label{lem:1607:0306}
One has 
\begin{equation}\label{eq:1701:0306}
\forall \,p,q>0,\quad I_{x_0}([\mathrm{Sh}_p(\Omega(C))|d\mathcal O(C)|\mathrm{Sh}_q(\Omega(C))])
\subset I_{x_0}(\mathrm{Sh}_{p+q}(\Omega(C))),  
\end{equation}
\begin{equation}\label{eq:1705:0306}
\forall\, n>0,\quad I_{x_0}([d\mathcal O(C)|\mathrm{Sh}_n(\Omega(C))])\subset I_{x_0}(\mathrm{Sh}_n(\Omega(C))),
\end{equation}
\begin{equation}\label{eq:1709:0306}
\forall\, n>0,\quad 
I_{x_0}([\mathrm{Sh}_n(\Omega(C))|d\mathcal O(C)])\subset p^*\mathcal O(C)\cdot I_{x_0}(\mathrm{Sh}_n(\Omega(C))).  
\end{equation}
\end{lem}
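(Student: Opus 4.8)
The plan is to deduce all three inclusions from a single integration-by-parts identity for iterated integrals, using two structural facts: that $d\mathcal O(C)$ consists of \emph{exact} regular forms, so $p^*(dg)=d(p^*g)$, and that $\Omega(C)$ is an $\mathcal O(C)$-module, so $g\omega\in\Omega(C)$ (hence $p^*(g\omega)=p^*g\cdot p^*\omega$) whenever $g\in\mathcal O(C)$ and $\omega\in\Omega(C)$. The elementary building block is integration by parts under $\mathrm{int}_{x_0}$: applying $\mathrm{int}_{x_0}$ to the Leibniz identity $d(p^*g\cdot F)=d(p^*g)\cdot F+p^*g\cdot dF$ and using $\mathrm{int}_{x_0}(dH)=H-H(x_0)$ gives, for any $F\in\mathcal O_{hol}(\tilde C)$,
$$
\mathrm{int}_{x_0}(F\cdot p^*(dg))=p^*g\cdot F-g(x_0)F(x_0)-\mathrm{int}_{x_0}(p^*g\cdot dF).
$$

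First I would settle \eqref{eq:1709:0306}, the case where $dg$ sits at the very end. Taking $F=I_{x_0}([\omega_1|\ldots|\omega_n])$, which vanishes at $x_0$ since $n>0$, and using \eqref{IND:1008} to identify $\mathrm{int}_{x_0}(F\cdot p^*(dg))$ with $I_{x_0}([\omega_1|\ldots|\omega_n|dg])$ and $dF=I_{x_0}([\omega_1|\ldots|\omega_{n-1}])\cdot p^*\omega_n$, the displayed formula yields
$$
I_{x_0}([\omega_1|\ldots|\omega_n|dg])=p^*g\cdot I_{x_0}([\omega_1|\ldots|\omega_n])-I_{x_0}([\omega_1|\ldots|\omega_{n-1}|g\omega_n]),
$$
which lies in $p^*\mathcal O(C)\cdot I_{x_0}(\mathrm{Sh}_n(\Omega(C)))$ because $g\omega_n\in\Omega(C)$. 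This proves \eqref{eq:1709:0306} outright.

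To move $dg$ away from the last slot I would prove, by induction on the number $q$ of letters to its right, the family of identities
$$
I_{x_0}([\omega_1|\ldots|\omega_p|dg|\eta_1|\ldots|\eta_q])=I_{x_0}([\omega_1|\ldots|\omega_p|g\eta_1|\eta_2|\ldots|\eta_q])-I_{x_0}([\omega_1|\ldots|\omega_{p-1}|g\omega_p|\eta_1|\ldots|\eta_q]),
$$
with the boundary conventions that the first term degenerates to $p^*g\cdot I_{x_0}([\omega_1|\ldots|\omega_p])$ when $q=0$, and the second term degenerates to $g(x_0)\,I_{x_0}([\eta_1|\ldots|\eta_q])$ when $p=0$. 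The base case $q=0$ is \eqref{eq:1709:0306} just proved (together with the trivial computation $I_{x_0}([dg])=p^*g-g(x_0)$ when $p=0$). For the inductive step one differentiates both sides: by \eqref{IND:1008} the left side has differential $I_{x_0}([\omega_1|\ldots|\omega_p|dg|\eta_1|\ldots|\eta_{q-1}])\cdot p^*\eta_q$, to which the induction hypothesis for $q-1$ applies, while each term on the right loses its last letter $\eta_q$ in exactly the matching way; the two differentials then coincide. Since both sides are holomorphic on the connected manifold $\tilde C$ and vanish at $x_0$, they are equal. Specializing to $p>0$ and $q>0$ gives \eqref{eq:1701:0306}, and to $p=0$ with $q=n>0$ gives \eqref{eq:1705:0306}; in each the absorbed forms $g\eta_1,\,g\omega_p$ stay in $\Omega(C)$, so the right-hand sides lie in the asserted images $I_{x_0}(\mathrm{Sh}_{p+q}(\Omega(C)))$ and $I_{x_0}(\mathrm{Sh}_n(\Omega(C)))$.

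The only delicate point is the bookkeeping at the two ends of the word. When $dg$ is at the extreme right the neighbouring-absorption term $g\eta_1$ is unavailable and is replaced by the function factor $p^*g$; when $dg$ is at the extreme left the term $g\omega_p$ is replaced by the scalar $g(x_0)$. Ensuring these degenerate cases are consistent with the single induction on $q$ — so that the base case and the $p=0$ specialization match the general step rather than requiring ad hoc reruns — is the main thing to get right. Everything else is a routine application of \eqref{IND:1008}, the Leibniz rule, and the $\mathcal O(C)$-module structure of $\Omega(C)$.
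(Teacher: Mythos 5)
Your proof is correct and rests on exactly the same integration-by-parts identities as the paper's proof: moving $df$ into a neighbouring slot as $f\cdot\beta_1$ or $\alpha_p\cdot f$, with the degenerate boundary terms $p^*f\cdot I_{x_0}(\cdots)$ and $f(x_0)\,I_{x_0}(\cdots)$ when $df$ sits at the right or left end of the word. The only difference is one of detail: the paper asserts these identities outright, whereas you verify them via the Leibniz rule and an induction on the number of letters to the right of $dg$, comparing differentials of two functions vanishing at $x_0$ — a valid and complete justification.
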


\begin{proof} Let us prove \eqref{eq:1701:0306}. The space $[\mathrm{Sh}_p(\Omega(C))|d\mathcal O(C)|\mathrm{Sh}_q(\Omega(C))]$
is linearly spanned by the elements $[\alpha_1|\ldots,|\alpha_p|df|\beta_1|\ldots|\beta_q]$, where 
$\alpha_i,\beta_j\in \Omega(C)$ for any $i,j$ and $f\in\mathcal O(C)$. Then 
$I_{x_0}([\alpha_1|\ldots,|\alpha_p|df|\beta_1|\ldots|\beta_q])=I_{x_0}([\alpha_1|\ldots,|\alpha_p|f\cdot\beta_1|\ldots|\beta_q]
-[\alpha_1|\ldots,|\alpha_p\cdot f|\beta_1|\ldots|\beta_q])\in I_{x_0}(\mathrm{Sh}_{p+q}(\Omega(C)))$. 

Let us prove \eqref{eq:1705:0306}. The space $[d\mathcal O(C)|\mathrm{Sh}_n(\Omega(C))]$ is linearly spanned by the elements 
$[df|\alpha_1|\ldots|\alpha_n]$, where $f\in\mathcal O(C)$ and $\alpha_i\in\Omega(C)$ for any $i$. Then 
$I_{x_0}([df|\alpha_1|\ldots|\alpha_n])=I_{x_0}([f\cdot\alpha_1|\ldots|\alpha_n])-f(x_0)\cdot I_{x_0}([\alpha_1|\ldots|\alpha_n])
\in I_{x_0}(\mathrm{Sh}_n(\Omega(C)))$. Eq. \eqref{eq:1709:0306} similarly follows from 
$I_{x_0}([\alpha_1|\ldots|\alpha_n|df])=p^*f\cdot I_{x_0}([\alpha_1|\ldots|\alpha_n])-I_{x_0}([\alpha_1|\ldots|\alpha_n\cdot f])
\in p^*\mathcal O(C)\cdot I_{x_0}(\mathrm{Sh}_n(\Omega(C)))$. 
\end{proof}

\begin{lem}\label{(a2)old}
For any $\sigma\in\Sigma_C$ and any $n\geq 0$, one has the inclusion 
$$
I_{x_0}(F_n\mathrm{Sh}(\Omega(C)))
\subset f_{J_\sigma,x_0}(F_n\mathrm{Sh}(\mathrm H_C)\otimes \mathbb C1+F_{n-1}\mathrm{Sh}(\mathrm H_C)\otimes\mathcal O(C)). 
$$
\end{lem}

\begin{proof}
By induction on $n\geq 0$. 
For $n=0$, the equality is obvious as both sides are equal to~$\mathbb C$. Let $n>0$, assume 
the equality for all steps $\leq n-1$ and let us prove it at step $n$. By the induction hypothesis, it suffices to prove the 
inclusion 
$I_{x_0}(\mathrm{Sh}_n(\Omega(C)))
\subset f_{J_\sigma,x_0}(F_n\mathrm{Sh}(\mathrm H_C)\otimes\mathbb C1
+F_{n-1}\mathrm{Sh}(\mathrm H_C)\otimes\mathcal O(C))$, i.e. 
\begin{equation}\label{opp:inclusion}
I_{x_0}(\mathrm{Sh}_n(\Omega(C)))
\subset I_{x_0}(F_n\mathrm{Sh}(\sigma(\mathrm H_C)))+p^*\mathcal O(C)\cdot I_{x_0}(F_{n-1}\mathrm{Sh}(\mathrm H_C)). 
\end{equation}
The space $\mathrm{Sh}_n(\Omega(C))$ is linearly spanned by the elements $[\omega_1|\ldots|\omega_n]$ with 
$\omega_1,\ldots,\omega_n\in\Omega(C)$. For all $i$, let $h_i\in \mathrm H_C$ be the projection of $\omega_i$ and choose 
$f_i\in \mathcal O(C)$ such that $\omega_i=\sigma(h_i)+df_i$. Then 
\begin{align*}
&[\omega_1|\ldots|\omega_n]
\in [\sigma(h_1)|\ldots|\sigma(h_n)]+[df_1|\mathrm{Sh}_{n-1}(\Omega(C))]
+[\mathrm{Sh}_{n-1}(\Omega(C))|df_{n-1}]\\ & +\sum_{i=1}^{n-1}[\mathrm{Sh}_{i-1}(\Omega(C))|df_i|\mathrm{Sh}_{n-i}(\Omega(C))]
\subset \mathrm{Sh}_n(\sigma(\mathrm H_C))
+[d\mathcal O(C)|\mathrm{Sh}_{n-1}(\Omega(C))]
\\ &+[\mathrm{Sh}_{n-1}(\Omega(C))|d\mathcal O(C)] +\sum_{i=1}^{n-1}[\mathrm{Sh}_{i-1}(\Omega(C))|d\mathcal O(C)|\mathrm{Sh}_{n-i}(\Omega(C))].
\end{align*}
Lem.  \ref{lem:1607:0306} then implies that
\begin{align}\label{1627:0306}
\nonumber I_{x_0}([\omega_1|\ldots|\omega_n])&\in I_{x_0}(\mathrm{Sh}_n(\sigma(\mathrm H_C)))
+p^*\mathcal O(C)\cdot I_{x_0}(\mathrm{Sh}_{n-1}(\Omega(C)))
+p^*\mathcal O(C)\cdot I_{x_0}(\mathrm{Sh}_{n-1}(\Omega(C)))
\\ & +\sum_{i=1}^{n-1}I_{x_0}(\mathrm{Sh}_{n-1}(\Omega(C)))
 = I_{x_0}(\mathrm{Sh}_n(\sigma(\mathrm H_C)))+p^*\mathcal O(C)\cdot I_{x_0}(\mathrm{Sh}_{n-1}(\Omega(C))).    
\end{align}
Moroever,
$$
p^*\mathcal O(C)\cdot I_{x_0}(\mathrm{Sh}_{n-1}(\Omega(C)))\subset
p^*\mathcal O(C)\cdot I_{x_0}(F_{n-1}\mathrm{Sh}(\Omega(C)))\subset 
p^*\mathcal O(C)\cdot I_{x_0}(F_{n-1}\mathrm{Sh}(\sigma(\mathrm H_C))),
$$
where the first inclusion follows from $\mathrm{Sh}_{n-1}(\Omega(C))\subset F_{n-1}\mathrm{Sh}(\Omega(C))$
and the second inclusion from the induction hypothesis, i.e. \eqref{opp:inclusion} at step $n-1$ by multiplication
by $p^*\mathcal O(C)$. Combining this inclusion with \eqref{1627:0306}, one obtains 
$$
I_{x_0}([\omega_1|\ldots|\omega_n])\in I_{x_0}(\mathrm{Sh}_n(\sigma(\mathrm H_C)))
+p^*\mathcal O(C)\cdot I_{x_0}(F_{n-1}\mathrm{Sh}(\sigma(\mathrm H_C))), 
$$
which is \eqref{opp:inclusion} at step $n$. 
\end{proof}

\begin{prop}\label{(a2)}
For any $J\in\mathrm{MC}_{nd}(C)$, one has the equality 
$$
I_{x_0}(F_\bullet\mathrm{Sh}(\Omega(C)))=f_{J,x_0}(F_\bullet\mathrm{Sh}(\mathrm H_C)\otimes\mathbb C1
+F_{\bullet-1}\mathrm{Sh}(\mathrm H_C)\otimes\mathcal O(C))
$$
of filtrations of $\mathcal O_{hol}(\tilde C)$, where $f_{J,x_0}$ is the algebra morphism from Lem.-Def. \ref{lem:3:13:0409}. 
%Thm. \ref{thm:03:0711}. 
\end{prop}

\begin{proof}
For $n\geq 0$, the space $f_{J,x_0}(F_n\mathrm{Sh}(\mathrm H_C)\otimes\mathbb C1 
+F_{n-1}\mathrm{Sh}(\mathrm H_C)\otimes\mathcal O(C))$
is equal to 
$I_{x_0}(J_*(F_n\mathrm{Sh}(\mathrm H_C) ))
+p^*(\mathcal O(C))\cdot I_{x_0}(J_*(F_{n-1}\mathrm{Sh}(\mathrm H_C) ))$, which is also equal to 
$I_{x_0}(J_*(F_n\mathrm{Sh}(\mathrm H_C) )
+d\mathcal O(C)\sha J_*(F_{n-1}\mathrm{Sh}(\mathrm H_C) ))$ since 
$p^*(\mathcal O(C))=\mathbb C+I_{x_0}([d\mathcal O(C)])$.    
This implies the equality 
\begin{equation}\label{toto:1608}
f_{J,x_0}(F_n\mathrm{Sh}(\mathrm H_C)\otimes\mathbb C1
+F_{n-1}\mathrm{Sh}(\mathrm H_C)\otimes\mathcal O(C))
=I_{x_0}(J_*(F_n\mathrm{Sh}(\mathrm H_C) )
+d\mathcal O(C)\sha J_*(F_{n-1}\mathrm{Sh}(\mathrm H_C) )). 
\end{equation}

Let us now prove 
\begin{equation}\label{eqBIS:1608}
\forall \,n\geq 0,\quad I_{x_0}(F_n\mathrm{Sh}(\Omega(C)))
=I_{x_0}(J_*(F_n\mathrm{Sh}(\mathrm H_C) )
+d\mathcal O(C)\sha J_*(F_{n-1}\mathrm{Sh}(\mathrm H_C) )),  
\end{equation}

The argument of $I_{x_0}$ in the right-hand side of \eqref{eqBIS:1608}
is contained in $F_n\mathrm{Sh}(\Omega(C))$, which implies the inclusion (left-hand side of \eqref{eqBIS:1608}) $\supset$ 
(right-hand side of \eqref{eqBIS:1608}). 

We now prove the opposite inclusion. There is a sequence of inclusions (in $\mathrm{Sh}(\Omega(C))$)  
\begin{align*}
&(\sigma_J)_*(F_n\mathrm{Sh}(\mathrm H_C))+d\mathcal O(C)\sha(\sigma_J)_*(F_{n-1}\mathrm{Sh}(\mathrm H_C))
\\ & 
\subset J_*(F_n\mathrm{Sh}(\mathrm H_C))+F_{n-1}\mathrm{Sh}(\Omega(C))
+d\mathcal O(C)\sha J_*(F_{n-1}\mathrm{Sh}(\mathrm H_C))+d\mathcal O(C)\sha F_{n-2}\mathrm{Sh}(\Omega(C))
\\ & 
=J_*(F_n\mathrm{Sh}(\mathrm H_C))+d\mathcal O(C)\sha J_*(F_{n-1}\mathrm{Sh}(\mathrm H_C))+F_{n-1}\mathrm{Sh}(\Omega(C)), 
\end{align*}
$\sigma_J$ being as in Lem-Def. \ref{lem:def:sigma:J:1012}, where the first inclusion follows from Lem. \ref{GR:ULJ}(b) and 
the second inclusion follows from $d\mathcal O(C)\sha F_{n-2}\mathrm{Sh}(\Omega(C)) \subset F_{n-1}\mathrm{Sh}(\Omega(C))$. 
One has therefore 
\begin{align}\label{titi:mrs:1608}
&\forall n\geq 0,\quad (\sigma_J)_*(F_n\mathrm{Sh}(\mathrm H_C))+d\mathcal O(C)\sha(\sigma_J)_*(F_{n-1}\mathrm{Sh}(\mathrm H_C))
\\ & \nonumber \subset J_*(F_n\mathrm{Sh}(\mathrm H_C))+d\mathcal O(C)\sha J_*(F_{n-1}\mathrm{Sh}(\mathrm H_C))
+F_{n-1}\mathrm{Sh}(\Omega(C)). 
\end{align}
For any $n\geq 0$, one then has 
\begin{align*}
&I_{x_0}(F_n(\mathrm{Sh}(\Omega(C)))) \subset f_{J_{\sigma_J},x_0}(F_n\mathrm{Sh}(\mathrm H_C)\otimes\mathbb C1+
F_{n-1}\mathrm{Sh}(\mathrm H_C)\otimes\mathcal O(C))
\\ & =I_{x_0}( (\sigma_J)_*(F_n\mathrm{Sh}(\mathrm H_C))+d\mathcal O(C)\sha(\sigma_J)_*(F_{n-1}\mathrm{Sh}(\mathrm H_C)) ) 
\\ & \subset I_{x_0}( J_*(F_n\mathrm{Sh}(\mathrm H_C))+d\mathcal O(C)\sha J_*(F_{n-1}\mathrm{Sh}(\mathrm H_C))+F_{n-1}\mathrm{Sh}(\Omega(C)) )
\\ & =I_{x_0}( J_*(F_n\mathrm{Sh}(\mathrm H_C))+d\mathcal O(C)\sha J_*(F_{n-1}\mathrm{Sh}(\mathrm H_C)) ) 
+I_{x_0}(F_{n-1}\mathrm{Sh}(\Omega(C)) )
\\ & =f_{J,x_0}(F_n\mathrm{Sh}(\mathrm H_C)\otimes\mathbb C1
+F_{n-1}\mathrm{Sh}(\mathrm H_C)\otimes\mathcal O(C))+I_{x_0}(F_{n-1}\mathrm{Sh}(\Omega(C)) )
\end{align*}
where the first relation follows from Lem. \ref{(a2)old}, the second relation follows from \eqref{toto:1608} applied to 
$J_{\sigma_J}$, the third relation follows from \eqref{titi:mrs:1608}, and the last relation follows from the second relation 
follows from \eqref{toto:1608} applied to $J$. The relation
$$
\forall n\geq 0,\quad I_{x_0}(F_n(\mathrm{Sh}(\Omega(C)))) \subset f_{J,x_0}(F_n\mathrm{Sh}(\mathrm H_C)\otimes\mathbb C1
+F_{n-1}\mathrm{Sh}(\mathrm H_C)\otimes\mathcal O(C))
$$ 
then follows by induction. Therefore
(left-hand side of \eqref{eqBIS:1608}) $\subset$ 
(right-hand side of \eqref{eqBIS:1608}), which ends the proof of \eqref{eqBIS:1608}. 

The result then follows from the combination of \eqref{eqBIS:1608} and \eqref{toto:1608}. 
\end{proof}

%\section{Algebra filtrations of $\mathcal O_{hol}(\tilde C)$ defined by the differential}

\subsection{The filtrations $F_\bullet^\delta\mathcal O_{hol}(\tilde C)$ and $F_\bullet^\mu\mathcal O_{hol}(\tilde C)$}
\label{sect:52:2811}\label{sect:5:2:1512}

In Def. \ref{def:filtr:0911}, we defined %\eqref{eq:1:1512}, \eqref{eq:2:1512}, \eqref{eq:3:1512}, we defined 
$F^\delta_\bullet\mathcal O_{hol}(\tilde C)$, $F^\mu_\bullet\mathcal O_{hol}(\tilde C)$,
$F^\delta_\infty\mathcal O_{hol}(\tilde C)$ and $F^\mu_\infty\mathcal O_{hol}(\tilde C)$ (see Thm. C). 

\begin{prop}\label{(a1)}
For any $x_0\in\tilde C$, one has the equality 
$$
F^\delta_\bullet\mathcal O_{hol}(\tilde C)=I_{x_0}(F_\bullet\mathrm{Sh}(\Omega(C)))
$$
of filtrations of $\mathcal O_{hol}(\tilde C)$. 
\end{prop}

\begin{proof} Let us prove 
\begin{equation}
\forall\, n\geq 0,\quad I_{x_0}(F_n\mathrm{Sh}(\Omega(C)))=F^\delta_n\mathcal O_{hol}(\tilde C)    
\end{equation}
by induction on $n$. For $n=0$, the equality holds since both sides are equal to $\mathbb C$. Assume the equality at step $n\geq 0$
and let us show it at step $n+1$. 

Let us first show the inclusion $I_{x_0}(F_{n+1}\mathrm{Sh}(\Omega(C)))\subset 
F^\delta_{n+1}\mathcal O_{hol}(\tilde C)$. For this, in view of the induction hypothesis, it suffices to 
prove $I_{x_0}(\mathrm{Sh}_{n+1}(\Omega(C)))\subset F^\delta_{n+1}\mathcal O_{hol}(\tilde C)$. The space is linearly 
spanned by the elements $[\omega_1|\ldots|\omega_{n+1}]$, where $\omega_1,\ldots,\omega_{n+1}\in\Omega(C)$. Then 
$d(I_{x_0}([\omega_1|\ldots|\omega_{n+1}]))=I_{x_0}([\omega_1|\ldots|\omega_{n}])\cdot \omega_{n+1}$, and 
$I_{x_0}([\omega_1|\ldots|\omega_{n}])\in I_{x_0}(\mathrm{Sh}_n(\Omega(C)))\subset F^\delta_n\mathcal O_{hol}(\tilde C)$
where the last inclusion follows from the induction hypothesis. This shows that $I_{x_0}([\omega_1|\ldots|\omega_{n+1}])
\in F^\delta_{n+1}\mathcal O_{hol}(\tilde C)$, therefore $I_{x_0}(\mathrm{Sh}_{n+1}(\Omega(C)))\subset F^\delta_{n+1}\mathcal O_{hol}(\tilde C)$
as wanted. 

Let us now show the inclusion $F^\delta_{n+1}\mathcal O_{hol}(\tilde C)\subset I_{x_0}(F_{n+1}\mathrm{Sh}(\Omega(C)))$. 
Let $f\in F^\delta_{n+1}\mathcal O_{hol}(\tilde C)$, then there exist elements $f_1,\ldots,f_k\in F^\delta_n\mathcal O_{hol}(\tilde C)$
and $\omega_1,\ldots,\omega_k\in \Omega(C)$ such that $df=\sum_i f_i\cdot p^*\omega_i$. By the induction hypothesis, there exist
$t_1,\ldots,t_k\in F_n\mathrm{Sh}(\Omega(C))$, such that $f_i=I_{x_0}(t_i)$ for any $i$. Then $df=\sum_i I_{x_0}(t_i)\cdot p^*\omega_i$. 
Integration gives $f=f(x_0)+\sum_i I_{x_0}([t_i|\omega_i])=I_{x_0}(f(x_0)+\sum_i[t_i|\omega_i])\in I_{x_0}(F_{n+1}(\mathrm{Sh}(\Omega(C))))$, 
which proves the claimed inclusion. 
\end{proof}

\begin{prop}\label{new:cor}
(a) Both $F^\delta_\bullet \mathcal O_{hol}(\tilde C)$ and $F^\mu_\bullet \mathcal O_{hol}(\tilde C)$ are algebra filtrations of 
$\mathcal O_{hol}(\tilde C)$. 

(b) For any $n\geq 0$, one has $F^\delta_n \mathcal O_{hol}(\tilde C) \subset F^\mu_n \mathcal O_{hol}(\tilde C) 
\subset F^\delta_{n+1}\mathcal O_{hol}(\tilde C)$.  

(c) One has $F^\delta_\infty \mathcal O_{hol}(\tilde C)=F^\mu_\infty \mathcal O_{hol}(\tilde C)$ (equality of subalgebras of 
$\mathcal O_{hol}(\tilde C)$).  
\end{prop}

\begin{proof}
Recall the shorthand $F_\bullet^{\delta/\mu}:=F_\bullet^{\delta/\mu}\mathcal O_{hol}(\tilde C)$ (see Lem. \ref{lem:intro}). 
By Prop. \ref{(a1)}, $F_\bullet^\delta$ is the image of the increasing algebra filtration 
$F_\bullet\mathrm{Sh}(\Omega(C))$ by the morphism $I_{x_0} : \mathrm{Sh}(\Omega(C))\to\mathcal O_{hol}(\tilde C)$,
which implies that $F_\bullet^\delta$ is an increasing algebra filtration of $\mathcal O_{hol}(\tilde C)$. 
Since $F_\bullet^\mu$ is obtained out of $F_\bullet^\mu$ by taking the product with the fixed subalgebra $\mathcal O(C)$ 
of $\mathcal O_{hol}(\tilde C)$, which implies that it is an increasing algebra filtration of $\mathcal O_{hol}(\tilde C)$ 
as well. This proves (a).  

For any $f\in \mathcal O(C)$, $df\in\Omega(C)=\Omega(C)\cdot F_0^\delta$, which implies $\mathcal O(C)\subset F^1_\delta$. 
For $n \geq 0$, one then has $F^\mu_n=\mathcal O(C)\cdot F^\delta_n \subset F^\delta_1\cdot F^\delta_n \subset F^\delta_{n+1}$. 
For $n\geq 0$, one clearly also has $F^\delta_n \subset F^\mu_n$, which implies (b). Statement (c) follows from (b). 
\end{proof}

\begin{prop}\label{(a3)}
For any $(J,x_0)\in\mathrm{MC}(C)\times\tilde C$, one has the equality 
$$
F^\mu_\bullet\mathcal O_{hol}(\tilde C)=f_{J,x_0}( F_\bullet\mathrm{Sh}(\mathrm H_C)\otimes\mathcal O(C))
$$
of filtrations of $\mathcal O_{hol}(\tilde C)$. 
\end{prop}

\begin{proof} Let $n\geq0$. In Prop. \ref{(a2)}, we proved the equality $I_{x_0}(F_n\mathrm{Sh}(\Omega(C)))=
I_{x_0}(J_*(F_n\mathrm{Sh}(\mathrm H_C)))+p^*\mathcal O(C)\cdot I_{x_0}(J_*(F_{n-1}\mathrm{Sh}(\mathrm H_C)))$. 
Multiplying it by $p^*\mathcal O(C)$, we obtain $p^*\mathcal O(C)\cdot I_{x_0}(F_n\mathrm{Sh}(\Omega(C)))=
p^*\mathcal O(C)\cdot I_{x_0}(J_*(F_n\mathrm{Sh}(\mathrm H_C)))$. By Prop. \ref{(a1)}, 
$I_{x_0}(F_n\mathrm{Sh}(\Omega(C)))=F_n^\delta\mathcal O_{hol}(\tilde C)$, and so $p^*\mathcal O(C)\cdot I_{x_0}(F_n\mathrm{Sh}(\Omega(C)))
=p^*\mathcal O(C)\cdot F_n^\delta\mathcal O_{hol}(\tilde C)=F_n^\mu\mathcal O_{hol}(\tilde C)$. It follows that 
$F_n^\mu\mathcal O_{hol}(\tilde C)=p^*\mathcal O(C)\cdot I_{x_0}(J_*(F_n\mathrm{Sh}(\mathrm H_C)))$; the right-hand side of this 
equality is equal to $f_{J,x_0}(F_n\mathrm{Sh}(\mathrm H_C)\otimes\mathcal O(C))$. 
\end{proof}

\subsection{Relation with $A_C$}\label{sect:53:2811}

Recall from §\ref{sect:context}:  

\begin{defn}
(a) A {\it stable subalgebra} (SSA) of $\mathcal O_{hol}(\tilde C)$ is a unital subalgebra $A$ of $\mathcal O_{hol}(\tilde C)$, such 
that for any $f\in A$ and $\omega\in \Omega(C)$, one has $\mathrm{int}_{x_0}(f\cdot p^*\omega)=(x\mapsto \int_{x_0}^x f\cdot p^*\omega)
\in A$. 

(b) $A_C:=\cap_{A\text{ a SSA of }\mathcal O_{hol}(\tilde C)}A$. 
\end{defn}

\begin{lemdef}\label{lem:5:10:2811} 
$A_C$ is a SSA of $\mathcal O_{hol}(\tilde C)$, contained in any SSA of $\mathcal O_{hol}(\tilde C)$; 
we therefore call $A_C$ the minimal stable subalgebra (MSSA) of $\mathcal O_{hol}(\tilde C)$. 
\end{lemdef}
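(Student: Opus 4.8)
The plan is to prove the two assertions of the statement separately. The minimality claim ($A_C$ is contained in every SSA) is immediate from the very definition of $A_C$ as an intersection, so the substance of the proof lies in showing that the intersection $A_C$ is itself an SSA. This I would reduce to the elementary fact that both defining properties of an SSA — being a unital subalgebra, and being stable under the antiderivation operation — pass to arbitrary intersections.

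As a preliminary, I would check that the family of SSAs over which the intersection is taken is nonempty, so that $A_C$ is genuinely well-defined: the ambient algebra $\mathcal O_{hol}(\tilde C)$ is itself an SSA. Indeed it is a unital subalgebra of itself, and for any $f \in \mathcal O_{hol}(\tilde C)$ and $\omega \in \Omega(C)$ the product $f \cdot p^*\omega$ belongs to $\Omega_{hol}(\tilde C)$, so that $\mathrm{int}_{x_0}(f \cdot p^*\omega) = (z \mapsto \int_{x_0}^z f \cdot p^*\omega)$ is a well-defined element of $\mathcal O_{hol}(\tilde C)$ by simple connectedness of $\tilde C$ (this is exactly the map $\mathrm{int}_{x_0}$ of \S\ref{subsect:Omega:mod:2811}). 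Next, I would observe that an arbitrary intersection of unital subalgebras of $\mathcal O_{hol}(\tilde C)$ is closed under addition, multiplication and scalar multiplication and contains the unit, hence is again a unital subalgebra; this gives that $A_C$ is a unital subalgebra.

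For stability, I would take $f \in A_C$ and $\omega \in \Omega(C)$ and set $g := \mathrm{int}_{x_0}(f \cdot p^*\omega)$. For every SSA $A$ one has $f \in A_C \subseteq A$, whence $g \in A$ by the stability of $A$; since this holds for every SSA $A$, one concludes $g \in \cap_A A = A_C$. Combined with the previous paragraph, this shows $A_C$ is an SSA, and then $A_C \subseteq A$ for each SSA $A$ holds by definition of the intersection, completing the proof.

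The argument is entirely formal and I do not expect a genuine obstacle. The single point deserving attention is that all SSAs are defined relative to the \emph{same} fixed basepoint $x_0$, so that the antiderivation $f \mapsto \mathrm{int}_{x_0}(f \cdot p^*\omega)$ is literally one and the same operation for every member of the family; this is precisely what allows the stability property to descend to the intersection. (Had the basepoint been allowed to vary, one would additionally note that changing $x_0$ alters the antiderivative only by an additive constant, which is absorbed by unitality, so the notion of SSA is independent of this choice.)
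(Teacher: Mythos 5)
Your proposal is correct and follows essentially the same route as the paper, which simply observes that an intersection of SSAs is again an SSA; you spell out the nonemptiness of the family, the closure of unitality and of the antiderivation operation under intersections, and the (correct) remark that the choice of basepoint is immaterial up to an additive constant absorbed by unitality. No gap.
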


\begin{proof} This follows from the fact that an intersection of two SSAs of $\mathcal O_{hol}(\tilde C)$ is a SSA of 
$\mathcal O_{hol}(\tilde C)$. \end{proof}

\begin{prop} \label{prop:AC:other}
For any $x_0\in\tilde C$, one has $A_C=I_{x_0}(\mathrm{Sh}(\Omega(C)))$. 
\end{prop}

\begin{proof}
For $\omega\in\Omega(C)$, let $\mathrm{prim}_\omega$ be the vector space endomorphism of $\mathcal O_{hol}(\tilde C)$ given by 
$f\mapsto (x\mapsto \int_{x_0}^x f\cdot p^*\omega)$. Let also $r_\omega$ be the linear endomorphism of $\mathrm{Sh}(\Omega(C))$
given by right concatenation with $\omega$; explicitly, $r_\omega([\omega_1|\ldots|\omega_k])=[\omega_1|\ldots|\omega_k|\omega]$
for any $\omega_1,\ldots,\omega_k\in\Omega(C)$. Then one checks the identity 
\begin{equation}\label{*:0102}
    \mathrm{prim}_\omega \circ I_{x_0}=I_{x_0} \circ R_\omega.
\end{equation}
It follows that $\mathrm{prim}_\omega(I_{x_0}(t))=I_{x_0}(R_\omega(t))$ for any $t\in\mathrm{Sh}(\Omega(C))$, 
which together with unitality implies that $I_{x_0}(\mathrm{Sh}(\Omega(C)))$ is a stable subalgebra of $\mathcal O_{hol}(\tilde C)$, hence $A_C\subset I_{x_0}(\mathrm{Sh}(\Omega(C)))$. 

Let now $A$ be a stable subalgebra of $\mathcal O_{hol}(\tilde C)$. Let $n\geq 0$. For any $\omega_1,\ldots,\omega_n\in\Omega(C)$, 
the element $\mathrm{prim}_{\omega_n}\circ \cdots\circ \mathrm{prim}_{\omega_1}(1)$ belongs to $A$ since $1\in A$ and by the stability 
of $A$. It follows from \eqref{*:0102} that this element is equal to $I_{x_0}([\omega_1|\ldots|\omega_n])$, and therefore $A$ contains 
$I_{x_0}(\mathrm{Sh}_n(\Omega(C)))$. This implies that A contains $I_{x_0}(\mathrm{Sh}(\Omega(C)))$, thus concluding the proof.
\end{proof}

\subsection{Proof of Theorems A,B,C}\label{sect:proof:of:thms}

\subsubsection{Proof of Theorem C}
In \eqref{thm:c:1st:line}, 
the first (resp. second) equation follows from Prop. \ref{prop:3:15:0409}(b) (resp. Prop. \ref{(a3)}). 
In \eqref{thm:c:2nd:line}, the first (resp. second) equation follows from Prop. \ref{(a1)} (resp. Prop. \ref{(a2)}). 
In \eqref{thm:c:3rd:line}, the first (resp. second, third, fourth, fifth) equality follows from 
Prop. \ref{prop:AC:other} (resp. Prop. \ref{(a1)} at infinity, Prop. \ref{new:cor}(c), Prop. \ref{(a3)} at infinity, 
Prop. \ref{prop:3:15:0409}(b) at infinity). 

\subsubsection{Proof of Theorem A}
Thm. A(a) is proved in Lem. \ref{lem:bibli:3005}. It follows from Prop. \ref{prop:3:15:0409} that the 
algebra morphism $f_{J,x_0}$ induces an algebra isomorphism $f_{J,x_0} : \mathrm{Sh}(\mathrm{H}_C)\otimes\mathcal O(C)  
\to F_\infty\mathcal O_{mod}(\tilde C)$. By Thm. C(b), one has $F_\infty\mathcal O_{mod}(\tilde C)=A_C$, 
which implies Thm. A(b). 

\subsubsection{Proof of Theorem B}

Thm. B follows from the combination of Prop. \ref{prop:3:15:0409}(a) and from the equality $F_\infty\mathcal O_{mod}(\tilde C)=A_C$, 
which follows from Thm. C(b).

\subsection{Consequences for hyperlogarithm functions}\label{sect:5:5:1611}

\begin{prop}\label{freeness:HL:funs}
Let $J\in\mathrm{MC}_{nd}(C)$, $x_0\in\tilde C$ and $(h_i)_{i\in[\![1,d]\!]}$ be a basis of $\mathrm H_C$. 

(a) The family $(\tilde f_{J,x_0}([h_{i_1}|\ldots|h_{i_k}]))_{\substack{k\geq 0,i_1,\ldots,i_k \in [\![1,d]\!]}}$ 
is a basis of the vector space $\mathcal H_C(J)$.

(b) The family in (a) is linearly independent over $\mathcal O(C)$, i.e. for any family $(\phi^{i_1,\ldots,i_k})_{k\geq 0,i_1,\ldots,i_k\in [\![1,d]\!]}$ in $\mathcal O(C)$, the 
relation 
$$
\sum_{k\geq 0,i_1,\ldots,i_k\in [\![1,d]\!]} p^*(\phi^{i_1,\ldots,i_k}) \tilde f_{J,x_0}([h_{i_1}|\ldots|h_{i_k}])=0
$$
%$\sum_{k\geq 0,i_1,\ldots,i_k\in [1,d]} p^*(f^{i_1,\ldots,i_k})L_{[\sigma(h_{i_1})|\ldots|\sigma(h_{i_k})]}=0$ 
implies the vanishing of $(\phi^{i_1,\ldots,i_k})_{k\geq 0,i_1,\ldots,i_k\in [\![1,d]\!]}$. 
\end{prop}

\begin{proof}
It follows from Prop. \ref{prop:3:15:0409}(a) that the algebra morphism $\tilde f_{J,x_0} : \mathrm{Sh}(\mathrm H_C)\to
\mathcal O_{hol}(\tilde C)$ is injective, which then implies (a). (b) follows from (a) and  Prop. \ref{prop:3:15:0409}(a). 
\end{proof}

Let now $C$ be as in §\ref{sect:92:0711}, so 
$C = \mathbb P^1_{\mathbb C}\smallsetminus S$, with $S$ a finite set containing $0$ and $\infty$. 
Let $\tilde f_{\sigma_0,0}: \mathrm{Sh}(\mathrm H_C)\to\mathcal O_{hol}(\tilde C)$ be as in Def. \ref{def:2011}. 

Define an algebra morphism $f_{\sigma_0,0} : \mathrm{Sh}(\mathrm H_C)\otimes\mathcal O(C)\to \mathcal O_{hol}(\tilde C)$
by $t\otimes f\mapsto p^*(f)\tilde f_{\sigma_0,0}(t)$. 

\begin{lem}\label{prop:mor:0:inj}
(a) The algebra morphism $f_{\sigma_0,0}$ is injective. 

(b) The map $\tilde f_{\sigma_0,0}$ is injective. 
\end{lem}

\begin{proof}
(a) Choose $z_0\in\tilde C$. Since 
$f_{\sigma_0,0}=m\circ (\tilde f_{\sigma_0,0}\otimes p^*)$ and $f_{\sigma_0,z_0}=
m\circ ((I_{z_0}\circ\sigma_0)\otimes p^*)$ (where $m : \mathcal O_{hol}(\tilde C)^{\otimes 2}\to \mathcal O_{hol}(\tilde C)$
is the product map), one has
\begin{equation}\label{equality:endos:mor}
f_{\sigma_0,0}=f_{\sigma_0,z_0}\circ(a_0^{z_0}\otimes \mathrm{id})    
\end{equation}
(equality of algebra morphisms $\mathrm{Sh}(\mathrm H_C)\otimes\mathcal O(C)\to\mathcal O_{hol}(\tilde C)$). 
The statement then follows from~\eqref{equality:endos:mor}, together with the injectivity of $f_{\sigma_0,z_0}$ 
(see Prop. \ref{prop:3:15:0409}(a)) and the automorphism status of $a_0^{z_0}$. 
Statement (b) follows from (a), as $\tilde f_{\sigma_0,0}$ is the composition of $f_{\sigma_0,0}$ with the canonical injection 
$-\otimes 1 : \mathrm{Sh}(\mathrm H_C)\to\mathrm{Sh}(\mathrm H_C)\otimes\mathcal O(C)$.\end{proof}

\begin{prop}[see also \cite{Br:these}, Cor. 5.6]
(a) The family of functions $(L_{[s_1|\ldots|s_k]})_{k\geq 0,s_1,\ldots,s_k\in S_\infty}$ of $\mathcal O_{hol}(\tilde C)$ 
(see notation after Def. \ref{def:2011}) is a basis of $\mathcal H_{\mathbb P^1_{\mathbb C}\smallsetminus S}(\sigma_0)$. 

(b) The  family of functions in (a) is linearly independent over $\mathcal O(C)$. 
\end{prop}

\begin{proof}
 Statement (a) follows from Lem. \ref{prop:mor:0:inj}(b) and from the second part 
of Prop.~\ref{prop:mor:0:moreover}. Statement (b) follows from (a) and from Lem. \ref{prop:mor:0:inj}. 
\end{proof}

\subsection{Relation with Chen's work}\label{sect:55:2811}
 
In the Introduction of \cite{Chen}, K.-T. Chen defines an algebra filtration $\tilde F_\bullet C^\infty(\tilde M)$ of 
$C^\infty(\tilde M)$
for any smooth manifold $M$, where $\tilde M\to M$ is the universal cover; it has the additional property that 
$\tilde F_n C^\infty(\tilde M)$ is a subalgebra of $C^\infty(\tilde M)$ for any $n\geq0$. If $X$ is a nonsingular complex algebraic variety, 
and $\tilde X$ is its universal cover, equipped with its natural structure of complex manifold, one can 
similarly define a filtration $\tilde F_\bullet \mathcal O_{hol}(\tilde X)$ of the algebra $\mathcal O_{hol}(\tilde X)$, 
by replacing in the definition of \cite{Chen} the spaces of smooth functions and 1-forms (denoted there $\Lambda^0(M)$ and 
$\Lambda^1(M)$) by the 
spaces of regular functions and differentials on $X$. When $X=C$, the explicit definition of $\tilde F_\bullet \mathcal O_{hol}(\tilde C)$
is as follows: $\tilde F_0\mathcal O_{hol}(\tilde C):=p^*\mathcal O(C)$ and 
$\tilde F_{r+1}\mathcal O_{hol}(\tilde C):=\mathbb C[f,\mathrm{int}_{x_0}(g\cdot p^*\omega)\,|\,f,g\in \tilde F_r\mathcal O_{hol}(\tilde C),\,
\omega\in\Omega(C)]$ for $r\geq 0$, where $\mathbb C[-]$ means the subalgebra generated by a family. 

\begin{lem} Let $x_0\in\tilde C$. 

(a) For any $r\geq 0$, $\tilde F_r\mathcal O_{hol}(\tilde C)\subset I_{x_0}(\mathrm{Sh}(\Omega(C)))$. 

(b) For any $r\geq 0$, $I_{x_0}(\mathrm{Sh}_r(\Omega(C)))\subset\tilde F_r\mathcal O_{hol}(\tilde C)$. 

(c) One has $\tilde F_\infty\mathcal O_{hol}(\tilde C)=I_{x_0}(\mathrm{Sh}(\Omega(C)))$ (equality of subalgebras of 
$\mathcal O_{hol}(\tilde C)$). 
\end{lem}

\begin{proof}
(a) $\tilde F_0\mathcal O_{hol}(\tilde C)=p^*\mathcal O(C)=I_{x_0}(\mathbb C\oplus[d\mathcal O(C)])
\subset I_{x_0}(\mathrm{Sh}(\Omega(C)))$. Let $r\geq 0$ and assume that $\tilde F_r\mathcal O_{hol}(\tilde C)\subset 
I_{x_0}(\mathrm{Sh}(\Omega(C)))$. Let $g\in \tilde F_r\mathcal O_{hol}(\tilde C),\,
\omega\in\Omega(C)$. One knows that for some $a\in \mathrm{Sh}(\Omega(C))$, $\omega=I_{x_0}(a)$.
Then $\mathrm{int}_{x_0}(g\cdot p^*\omega)=\mathrm{int}_{x_0}(I_{x_0}(a)\cdot p^*\omega)
=I_{x_0}([a|\omega])\in I_{x_0}(\mathrm{Sh}(\Omega(C)))$. Then 
$\tilde F_{r+1}\mathcal O_{hol}(\tilde C)=\mathbb C[f,\mathrm{int}_{x_0}(g\cdot p^*\omega)\,|\,f,g\in \tilde F_r\mathcal O_{hol}(\tilde C),\,
\omega\in\Omega(C)]\subset I_{x_0}(\mathrm{Sh}(\Omega(C)))$ by $\tilde F_r\mathcal O_{hol}(\tilde C)\subset 
I_{x_0}(\mathrm{Sh}(\Omega(C)))$ and the fact that $I_{x_0}(\mathrm{Sh}(\Omega(C)))$ is an algebra. 

(b) $I_{x_0}(\mathrm{Sh}_0(\Omega(C)))=\mathbb C\subset \tilde F_0\mathcal O_{hol}(\tilde C)$. Let $r\geq 0$ and assume that 
$I_{x_0}(\mathrm{Sh}_r(\Omega(C)))\subset\tilde F_r\mathcal O_{hol}(\tilde C)$. Let $a\in \mathrm{Sh}_{r+1}(\Omega(C))$. There exist
elements $(a_i)_{i=1,\ldots,k}$, $(\omega_i)_{i=1,\ldots,k}$ where $a_i\in\mathrm{Sh}_r(\Omega(C))$, $\omega_i\in\Omega(C)$ such that 
$a=\sum_{i=1}^k[a_i|\omega_i]$. Then 
$$
I_{x_0}(a)=\sum_{i=1}^k I_{x_0}([a_i|\omega_i])
=\sum_{i=1}^k\mathrm{int}_{x_0}(I_{x_0}(a_i)\cdot p^*\omega_i). 
$$
One has $I_{x_0}(a_i)\in \tilde F_r\mathcal O_{hol}(\tilde C)$
by assumption, therefore the final term of this equality belongs to $\tilde F_{r+1}\mathcal O_{hol}(\tilde C)$. Therefore 
$I_{x_0}(\mathrm{Sh}_{r+1}(\Omega(C)))\subset\tilde F_{r+1}\mathcal O_{hol}(\tilde C)$. The statement follows by induction. 

(c) By (a), we know that $\tilde F_\infty\mathcal O_{hol}(\tilde C)\subset I_{x_0}(\mathrm{Sh}(\Omega(C)))$. Moreover, (b) implies that, for any $r\geq 0$,
$I_{x_0}(\mathrm{Sh}_r(\Omega(C)))\subset\tilde F_\infty\mathcal O_{hol}(\tilde C)$, and so $I_{x_0}(\mathrm{Sh}(\Omega(C)))\subset\tilde F_\infty\mathcal O_{hol}(\tilde C)$.
\end{proof}
 
\begin{rem}
\S2.3 of \cite{Chen} contains the definition of another filtration $F_\bullet C^\infty(\tilde M)$. This definition is both 
an analogue of that of $F_\bullet^\mu\mathcal O_{hol}(\tilde C)$ (as both definitions give analogous values for the degree 0 term of 
the filtration) and  of $F_\bullet^\delta\mathcal O_{hol}(\tilde C)$ (as both definitions share the same induction step). However, the 
statement ``$F_r C^\infty(\tilde M)\cdot F_s C^\infty(\tilde M)\subset F_{r+s} C^\infty(\tilde M)$ for any $r,s\geq 0$'' from Prop. 2.3.1 
in \cite{Chen} appears to be wrong. Indeed, if $r=0$, $s=1$,  $F_0C^\infty(\tilde M)=p^*C^\infty(M)$ while $F_1C^\infty(\tilde M)=\{f\in 
C^\infty(\tilde M)\,|\,df\in F_0C^\infty(\tilde M)\cdot p^*\Lambda^1(M)\}$; since $F_0C^\infty(\tilde M)\cdot p^*\Lambda^1(M)=p^*\Lambda^1(M)$, 
the set $F_1C^\infty(\tilde M)$ is the set of functions on $\tilde M$ of the form $x\mapsto c+\int_{x_0}^xp^*\omega$, where 
$c\in\mathbb C$ and 
$ \omega\in\Lambda^1(M)$; this set in not stable under multiplication by $p^*C^\infty(M)$ (the mistake can be traced to the proof of 
Prop. 2.3.1, which overlooks the fact that the inclusion $dF_rC^\infty(\tilde M)\subset F_{r-1}C^\infty(\tilde M)\cdot p^*\Lambda^1(M)$
is valid in general only if one introduces $F_{-1}C^\infty(\tilde M)=\mathbb C$). 
\end{rem}

\part{Complementary results}

\section{Connections for HACAs}\label{sect:6:0212}

We introduce the notion of connection on a HACA in §\ref{sect:61:2911}. In §\ref{sect:nat:conn:2911}, we construct a natural connection 
on the HACA $((\mathbb C\Gamma_C)',F_\infty \mathcal O_{mod}(\tilde C))$. We compute its pull-back under the HACA isomorphism 
$(\nu(p_{J,x_0}),f_{J,x_0})$ from Prop. \ref{prop:3:15:0409} in §\ref{sect:preimage:conn:2911}. 

\subsection{Connections for HACAs}\label{sect:61:2911}

Let $\mathbf a$ be a commutative algebra. Recall that the $\mathbf a$-module of Kähler differentials of $\mathbf a$ is the quotient $\Omega_{\mathbf a}:=\mathrm{Ker}(m)/\mathrm{Ker}(m)^2$, where $m$ is the product map $\mathbf a \otimes\mathbf a\to\mathbf a$; the derivation 
$d : \mathbf a\to \Omega_{\mathbf a}$ is defined by $d(a)=a\otimes 1-1\otimes a+\mathrm{Ker}(m)^2$. One has 
$\Omega_{\mathcal O(C)}=\Omega(C)$. 

Let $(O,A)$ be a HACA with coaction morphism $\Delta_A : A\to O \otimes A$. 
One has $A^O=\{a \in A\,|\,\Delta_A(a)=1 \otimes a\}=F_0A$. If $A^O$ is a commutative algebra, then 
$A \otimes_{A^O} \Omega_{A^O}$ is a left $A$-module. 

\begin{defn}\label{def:81:0711}
Let $(O,A)$ be a HACA such that $A^O$ is central in $A$, so that $A \otimes_{A^O} \Omega_{A^O}$ 
is a right $A$-module. 
A connection for $(O,A)$ is a map $\nabla_A : A\to A \otimes_{A^O} \Omega_{A^O}$, which: \\
(a) is a derivation, i.e.  
$\nabla_A(aa')=a\nabla_A(a')+\nabla_A(a)a'$ for any $a,a' \in A$; \\
(b) is $O$-equivariant, i.e. 
$(\Delta_A \otimes id_{\Omega_{A^O}}) \circ \nabla_A=(id_O \otimes \nabla_A) \circ \Delta_A$; \\
(c) is such that 
$\nabla_A(a)=1 \otimes da$ for $a\in A^O$. 
\end{defn}
If $(O,A)$ is a HACA with connection $\nabla_A$, then one defines the pull-back of $\nabla_A$ by a HACA isomorphism 
$(O',A')\to(O,A)$, which is a connection for $(O',A')$. 

\begin{rem}
If $G$ is an algebraic group and $P$ is a principal $G$-bundle over an affine base, and $(O,A)$ is the pair 
of regular functions on these spaces, then a connection for $(O,A)$ is an algebraic version of a $G$-invariant 
Ehresmann connection on the bundle $P\to P/G$. 
\end{rem}

\subsection{A connection for $((\mathbb C\Gamma_C)',F_\infty\mathcal O_{mod}(\tilde C))$}\label{sect:nat:conn:2911}

\begin{prop}\label{lem:1:3:2905}
(a) The map $F_\infty\mathcal O_{mod}(\tilde C)\otimes_{\mathcal O(C)}\Omega(C)\to \Omega_{hol}(\tilde C)$
given by $f\otimes\omega\mapsto f\cdot p^*\omega$ is injective. 

(b) There exists a unique map $\nabla : F_\infty\mathcal O_{mod}(\tilde C)\to F_\infty\mathcal O_{mod}(\tilde C) 
\otimes_{\mathcal O(C)} \Omega(C)$ such that the diagram 
\begin{equation}\label{diag;1809}
\xymatrix{F_\infty\mathcal O_{mod}(\tilde C)\ar^{\!\!\!\!\!\!\!\!\nabla}[r]\ar@{^{(}->}[d] & F_\infty\mathcal O_{mod}(\tilde C)
\otimes_{\mathcal O(C)}\Omega(C)\ar@{^{(}->}[d]\\
\mathcal O_{hol}(\tilde C)\ar_{d}[r]& \Omega_{hol}(\tilde C)}
\end{equation}
commutes. 

(c) $\nabla$ is a connection for the HACA $((\mathbb C\Gamma_C)',F_\infty\mathcal O_{mod}(\tilde C))$
(see Lem. \ref{LEM:HACA}).  
\end{prop}

\begin{proof} (a) By Lem. \ref{prop:3:15:0409}, $f_{J_\sigma,x_0} : \mathrm{Sh}(\mathrm H_C)\otimes \mathcal O(C)\to 
F_\infty\mathcal O_{mod}(\tilde C)$ is an isomorphism of filtered $\mathcal O(C)$-modules. Its image by the functor 
$- \otimes_{\mathcal O(C)}\Omega(C)$ is an isomorphism of filtered vector spaces $\varphi_{\sigma,x_0} :
\mathrm{Sh}(\mathrm H_C)\otimes\Omega(C)\to F_\infty\mathcal O_{mod}(\tilde C)\otimes_{\mathcal O(C)} \Omega(C)$. 
The natural morphism $\mathrm{can} : F_\infty\mathcal O_{mod}(\tilde C)\otimes_{\mathcal O(C)}\Omega(C)\to \Omega_{hol}(\tilde C)$
is $\Gamma_C$-equivariant, therefore by Lem. \ref{MISSINGLEM}(b) is compatible with the filtrations induced by the action of 
$\Gamma_C$. The composed morphism  $\mathrm{can} \circ \varphi_{\sigma,x_0} : \mathrm{Sh}(\mathrm H_C)\otimes \Omega(C)\to 
\Omega_{hol}(\tilde C)$, given by $\varphi_{\sigma,x_0}(a\otimes \omega)=I_{x_0}(\sigma(a))\cdot p^*(\omega)$, is therefore compatible with the 
filtrations $F_\bullet\mathrm{Sh}(\mathrm H_C)\otimes \Omega(C)$ of the source and $F_\bullet\Omega_{hol}(\tilde C)$ of the 
target (see Def. \ref{MISSINGDEF}), therefore it gives rise to an associated graded map $\mathrm{gr}_\bullet (\mathrm{can}\circ\varphi_{\sigma,x_0}) : 
\mathrm{Sh}_\bullet(\mathrm H_C)\otimes \Omega(C)\to \mathrm{gr}_\bullet\Omega_{hol}(\tilde C)$. By Lem. \ref{INJLEM}(c), 
one attaches to the $\Gamma_C$-module $\Omega_{hol}(\tilde C)$ the injective graded map $\mathrm{gr}_\bullet\Omega_{hol}(\tilde C)
\hookrightarrow \oplus_{n\geq 0}\mathrm{Hom}_{\mathbb C-vec}(\mathrm{gr}^n(\mathbb C\Gamma_C),\Omega_{hol}(\tilde C)^{\Gamma_C})$, 
where $\Omega_{hol}(\tilde C)^{\Gamma_C}$ can be identified with the space of holomorphic differentials on $C$; it contains $\Omega(C)$ as a 
subspace.  

For any $n\geq0$, the composition of $\mathrm{gr}_n\Omega_{hol}(\tilde C)\hookrightarrow 
\mathrm{Hom}_{\mathbb C-vec}(\mathrm{gr}^n(\mathbb C\Gamma_C),\Omega_{hol}(\tilde C)^{\Gamma_C})$ 
with $\mathrm{gr}_n (\mathrm{can}\circ\varphi_{\sigma,x_0})$ is given by the composition of the isomorphism 
$\mathrm{Hom}_{\mathbb C-vec}(\mathrm{gr}^n(\mathbb C\Gamma_C),\mathbb C)\otimes 
\Omega_{hol}(\tilde C)^{\Gamma_C}\simeq \mathrm{Hom}_{\mathbb C-vec}(\mathrm{gr}^n(\mathbb C\Gamma_C),\Omega_{hol}(\tilde C)^{\Gamma_C})$
(due to the finite dimensionality of $\mathrm{gr}^n(\mathbb C\Gamma_C)$) with the tensor product of the injection 
$\Omega(C)\hookrightarrow \Omega_{hol}(\tilde C)^{\Gamma_C}$ with the map 
$\mathrm{Sh}_n(\mathrm H_C)\to \mathrm{Hom}_{\mathbb C-vec}(\mathrm{gr}^n(\mathbb C{\Gamma_C}),\mathbb C)$, which 
is injective by Lem. \ref{2123:3107}. It follows that $\mathrm{gr}_\bullet(\mathrm{can}\circ\varphi_{\sigma,x_0})$ is injective, which as 
the filtration of the source of $\mathrm{can}\circ\varphi_{\sigma,x_0}$ is exhaustive, implies the injectivity of
$\mathrm{can}\circ\varphi_{\sigma,x_0}$, which as
$\varphi_{\sigma,x_0}$ is an isomorphism implies the injectivity of $\mathrm{can}$. 

(b) In this proof, we abbreviate $F^\delta_\bullet\mathcal O_{hol}(\tilde C)$ into $F^\delta_\bullet$. For any $n\geq 0$, the inclusion $F_n^\delta\subset F_\infty\mathcal O_{mod}(\tilde C)$ gives rise to the inclusion of subspaces 
$\mathrm{im}(F_n^\delta\otimes \Omega(C)\to\Omega_{hol}(\tilde C))\subset \mathrm{im}(F_\infty\mathcal O_{mod}(\tilde C)
\otimes \Omega(C)\to\Omega_{hol}(\tilde C))\subset \Omega_{hol}(\tilde C)$. By the definition of $F_\bullet^\delta$, one has 
$d(F_{n+1}^\delta)\subset \mathrm{im}(F_n^\delta\otimes \Omega(C)\to\Omega_{hol}(\tilde C))$, therefore 
$d(F_{n+1}^\delta)\subset\mathrm{im}(F_\infty\mathcal O_{mod}(\tilde C)\otimes \Omega(C)\to\Omega_{hol}(\tilde C))$. This holds for any 
$n\geq 0$ and $F_\infty\mathcal O_{mod}(\tilde C)=\cup_{n\geq 0}F_n^\delta$ (see Prop. \ref{prop:3:15:0409}, Prop. \ref{new:cor} and 
Prop. \ref{(a3)}), therefore 
$d(F_\infty\mathcal O_{mod}(\tilde C))\subset\mathrm{im}(F_\infty\mathcal O_{mod}(\tilde C)\otimes \Omega(C)\to\Omega_{hol}(\tilde C))$. 
The linear map $F_\infty\mathcal O_{mod}(\tilde C)\otimes \Omega(C)\to\Omega_{hol}(\tilde C)$ admits a factorization 
$F_\infty\mathcal O_{mod}(\tilde C)\otimes \Omega(C)\twoheadrightarrow F_\infty\mathcal O_{mod}(\tilde C)\otimes_{\mathcal O(C)} 
\Omega(C)\to\Omega_{hol}(\tilde C)$
where the first map is surjective, therefore $\mathrm{im}(F_\infty\mathcal O_{mod}(\tilde C)\otimes \Omega(C)\to\Omega_{hol}(\tilde C))
=\mathrm{im}(F_\infty\mathcal O_{mod}(\tilde C)\otimes_{\mathcal O(C)} \Omega(C)\to\Omega_{hol}(\tilde C))$, which implies 
$dF_\infty\mathcal O_{mod}(\tilde C))\subset \mathrm{im}(F_\infty\mathcal O_{mod}(\tilde C)\otimes_{\mathcal O(C)} \Omega(C)\to
\Omega_{hol}(\tilde C))$. The claim then follows from (a). 

(c) The derivation property of $\nabla$ follows from the derivation property of $d$ and of the injectivity of the right vertical map of 
\eqref{diag;1809}. The equivariance of $\nabla$ follows from the same injectivity and from the $\Gamma_C$-equivariance of $d$; the identity 
$\nabla(f)=1\otimes df$ for $f\in\mathcal O(C)$ follows from the same injectivity.  
\end{proof}

\subsection{A connection for $(\mathrm{Sh}(\mathrm H_C),\mathrm{Sh}(\mathrm H_C)\otimes\mathcal O(C))$}
\label{sect:preimage:conn:2911}

\begin{lem}\label{lem:toto:0309}
(a) Let $O$ be a Hopf algebra and $\mathbf a$ be a commutative algebra. Then $(O,O\otimes\mathbf a)$, equipped 
with the coaction morphism $O\otimes \mathbf a\stackrel{\Delta_O\otimes id}{\longrightarrow} O\otimes(O\otimes\mathbf a)$,  
is a HACA satisfying the assumptions of Def. \ref{def:81:0711}. 

(b) Any linear map $\mu : O\to\mathbf a$ such that $\mu(fg)=\epsilon(f)\mu(g)+\mu(f)\epsilon(g)$ 
gives rise to a connection on $(O,O \otimes \mathbf a)$, 
where $\nabla : O \otimes\mathbf a\to O \otimes \Omega(\mathbf a)$ 
is given by $f \otimes a\mapsto f \otimes d(a)+f^{(1)} \otimes \mu(f^{(2)})a$.  
\end{lem}

\begin{proof}
(a) The fact that $(O,O\otimes\mathbf a)$  is a HACA follows from 
Prop. \ref{lem:B:15:1508}. The fact that it satisfies the assumptions of Def. \ref{def:81:0711} follows from $A^O=\mathbf a$.

(b) The statement follows from the axioms, and from the fact that the left and right action of $\mathbf a$ on 
$\Omega^1_{\mathbf a}$ coincide, as $\mathbf a$ is commutative. The axioms of 
invariance and restriction to $(O\otimes\mathbf a)^O=\mathbf a$ are immediate. 
\end{proof}

\begin{prop}\label{lem:8:5:2209} Let $J\in\mathrm{MC}(C)$. 

(a) The map $\nabla_{J} : \mathrm{Sh}(\mathrm H_C)\otimes\mathcal O(C)\to 
\mathrm{Sh}(\mathrm H_C)\otimes\Omega(C)$ given by $a\otimes f\mapsto a\otimes df+a^{(1)}
\otimes \mu_J(a^{(2)})f$, where $\mu_J$ is as in \S\ref{subsect:mu:J},
defines a connection for the HACA $(\mathrm{Sh}(\mathrm H_C),\mathrm{Sh}(\mathrm H_C)\otimes\mathcal O(C))$. 

(b) The connection $\nabla_{J}$ is the pull-back of $\nabla$ under the HACA isomorphism $(\nu(p_{J,x_0}),f_{J,x_0})$ 
(see Prop. \ref{prop:3:15:0409}).  
\end{prop}

\begin{proof}
(a) follows from Lems. \ref{lem:02:0208} and \ref{lem:toto:0309}. 

(b) For $a \in \mathrm{Sh}(\mathrm H_C)$, one has 
\begin{align*}
& \nabla(f_{J,x_0}(a \otimes 1))
=d(I_{x_0}(J_*(a)))
=I_{x_0}(J_*(a)^{(1)})\pi_{\mathrm{Sh}(\Omega(C))}(J_*(a)^{(2)})=I_{x_0}(J_*(a^{(1)}))\pi_{\mathrm{Sh}(\Omega(C))}(J_*(a^{(2)}))
\\ & 
=I_{x_0}(J_*(a^{(1)}))\mu_J(a^{(2)})
=f_{J,x_0} \otimes_{\mathcal O(C)} id_{\Omega(C)} \circ \nabla_J(f_{J,x_0}(a \otimes 1)) 
\end{align*}
where $\pi_{\mathrm{Sh}(\Omega(C))} : \mathrm{Sh}(\Omega(C))\to\Omega(C)$ is as in \S\ref{subsect:IIs:1008}, the second equality 
follows from \eqref{IND:1008}, the third equality follows from the fact that $J_*$ is an algebra morphism, and the fourth equality  
follows from the equality $\mu_J(a)=\pi(J_*(a))$ for any $a\in \mathrm{Sh}(\mathrm H_C)$, which follows from  
the definition of $\mu_J$.  One derives the commutativity of the diagram 
$$
\xymatrix{
\mathrm{Sh}(\mathrm H_C) \otimes \mathcal O(C)\ar^{f_{J,x_0}}[r]\ar_{\nabla_J}[d]&
F_\infty\mathcal O_{mod}(\tilde C)\ar^{\nabla}[d]
\\
\mathrm{Sh}(\mathrm H_C)\otimes_{\mathcal O(C)}\Omega(C)\ar_{f_{J,x_0}\otimes_{\mathcal O(C)}id_{\Omega(C)}}[r]&
F_\infty\mathcal O_{mod}(\tilde C)\otimes_{\mathcal O(C)}\Omega(C)
}
$$
\end{proof}

\begin{rem}
The HACA $(\mathrm{Sh}(\mathrm H_C),\mathrm{Sh}(\mathrm H_C)\otimes\mathcal O(C))$ corresponds to the 
trivial principal bundle over~$C$ with group $\mathrm{Spec}(\mathrm{Sh}(\mathrm H_C))$, and $\nabla_J$ is the flat 
connection $d+J$ over it; one has 
$\mathbb L((\mathrm H_C)^*)=\mathrm{Lie}\mathrm{Spec}(\mathrm{Sh}(\mathrm H_C))$. 
When $C=\mathbb P^1\smallsetminus S$ (see \S\ref{sect:92:0711}) and $J=J_{\sigma_0}$ (see Rem. \ref{rem:2:20}), 
$\nabla_{J_0}$ is the map $\mathrm{Sh}(\mathbb C \hat S_\infty) \otimes \mathbb C[z,1/(z-s),s \in S_\infty]\to 
\mathrm{Sh}(\mathbb C \hat S_\infty) \otimes \mathbb C[z,1/(z-s),s \in S_\infty]\cdot dz$ 
given by 
$[a_1|\ldots|a_k] \otimes f\mapsto [a_1|\ldots|a_k] \otimes df
+\sum_{s \in \hat S_\infty}(a_k)_s
[a_1|\ldots|a_{k-1}] \otimes f\cdot dz/(z-s)$, where for $a \in \mathbb C \hat S_\infty$, $a=\sum_{s \in S_\infty}
a_s\cdot \hat s$.
\end{rem}

\section{Local expansion of the elements of $F_\infty\mathcal O_{mod}(\tilde C)$}\label{sect:4:0711}

By \S\ref{mgfoad:0709}, one may view $u,z$ as elements of $\mathcal O_{hol}(\tilde D^\times)$ and $\mathcal O_{hol}(D^\times)$
respectively, such that $e^*z=e(u)$. We therefore use the notation $e^*\mathrm{log}z$ for $2\pi\mathrm{i}u$. 
Recall the action of $\mathbb Z$ on $\mathcal O_{mod}(\tilde D^\times)$, where $1$ acts by $\theta^*$ (cf. Lem. \ref{lem:debut:0924}). 
Let us denote by 
$F_\bullet\mathcal O_{mod}(\tilde D^\times)$ the algebra filtration of $\mathcal O_{mod}(\tilde D^\times)$ induced by this action 
according to Def. \ref{def:B5:0709} and Lem. \ref{1008:jeu}.
 
\begin{lem}\label{lem:local:0409}
The algebra morphism $\mathcal O(D^\times)[X]\to \mathcal O_{mod}(\tilde D^\times)$ given by $f\mapsto f$ for $f\in\mathcal O(D^\times)$ 
and $X\mapsto e^*\mathrm{log}z$ induces an isomorphism between the algebra filtrations $F_\bullet\mathcal O(D^\times)[X]$ of 
$\mathcal O(D^\times)[X]$ given by $F_n\mathcal O(D^\times)[X]:=\mathcal O(D^\times)[X]_{\leq n}$ for $n\geq 0$ 
and $F_\bullet\mathcal O_{mod}(\tilde D^\times)$ of $\mathcal O_{mod}(\tilde D^\times)$. In particular, one has $F_\infty\mathcal O_{mod}(\tilde D^\times)=\mathcal O(D^\times)[e^*\mathrm{log}z]$
(equality of subalgebras of $\mathcal O_{mod}(\tilde D^\times)$).
\end{lem}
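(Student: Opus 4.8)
The plan is to make the target filtration explicit and then apply the graded criterion Lem.~\ref{graded:crit:iso}. First I would unwind the filtration $F_\bullet\mathcal O_{mod}(\tilde D^\times)$ coming from Def.~\ref{def:B5:0709} and Lem.~\ref{1008:jeu}: the acting group is $\mathbb Z$, generated by the element acting through $\theta^*$, and the $(n+1)$-st power of the augmentation ideal of $\mathbb C\mathbb Z$ is the principal ideal generated by $(\theta^*-1)^{n+1}$, so the $\mathbb Z$-analogue of \eqref{def:fil:30:01} reads
\[
F_n\mathcal O_{mod}(\tilde D^\times)=\{f\in\mathcal O_{mod}(\tilde D^\times)\mid(\theta^*-1)^{n+1}f=0\}.
\]
In particular $F_0\mathcal O_{mod}(\tilde D^\times)=\mathcal O_{mod}(\tilde D^\times)^{\mathbb Z}=\mathcal O(D^\times)$ by Lem.~\ref{lem:1:5:1304:0208}, which is the base of the argument. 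Writing $\Phi$ for the morphism, it is well defined and lands in $\mathcal O_{mod}(\tilde D^\times)$: the latter is a $\theta^*$-stable algebra (Lem.~\ref{lem:debut:0924}), it contains $\mathcal O(D^\times)$ as its $\mathbb Z$-invariants, and it contains $e^*\mathrm{log}z=2\pi\mathrm i u$, whose modulus is bounded by $2\pi(\max(|a|,|b|)+y)\le C e^{2\pi y}$ on each strip $[a,b]\times\mathbb R_+$.

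Next I would record the single computation driving everything. Since any $f_k\in\mathcal O(D^\times)$ is $\mathbb Z$-invariant, $(\theta^*-1)(f_k h)=f_k(\theta^*-1)h$, and $(\theta^*-1)$ sends $(2\pi\mathrm i u)^k$ to $(2\pi\mathrm i(u+1))^k-(2\pi\mathrm i u)^k$, a polynomial in $u$ of degree $k-1$; iterating, the finite-difference identity gives
\[
(\theta^*-1)^n\big((2\pi\mathrm i u)^n\big)=(2\pi\mathrm i)^n\,n!.
\]
Two consequences follow. First, $(\theta^*-1)$ strictly lowers the $u$-degree on $\mathcal O(D^\times)[2\pi\mathrm i u]$, so the image $\Phi(F_n\mathcal O(D^\times)[X])=\Phi(\mathcal O(D^\times)[X]_{\le n})$ is killed by $(\theta^*-1)^{n+1}$ and hence lies in $F_n\mathcal O_{mod}(\tilde D^\times)$: the morphism is compatible with the filtrations. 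Second, the associated graded $\mathrm{gr}_n(\Phi)$, sending $\overline{f_nX^n}\in\mathcal O(D^\times)\cdot X^n$ to the class of $f_n(2\pi\mathrm i u)^n$, is injective, since $f_n(2\pi\mathrm i u)^n\in F_{n-1}$ forces $0=(\theta^*-1)^n(f_n(2\pi\mathrm i u)^n)=f_n(2\pi\mathrm i)^n n!$, whence $f_n=0$.

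For surjectivity of $\mathrm{gr}_n(\Phi)$, given a class represented by $f\in F_n\mathcal O_{mod}(\tilde D^\times)$, I would set $g:=(\theta^*-1)^nf$, which lies in $\mathcal O_{mod}(\tilde D^\times)$ by $\theta^*$-stability and satisfies $(\theta^*-1)g=0$, so $g\in F_0=\mathcal O(D^\times)$ by Lem.~\ref{lem:1:5:1304:0208}. Then $f_n:=g/((2\pi\mathrm i)^n n!)\in\mathcal O(D^\times)$ obeys $(\theta^*-1)^n(f-f_n(2\pi\mathrm i u)^n)=g-g=0$, i.e.\ $f-f_n(2\pi\mathrm i u)^n\in F_{n-1}$, so $\overline f=\mathrm{gr}_n(\Phi)(\overline{f_nX^n})$. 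With $\mathrm{gr}_n(\Phi)$ an isomorphism for every $n$ and $\Phi$ filtration-compatible, Lem.~\ref{graded:crit:iso} yields that each $F_n\Phi$ and the total map $F_\infty\Phi$ are isomorphisms; the ``in particular'' statement is the case of $F_\infty$, since $F_\infty\mathcal O_{mod}(\tilde D^\times)=\Phi(\mathcal O(D^\times)[X])=\mathcal O(D^\times)[e^*\mathrm{log}z]$.

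I do not expect a genuine obstacle here: once the bottom layer $F_0=\mathcal O(D^\times)$ is in hand (Lem.~\ref{lem:1:5:1304:0208}), the argument is formal and propagates up the filtration by the finite-difference identity. The only points requiring care are the correct reading of the induced filtration as $\mathrm{Ker}\big((\theta^*-1)^{\bullet+1}\big)$ and the verification that the operator $(\theta^*-1)$ simultaneously preserves moderate growth and strictly decreases the polynomial degree in $u$, which is exactly what allows one to peel off the top coefficient and descend.
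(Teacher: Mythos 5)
Your proof is correct, and it follows the same overall skeleton as the paper's: check that the morphism is filtration-compatible, show that its associated graded is an isomorphism, and conclude via Lem.~\ref{graded:crit:iso}. The one genuine difference lies in how the bijectivity of the graded map is established. The paper constructs an explicit candidate inverse $\mu:\mathrm{gr}(\mathcal O_{mod}(\tilde D^\times))\to\mathcal O(D^\times)[X]$ out of the maps $f\mapsto(\theta^*-1)^n(f)/n!$, proves that $\mu$ is a morphism of \emph{graded algebras} via the Hopf-algebraic identity \eqref{id;0709} in $\mathbb C[X^{\pm1},Y^{\pm1}]$, and then checks $\mu\circ\mathrm{gr}(\mathrm{can})=\mathrm{id}$ on the algebra generators $\mathcal O(D^\times)$ and $X$; injectivity of $\mu$ plus this identity forces both maps to be isomorphisms. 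You bypass the multiplicativity verification entirely: since Lem.~\ref{graded:crit:iso} only needs a graded \emph{vector space} isomorphism, you prove injectivity and surjectivity of $\mathrm{gr}_n(\Phi)$ degree by degree, with the finite-difference identity $(\theta^*-1)^n\bigl((2\pi\mathrm i u)^n\bigr)=(2\pi\mathrm i)^n\,n!$ doing all the work, and with Lem.~\ref{lem:1:5:1304:0208} identifying $\mathrm{Ker}(\theta^*-1)$ with $\mathcal O(D^\times)$ in the surjectivity step. This is a mild but real simplification: it removes the most technical block of the paper's argument (the commutativity of diagram \eqref{comm:0709}) at no cost. Incidentally, your normalization $g/((2\pi\mathrm i)^n n!)$ is the correct one; the paper's claim $\mu_1(e^*\mathrm{log}z)=1$ overlooks the factor $2\pi\mathrm i$, a harmless slip that your version avoids.
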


\begin{proof}
Let us denote by $\mathrm{can} : \mathcal O(D^\times)[X]\to \mathcal O_{mod}(\tilde D^\times)$ the algebra morphism from the statement. The
image of $\mathcal O(D^\times)$ by $\mathrm{can}$ is contained in $F_0\mathcal O_{mod}(\tilde D^\times)$ and the image of 
$e^*\mathrm{log}z$ by $\mathrm{can}$ is contained in $F_1\mathcal O_{mod}(\tilde D^\times)$ 
since $(\theta^*-1)^2(e^*\mathrm{log}z)=(\theta^*-1)(1)=0$, therefore for any $n\geq 0$, 
$\mathrm{can}(F_n\mathcal O(D^\times)[X])=\mathrm{can}(\mathcal O(D^\times)[X]_{\leq n}) \subset F_n \mathcal O_{mod}(\tilde D^\times)$. 
Therefore  $\mathrm{can}$ is compatible with the algebra filtrations in its source and target. Let 
$$
\mathrm{gr}(\mathrm{can}) : \mathcal O(D^\times)[X]\to\mathrm{gr}(\mathcal O_{mod}(\tilde D^\times))
$$
be the corresponding graded algebra morphism. Its restriction to the degree 0 part of its source is given by 
$\mathcal O(D^\times) \ni f\mapsto f \in \mathcal O(D^\times)=F_0\mathcal O_{mod}(\tilde D^\times)=\mathrm{gr}_0
\mathcal O_{mod}(\tilde D^\times)$ and it is such that  $\mathcal O(D^\times)[X]\ni X\mapsto [e^*\mathrm{log}z] \in \mathrm{gr}_1
\mathcal O_{mod}(\tilde D^\times)$ (degree 1 elements). 

Let $n\geq 0$. By Lem. \ref{1008:jeu}(b), the linear map $\mu_n : F_n\mathcal O_{mod}(\tilde D^\times)\to \mathcal O_{mod}(\tilde D^\times)$ 
given by $f\mapsto (\theta^*-1)^n(f)/n!$ has its image contained in $F_0\mathcal O_{mod}(\tilde D^\times)$, whereas the image of the 
subspace $F_{n-1}\mathcal O_{mod}(\tilde D^\times)$ by this map is zero. 

Under the identification $\mathbb C\mathbb Z\simeq\mathbb C[X,X^{-1}]$ with $X$ group-like, the ideal 
$F_n\mathbb C\mathbb Z$ is identified with $((X-1)^n)$, therefore 
$\mathrm{Ker}(\mu_n)=\mathrm{Ann}((\theta^*-1)^n)=\mathrm{Ann}(F_n\mathbb C\mathbb Z)=F_{n-1}\mathcal O_{mod}(\tilde D^\times)$. 
It follows that $\mu_n$ induces an injective linear map 
$\mu'_n : \mathrm{gr}_n(\mathcal O_{mod}(\tilde D^\times) )\to F_0\mathcal O_{mod}(\tilde D^\times)$.
where by  Lem. \ref{lem:1:5:1304:0208}, the target space is equal to $\mathcal O(D^\times)$. 

Define a graded linear map 
$$
\mu : \mathrm{gr}(\mathcal O_{mod}(\tilde D^\times))\to\mathcal O(D^\times)[X]
$$
by $\mu(a):=\mu'_n(a)X^n$ 
for $a\in\mathrm{gr}_n(\mathcal O_{mod}(\tilde D^\times))$ and $n \geq 0$. As~$\mu$ is a direct sum of injective maps,~$\mu$
is injective. 

Let us show that for $n,m\geq 0$, the diagram
\begin{equation}\label{comm:0709}
\xymatrix{F_n\mathcal O_{mod}(\tilde D^\times)\otimes F_m\mathcal O_{mod}(\tilde D^\times)\ar^{\ \ \ \ \ \ \ \ \  
\mu'_n\otimes\mu'_m}[r]\ar[d] &
\mathcal O(D^\times)\otimes\mathcal O(D^\times)\ar[d]
\\ F_n\mathcal O_{mod}(\tilde D^\times) \ar_{\mu'_{n+m}}[r] & \mathcal O(D^\times)}
\end{equation}
is commutative, where the vertical maps are given by multiplication. The relation 
\begin{equation}\label{id;0709}
\tfrac{(X\otimes X-1)^{n+m}}{(n+m)!}\in \tfrac{(X-1)^n}{n!}\otimes  \tfrac{(X-1)^m}{m!}+((X-1)^{n+1})\otimes 
\mathbb C[X,X^{-1}]+\mathbb C[X,X^{-1}]\otimes ((X-1)^{m+1}) 
\end{equation}
in $\mathbb C[X,X^{-1}]^{\otimes 2}$  is a consequence of the relation
$$
\tfrac{(XY-1)^{n+m}}{(n+m)!}=\tfrac{((X-1)Y+(Y-1))^{n+m}}{(n+m)!}
\in 
\tfrac{((X-1)Y)^nY^m}{n!m!}+I=
\tfrac{(X-1)^n}{n!}\tfrac{(Y-1)^m}{m!}+I
$$
in $\mathbb C[X^{\pm1},Y^{\pm1}]$, 
where $I:=((X-1)^{n+1})+((Y-1)^{m+1})$. 
Then $f\in F_n\mathcal O_{mod}(\tilde D^\times)$, 
$g\in F_m\mathcal O_{mod}(\tilde D^\times)$, one has 
\begin{align*}
&\mu'_{n+m}(fg)=\tfrac{(\theta^*-1)^{n+m}(fg)}{(n+m)!}=\tfrac{(X-1)^{n+m}}{(n+m)!}\cdot (f\cdot g)=\Delta\Big(\tfrac{(X-1)^{n+m}}{(n+m)!}\Big)\cdot (f\otimes g)
\\ & 
\in \tfrac{(X-1)^n}{n!} \otimes \tfrac{(X-1)^m}{m!}+((X-1)^{n+1})\otimes\mathbb C[X,X^{-1}]+\mathbb C[X,X^{-1}]\otimes ((X-1)^{m+1}))
\cdot (f\otimes g)
\\ & =\tfrac{(X-1)^n}{n!}\cdot f\otimes\tfrac{(X-1)^m}{m!}\cdot g=\mu'_n(f)\cdot \mu'_m(g),  
\end{align*}
where the third equality follows from the Hopf algebra action properties, the inclusion relation follows from 
\eqref{id;0709} and the group-likeness of $X$, and the last equality follows from $(\theta^*-1)^{n+1}f=(\theta^*-1)^{m+1}g=0$
as $f\in F_n\mathcal O_{mod}(\tilde D^\times)$, $g\in F_m\mathcal O_{mod}(\tilde D^\times)$. 
The commutativity of \eqref{comm:0709} implies that $\mu : \mathrm{gr}(\mathcal O_{mod}(\tilde D^\times))\to\mathcal O(D^\times)[X]$ 
is a morphism of graded algebras. 

Then $\mu\circ\mathrm{gr}(\mathrm{can})$ is a graded algebra endomorphism of $\mathcal O(D^\times)[X]$. For $f\in \mathcal O(D^\times)$, one
has $\mu\circ\mathrm{gr}(\mathrm{can})(f)=\mu(f)=f$. One also has $\mu\circ\mathrm{gr}(\mathrm{can})(X)=\mu([e^*\mathrm{log}z])
=\mu'_1([e^*\mathrm{log}z])X=X$ as $\mu'_1([e^*\mathrm{log}z])=\mu_1(e^*\mathrm{log}z)=1$. It follows that 
$\mu\circ\mathrm{gr}(\mathrm{can})$ is the identity of $\mathcal O(D^\times)[X]$, therefore that $\mu$ is surjective. 
It is therefore an isomorphism, which using again $\mu\circ\mathrm{gr}(\mathrm{can})=id$ implies that 
$\mathrm{gr}(\mathrm{can})$ is an isomorphism. Lem. \ref{graded:crit:iso} then implies the statement.  
\end{proof}

\begin{prop}\label{toto:0409}
The algebra morphism $\bigoplus_{s\in S}\prod_{s\in X_s}(\tilde\varphi_{s,x}^\times)^*$ (see \eqref{alg:mor:0208}) is such that 
$$
\bigg(\bigoplus_{s\in S}\prod_{x\in X_s}(\tilde\varphi_{s,x}^\times)^*\bigg)(F_\infty\mathcal O_{mod}(\tilde C))
\,\subset\, \bigoplus_{s\in S}\prod_{x\in X_s}\mathcal O(D^\times)[e^*\mathrm{log}z]. 
$$
\end{prop}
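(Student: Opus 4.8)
The plan is to prove the stronger, filtration-wise statement that for every $n\geq 0$, $s\in S$ and $x\in X_s$, the pullback $(\tilde\varphi_{s,x}^\times)^*$ maps $F_n\mathcal O_{mod}(\tilde C)$ into $F_n\mathcal O_{mod}(\tilde D^\times)$; taking the union over $n$ and invoking Lem.~\ref{lem:local:0409}, which identifies $F_\infty\mathcal O_{mod}(\tilde D^\times)$ with $\mathcal O(D^\times)[e^*\mathrm{log}z]$, will then immediately yield the claim for every component, and hence for the full map $\bigoplus_{s\in S}\prod_{x\in X_s}(\tilde\varphi_{s,x}^\times)^*$. Note that $(\tilde\varphi_{s,x}^\times)^*f\in\mathcal O_{mod}(\tilde D^\times)$ already holds whenever $f\in\mathcal O_{mod}(\tilde C)$, by Def.~\ref{def:1:8:0208}, so the only point is to locate the image within the filtration.

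The key input is the intertwining of the local $\mathbb Z$-action with the global $\Gamma_C$-action through $(\tilde\varphi_{s,x}^\times)^*$. First I would recall that the generator $\theta_{s,x}\in\Gamma_C$ of the stabilizer of $x$ acts on $\tilde U_{s,x}^\times$ as the conjugate of $\theta$ by $\tilde\varphi_{s,x}^\times$, so that $\theta_{s,x}\circ\tilde\varphi_{s,x}^\times=\tilde\varphi_{s,x}^\times\circ\theta$ as maps $\tilde D^\times\to\tilde C$ (equivalently, this is Lem.~\ref{lem15:0208} for $\gamma=\theta_{s,x}$, where $\theta_{s,x}x=x$ and $c(\theta_{s,x},x)=1$). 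Pulling back and using contravariance of $(-)^*$ yields the operator identity $\theta^*\circ(\tilde\varphi_{s,x}^\times)^*=(\tilde\varphi_{s,x}^\times)^*\circ\theta_{s,x}^*$ on $\mathcal O_{hol}(\tilde C)$, and hence, by iteration, $(\theta^*-1)^k\circ(\tilde\varphi_{s,x}^\times)^*=(\tilde\varphi_{s,x}^\times)^*\circ(\theta_{s,x}^*-1)^k$ for every $k\geq 0$, where $\theta_{s,x}^*f=f_{|\theta_{s,x}}$ is the right $\Gamma_C$-action.

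With this identity in hand I would conclude as follows. If $f\in F_n\mathcal O_{mod}(\tilde C)$, then specializing \eqref{def:fil:30:01} to $\gamma_1=\cdots=\gamma_{n+1}=\theta_{s,x}$ gives $(\theta_{s,x}^*-1)^{n+1}f=f_{|(\theta_{s,x}-1)^{n+1}}=0$; applying $(\tilde\varphi_{s,x}^\times)^*$ and using the operator identity with $k=n+1$ yields $(\theta^*-1)^{n+1}\bigl((\tilde\varphi_{s,x}^\times)^*f\bigr)=0$, i.e. $(\tilde\varphi_{s,x}^\times)^*f\in F_n\mathcal O_{mod}(\tilde D^\times)$. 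Here I use the characterization $F_n\mathcal O_{mod}(\tilde D^\times)=\ker(\theta^*-1)^{n+1}$ of the local ($\mathbb Z$-HAMA) filtration, which is precisely what is established in the course of the proof of Lem.~\ref{lem:local:0409} (via Lem.~\ref{1008:jeu} and the identification of $F_n\mathbb C\mathbb Z$ with $((\theta-1)^n)$). The only real obstacle is therefore formal rather than analytic: it consists in correctly matching the global filtration of $\mathcal O_{mod}(\tilde C)$, defined by annihilation under all length-$(n+1)$ products of augmentation elements of $\mathbb C\Gamma_C$, with the single diagonal product $(\theta_{s,x}-1)^{n+1}$ that the intertwiner converts into the defining condition of $F_n\mathcal O_{mod}(\tilde D^\times)$.
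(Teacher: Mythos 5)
Your proposal is correct and follows essentially the same route as the paper: the paper's proof also reduces to the $\mathrm{Stab}_{\Gamma_C}(x)\simeq\mathbb Z$-equivariance of $(\tilde\varphi_{s,x}^\times)^*$ (packaged there as Lem.~\ref{lem:for:local:expansion}(a),(b) applied to the Hopf subalgebra $\mathbb C\mathbb Z\subset\mathbb C\Gamma_C$ generated by $\theta_{s,x}$) followed by Lem.~\ref{lem:local:0409}. Your version merely unwinds that HAMA formalism into the explicit intertwining identity $(\theta^*-1)^{n+1}\circ(\tilde\varphi_{s,x}^\times)^*=(\tilde\varphi_{s,x}^\times)^*\circ(\theta_{s,x}^*-1)^{n+1}$ and the specialization $\gamma_1=\cdots=\gamma_{n+1}=\theta_{s,x}$, which is a faithful, filtration-wise rendering of the same argument.
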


\begin{proof} It follows from the proof of Prop. \ref{lem:2:12:1508} that \eqref{titi:0409} is a $\Gamma_C$-equivariant algebra morphism, 
where the action on the target is the direct sum over $s\in S$ of the actions of Lem. \ref{lem:1:8:0308}(a). 
For any $s\in S$, the composition of \eqref{titi:0409} with the canonical projection is a $\Gamma_C$-equivariant algebra morphism 
$\oplus_{x\in X_s}(\tilde\varphi_{s,x}^\times)^* : \mathcal O_{mod}(\tilde C)\to \oplus_{x\in X_s}\mathcal O_{mod}(\tilde D^\times)$. For 
$x\in X_s$, the decomposition of the target as $\mathcal O_{mod}(\tilde D^\times)\oplus (\oplus_{x'\in X_s\smallsetminus\{x\}}
\mathcal O_{mod}(\tilde D^\times))$ is preserved by the action of the stabilizer subgroup $\mathrm{Stab}_{\Gamma_C}(x)\subset \Gamma_C$ of 
$x\in X_s$ under the action of $\Gamma_C$. The map 
\begin{equation}\label{tutu:0409}
    (\tilde\varphi_{s,x}^\times)^* : \mathcal O_{mod}(\tilde C)\to\mathcal O_{mod}(\tilde D^\times)
\end{equation}
is therefore equivariant under the action of $\mathrm{Stab}_{\Gamma_C}(x)$. 

By \S\ref{subsect:mod:growth}, there is a group isomorphism $\mathbb Z\simeq \mathrm{Stab}_{\Gamma_C}(x)$ given by 
$1\mapsto\theta_{s,x}$, and \eqref{tutu:0409} is $\mathbb Z$-equivariant, the action of $\mathbb Z$ on the target being as in 
Lem. \ref{lem:local:0409}. Then 
\begin{align*}
& F_\infty\mathcal O_{mod}(\tilde C)=F_\infty^{\mathbb C\Gamma_C}\mathcal O_{mod}(\tilde C)
\subset F_\infty^{\mathbb C\mathbb Z}\mathcal O_{mod}(\tilde C)\subset
(\varphi_{s,x}^*)^{-1}(F_\infty^{\mathbb C\mathbb Z}\mathcal O_{mod}(\tilde D^\times))
\\ & 
=(\varphi_{s,x}^*)^{-1}(\mathcal O_{mod}(D^\times)[e^*\mathrm{log}z]) 
\end{align*}
where we use the notation of Lem. \ref{lem:for:local:expansion}, the first equality is the definition of 
$F_\infty\mathcal O_{mod}(\tilde C)$, the first inclusion follows from Lem. \ref{lem:for:local:expansion}(a), the second inclusion 
follows from Lem. \ref{lem:for:local:expansion}(b), and the last equality follows from Lem. \ref{lem:local:0409}. 
\end{proof}

\begin{rem}
Prop. \ref{toto:0409} implies that the elements of $F_\infty\mathcal O_{mod}(\tilde C)$ are Nilsson class functions
on $C$ in the sense of \cite{Ph}, p. 154. Indeed, for any $f\in \mathcal O(D^\times)[e^*\mathrm{log}z]$, there exists
$\alpha_0\in\mathbb Z$ such that $f=z^{\alpha_0}P_0(\mathrm{log}z)$ with $P_0$ as in \cite{Ph}, eq. (1.4), p. 151. The Nilsson class functions obtained in this way are not of the most general form, as the class in $\mathbb C/\mathbb Z$ of the $\alpha_i$ in their  expansion from 
\cite{Ph}, eq. (1.4), p. 151, is always $0$. 
\end{rem}

\section{Relation of $A_C$ with minimal acyclic extensions of dgas}\label{sect:54:2811}

Recall the definition of the dga $(\Omega_{hol}^\bullet(\tilde C),d)$ from \S\ref{subsect:Omega:mod:2811}. Denote by $(\Omega^\bullet(C),d)$ 
the dga of algebraic differential forms on $C$; it is concentrated in degrees 0 and 1, with $\Omega^0(C)=\mathcal O(C)$, $\Omega^1(C)=\Omega(C)$, 
algebra structure given by the algebra structure of $\mathcal O(C)$ and the module structure of $\Omega(C)$ over it, and differential 
given by $d$. The pull-back of $p : \tilde C\to C$ gives rise to an injective dga morphism 
$p^* : (\Omega^\bullet(C),d)\hookrightarrow (\Omega_{hol}^\bullet(\tilde C),d)$. 
%Recall the inclusion of dgas $(\Omega^\bullet(C),d)\hookrightarrow(\Omega^\bullet_{hol}(C),d)$ from \S\ref{sect:5:2:1512}. 
One has $\mathrm H^1(\Omega^\bullet(C))\simeq\mathrm H_C$, while $\mathrm H^1(\Omega_{hol}^\bullet(\tilde C))=0$.

\begin{defn}
(a) An acyclic extension (AE) of $\Omega^\bullet(C)$ is a dga $(E^\bullet,d)$ with $\Omega^\bullet(C)\subset E^\bullet\subset \Omega_{hol}^\bullet(\tilde C)$ and $\mathrm H^1(E^\bullet)=0$.  

(b) $E_C^\bullet:=\cap_{E^\bullet\text{ an AE of }\Omega^\bullet(C)}E^\bullet$. 
\end{defn}

\begin{lemdef}
$E_C^\bullet$ is an AE of $\Omega^\bullet(C)$, which is contained in any AE of $\Omega^\bullet(C)$; we therefore call it the 
{\it minimal AE} of $\Omega^\bullet(C)$. 
\end{lemdef}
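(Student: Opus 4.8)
The plan is to prove the two assertions separately. The second one---that $E_C^\bullet$ is contained in any AE of $\Omega^\bullet(C)$---is immediate, since by construction $E_C^\bullet=\cap_{E^\bullet}E^\bullet$ is a subspace of each AE $E^\bullet$ appearing in the intersection. So the substance of the statement is that $E_C^\bullet$ is itself an AE. First I would record that the family of AEs is nonempty: $\Omega^\bullet_{hol}(\tilde C)$ is one, as it contains $\Omega^\bullet(C)$ via $p^*$, and $\mathrm H^1(\Omega^\bullet_{hol}(\tilde C))=0$ (noted just above, using the simple-connectedness of $\tilde C$). Hence $E_C^\bullet$ is well defined. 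An intersection of sub-dgas of $\Omega^\bullet_{hol}(\tilde C)$ is again a sub-dga: it is closed under the product and contains the unit, it is stable under $d$, and it is graded, since an intersection of graded subspaces of the graded space $\Omega^\bullet_{hol}(\tilde C)$ is graded. It visibly satisfies $\Omega^\bullet(C)\subset E_C^\bullet\subset\Omega^\bullet_{hol}(\tilde C)$. It therefore remains to check the acyclicity condition $\mathrm H^1(E_C^\bullet)=0$.

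Next I would reduce acyclicity to a statement about primitives. Since $\tilde C$ is a complex curve, $\Omega^2_{hol}(\tilde C)=0$, so every element of degree $1$ is closed, and for any sub-dga $E^\bullet$ one has $\mathrm H^1(E^\bullet)=E^1/d(E^0)$, where $E^0,E^1$ denote the degree $0$ and degree $1$ parts. Thus the condition $\mathrm H^1(E^\bullet)=0$ says exactly that every $\omega\in E^1$ admits a primitive in $E^0$, and proving $\mathrm H^1(E_C^\bullet)=0$ amounts to showing $E_C^1=d(E_C^0)$, where $E_C^0=\cap_{E^\bullet}E^0$ and $E_C^1=\cap_{E^\bullet}E^1$. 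The inclusion $d(E_C^0)\subset E_C^1$ holds because $E_C^\bullet$ is a sub-dga.

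For the reverse inclusion I would use the uniqueness of primitives on the simply-connected curve $\tilde C$. Let $\omega\in E_C^1$, and let $f:=\mathrm{int}_{x_0}(\omega)=(z\mapsto\int_{x_0}^z\omega)\in\mathcal O_{hol}(\tilde C)$ be its canonical primitive vanishing at $x_0$, which is well defined by simple-connectedness. For each AE $E^\bullet$ one has $\omega\in E^1$, hence by acyclicity of $E^\bullet$ there is some $f_{E^\bullet}\in E^0$ with $df_{E^\bullet}=\omega$; then $f-f_{E^\bullet}$ has zero differential on the connected manifold $\tilde C$, hence equals a constant $c_{E^\bullet}\in\mathbb C$. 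Since $\mathbb C\subset\mathcal O(C)=\Omega^0(C)\subset E^0$, one gets $f=f_{E^\bullet}-c_{E^\bullet}\in E^0$. As this holds for every AE $E^\bullet$, we conclude $f\in\cap_{E^\bullet}E^0=E_C^0$, and $df=\omega$ gives $\omega\in d(E_C^0)$. Hence $E_C^1=d(E_C^0)$ and $\mathrm H^1(E_C^\bullet)=0$, completing the proof that $E_C^\bullet$ is an AE.

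The argument is essentially routine; the step requiring care---and the main obstacle, mild as it is---is that acyclicity of the intersection is not a purely formal consequence of acyclicity of the members (unlike closure under the product or under $d$), because it is an existence statement about primitives rather than a closure property. The resolution is that on the simply-connected $\tilde C$ the primitive of a given $1$-form is canonical up to an additive constant, and constants lie in every $E^0$; this forces the single canonical primitive $\mathrm{int}_{x_0}(\omega)$ to lie in all the $E^0$ simultaneously, hence in $E_C^0$.
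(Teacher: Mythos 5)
Your proof is correct and rests on the same key observation as the paper's: two primitives of a $1$-form on the simply-connected $\tilde C$ differ by a constant, and constants lie in every $E^0$, so membership of a primitive in $E^0$ is forced simultaneously for all AEs. The only organizational difference is that the paper proves closure of the AE property under pairwise intersection and then asserts the general case, whereas you work directly with the canonical primitive $\mathrm{int}_{x_0}(\omega)$ and the full intersection --- which is, if anything, slightly cleaner, since acyclicity of an arbitrary intersection does not follow formally from the pairwise statement.
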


\begin{proof}
Let us show that if $E^\bullet,F^\bullet$ are AEs of $\Omega^\bullet(C)$, then so is $E^\bullet\cap F^\bullet$. 
The intersection $E^\bullet\cap F^\bullet$ is obviously a dga containing $\Omega^\bullet(C)$. If $x\in E^1\cap F^1$, there exists 
$e\in E^0$ such that $x=d(e)$ since $\mathrm{H}^1(E^\bullet)=0$, and $f\in E^0$ such that $x=d(f)$ since $\mathrm{H}^1(F^\bullet)=0$. 
Then $d(e-f)=0$, therefore $e-f\in\mathbb C$, therefore $f\in e+\mathbb C\subset E^0+\mathbb C=E^0$, so $f\in E^0\cap F^0$. 
So $E^1\cap F^1\subset d(E^0\cap F^0)$, which implies $\mathrm{H}^1(E^\bullet\cap F^\bullet)=0$. It follows that 
$E^\bullet\cap F^\bullet$ is an AE of $\Omega^\bullet(C)$. This fact implies the statement. 
\end{proof}

\begin{lem}\label{lem:bij:AEs:AlgC}
The maps $E^\bullet \mapsto E^0$ and $A \mapsto (A \oplus dA,d)$ define inverse bijections between the sets  
$\{$AEs of $\Omega^\bullet(C)\}$ and $\mathrm{Alg}_C:=\{$algebras $A$ with $\mathcal O(C) \subset A \subset\mathcal O_{hol}(\tilde C)$ 
such that $d(A) \supset \Omega(C)$ 
and $A \cdot d(A)=dA$ (equality of subspaces of $\Omega_{hol}(\tilde C))\}$. 
\end{lem}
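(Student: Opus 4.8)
The plan rests on a single structural observation that I would establish first. Since $C$ is a curve, $\Omega^2_{hol}(\tilde C)=0$, so every dga $E^\bullet$ with $\Omega^\bullet(C)\subset E^\bullet\subset\Omega^\bullet_{hol}(\tilde C)$ is concentrated in degrees $0$ and $1$: its degree-$0$ part $E^0$ is a subalgebra of $\mathcal O_{hol}(\tilde C)$ and $E^1$ is an $E^0$-submodule of $\Omega_{hol}(\tilde C)$. The differential $d:E^1\to E^2=0$ then vanishes, so $\mathrm H^1(E^\bullet)=E^1/d(E^0)$, and the acyclicity condition $\mathrm H^1(E^\bullet)=0$ is \emph{equivalent} to $E^1=d(E^0)$. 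In other words, the degree-$1$ part of an AE is forced to coincide with $d(E^0)$, hence is completely determined by its degree-$0$ part; this is exactly what makes the assignment $E^\bullet\mapsto E^0$ reversible.

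Next I would check that $E^\bullet\mapsto E^0$ lands in $\mathrm{Alg}_C$. For an AE $E^\bullet$, the degree-$0$ parts of the inclusions $\Omega^\bullet(C)\subset E^\bullet\subset\Omega^\bullet_{hol}(\tilde C)$ give a subalgebra $E^0$ of $\mathcal O_{hol}(\tilde C)$ with $\mathcal O(C)\subset E^0$. By the equivalence above $d(E^0)=E^1$; since $E^1\supset\Omega^1(C)=\Omega(C)$ this yields $d(E^0)\supset\Omega(C)$, and since $E^1$ is an $E^0$-module one gets $E^0\cdot d(E^0)\subset d(E^0)$, whence $E^0\cdot d(E^0)=d(E^0)$, the reverse inclusion holding because $1\in\mathcal O(C)\subset E^0$. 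Thus $E^0\in\mathrm{Alg}_C$.

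Conversely I would verify that $A\mapsto(A\oplus dA,d)$ lands among the AEs. For $A\in\mathrm{Alg}_C$ set $E^0:=A$, $E^1:=dA$, $E^{\geq 2}:=0$. The condition $A\cdot dA=dA$ makes $dA$ an $A$-module and guarantees closure under the product of $\Omega^\bullet_{hol}(\tilde C)$ (the degree-$1\times 1$ product vanishes automatically), while closure under $d$ is clear since $d(dA)=0$; hence $(A\oplus dA,d)$ is a sub-dga of $\Omega^\bullet_{hol}(\tilde C)$. The inclusions $\mathcal O(C)\subset A$ and $\Omega(C)\subset dA$ give $\Omega^\bullet(C)\subset A\oplus dA$, and $A\subset\mathcal O_{hol}(\tilde C)$ gives $A\oplus dA\subset\Omega^\bullet_{hol}(\tilde C)$; acyclicity is immediate from $E^1=dA=d(E^0)$.

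Finally I would confirm that the two composites are identities: starting from $A\in\mathrm{Alg}_C$, forming $A\oplus dA$ and extracting its degree-$0$ part returns $A$ tautologically, while starting from an AE $E^\bullet$, taking $E^0$ and forming $E^0\oplus d(E^0)$ returns $E^\bullet$ because the degree-$0$ parts agree trivially and the degree-$1$ parts agree by the acyclicity equivalence $E^1=d(E^0)$. The argument is essentially bookkeeping, and the only point genuinely requiring care—which I would isolate at the very start—is the reformulation of acyclicity as $E^1=d(E^0)$, together with the correspondingly elementary equivalence between the stated condition $A\cdot d(A)=dA$ and $dA$ being an $A$-submodule of $\Omega_{hol}(\tilde C)$.
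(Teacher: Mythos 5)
Your proposal is correct and follows essentially the same route as the paper: reformulate acyclicity as $E^1=d(E^0)$ (the paper leaves implicit the fact that $\Omega^2_{hol}(\tilde C)=0$, which you rightly make explicit), then check that each assignment lands in the intended set and that the composites are identities. The only cosmetic difference is that you isolate the degree reason for $\mathrm H^1(E^\bullet)=E^1/d(E^0)$ up front, which is a minor clarification rather than a change of argument.
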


\begin{proof}
Let $E^\bullet$ be an AE of $\Omega^\bullet(C)$. Then $\mathcal O(C) \subset E^0 \subset \Omega_{hol}(\tilde C)$ since $\Omega^\bullet(C) 
\subset E^\bullet \subset \Omega^\bullet_{hol}(\tilde C)$. Since $\mathrm H^1(E^\bullet)=0$, one has $E^1=d(E^0)$ and since 
$\Omega^\bullet(C) \subset E^\bullet$, one has 
$E^1 \supset \Omega(C)$, therefore $d(E^0) \supset \Omega(C)$. The equality $E^1=d(E^0)$ implies the two extreme equalitites in 
$E^0\cdot d(E^0)=E^0\cdot E^1=E^1=d(E^0)$, while the middle equality follows from the fact that $E^\bullet$ is a dga with unit. It follows that 
$E^0$ belongs to $\mathrm{Alg}_C$. 

Let $A$ belong to $\mathrm{Alg}_C$. Since $A \cdot d(A)=d(A)$, the pair $(A \oplus d(A),d)$ is a sub-dga of $\Omega^\bullet_{hol}(\tilde C)$, 
and since $d(A) \supset \Omega(C)$ and 
$A \supset\mathcal  O(C)$, the dga $(A \oplus d(A),d)$ contains $\Omega^\bullet(C)$ as a sub-dga. One has clearly $\mathrm H^1(A \oplus dA,d)=0$. 
It follows that $(A \oplus d(A),d)$ is a AE of $\Omega^\bullet(C)$. 

The composed map $\mathrm{Alg}_C\to \{$AEs of $\Omega^\bullet(C)\}\to\mathrm{Alg}_C$ is obviously the identity, and the fact that the composed 
map $\{$AEs of $\Omega^\bullet(C)\}\to\mathrm{Alg}_C\to\{$AEs of $\Omega^\bullet(C)\}$ is the identity follows from the fact that if 
$E^\bullet$ is an AE of $\Omega^\bullet(C)$, then $E^1=d(E^0)$ due to $\mathrm H^1(E^\bullet)=0$. 
\end{proof}

\begin{lem}\label{lem:28112022}
(a) $\mathrm{Alg}_C \subset \{$SSAs of $\mathcal O_{hol}(\tilde C)\}$. 

(b) $A_C \in \mathrm{Alg}_C$. 

(c) $\cap_{A \in \mathrm{Alg}_C}A=A_C$.  
\end{lem}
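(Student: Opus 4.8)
The plan is to establish (a) and (b) directly and then deduce (c) formally, relying on the identification $A_C=I_{x_0}(\mathrm{Sh}(\Omega(C)))$ of Prop. \ref{prop:AC:other} and on the minimality of $A_C$ among SSAs recorded in Lemma-Def. \ref{lem:5:10:2811}.

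For (a), I would take an arbitrary $A\in\mathrm{Alg}_C$ and check the two defining properties of an SSA. Unitality is immediate, since $\mathcal O(C)\subset A$ forces $\mathbb C\subset A$. For stability under primitives, let $f\in A$ and $\omega\in\Omega(C)$, and set $g:=(z\mapsto\int_{x_0}^z f\cdot p^*\omega)$, so that $dg=f\cdot p^*\omega$. Since $d(A)\supset\Omega(C)$, there is $h\in A$ with $dh=p^*\omega$, whence $f\cdot p^*\omega=f\cdot dh\in A\cdot d(A)=d(A)$; thus $dk=f\cdot p^*\omega$ for some $k\in A$. As $\tilde C$ is connected, $g-k$ is a constant, so $g\in A+\mathbb C=A$. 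This is the conceptual core of the lemma — the relation $A\cdot d(A)=d(A)$ is exactly what turns the acyclicity-type hypothesis into closure under antiderivation — and I expect this short argument to be the main obstacle.

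For (b), I would verify the membership of $A_C$ in $\mathrm{Alg}_C$ termwise. The inclusions $\mathcal O(C)\subset A_C\subset\mathcal O_{hol}(\tilde C)$ follow from the identity $p^*f=f(x_0)+I_{x_0}([df])$, read off from the definition of $I_{x_0}$ on one-letter words, together with the fact that $A_C$ is a unital algebra. The remaining two conditions both reduce to the computation $d(A_C)=A_C\cdot p^*\Omega(C)$: the inclusion $\subseteq$ comes from applying \eqref{IND:1008} to a spanning word, and $\supseteq$ — in particular $d(A_C)\supset\Omega(C)$, obtained from the empty word — comes from the same formula rewritten as $d(I_{x_0}([b|\omega]))=I_{x_0}(b)\cdot p^*\omega$ for $b\in\mathrm{Sh}(\Omega(C))$ (right concatenation by $\omega$). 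Granting this, $A_C\cdot d(A_C)=A_C\cdot A_C\cdot p^*\Omega(C)=A_C\cdot p^*\Omega(C)=d(A_C)$, using $A_C\cdot A_C=A_C$, so $A_C\in\mathrm{Alg}_C$.

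Finally, (c) is formal. By (b) we have $A_C\in\mathrm{Alg}_C$, giving $\cap_{A\in\mathrm{Alg}_C}A\subset A_C$; by (a) each $A\in\mathrm{Alg}_C$ is an SSA, so the minimality of $A_C$ from Lemma-Def. \ref{lem:5:10:2811} yields $A_C\subset A$ for all such $A$, hence $A_C\subset\cap_{A\in\mathrm{Alg}_C}A$. The two inclusions give the claimed equality.
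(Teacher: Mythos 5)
Your proposal is correct, and parts (a) and (c) coincide with the paper's argument: for (a) the identity $f\cdot p^*\omega=f\cdot dh\in A\cdot d(A)=d(A)$ followed by adjusting by a constant, and for (c) the two inclusions obtained from (b) together with the minimality of $A_C$ from Lemma-Definition \ref{lem:5:10:2811}.

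Where you genuinely diverge is in part (b), specifically in the verification that $A_C\cdot d(A_C)=d(A_C)$. The paper proves this by passing through the filtration $F_\bullet^\delta\mathcal O_{hol}(\tilde C)$: it writes $f\in F_n^\delta$, $g\in F_m^\delta$, expands $dg=\sum_i k_i\cdot p^*\omega_i$ with $k_i\in F_{m-1}^\delta$, and invokes the algebra-filtration property of Corollary \ref{new:cor}(a) to conclude that $\int f\,dg$ lies in $F_{n+m}^\delta$. You instead prove the single identity $d(A_C)=A_C\cdot p^*\Omega(C)$ directly from the description $A_C=I_{x_0}(\mathrm{Sh}(\Omega(C)))$ of Proposition \ref{prop:AC:other} and formula \eqref{IND:1008} (the inclusion $\subseteq$ from differentiating a spanning word, the inclusion $\supseteq$ from $I_{x_0}(b)\cdot p^*\omega=d(I_{x_0}([b|\omega]))$), after which both conditions $d(A_C)\supset\Omega(C)$ and $A_C\cdot d(A_C)=d(A_C)$ fall out formally, the latter via $A_C\cdot A_C=A_C$. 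Your route is shorter and avoids any appeal to Corollary \ref{new:cor} or to the graded structure; the paper's route has the mild advantage of recording the quantitative statement that $\int f\,dg$ lands in $F_{n+m}^\delta$ when $f\in F_n^\delta$ and $g\in F_m^\delta$, which is in the spirit of the filtration comparisons it develops elsewhere. Both arguments ultimately rest on Proposition \ref{prop:AC:other}, so neither is more elementary in its inputs.
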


\begin{proof}
(a) Let $A \in \mathrm{Alg}_C$. Then $A$ is unital. %Then $A \supset \mathcal O(C)$. 
Let $f \in A$, $\omega \in \Omega(C)$ and set 
$F:=(z\mapsto \int_{x_0}^z f\cdot p^*\omega)$. 
Since $\Omega(C) \subset d(A)$ (by $A \in \mathrm{Alg}_C$), there exists $g \in A$ with $dg=\omega$. Then 
$dF=f\cdot p^*\omega=f\cdot dg\in A\cdot dA=dA$, where the last equality follows from $A \in \mathrm{Alg}_C$. It follows that  
$F \in A$, therefore~$A$ is a SSA of $\mathcal O_{hol}(\tilde C)$.

(b) By Lem. \ref{lem:5:10:2811},  $A_C$ is a SSA of $\mathcal O_{hol}(\tilde C)$, therefore it contains $1$ and the functions
$z\mapsto \int_{x_0}^z df$ for any $f \in \mathcal O(C)$, therefore $A_C$ is a subalgebra of $\mathcal O_{hol}(\tilde C)$ 
containing $\mathcal O(C)$. 
Let $\omega \in \Omega(C)$ and set $F_\omega:=(z \mapsto \int_{x_0}^z p^*\omega) \in \mathcal O_{hol}(\tilde C)$. Then $F_\omega \in A_C$ by 
the stability properties of $A_C$. Then $d(A_C) \ni d(F_\omega)=\omega$, which implies $d(A_C) \supset \Omega(C)$.  

Let us show that $A_C\cdot dA_C=dA_C$. It suffices to prove that, for any $f,g \in A_C$, the element 
$h:=(z \mapsto \int_{x_0}^z f\cdot dg) \in \mathcal O_{hol}(\tilde C)$ belongs to $A_C$. 
By Prop. \ref{prop:AC:other} and  $F_\infty^\delta\mathcal O_{hol}(\tilde C)=\cup_{n \geq 0}F_n^\delta\mathcal O_{hol}(\tilde C)$, 
there exist $n,m \geq 0$ such that $f\in F_n^\delta\mathcal O_{hol}(\tilde C)$ and $g\in F_m^\delta\mathcal O_{hol}(\tilde C)$.
Then there exists a finite set~$I$ and maps $I\to \Omega(C)$, $I\to F_{m-1}^\delta\mathcal O_{hol}(\tilde C)$ denoted 
$i\mapsto \omega_i$, $i\mapsto k_i$ such that 
$dg=\sum_{i \in I} k_i\cdot p^*\omega_i$. Then $dh=\sum_{i\in I}fk_i\cdot p^*\omega_i$. 
For any $i \in I$, $fk_i \in F_n^\delta\mathcal O_{hol}(\tilde C)\cdot F_{m-1}^\delta\mathcal O_{hol}(\tilde C)
\subset F_{n+m-1}^\delta\mathcal O_{hol}(\tilde C)$, where the last inclusion follows from Prop. \ref{new:cor}(a). 
It follows that $h \in F_{n+m}^\delta\mathcal O_{hol}(\tilde C)$, and therefore $h \in A_C$ by 
$F_\infty^\delta\mathcal O_{hol}(\tilde C)=\cup_{n \geq 0}F_n^\delta\mathcal O_{hol}(\tilde C)$
and Prop. \ref{prop:AC:other}.     

(c) By (a), one has $\cap_{A \in \mathrm{Alg}_C}A \supset \cap_{A\text{ a SSA of }\mathcal O(C)}A=A_C$. On the other hand, 
$\cap_{A \in \mathrm{Alg}_C}A \subset A$ for any 
$A \in \mathrm{Alg}_C$, so by (b) $\cap_{A \in \mathrm{Alg}_C}A \subset A_C$.  
\end{proof}

\begin{prop}\label{prop:5:16:2811}
$E_C^\bullet=(A_C\oplus d(A_C),d)$. 
\end{prop}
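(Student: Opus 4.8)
The plan is to reduce the statement to the bijection of Lem.~\ref{lem:bij:AEs:AlgC} together with the identification $\cap_{A\in\mathrm{Alg}_C}A=A_C$ from Lem.~\ref{lem:28112022}(c). The key observations are that $E_C^\bullet$, being the minimal AE of $\Omega^\bullet(C)$, is itself an AE, and that the two maps of Lem.~\ref{lem:bij:AEs:AlgC} are mutually inverse.

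First I would record that, since the maps $E^\bullet\mapsto E^0$ and $A\mapsto(A\oplus dA,d)$ of Lem.~\ref{lem:bij:AEs:AlgC} are inverse to one another, the composite $E^\bullet\mapsto E^0\mapsto(E^0\oplus d(E^0),d)$ is the identity on the set of AEs of $\Omega^\bullet(C)$. Applying this to the AE $E^\bullet=E_C^\bullet$ yields the equality $E_C^\bullet=(E_C^0\oplus d(E_C^0),d)$. It therefore suffices to prove that $E_C^0=A_C$.

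To this end I would use that the intersection of dgas is computed degreewise, so that $E_C^0=(\cap_{E^\bullet\text{ an AE}}E^\bullet)^0=\cap_{E^\bullet\text{ an AE}}E^0$. By Lem.~\ref{lem:bij:AEs:AlgC}, the assignment $E^\bullet\mapsto E^0$ is a bijection from the set of AEs of $\Omega^\bullet(C)$ onto $\mathrm{Alg}_C$; in particular its image is exactly $\mathrm{Alg}_C$, so the family $(E^0)_{E^\bullet\text{ an AE}}$ coincides with the family $(A)_{A\in\mathrm{Alg}_C}$. Hence $E_C^0=\cap_{A\in\mathrm{Alg}_C}A$, which equals $A_C$ by Lem.~\ref{lem:28112022}(c). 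Combining this with the previous paragraph gives $E_C^\bullet=(E_C^0\oplus d(E_C^0),d)=(A_C\oplus d(A_C),d)$, as claimed.

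No step here presents a genuine difficulty; the only point requiring care is to note that the range of $E^\bullet\mapsto E^0$ is precisely $\mathrm{Alg}_C$, so that intersecting the degree-$0$ parts over all AEs matches intersecting over all $A\in\mathrm{Alg}_C$. This is exactly the surjectivity half of the bijection in Lem.~\ref{lem:bij:AEs:AlgC}, so the identification $E_C^0=A_C$ goes through without further work.
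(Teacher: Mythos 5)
Your proof is correct and follows essentially the same route as the paper: both arguments reduce the statement to the bijection of Lem.~\ref{lem:bij:AEs:AlgC} together with the identity $\cap_{A\in\mathrm{Alg}_C}A=A_C$ from Lem.~\ref{lem:28112022}(c). The only (minor) difference is that you transport the intersection backwards through the degree-zero map $E^\bullet\mapsto E^0$, using that $E_C^\bullet$ is itself an AE so that the round trip is the identity, whereas the paper pushes the intersection forward through $A\mapsto(A\oplus d(A),d)$ and therefore needs the auxiliary identity $(A\oplus d(A))\cap(B\oplus d(B))=(A\cap B)\oplus d(A\cap B)$, which your variant avoids.
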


\begin{proof}
Set $X:=\cap_{A\in\mathrm{Alg}_C}A$. One has
$$
E_C^\bullet=\cap_{E^\bullet\text{ a AE of }\Omega^\bullet(C)}E^\bullet=\cap_{A \in \mathrm{Alg}_C}(A \oplus d(A),d)
=(X\oplus d(X),d)=(A_C\oplus d(A_C),d)
$$
where the first equality follows from the definition of $E_C^\bullet$, the second equality follows from 
Lem.~\ref{lem:bij:AEs:AlgC}, the third equality follows from $(A \oplus d(A)) \cap (B \oplus d(B))=((A \cap B) \oplus 
d(A \cap B))$ (equality of subspaces of $\Omega^\bullet(\tilde C)$) for $A,B$ any pair of subspaces of $\mathcal O_{hol}(\tilde C)$ 
containing $1$, and the last equality follows from Lem. \ref{lem:28112022}(c). 
\end{proof}

\section{Computation of $\mathrm{Ker}(I_{x_0})$}\label{sect:8:0212}

In this section, we fix $x_0\in\tilde C$. The main result of this section is Thm. \ref{thm:7:1:0409}, where we compute 
$\mathrm{Ker}(I_{x_0})$ and exhibit a complement of this space in $\mathrm{Sh}(\Omega(C))$. These results are expressed in 
terms of an element $\sigma\in\Sigma_C$, which is fixed in the whole section. We also set $f_{\sigma,x_0}:=f_{J_\sigma,x_0}$. 

\begin{lemdef}
(a) The direct sum of the map $\sigma_*:\mathrm{Sh}(\mathrm H_C)\to\mathrm{Sh}(\Omega(C))$ and of its concatenation with the canonical inclusion $d\mathcal O(C)\hookrightarrow\Omega(C)$ is 
an injective map 
$\mathrm{Sh}(\mathrm H_C)\oplus [\mathrm{Sh}(\mathrm H_C)|d\mathcal O(C)]\to \mathrm{Sh}(\Omega(C))$. 

(b) We define  
\begin{equation}\label{def:Sub:sigma}
\mathrm{Sub}_\sigma:=\sigma_*(\mathrm{Sh}(\mathrm H_C))\oplus 
[\sigma_*(\mathrm{Sh}(\mathrm H_C))|d\mathcal O(C)]
\hookrightarrow \mathrm{Sh}(\Omega(C))
\end{equation}
to be the image of this map. 
\end{lemdef}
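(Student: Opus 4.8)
The plan is to use the fact that $\sigma$, being a section of the canonical projection $\Omega(C)\to\mathrm{H}_C^{\mathrm{dR}}=\Omega(C)/d\mathcal O(C)$, is injective and induces a direct sum decomposition $\Omega(C)=\sigma(\mathrm{H}_C^{\mathrm{dR}})\oplus d\mathcal O(C)$ (if $\sigma(h)\in d\mathcal O(C)=\mathrm{Ker}(\pi)$, then $h=\pi\sigma(h)=0$, giving triviality of the intersection, and the sum is all of $\Omega(C)$ since $\omega-\sigma(\pi(\omega))\in\mathrm{Ker}(\pi)$). Writing $W:=\sigma(\mathrm{H}_C^{\mathrm{dR}})$ and $U:=d\mathcal O(C)$, so that $\Omega(C)=W\oplus U$, I would first record the resulting decomposition of the shuffle algebra,
\[
\mathrm{Sh}(\Omega(C))=\bigoplus_{n\geq0}\Omega(C)^{\otimes n}
=\bigoplus_{\varepsilon}\varepsilon_1\otimes\cdots\otimes\varepsilon_{|\varepsilon|},
\]
where $\varepsilon=(\varepsilon_1,\ldots,\varepsilon_{|\varepsilon|})$ ranges over all finite words in the two-letter alphabet $\{W,U\}$ (the empty word contributing $\mathbb C$ in degree $0$).

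Next I would locate the two constituent maps within this decomposition. The map $\mathrm{Sh}(\sigma)$ restricts in degree $n$ to $\sigma^{\otimes n}$, of image $W^{\otimes n}$; hence its total image is supported precisely on the words $\varepsilon=(W,\ldots,W)$ containing only the letter $W$. The second map, the concatenation of $\mathrm{Sh}(\sigma)$ with the inclusion $U\hookrightarrow\Omega(C)$, sends (after identifying its source $[\mathrm{Sh}(\mathrm{H}_C^{\mathrm{dR}})|d\mathcal O(C)]$ with $\mathrm{Sh}(\mathrm{H}_C^{\mathrm{dR}})\otimes d\mathcal O(C)$) the element $[h_1|\cdots|h_m]\otimes df$ to $[\sigma(h_1)|\cdots|\sigma(h_m)|df]$; its degree-$(m+1)$ image is $W^{\otimes m}\otimes U$, so its total image is supported precisely on the words $\varepsilon=(W,\ldots,W,U)$ ending in $U$ with every earlier letter equal to $W$. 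These two families of words are disjoint, the first never using the letter $U$ and the second always ending with it; therefore the two images lie in complementary direct summands of $\mathrm{Sh}(\Omega(C))$, and in particular meet only in $0$.

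The conclusion is then formal. Each constituent map is injective on its own: $\mathrm{Sh}(\sigma)$ because $\sigma$ is injective and tensor powers of an injection of $\mathbb C$-vector spaces are injective, and the concatenation map because it is the composite of $\mathrm{Sh}(\sigma)\otimes(U\hookrightarrow\Omega(C))$ with the canonical isomorphism $\Omega(C)^{\otimes m}\otimes\Omega(C)\stackrel{\sim}{\to}\Omega(C)^{\otimes(m+1)}$, each factor being injective. If now an element $(a,b)$ of the source is killed by the direct sum map, then $\mathrm{Sh}(\sigma)(a)$ equals the negative of the image of $b$, so both lie in the intersection of the two images, which is $0$ by the previous paragraph; the individual injectivities then give $a=0$ and $b=0$. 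This proves (a), and (b) merely names the image $\mathrm{Sub}_\sigma=\mathrm{Sh}(W)\oplus[\mathrm{Sh}(W)|U]$, the displayed direct sum being justified by the same disjoint-support observation.

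The argument has no genuine obstacle: it is pure multilinear algebra once $\Omega(C)=W\oplus U$ is in hand. The only point demanding a little attention is the bookkeeping identifying the supports of the two images as disjoint sets of words, which is exactly what excludes cancellation between them and upgrades the injectivity of each constituent map to injectivity of their direct sum.
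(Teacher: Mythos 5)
Your proof is correct and rests on the same key fact as the paper's own (one-line) proof, namely the decomposition $\Omega(C)=\sigma(\mathrm H_C^{\mathrm{dR}})\oplus d\mathcal O(C)$ coming from $\sigma$ being a section; you simply spell out the multilinear-algebra bookkeeping (the word decomposition of the tensor algebra and the disjointness of the supports of the two images) that the paper leaves implicit.
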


\begin{proof} (a) follows from $\Omega(C)=\sigma(\mathrm H_C)\oplus d\mathcal O(C)$. 
\end{proof}

Define a vector space filtration $F_\bullet\mathrm{Sub}_\sigma$
by $F_n\mathrm{Sub}_\sigma:=\sigma_*(F_n\mathrm{Sh}(\mathrm H_C))\oplus 
[\sigma_*(F_n\mathrm{Sh}(\mathrm H_C))|d\mathcal O(C)]$ for $n\geq 0$. 

\begin{lem}
One has for any $n\geq0$, 
\begin{equation}\label{incl2025}
I_{x_0}(F_n\mathrm{Sub}_\sigma)+I_{x_0}(F_n\mathrm{Sh}(\Omega(C)))=
f_{\sigma,x_0}(F_n\mathrm{Sh}(\mathrm H_C)\otimes\mathcal O(C))
+I_{x_0}(F_n\mathrm{Sh}(\Omega(C))), 
\end{equation}
\begin{equation}\label{incl:bis:0906}
f_{\sigma,x_0}(F_n\mathrm{Sh}(\mathrm H_C)\otimes \mathbb C1) 
\subset I_{x_0}(F_n\mathrm{Sub}_\sigma). 
\end{equation}
\end{lem}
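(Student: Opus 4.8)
The second inclusion \eqref{incl:bis:0906} is immediate and should be dispatched first: by construction $F_n\mathrm{Sub}_\sigma$ contains $F_n\mathrm{Sh}(\sigma(\mathrm H_C^{\mathrm{dR}}))$ as a direct summand, while $f_{\sigma,x_0}(1\otimes a)=I_{x_0}(\sigma(a))$, so that $f_{\sigma,x_0}(\mathbb C1\otimes F_n\mathrm{Sh}(\mathrm H_C^{\mathrm{dR}}))=I_{x_0}(F_n\mathrm{Sh}(\sigma(\mathrm H_C^{\mathrm{dR}})))\subset I_{x_0}(F_n\mathrm{Sub}_\sigma)$. The genuine content is therefore the equality \eqref{incl2025}.

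For \eqref{incl2025} the plan is to reduce both inclusions to a single algebraic identity. Using the pointwise identity $p^*f=f(x_0)+I_{x_0}([df])$ (the pointwise form of \eqref{id:p*O}) together with the algebra morphism status of $I_{x_0}$ (Lem. \ref{lem:221:3108}), one obtains, for a homogeneous $a\in\mathrm{Sh}(\mathrm H_C^{\mathrm{dR}})$ of degree $k$ and $f\in\mathcal O(C)$,
\begin{equation*}
f_{\sigma,x_0}(a\otimes f)=I_{x_0}(\sigma(a))\cdot p^*f=f(x_0)\,I_{x_0}(\sigma(a))+I_{x_0}\big(\sigma(a)\shuffle[df]\big).
\end{equation*}
The first summand already lies in $I_{x_0}(F_k\mathrm{Sh}(\sigma(\mathrm H_C^{\mathrm{dR}})))\subset I_{x_0}(F_n\mathrm{Sub}_\sigma)$ whenever $k\leq n$, so the whole analysis concentrates on the term $I_{x_0}(\sigma(a)\shuffle[df])$.

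The next step is to expand the shuffle of the length-$k$ word $\sigma(a)$ with the length-$1$ word $[df]$ as the sum over its $k+1$ insertion positions. The extremal term $[\sigma(a)|df]$ lies in $[F_k\mathrm{Sh}(\sigma(\mathrm H_C^{\mathrm{dR}}))|d\mathcal O(C)]\subset F_n\mathrm{Sub}_\sigma$, whereas each of the remaining insertions places $df\in d\mathcal O(C)$ in a non-final slot; applying $I_{x_0}$ and invoking the inclusions \eqref{eq:1705:0306} (for the front insertion) and \eqref{eq:1701:0306} (for the interior ones) of Lem. \ref{lem:1607:0306} shows that these all land in $I_{x_0}(\mathrm{Sh}_k(\Omega(C)))\subset I_{x_0}(F_n\mathrm{Sh}(\Omega(C)))$. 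Reading the displayed identity from left to right, and summing over the homogeneous components of an arbitrary $a\in F_n\mathrm{Sh}(\mathrm H_C^{\mathrm{dR}})$, yields the inclusion of the right-hand side of \eqref{incl2025} into the left-hand side; solving the same identity for $I_{x_0}([\sigma(a)|df])$ and using these three facts once more gives the reverse inclusion, since $I_{x_0}(F_n\mathrm{Sh}(\sigma(\mathrm H_C^{\mathrm{dR}})))\subset f_{\sigma,x_0}(F_n\mathrm{Sh}(\mathrm H_C^{\mathrm{dR}})\otimes\mathcal O(C))$ holds trivially.

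I expect the only delicate point to be the combinatorial bookkeeping of the shuffle expansion and the correct matching of each family of insertion terms to the appropriate inclusion of Lem. \ref{lem:1607:0306}, together with the degree tracking needed to keep every term at filtration level $\leq n$. One should also note the harmless identification of the two orderings of the tensor factors in the domain of $f_{\sigma,x_0}$, used implicitly here as in the proof of Prop. \ref{(a2)}.
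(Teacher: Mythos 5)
Your argument is correct, and its overall strategy coincides with the paper's: both proofs reduce \eqref{incl2025} to the single congruence $f_{\sigma,x_0}(a\otimes f)\equiv I_{x_0}([\sigma(a)|df])$ modulo $I_{x_0}(F_n\mathrm{Sh}(\Omega(C)))$ plus the trivially shared summand $I_{x_0}(F_n\mathrm{Sh}(\sigma(\mathrm H_C^{\mathrm{dR}})))=f_{\sigma,x_0}(F_n\mathrm{Sh}(\mathrm H_C^{\mathrm{dR}})\otimes\mathbb C1)$, and both dispose of \eqref{incl:bis:0906} by the identity $f_{\sigma,x_0}(1\otimes t)=I_{x_0}(\sigma(t))$. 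Where you differ is in how the congruence is produced. The paper's proof invokes the one-line integration-by-parts identity \eqref{useful:1}, i.e.\ $I_{x_0}([\sigma(h_1)|\ldots|\sigma(h_n)|df])=f_{\sigma,x_0}([h_1|\ldots|h_n]\otimes f)-I_{x_0}([\sigma(h_1)|\ldots|\sigma(h_{n-1})|\sigma(h_n)p^*(f)])$, which carries a single correction term visibly lying in $\mathrm{Sh}_n(\Omega(C))$ (this is exactly the computation behind \eqref{eq:1709:0306}). You instead split $p^*f=f(x_0)+I_{x_0}([df])$ via \eqref{id:p*O}, convert the product into a shuffle using Lem.~\ref{lem:221:3108}, and then expand $\sigma(a)\shuffle[df]$ over its $k+1$ insertion positions, absorbing the non-final insertions by \eqref{eq:1701:0306} and \eqref{eq:1705:0306}. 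This is a legitimately different decomposition: it costs more bookkeeping (one must check the positivity hypotheses of Lem.~\ref{lem:1607:0306} and treat the degree-zero component of $a$ separately, where the only insertion is already the extremal one), and it is slightly roundabout in that Lem.~\ref{lem:1607:0306} is itself proved by the very integration-by-parts identities the paper uses directly; on the other hand it makes the argument uniform in $a$ without choosing a spanning set of $\mathrm{gr}_n\mathrm{Sub}_\sigma$. Both directions of the equality are correctly extracted from your displayed identity.
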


\begin{proof} \eqref{incl2025} follows from the inclusion $F_{n-1}\mathrm{Sub}_\sigma \subset F_n\mathrm{Sh}(\Omega(C))$, from the fact that 
$\mathrm{gr}_n\mathrm{Sub}_\sigma$ is linearly spanned by the elements $[\sigma(h_1)|\ldots|\sigma(h_n)|df] \oplus 0$ and $0 \oplus [\sigma(h_1)|\ldots|\sigma(h_n)]$, where $h_1,\ldots,h_n\in \mathrm H_C$ and $f\in\mathcal O(C)$, and from the identities
\begin{equation}\label{useful:1}
I_{x_0}([\sigma(h_1)|\ldots|\sigma(h_n)|df]\oplus 0)=f_{\sigma,x_0}([h_1|\ldots|h_n]\otimes f)
-I_{x_0}([\sigma(h_1)|\ldots|\sigma(h_{n-1})|\sigma(h_n)p^*(f)]). 
\end{equation}
$$
I_{x_0}(0\oplus [\sigma(h_1)|\ldots|\sigma(h_n)])=f_{\sigma,x_0}([h_1|\ldots|h_n]\otimes 1). 
$$
Eq. \eqref{incl:bis:0906} follows from the equality of the maps 
$F_n\mathrm{Sh}(\mathrm H_C)\to \mathcal O_{hol}(\tilde C)$, $t\mapsto f_{\sigma,x_0}(t\otimes 1)$ and 
$t\mapsto I_{x_0}(\sigma(t)\oplus 0)$. 
\end{proof}

\begin{lem}
One has, for any $n\geq 0$, 
\begin{equation}\label{a:step}
f_{\sigma,x_0}(F_n\mathrm{Sh}(\mathrm H_C)\otimes\mathcal O(C))=I_{x_0}(F_n\mathrm{Sub}_\sigma). 
\end{equation}
\end{lem}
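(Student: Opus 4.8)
The plan is to prove \eqref{a:step} by induction on $n\ge 0$, using the two inclusions of the preceding lemma together with the decomposition of $I_{x_0}(F_n\mathrm{Sh}(\Omega(C)))$ furnished by Prop. \ref{(a2)}. Throughout I would abbreviate $W_n := f_{\sigma,x_0}(\mathcal O(C)\otimes F_n\mathrm{Sh}(\mathrm H_C^{\mathrm{dR}}))$, $Z_n := I_{x_0}(F_n\mathrm{Sub}_\sigma)$ and $Y_n := I_{x_0}(F_n\mathrm{Sh}(\Omega(C)))$, noting that $W_n$ is the same subspace $p^*\mathcal O(C)\cdot I_{x_0}(\sigma(F_n\mathrm{Sh}(\mathrm H_C^{\mathrm{dR}})))$ as the term $f_{\sigma,x_0}(F_n\mathrm{Sh}(\mathrm H_C^{\mathrm{dR}})\otimes\mathcal O(C))$ appearing on the right of \eqref{incl2025}, so that \eqref{incl2025} reads simply $Z_n + Y_n = W_n + Y_n$. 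Adopting the convention $F_{-1}=0$, the base case $n=0$ is subsumed in the inductive step below, since $W_{-1}$ and $Z_{-1}$ are then images of the zero subspace; it reduces to \eqref{id:p*O}, giving $Z_0 = W_0 = p^*\mathcal O(C)$.

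The heart of the argument is to locate $Y_n$ relative to both $W_n$ and $Z_n$. By Prop. \ref{(a2)} one has $Y_n = f_{\sigma,x_0}(\mathbb C 1\otimes F_n\mathrm{Sh}(\mathrm H_C^{\mathrm{dR}}) + \mathcal O(C)\otimes F_{n-1}\mathrm{Sh}(\mathrm H_C^{\mathrm{dR}}))$. Since the argument on the right is contained in $\mathcal O(C)\otimes F_n\mathrm{Sh}(\mathrm H_C^{\mathrm{dR}})$, and images preserve inclusions, this yields at once $Y_n \subseteq W_n$. For the inclusion $Y_n \subseteq Z_n$ I would split $Y_n$ into the sum of $f_{\sigma,x_0}(\mathbb C1\otimes F_n\mathrm{Sh}(\mathrm H_C^{\mathrm{dR}}))$, which lies in $Z_n$ by \eqref{incl:bis:0906}, and $f_{\sigma,x_0}(\mathcal O(C)\otimes F_{n-1}\mathrm{Sh}(\mathrm H_C^{\mathrm{dR}})) = W_{n-1}$; by the induction hypothesis $W_{n-1}=Z_{n-1}$, and $Z_{n-1}\subseteq Z_n$ because $F_{n-1}\mathrm{Sub}_\sigma\subseteq F_n\mathrm{Sub}_\sigma$ and $I_{x_0}$ preserves inclusions. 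Hence $Y_n\subseteq Z_n$ as well.

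With both $Y_n\subseteq W_n$ and $Y_n\subseteq Z_n$ in hand, the equality $W_n=Z_n$ follows by sandwiching from \eqref{incl2025}: substituting $Y_n\subseteq Z_n$ into $Z_n+Y_n=W_n+Y_n$ gives $Z_n=W_n+Y_n\supseteq W_n$, while substituting $Y_n\subseteq W_n$ gives $W_n=Z_n+Y_n\supseteq Z_n$, so the two subspaces coincide. This closes the induction and establishes \eqref{a:step}.

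I expect the only genuinely delicate point to be the appeal to the induction hypothesis in proving $Y_n\subseteq Z_n$: the identity \eqref{incl2025} by itself determines $Z_n$ only modulo $Y_n$, and it is precisely the $F_{n-1}$-part of $Y_n$ — recognized through Prop. \ref{(a2)} as the previous-level image $W_{n-1}=Z_{n-1}$ — that must be absorbed into $Z_n$ to break the apparent circularity. Once this is noticed, no further input is required (in particular no use of the injectivity of $f_{\sigma,x_0}$), and the remaining manipulations are purely formal inclusions of subspaces.
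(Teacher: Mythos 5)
Your proof is correct and follows essentially the same route as the paper: both arguments rest on \eqref{incl2025}, \eqref{incl:bis:0906} and Prop.~\ref{(a2)}, with the induction hypothesis entering exactly where you flag it, namely to absorb the previous-level term $f_{\sigma,x_0}(\mathcal O(C)\otimes F_{n-1}\mathrm{Sh}(\mathrm H_C^{\mathrm{dR}}))$ into $I_{x_0}(F_n\mathrm{Sub}_\sigma)$. Your reorganization (first showing $Y_n$ lies in both sides, then sandwiching via \eqref{incl2025}) is a mild repackaging of the paper's two separate inclusion chains, not a genuinely different argument.
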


\begin{proof}
For any $n\geq 0$, one has 
\begin{align*}
& f_{\sigma,x_0}(F_n\mathrm{Sh}(\mathrm H_C)\otimes\mathcal O(C))
\subset I_{x_0}(F_n\mathrm{Sub}_\sigma)+I_{x_0}(F_n\mathrm{Sh}(\Omega(C)))
\\ &     
=I_{x_0}(F_n\mathrm{Sub}_\sigma)
+f_{\sigma,x_0}(F_n\mathrm{Sh}(\mathrm H_C)\otimes\mathbb C1+F_{n-1}\mathrm{Sh}(\mathrm H_C)\otimes\mathcal O(C))
\\ & 
=I_{x_0}(F_n\mathrm{Sub}_\sigma)
+f_{\sigma,x_0}(F_n\mathrm{Sh}(\mathrm H_C)\otimes\mathbb C1)
+f_{\sigma,x_0}(F_{n-1}\mathrm{Sh}(\mathrm H_C)\otimes\mathcal O(C))
\\ & 
=I_{x_0}(F_n\mathrm{Sub}_\sigma)
+f_{\sigma,x_0}(F_{n-1}\mathrm{Sh}(\mathrm H_C)\otimes\mathcal O(C))
\end{align*}
where the first inclusion follows from \eqref{incl2025}, the first equality follows from Prop. \ref{(a2)}, 
and the third equality follows from the inclusion \eqref{incl:bis:0906} applied to the 
two first summands of its left-hand side. Therefore one finds that 
$$
f_{\sigma,x_0}(F_n\mathrm{Sh}(\mathrm H_C)\otimes\mathcal O(C))
\subset I_{x_0}(F_n\mathrm{Sub}_\sigma)
+f_{\sigma,x_0}(F_{n-1}\mathrm{Sh}(\mathrm H_C)\otimes\mathcal O(C)). 
$$
Based on this inclusion, one proves inductively that, for any $n\geq 0$, one has 
\begin{equation}\label{temporary}
f_{\sigma,x_0}(F_n\mathrm{Sh}(\mathrm H_C)\otimes\mathcal O(C))
\subset I_{x_0}(F_n\mathrm{Sub}_\sigma). 
\end{equation}
On the other hand, 
for any $n\geq 0$, one has  
\begin{align*}
    & I_{x_0}(F_n\mathrm{Sub}_\sigma)\subset 
f_{\sigma,x_0}(F_n\mathrm{Sh}(\mathrm H_C)\otimes\mathcal O(C))
+I_{x_0}(F_n\mathrm{Sh}(\Omega(C))).
\\ & 
=f_{\sigma,x_0}(F_n\mathrm{Sh}(\mathrm H_C)\otimes\mathcal O(C))
+f_{\sigma,x_0}(F_n\mathrm{Sh}(\mathrm H_C)\otimes\mathbb C1+F_{n-1}\mathrm{Sh}(\mathrm H_C)\otimes\mathcal O(C))
\\ & 
=f_{\sigma,x_0}(F_n\mathrm{Sh}(\mathrm H_C)\otimes\mathcal O(C)) 
\end{align*}
where the inclusion follows from \eqref{incl2025}, the first equality follows from Prop. \ref{(a2)}, 
and the last equality follows from the inclusion of the second summand of its left-hand side in its first one. Therefore one finds that
$$
I_{x_0}(F_n\mathrm{Sub}_\sigma)\subset
f_{\sigma,x_0}(F_n\mathrm{Sh}(\mathrm H_C)\otimes\mathcal O(C)),
$$
which together with \eqref{temporary} implies the statement. 
\end{proof}

It follows from Prop. \ref{prop:3:15:0409}(a) that the corestriction of the map $f_{\sigma,x_0}$ defines a 
linear isomorphism
$\underline{f}_{\sigma,x_0} : F_n\mathrm{Sh}(\mathrm H_C)\otimes\mathcal O(C)\to 
f_{\sigma,x_0}(F_n\mathrm{Sh}(\mathrm H_C)\otimes\mathcal O(C))$. Define 
$$
\mathrm{map}_{\sigma,x_0} : \mathrm{Sub}_\sigma\to \mathrm{Sh}(\mathrm H_C)\otimes\mathcal O(C)
$$
to be the composition 
\begin{equation}\label{compos:1106}
\mathrm{Sub}_\sigma\stackrel{I_{x_0}}{\to}I_{x_0}(\mathrm{Sub}_\sigma)=
f_{\sigma,x_0}(\mathrm{Sh}(\mathrm H_C)\otimes\mathcal O(C))\stackrel{(\underline{f}_{\sigma,x_0})^{-1}}{\longrightarrow}
\mathrm{Sh}(\mathrm H_C)\otimes\mathcal O(C), 
\end{equation}
where the equality (equality of subspaces of $\mathcal O_{hol}(\tilde C)$) follows from the collection of all equalities \eqref{a:step} for $n\geq 0$. 

\begin{lem}\label{map:sigma:x0:iso}
The map $\mathrm{map}_{\sigma,x_0}$ is an isomorphism of vector spaces. 
\end{lem}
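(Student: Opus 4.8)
The plan is to show that $\mathrm{map}_{\sigma,x_0}$ is an isomorphism by applying the graded criterion of Lemma~\ref{graded:crit:iso} to the filtration $F_\bullet\mathrm{Sub}_\sigma$ on the source and the filtration $\mathcal O(C)\otimes F_\bullet\mathrm{Sh}(\mathrm H_C^{\mathrm{dR}})$ on the target. First I would record that the second arrow in the composition \eqref{compos:1106}, namely $(\underline f_{\sigma,x_0})^{-1}$, is already a linear isomorphism: by Prop.~\ref{prop:3:15:0409}(a) the map $f_{\sigma,x_0}$ is injective, so its corestriction $\underline f_{\sigma,x_0}$ to its image is bijective, and by Prop.~\ref{prop:3:15:0409}(b) it carries $\mathcal O(C)\otimes F_n\mathrm{Sh}(\mathrm H_C^{\mathrm{dR}})$ isomorphically onto the $n$-th filtered piece. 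Combining this with the equalities \eqref{a:step} I obtain $\mathrm{map}_{\sigma,x_0}(F_n\mathrm{Sub}_\sigma)=\mathcal O(C)\otimes F_n\mathrm{Sh}(\mathrm H_C^{\mathrm{dR}})$ for every $n\geq 0$; thus $\mathrm{map}_{\sigma,x_0}$ is compatible with the filtrations, and by Lemma~\ref{graded:crit:iso} it suffices to prove that each $\mathrm{gr}_n\mathrm{map}_{\sigma,x_0}$ is an isomorphism.

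Next I would compute the associated graded explicitly. Since $F_\bullet\mathrm{Sub}_\sigma$ is the direct sum of the filtrations on the two summands of $\mathrm{Sub}_\sigma$, one has $\mathrm{gr}_n\mathrm{Sub}_\sigma=\mathrm{Sh}_n(\sigma(\mathrm H_C^{\mathrm{dR}}))\oplus[\mathrm{Sh}_n(\sigma(\mathrm H_C^{\mathrm{dR}}))|d\mathcal O(C)]$, while $\mathrm{gr}_n(\mathcal O(C)\otimes\mathrm{Sh}(\mathrm H_C^{\mathrm{dR}}))=\mathcal O(C)\otimes\mathrm{Sh}_n(\mathrm H_C^{\mathrm{dR}})$. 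Fixing a complement $V$ of $\mathbb C1$ in $\mathcal O(C)$, the differential restricts to an isomorphism $d|_V:V\stackrel{\sim}{\to}d\mathcal O(C)$ (as $\mathrm{Ker}(d)=\mathbb C$), which yields the target decomposition $\mathcal O(C)\otimes\mathrm{Sh}_n(\mathrm H_C^{\mathrm{dR}})=(\mathbb C1\oplus V)\otimes\mathrm{Sh}_n(\mathrm H_C^{\mathrm{dR}})$. On the first summand, the second identity in \eqref{useful:1} gives $\mathrm{map}_{\sigma,x_0}(0\oplus[\sigma(h_1)|\ldots|\sigma(h_n)])=1\otimes[h_1|\ldots|h_n]$ exactly, so its class lands isomorphically in $\mathbb C1\otimes\mathrm{Sh}_n(\mathrm H_C^{\mathrm{dR}})$ with vanishing $V\otimes\mathrm{Sh}_n(\mathrm H_C^{\mathrm{dR}})$-component. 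On the second summand, the first identity in \eqref{useful:1} expresses $I_{x_0}([\sigma(h_1)|\ldots|\sigma(h_n)|df]\oplus 0)$ as $f_{\sigma,x_0}(f\otimes[h_1|\ldots|h_n])$ minus the term $I_{x_0}([\sigma(h_1)|\ldots|\sigma(h_{n-1})|f\sigma(h_n)])$; writing $f=c+v$ with $c\in\mathbb C$ and $v\in V$, I would read off that the $V\otimes\mathrm{Sh}_n(\mathrm H_C^{\mathrm{dR}})$-component of the image is $v\otimes[h_1|\ldots|h_n]$, and since $df=dv$ this block is identified with $(d|_V)^{-1}$ tensored with the identity, hence an isomorphism.

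The main obstacle will be to verify that the subtracted term $I_{x_0}([\sigma(h_1)|\ldots|\sigma(h_{n-1})|f\sigma(h_n)])$ contributes nothing to the $V$-component modulo $F_{n-1}$. For this I would invoke Prop.~\ref{(a2)}, precisely the inclusion \eqref{1627:0306}, which places $I_{x_0}(\mathrm{Sh}_n(\Omega(C)))$ inside $I_{x_0}(\mathrm{Sh}_n(\sigma(\mathrm H_C^{\mathrm{dR}})))+p^*\mathcal O(C)\cdot I_{x_0}(\mathrm{Sh}_{n-1}(\Omega(C)))$: the second summand lies in $F_{n-1}$ of the target and so dies in $\mathrm{gr}_n$, while the first maps under $(\underline f_{\sigma,x_0})^{-1}$ into $\mathbb C1\otimes\mathrm{Sh}_n(\mathrm H_C^{\mathrm{dR}})$. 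Consequently, with respect to the two decompositions above, $\mathrm{gr}_n\mathrm{map}_{\sigma,x_0}$ is block upper-triangular, with the block from the first summand onto $\mathbb C1\otimes\mathrm{Sh}_n(\mathrm H_C^{\mathrm{dR}})$ equal to the identity and the block from the second summand onto $V\otimes\mathrm{Sh}_n(\mathrm H_C^{\mathrm{dR}})$ equal to $(d|_V)^{-1}$. As both diagonal blocks are isomorphisms, $\mathrm{gr}_n\mathrm{map}_{\sigma,x_0}$ is bijective for every $n$, and Lemma~\ref{graded:crit:iso} then delivers that $\mathrm{map}_{\sigma,x_0}$ is an isomorphism of vector spaces.
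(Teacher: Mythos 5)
Your proof is correct and follows essentially the same route as the paper's: both reduce to the associated graded map via Lem.~\ref{graded:crit:iso} (using \eqref{a:step} for filtration compatibility) and then exhibit a triangular structure on $\mathrm{gr}_n\mathrm{map}_{\sigma,x_0}$ by means of the identities \eqref{useful:1} together with Prop.~\ref{(a2)} to discard the correction term. The only (cosmetic) difference is that you split $\mathcal O(C)=\mathbb C1\oplus V$ explicitly and argue with block matrices, whereas the paper encodes the same triangularity as compatibility with a $2$-step filtration whose graded pieces are $\mathbb C$ and $\mathcal O(C)/\mathbb C$.
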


\begin{proof}
For any $n\geq0$, the composition \eqref{compos:1106} restricts to a composition 
$$
F_n\mathrm{Sub}_\sigma\stackrel{I_{x_0}}{\to}I_{x_0}(F_n\mathrm{Sub}_\sigma)=
f_{\sigma,x_0}(F_n\mathrm{Sh}(\mathrm H_C)\otimes\mathcal O(C))\stackrel{(\underline{f}_{\sigma,x_0})^{-1}}{\longrightarrow}
F_n\mathrm{Sh}(\mathrm H_C)\otimes\mathcal O(C), 
$$
where the equality follows from \eqref{a:step}. 

It follows that the map $\mathrm{map}_{\sigma,x_0}$ is compatible with the filtrations on both sides, and therefore 
induces, for any $n\geq 0$, a linear map 
$$
F_n \mathrm{map}_{\sigma,x_0} : F_n\mathrm{Sub}_\sigma\to F_n\mathrm{Sh}(\mathrm H_C)\otimes\mathcal O(C).
$$
The composition of the associated graded map with the canonical isomorphisms is then 
\begin{align}\label{graded:map}
&  \mathrm{map}_n : \sigma_*(\mathrm{Sh}_n(\mathrm H_C)) \oplus [\sigma_*(\mathrm{Sh}_n(\mathrm H_C))|d\mathcal O(C)]\simeq
\mathrm{gr}_n\mathrm{Sub}_\sigma
\stackrel{\mathrm{gr}_n \mathrm{map}_{\sigma,x_0}}{\to} \mathrm{gr}_n\mathrm{Sh}(\mathrm H_C)\otimes\mathcal O(C) 
\\ & \nonumber \simeq \mathrm{Sh}_n(\mathrm H_C)\otimes\mathcal O(C). 
\end{align}
Recall that a 2-step filtration of a vector space is the same as a vector subspace. The source and target of $\mathrm{map}_n$ are 
equipped with the 2-step filtrations associated respectively with the subspaces 
$\sigma_*(\mathrm{Sh}_n(\mathrm H_C))$
and $\mathrm{Sh}_n(\mathrm H_C)\otimes\mathbb C$. 

The composition \eqref{compos:1106} restricts to a composition 
$$
\sigma_*(\mathrm{Sh}(\mathrm H_C))\stackrel{I_{x_0}}{\to}
I_{x_0}\sigma_*(\mathrm{Sh}_n(\mathrm H_C)))=
f_{\sigma,x_0}(\mathrm{Sh}(\mathrm H_C)\otimes\mathbb C)\stackrel{(\underline{f}_{\sigma,x_0})^{-1}}{\to}
\mathrm{Sh}(\mathrm H_C)\otimes\mathbb C, 
$$
where the middle equality follows from the equality of maps $\mathrm{Sh}(\mathrm H_C)\to\mathcal O_{hol}(\tilde C)$, $t\mapsto I_{x_0}(\sigma_*(t))$
and $t\mapsto f_{\sigma,x_0}(t\otimes 1)$. Therefore $\mathrm{map}_{\sigma,x_0}$ restricts to the isomorphism  $\sigma_*(\mathrm{Sh}(\mathrm H_C))\to 
\mathrm{Sh}(\mathrm H_C)\otimes\mathbb C$ given by $\sigma_*(t)\mapsto t\otimes 1$. This map is compatible with the filtrations, therefore it induces 
an isomorphism $\sigma_*(F_n\mathrm{Sh}(\mathrm H_C))\to F_n\mathrm{Sh}(\mathrm H_C)\otimes\mathbb C$ for any $n\geq 0$, which by 
passing to the associated graded implies that $\mathrm{map}_n$ is compatible to the 2-step filtrations on both sides. Its associated graded for this
filtration is then a map 
$$
\oplus_{i=0}^1\mathrm{gr}_i \mathrm{map}_n : 
\sigma_*(\mathrm{Sh}_n(\mathrm H_C)) \oplus [\sigma_*(\mathrm{Sh}_n(\mathrm H_C))|d\mathcal O(C)]
\to \mathrm{Sh}_n(\mathrm H_C)\otimes(\mathbb C\oplus 
(\mathcal O(C)/\mathbb C)). 
$$
If follows from the fact that $\mathrm{map}_{\sigma,x_0}$ restricts to the isomorphism  
$\sigma_*(\mathrm{Sh}(\mathrm H_C))\to 
\mathrm{Sh}(\mathrm H_C)\otimes\mathbb C$ given by $\sigma_*(t)\mapsto t\otimes 1$ that $\mathrm{gr}_0 \mathrm{map}_n$ is the map 
$\sigma_*(\mathrm{Sh}_n(\mathrm H_C))\to F_n\mathrm{Sh}(\mathrm H_C)\otimes\mathbb C$, $\sigma_*(t)\mapsto t\otimes 1$. 
Moreover, \eqref{useful:1} implies 
\begin{align*}
    & I_{x_0}([\sigma(h_1)|\ldots|\sigma(h_n)|df]\oplus 0)\in f_{\sigma,x_0}([h_1|\ldots|h_n]\otimes f)
+I_{x_0}(F_n\mathrm{Sh}(\Omega(C))
\\ & \subset f_{\sigma,x_0}([h_1|\ldots|h_n]\otimes f)
+f_{\sigma,x_0}(F_\bullet\mathrm{Sh}(\mathrm H_C)\otimes\mathbb C1+F_{\bullet-1}\mathrm{Sh}(\mathrm H_C)\otimes\mathcal O(C))
\end{align*}
where the first relation follows from \eqref{useful:1} and the second follows from
Prop. \ref{(a2)}, thus implying that $\mathrm{gr}_1 \mathrm{map}_n$ is the map 
$[\sigma_*(\mathrm{Sh}_n(\mathrm H_C))|d\mathcal O(C)]\to \mathrm{Sh}_n(\mathrm H_C)\otimes(\mathcal O(C)/\mathbb C)$, 
$[\sigma_*(t)|df]\mapsto t\otimes [f]$. 

The maps $\mathrm{gr}_i\mathrm{map}_n$ are isomorphisms for $i=0,1$ and any $n\geq0$, which implies that $\mathrm{map}_n$ is an 
isomorphism for any $n\geq0$. This implies that $\mathrm{gr}_n \mathrm{map}_{\sigma,x_0}$ is an isomorphism for any $n\geq0$; since the filtrations 
on the source and target of $\mathrm{map}_{\sigma,x_0}$ are complete, one concludes that this map is an isomorphism. 
\end{proof}

\begin{lem}\label{lem:1:1117}
The restriction of $I_{x_0}$ to $\mathrm{Sub}_\sigma$ is injective. 
\end{lem}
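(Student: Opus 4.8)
The plan is to read off the injectivity directly from Lemma \ref{map:sigma:x0:iso}, since essentially all the work has already been done there. The key observation is that the map $\mathrm{map}_{\sigma,x_0}$ was \emph{defined} in \eqref{compos:1106} precisely as a factorization whose first constituent is the restriction of $I_{x_0}$ to $\mathrm{Sub}_\sigma$. Concretely, I would write $\iota : \mathrm{Sub}_\sigma\to I_{x_0}(\mathrm{Sub}_\sigma)$ for the corestriction of $I_{x_0}|_{\mathrm{Sub}_\sigma}$ to its image, so that by construction $\mathrm{map}_{\sigma,x_0}=(\underline{f}_{\sigma,x_0})^{-1}\circ\iota$.

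First I would record that $(\underline{f}_{\sigma,x_0})^{-1}$ is a genuine bijection: by Prop. \ref{prop:3:15:0409}(a) the algebra morphism $f_{\sigma,x_0}$ is injective, hence its corestriction $\underline{f}_{\sigma,x_0}$ onto its image $f_{\sigma,x_0}(\mathcal O(C)\otimes\mathrm{Sh}(\mathrm H_C^{\mathrm{dR}}))$ is a linear isomorphism, and its inverse is well-defined. Next, Lemma \ref{map:sigma:x0:iso} asserts that $\mathrm{map}_{\sigma,x_0}$ is an isomorphism of vector spaces, in particular injective. Now in any composition $g\circ f$ that is injective, the first factor $f$ is automatically injective; applying this with $f=\iota$ and $g=(\underline{f}_{\sigma,x_0})^{-1}$ yields that $\iota$ is injective. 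Since $\iota$ is nothing but $I_{x_0}|_{\mathrm{Sub}_\sigma}$ with its codomain restricted to the image, the injectivity of $\iota$ is equivalent to the injectivity of $I_{x_0}|_{\mathrm{Sub}_\sigma}$, which is exactly the claim.

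There is no real obstacle here: the statement is an immediate corollary of Lemma \ref{map:sigma:x0:iso}, and the only thing to be careful about is the bookkeeping of codomains (distinguishing $I_{x_0}|_{\mathrm{Sub}_\sigma}$ as a map into $\mathcal O_{hol}(\tilde C)$ from its corestriction $\iota$ onto its image), together with invoking the invertibility of $\underline{f}_{\sigma,x_0}$ so that cancelling it on the left preserves injectivity. The substantive content — that the associated graded maps $\mathrm{gr}_i\,\mathrm{map}_n$ are isomorphisms and hence $\mathrm{map}_{\sigma,x_0}$ is an isomorphism — was already carried out in the proof of Lemma \ref{map:sigma:x0:iso}, so the present proof is purely formal.
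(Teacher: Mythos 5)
Your proof is correct and uses exactly the same two ingredients as the paper, namely the injectivity of $f_{\sigma,x_0}$ from Prop.~\ref{prop:3:15:0409}(a) and the isomorphism statement of Lem.~\ref{map:sigma:x0:iso}; the paper simply phrases the deduction as ``the restriction equals $f_{\sigma,x_0}\circ\mathrm{map}_{\sigma,x_0}$, a composition of injective maps,'' whereas you cancel $(\underline{f}_{\sigma,x_0})^{-1}$ on the other side, which is the same formal argument.
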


\begin{proof}
This follows from the equality of this restriction with the composition $f_{\sigma,x_0}\circ \mathrm{map}_{\sigma,x_0}$, which is 
injective by Prop. \ref{prop:3:15:0409}(a) and Lem. \ref{map:sigma:x0:iso}.
\end{proof}

\begin{prop}\label{prop:2:1117}
One has $\mathrm{Sh}(\Omega(C))=\mathrm{Sub}_\sigma+\mathrm{im}(D_{x_0})$.
\end{prop}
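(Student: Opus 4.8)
The plan is to establish the nontrivial inclusion $\mathrm{Sh}(\Omega(C)) \subseteq \mathrm{Sub}_\sigma + \mathrm{im}(D_{x_0})$, the reverse inclusion being automatic since both summands sit inside $\mathrm{Sh}(\Omega(C))$. The space $\mathrm{Sh}(\Omega(C))$ is spanned by words $[\gamma_1|\cdots|\gamma_n]$ with $\gamma_i \in \Omega(C)$, and the decomposition $\Omega(C) = \sigma(\mathrm{H}_C^{\mathrm{dR}}) \oplus d\mathcal{O}(C)$ lets me fix for each letter a splitting $\gamma_i = \sigma(h_i) + df_i$ and expand each word multilinearly into a sum of \emph{pure} words, each letter of which is either of the form $\sigma(h)$ ($\sigma$-type) or $df$ ($d$-type). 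It therefore suffices to treat pure words.

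The heart of the argument is an induction on the degree $n$. A pure word with no $d$-type letter lies in $\mathrm{Sh}_n(\sigma(\mathrm{H}_C^{\mathrm{dR}}))$, and a pure word whose unique $d$-type letter is its last letter lies in $[\mathrm{Sh}_{n-1}(\sigma(\mathrm{H}_C^{\mathrm{dR}}))|d\mathcal{O}(C)]$; in both cases the word already belongs to $\mathrm{Sub}_\sigma$. The remaining case is a pure word carrying a $d$-type letter $df$ in a non-final position, say $[\alpha_1|\cdots|\alpha_p|df|\beta_1|\cdots|\beta_q]$ with $q \geq 1$. Here I would invoke the integration-by-parts relations underlying Lem.~\ref{lem:1607:0306}, namely that
\[
[\alpha_1|\cdots|\alpha_p|df|\beta_1|\cdots|\beta_q]
-[\alpha_1|\cdots|\alpha_p|f\beta_1|\cdots|\beta_q]
+[\alpha_1|\cdots|f\alpha_p|\beta_1|\cdots|\beta_q]
\]
lies in $\mathrm{Ker}(I_{x_0})$ (with the last term replaced by $f(x_0)[\beta_1|\cdots|\beta_q]$ when $p=0$), hence in $\mathrm{im}(D_{x_0})$ by the definition of $D_{x_0}$. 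Modulo $\mathrm{im}(D_{x_0})$ the word is thus congruent to a sum of words of degree $n-1$, to which the induction hypothesis applies. This closes the induction and yields $\mathrm{Sh}(\Omega(C)) = \mathrm{Sub}_\sigma + \mathrm{im}(D_{x_0})$.

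I expect the main difficulty to be bookkeeping rather than conceptual. After integration by parts the absorbed letters $f\beta_1$ and $f\alpha_p$ are arbitrary elements of $\Omega(C)$ and must be re-decomposed along $\sigma(\mathrm{H}_C^{\mathrm{dR}}) \oplus d\mathcal{O}(C)$, which can reintroduce $d$-type letters at earlier positions; the point making the induction go through is that each reduction \emph{consumes} the letter $df$ and so strictly lowers the degree, placing the re-decomposition at degree $n-1$, already covered by the hypothesis. The one thing to verify carefully is that the three-term relations employed are genuinely among the generators of $\mathrm{im}(D_{x_0})$. It is worth recording the slicker half of the picture supplied by the established results: combining \eqref{a:step} with Prop.~\ref{(a3)}, Cor.~\ref{new:cor}(c) and Prop.~\ref{(a1)} gives $I_{x_0}(\mathrm{Sub}_\sigma) = F_\infty^\mu\mathcal{O}_{hol}(\tilde C) = F_\infty^\delta\mathcal{O}_{hol}(\tilde C) = I_{x_0}(\mathrm{Sh}(\Omega(C)))$, so that $I_{x_0}$ is already surjective from $\mathrm{Sub}_\sigma$ onto its image and hence $\mathrm{Sh}(\Omega(C)) = \mathrm{Sub}_\sigma + \mathrm{Ker}(I_{x_0})$; the content of the proposition over this observation is precisely the refinement of $\mathrm{Ker}(I_{x_0})$ to the \emph{explicit} subspace $\mathrm{im}(D_{x_0})$, which is exactly what the direct reduction above provides.
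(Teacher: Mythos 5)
Your proposal is correct and follows essentially the same route as the paper: decompose each letter along $\Omega(C)=\sigma(\mathrm H_C^{\mathrm{dR}})\oplus d\mathcal O(C)$, observe that words whose only $d$-type letter (if any) is final already lie in $\mathrm{Sub}_\sigma$, and eliminate any non-final $df$ via the three-term relation $D_{x_0}(s\otimes f\otimes s')$, which drops the degree by one and closes the induction on $n$ exactly as in the paper's proof. The only phrasing to tighten is the detour through $\mathrm{Ker}(I_{x_0})$: the three-term combination is literally $D_{x_0}$ applied to an elementary tensor (with $s'\in\mathrm{Sh}_+(\Omega(C))$, i.e.\ $q\geq 1$, as required), so it lies in $\mathrm{im}(D_{x_0})$ by definition rather than because it is annihilated by $I_{x_0}$.
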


\begin{proof}
Let us prove the inclusion 
\begin{equation}\label{incl:n}
    \mathrm{Sh}_n(\Omega(C))\subset\mathrm{Sub}_\sigma+\mathrm{im}(D_{x_0})+F_{n-1}\mathrm{Sh}(\Omega(C))
\end{equation}
for any $n\geq 0$. One has $\mathbb C1\in \mathrm{Sub}_\sigma$, which proves \eqref{incl:n} for $n=0$.  
One has $\Omega(C)=\sigma(\mathrm H_C)\oplus d\mathcal O(C)$, therefore $\mathrm{Sh}_1(\Omega(C))\subset\mathrm{Sub}_\sigma$; this 
proves  \eqref{incl:n} for $n=1$. Let $n\geq 2$. It follows from $\Omega(C)=d\mathcal O(C)+\sigma(\mathrm H_C)$ that  
\begin{equation}\label{incl:1:1040}
\mathrm{Sh}_n(\Omega(C))=[\sigma_*(\mathrm{Sh}_{n-1}(\mathrm H_C))|\Omega(C)]
+\sum_{k=0}^{n-1}[\mathrm{Sh}_{k-1}(\Omega(C))|d\mathcal O(C)|\mathrm{Sh}_{n-k}(\Omega(C))]. 
\end{equation}
One has 
\begin{equation}\label{incl:2:1040}
[\sigma_*(\mathrm{Sh}_n(\mathrm H_C))|\Omega(C)]=\sigma_*(\mathrm{Sh}_{n-1}(\mathrm H_C))\oplus 
[\sigma_*(\mathrm{Sh}_{n-1}(\mathrm H_C))|d\mathcal O(C)]\subset\mathrm{Sub}_\sigma. 
\end{equation}
Moreover for $k\in[\![0,n-1]\!]$, $f\in\mathcal O(C)$ and $\omega_i\in\Omega(C)$, $i\in [\![0,n]\!]\smallsetminus \{k\}$, 
one has 
\begin{align*}
&[\omega_1|\ldots|\omega_{k-1}|df|\omega_{k+1}|\ldots|\omega_n]
=D_{x_0}([\omega_1|\ldots|\omega_{k-1}]\otimes f\otimes [\omega_{k+1}|\ldots|\omega_n])
\\ & +[\omega_1|\ldots|\omega_{k-1}|p^*(f)\omega_{k+1}|\ldots|\omega_n]
-[\omega_1|\ldots|\omega_{k-1}p^*(f)|\omega_{k+1}|\ldots|\omega_n]
\\ & \in \mathrm{im}(D_{x_0})+F_{n-1}\mathrm{Sh}(\Omega(C))
\end{align*}
if $k>0$ and  
\begin{align*}
&[df|\omega_2|\ldots|\omega_n]
=D_{x_0}(1\otimes f\otimes [\omega_2|\ldots|\omega_n])
+[p^*(f)\omega_2|\ldots|\omega_n]
-f(x_0)[\omega_2|\ldots|\omega_n]
\\ & \in \mathrm{im}(D_{x_0})+F_{n-1}\mathrm{Sh}(\Omega(C))
\end{align*}
if $k=0$, which implies 
\begin{equation}\label{incl:3:1040}
[\mathrm{Sh}_{k-1}(\Omega(C))|d\mathcal O(C)|\mathrm{Sh}_{n-k}(\Omega(C))]\subset 
\mathrm{im}(D_{x_0})+F_{n-1}\mathrm{Sh}(\Omega(C)). 
\end{equation}
Then \eqref{incl:1:1040},  \eqref{incl:2:1040} and  \eqref{incl:3:1040} imply \eqref{incl:n}, which in its turn 
can be shown to imply  the statement by induction on $n$.  
\end{proof}

Let $\mathrm{Sh}_+(\Omega(C))$ be the augmentation ideal of $\mathrm{Sh}(\Omega(C))$. A left module structure of 
$\mathrm{Sh}_+(\Omega(C))$ over 
$\mathcal O(C)$ is defined by $f\cdot [\omega_1|\ldots|\omega_k]:=[f\omega_1|\ldots|\omega_k]$ and a right 
module structure of $\mathrm{Sh}(\Omega(C))$ over $\mathcal O(C)$ is defined by 
$[\omega_1|\ldots|\omega_k]\cdot f:=[\omega_1|\ldots|\omega_kf]$ if $k>0$ and $1\cdot f:=f(x_0)1$. Define a map 
$$
D_{x_0} : \mathrm{Sh}(\Omega(C)) \otimes \mathcal O(C)\otimes \mathrm{Sh}_+(\Omega(C))\to \mathrm{Sh}(\Omega(C)),\quad 
s\otimes f\otimes s'\mapsto [s|df|s']-[s|f\cdot s']+[s\cdot f|s'].  
$$
\begin{thm}\label{thm:7:1:0409}
(a) The sequence of maps 
$$
\mathrm{Sh}(\Omega(C)) \otimes \mathcal O(C)\otimes \mathrm{Sh}_+(\Omega(C))\stackrel{D_{x_0}}{\to} \mathrm{Sh}(\Omega(C))
\stackrel{I_{x_0}}{\to}\mathcal O_{hol}(\tilde C)
$$
is an exact complex, so that $\mathrm{Ker}(I_{x_0})=\mathrm{im}(D_{x_0})$. 

(b) There is a direct sum decomposition $\mathrm{Sh}(\Omega(C))=\mathrm{Sub}_\sigma\oplus\mathrm{Ker}(I_{x_0})$. 
\end{thm}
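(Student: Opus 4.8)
The plan is to reduce both statements to two facts already established: the spanning relation $\mathrm{Sh}(\Omega(C))=\mathrm{Sub}_\sigma+\mathrm{im}(D_{x_0})$ of Prop. \ref{prop:2:1117}, and the injectivity of $I_{x_0}$ on $\mathrm{Sub}_\sigma$ of Lem. \ref{lem:1:1117}. The only computation genuinely internal to the theorem is the verification that the displayed two-term sequence is a complex, i.e. that $I_{x_0}\circ D_{x_0}=0$; everything else will be a formal diagram chase using these two inputs.

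First I would prove $\mathrm{im}(D_{x_0})\subseteq\mathrm{Ker}(I_{x_0})$. Since $D_{x_0}(s\otimes f\otimes s')=[s|df|s']-[s|f\cdot s']+[s\cdot f|s']$, it suffices to check the identity $I_{x_0}([s|df|s'])=I_{x_0}([s|f\cdot s'])-I_{x_0}([s\cdot f|s'])$ for $s\in\mathrm{Sh}(\Omega(C))$, $f\in\mathcal O(C)$ and $s'\in\mathrm{Sh}_+(\Omega(C))$. For $s$ of positive degree this is exactly the relation established in the proof of Lem. \ref{lem:1607:0306} underlying \eqref{eq:1701:0306}; for $s=1$ the right $\mathcal O(C)$-action gives $s\cdot f=f(x_0)\,1$, and the required identity becomes the one underlying \eqref{eq:1705:0306}. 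This yields $I_{x_0}\circ D_{x_0}=0$, hence the complex property and the inclusion $\mathrm{im}(D_{x_0})\subseteq\mathrm{Ker}(I_{x_0})$.

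Granting this, both assertions follow formally. From $\mathrm{im}(D_{x_0})\subseteq\mathrm{Ker}(I_{x_0})$ and Lem. \ref{lem:1:1117} one obtains $\mathrm{Sub}_\sigma\cap\mathrm{Ker}(I_{x_0})=0$, while Prop. \ref{prop:2:1117} gives $\mathrm{Sh}(\Omega(C))=\mathrm{Sub}_\sigma+\mathrm{im}(D_{x_0})\subseteq\mathrm{Sub}_\sigma+\mathrm{Ker}(I_{x_0})\subseteq\mathrm{Sh}(\Omega(C))$; combining these proves the direct sum decomposition $\mathrm{Sh}(\Omega(C))=\mathrm{Sub}_\sigma\oplus\mathrm{Ker}(I_{x_0})$ of part (b). For the exactness in part (a), I would take $x\in\mathrm{Ker}(I_{x_0})$ and use Prop. \ref{prop:2:1117} to write $x=y+z$ with $y\in\mathrm{Sub}_\sigma$ and $z\in\mathrm{im}(D_{x_0})$; as $I_{x_0}(z)=0$ by the complex property, one gets $I_{x_0}(y)=0$, so $y\in\mathrm{Sub}_\sigma\cap\mathrm{Ker}(I_{x_0})=0$ and $x=z\in\mathrm{im}(D_{x_0})$. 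Together with the already-proved complex property this yields $\mathrm{Ker}(I_{x_0})=\mathrm{im}(D_{x_0})$.

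The real substance of the argument has already been discharged in Prop. \ref{prop:2:1117} and Lem. \ref{lem:1:1117} (the latter resting on the isomorphism status of $\mathrm{map}_{\sigma,x_0}$ from Lem. \ref{map:sigma:x0:iso}); consequently the only potential subtlety at this stage is the bookkeeping in the complex property, in particular the $s=1$ boundary case, where the right action $1\cdot f=f(x_0)\,1$ and the resulting constant term $f(x_0)$ must be tracked correctly against \eqref{eq:1705:0306}. I expect no serious obstacle beyond this, so the proof should be short once these references are in place.
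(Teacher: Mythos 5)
Your proposal is correct and follows essentially the same route as the paper: verify $I_{x_0}\circ D_{x_0}=0$ via the identities behind \eqref{eq:1701:0306} and \eqref{eq:1705:0306} (including the $s=1$ case with $1\cdot f=f(x_0)1$), then combine the injectivity of $I_{x_0}$ on $\mathrm{Sub}_\sigma$ (Lem. \ref{lem:1:1117}) with the spanning relation of Prop. \ref{prop:2:1117} to get both the direct sum decomposition and the equality $\mathrm{Ker}(I_{x_0})=\mathrm{im}(D_{x_0})$. The only difference is cosmetic ordering of the final diagram chase.
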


\begin{proof} (a) One checks that $I_{x_0}\circ D_{x_0}=0$, therefore $\mathrm{im}(D_{x_0})\subset\mathrm{Ker}(I_{x_0})$. Let us 
prove the opposite inclusion. The subspace $\mathrm{Sub}_\sigma\subset \mathrm{Sh}(\Omega(C))$ (see \eqref{def:Sub:sigma}) is such that
(i) the restriction of~$I_{x_0}$ to $\mathrm{Sub}_\sigma$ is injective (see Lem. \ref{lem:1:1117}), (ii) 
$\mathrm{Sh}(\Omega(C))=\mathrm{Sub}_\sigma+\mathrm{im}(D_{x_0})$ (see Prop. \ref{prop:2:1117}). Then~(i) implies 
$\mathrm{Sub}_\sigma\cap \mathrm{Ker}(I_{x_0})=0$, and therefore $\mathrm{Sub}_\sigma\cap \mathrm{im}(D_{x_0})=0$. Then (ii) implies that 
$\mathrm{Sh}(\Omega(C))=\mathrm{Sub}_\sigma\oplus\mathrm{im}(D_{x_0})$. The restriction of $I_{x_0}$ to the two summands are then respectively 
injective (by Lem. \ref{lem:1:1117}) and zero (by $I_{x_0}\circ D_{x_0}=0$), which implies $\mathrm{im}(D_{x_0})=\mathrm{Ker}(I_{x_0})$.

(b) This follows from combining the already proved equalities $\mathrm{Sh}(\Omega(C))=\mathrm{Sub}_\sigma\oplus\mathrm{im}(D_{x_0})$ and  
$\mathrm{im}(D_{x_0})=\mathrm{Ker}(I_{x_0})$.
\end{proof}

\appendix

\part{Appendices}

\section{Background on Hopf algebras}\label{sect:A:0112}

This section is devoted to constructions on Hopf algebras. In §\ref{sect:A:1:2308}, we define an endofunctor $O\mapsto 
F_\infty O$ of the category of Hopf algebras $\mathbf{HA}$, and in §\ref{sect:A2:0711}, a duality functor $\mathbf{HA} \supset 
\mathbf{HA}_{fd}\to\mathbf{HA}_{comm}$, $H\mapsto H'$. In §\ref{sect:A:3:3108}, we show that a Hopf algebra pairing 
$p : O \otimes H\to\mathbb C$ gives rise, under a finite-dimensionality assumption, 
to a Hopf algebra morphism $\nu(p) : F_\infty O\to H'$ (see Lem. \ref{lem:4:11:0307}).

\subsection{An endofunctor $O\mapsto F_\infty O$ of $\mathbf{HA}$}\label{sect:A:1:2308}

Let $O$ be a Hopf algebra with coproduct~$\Delta_O$. Recall that for 
$n\geq 0$, one defines $F_n  O:=\mathrm{Ker}(pr_O^{\otimes n+1}\circ\Delta_O^{(n+1)})\subset O$, where 
$\Delta_O^{(n)} : O\to O^{\otimes n}$ is the morphism obtained by iteration of 
$\Delta_O$ and $pr_O : O\to O/\mathbb C$ is the canonical projection (see Def. \ref{conilp:filtra}). 

\begin{lem}[see also \cite{Fr}, \S7.2.15] \label{lem:4:2:2106}
(a) For $n\geq 0$, $F_n  O\subset F_{n+1}  O$. 

(b) For $n\geq 0$ and $k\in[\![0,n+1]\!]$, one has $\Delta_O(F_n  O)\subset F_{k-1}  O\otimes O+O \otimes 
F_{n-k}  O$. 

(c) For $n\geq 0$, one has $\Delta_O(F_n  O)\subset \sum_{k=0}^n F_k O\otimes F_{n-k} O$.

(d) For $n,m\geq 0$, one has $(F_n   O)\cdot (F_m   O)\subset F_{n+m}   O$.  
\end{lem}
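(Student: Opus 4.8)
The plan is to phrase everything through the maps $P_m:=pr_O^{\otimes m}\circ\Delta_O^{(m)}\colon O\to(O/\mathbb C1)^{\otimes m}$ (for $m\ge1$), so that $F_n^{\heartsuit}O=\mathrm{Ker}(P_{n+1})$ and $F_{-1}^{\heartsuit}O=\{0\}$. The only structural input is the coassociativity identity
\[
P_{a+b}=(P_a\otimes P_b)\circ\Delta_O\qquad(a,b\ge1),
\]
immediate from $\Delta_O^{(a+b)}=(\Delta_O^{(a)}\otimes\Delta_O^{(b)})\circ\Delta_O$ together with $pr_O^{\otimes(a+b)}=pr_O^{\otimes a}\otimes pr_O^{\otimes b}$. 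I also use the splitting $O=\mathbb C1\oplus\mathrm{Ker}(\epsilon_O)$ with associated projection $\rho:=\mathrm{id}_O-\eta_O\epsilon_O\colon O\to\mathrm{Ker}(\epsilon_O)$, which represents $pr_O$ under the isomorphism $O/\mathbb C1\simeq\mathrm{Ker}(\epsilon_O)$ and satisfies $\epsilon_O\circ\rho=0$ and $\rho\circ\eta_O=0$.

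For (a) I would first establish the factorization $P_{n+1}=\tilde\Delta^{(n+1)}\circ\rho$, where $\tilde\Delta(x):=\Delta_O(x)-x\otimes1-1\otimes x$ is the (coassociative) reduced coproduct on $\mathrm{Ker}(\epsilon_O)$ and $\tilde\Delta^{(m)}$ denotes its iterates. This is an induction on $n$: the base case is $\rho=\mathrm{id}\circ\rho$, and the step uses $P_{n+2}=(P_{n+1}\otimes P_1)\circ\Delta_O$, the hypothesis $P_{n+1}=\tilde\Delta^{(n+1)}\circ\rho$, the identity $(\rho\otimes\rho)\circ\Delta_O=\tilde\Delta\circ\rho$, and $(\tilde\Delta^{(n+1)}\otimes\mathrm{id})\circ\tilde\Delta=\tilde\Delta^{(n+2)}$. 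Granting this, (a) is immediate: since $\tilde\Delta^{(n+2)}=(\tilde\Delta\otimes\mathrm{id}^{\otimes n})\circ\tilde\Delta^{(n+1)}$, any $x$ with $\tilde\Delta^{(n+1)}(\rho x)=0$ also satisfies $\tilde\Delta^{(n+2)}(\rho x)=0$, i.e.\ $\mathrm{Ker}(P_{n+1})\subset\mathrm{Ker}(P_{n+2})$.

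For (b), the cases $1\le k\le n$ fall out of the core identity: from $P_{n+1}=(P_k\otimes P_{n+1-k})\circ\Delta_O$ and the elementary fact $\mathrm{Ker}(f\otimes g)=\mathrm{Ker}(f)\otimes W+V\otimes\mathrm{Ker}(g)$, one gets $\mathrm{Ker}(P_k\otimes P_{n+1-k})=F_{k-1}^{\heartsuit}O\otimes O+O\otimes F_{n-k}^{\heartsuit}O$, so $\Delta_O(x)$ lies there whenever $P_{n+1}(x)=0$. The boundary cases $k=0,n+1$ assert $\Delta_O(F_n^{\heartsuit}O)\subset O\otimes F_n^{\heartsuit}O$ and $\subset F_n^{\heartsuit}O\otimes O$; for these I would write $\mathrm{id}_O=\rho+\eta_O\epsilon_O$ to obtain $(\mathrm{id}_O\otimes P_{n+1})\circ\Delta_O=P_{n+2}+(x\mapsto1\otimes P_{n+1}(x))$, which annihilates $F_n^{\heartsuit}O$ using $P_{n+1}(x)=0$ and—this is where (a) is needed—$P_{n+2}(x)=0$. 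Part (c) then follows by intersecting the inclusions of (b) over all $k$: in a basis of $O$ adapted to $F_\bullet^{\heartsuit}O$ one checks the elementary identity $\bigcap_{k=0}^{n+1}(F_{k-1}^{\heartsuit}O\otimes O+O\otimes F_{n-k}^{\heartsuit}O)=\sum_{k=0}^{n}F_k^{\heartsuit}O\otimes F_{n-k}^{\heartsuit}O$, a basis tensor $e\otimes e'$ belonging to either side exactly when $\deg e+\deg e'\le n$.

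Finally, for (d) I would combine the fact that $\Delta_O^{(r)}$ is an algebra map with the product rule $\rho(ab)=\rho(a)\rho(b)+\epsilon_O(a)\rho(b)+\epsilon_O(b)\rho(a)$. Setting $r=n+m+1$ and expanding $P_r(xy)=\rho^{\otimes r}\bigl(\Delta_O^{(r)}(x)\cdot\Delta_O^{(r)}(y)\bigr)$ slot by slot produces a sum over partitions $\{1,\dots,r\}=A\sqcup B\sqcup C$ (slot types ``keep both'', ``keep $x$, contract $y$'', ``keep $y$, contract $x$''); contracting the resulting counits via the counit axiom collapses the $x$- and $y$-factors to $P_{|A\cup B|}(x)$ and $P_{|A\cup C|}(y)$. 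By (a) these vanish unless $|A\cup B|\le n$ and $|A\cup C|\le m$, which is impossible since $|A\cup B|+|A\cup C|=|A|+r\ge n+m+1$; hence every term vanishes and $P_{n+m+1}(xy)=0$. I expect (d) to be the main obstacle: it is the only part intertwining the product and the coproduct, and the delicate point is to justify that contracting the counit slots of $\Delta_O^{(r)}$ returns the lower iterate $\Delta_O^{(|A\cup B|)}$, so that the $P_\bullet$-factors genuinely appear. Parts (a)–(c), by contrast, are purely coalgebraic and follow formally once the factorization $P_{n+1}=\tilde\Delta^{(n+1)}\circ\rho$ and the tensor-kernel identity are in hand.
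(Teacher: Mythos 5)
Your proposal is correct. Parts (a)--(c) follow essentially the same route as the paper: the paper also identifies $F_n^{\heartsuit}O$ with $\mathrm{Ker}((\mathrm{id}-\eta_O\epsilon_O)^{\otimes n+1}\circ\Delta_O^{(n+1)})$, derives (a) from the factorization of the $(n+2)$-fold map through the $(n+1)$-fold one, gets the middle cases of (b) from $\Delta_O^{(n+1)}=(\Delta_O^{(k)}\otimes\Delta_O^{(n-k+1)})\circ\Delta_O$ plus the tensor-kernel identity, and proves (c) by the same intersection computation in a basis adapted to the filtration. Your factorization $P_{n+1}=\tilde\Delta^{(n+1)}\circ\rho$ through the reduced coproduct is in fact the cleaner way to phrase the mechanism behind (a), and your treatment of the boundary cases $k=0,n+1$ of (b) (splitting $\mathrm{id}_O=\rho+\eta_O\epsilon_O$ and invoking (a)) is a small variant of the paper's (which instead applies $\epsilon_O\otimes\mathrm{id}$ to the $k=1$ inclusion); one minor point to make explicit in (c) is that since $F_\bullet^{\heartsuit}O$ need not be exhaustive, the adapted basis must include vectors of ``degree $\infty$'', which your $k=0$ and $k=n+1$ terms correctly exclude. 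Part (d) is where you genuinely diverge: the paper first uses (c) to place $\Delta_O^{(n+m+1)}(F_n^{\heartsuit}O)$ in a sum of subspaces $O_J^{(n+m+1)}$ having the unit in at least $m+1$ slots, multiplies by the analogous statement for $F_m^{\heartsuit}O$, and concludes by a pigeonhole argument ($J\cap K\neq\emptyset$) that some slot of the product is scalar and hence killed by $pr_O^{\otimes n+m+1}$; you instead expand $\rho^{\otimes r}$ of the product directly via the Leibniz-type rule $\rho(ab)=\rho(a)\rho(b)+\epsilon_O(a)\rho(b)+\epsilon_O(b)\rho(a)$, reduce each term to an interleaving of $P_{|A\cup B|}(x)$ and $P_{|A\cup C|}(y)$ by the counit axiom, and kill it by the count $|A\cup B|+|A\cup C|=|A|+r\geq n+m+1$. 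Both arguments are valid; yours has the structural advantage of not relying on (c) at all (only on (a)), at the cost of the bookkeeping you already flag, namely that the $A$-slots carry products $\rho(x^{(i)})\rho(y^{(i)})$ so the $x$- and $y$-factors are only separated up to a multiplication map --- which is harmless, since vanishing of either tensor factor still kills the term.
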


\begin{proof}
(a) Let $\eta_O,\epsilon_O$ be the unit and counit maps of $O$. One checks that 
$F_n   O=\mathrm{Ker}((id-\eta_O\epsilon_O)^{\otimes n+1}\circ\Delta_O^{(n+1)})$. One has 
$(id-\eta_O\epsilon_O)^{\otimes2}\circ\Delta_O=\Delta_O\circ (id-\eta_O\epsilon_O)$, which implies that 
$(id-\eta_O\epsilon_O)^{\otimes n+2}\circ\Delta_O^{(n+2)}=(\Delta_O\otimes id_O^{\otimes n})
\circ(id-\eta_O\epsilon_O)^{\otimes n+1}\circ\Delta_O^{(n+1)}$ (equality of linear maps $O\to O^{\otimes n+2}$).
Therefore $F_n   O=\mathrm{Ker}((id-\eta_O\epsilon_O)^{\otimes n+1}\circ\Delta_O^{(n+1)})
\subset \mathrm{Ker}((id-\eta_O\epsilon_O)^{\otimes n+2}\circ\Delta_O^{(n+2)})=F_{n+1}   O$. 

(b) Assume that $k\in [\![1,n]\!]$. One has $\Delta_O^{(n+1)}=(\Delta_O^{(k)}\otimes\Delta_O^{(n-k+1)})\circ\Delta_O$, and so $pr_O^{\otimes n+1}\circ\Delta_O^{(n+1)}=((pr_O^{\otimes k}\circ\Delta_O^{(k)})\otimes(pr_O^{\otimes 
n-k+1}\circ\Delta_O^{(n-k+1)}))\circ\Delta_O$. Therefore $\Delta_O(F_n   O)\subset \mathrm{Ker}(
(pr_O^{\otimes k}\circ\Delta_O^{(k)})\otimes(pr_O^{\otimes 
n-k+1}\circ\Delta_O^{(n-k+1)}))=\mathrm{Ker}(pr_O^{\otimes k}\circ\Delta_O^{(k)})\otimes O+O\otimes\mathrm{Ker}
(pr_O^{\otimes n-k+1}\circ\Delta_O^{(n-k+1)})=F_{k-1}   O\otimes O+O \otimes F_{n-k}   O$.

Assume that $k=0$. It follows from the statement with $k=1$ that $\Delta_O(F_n   O)\subset
\mathbb C\otimes O+O\otimes F_{n-1}   O$. It follows that for $f\in F_n   O$,  there exists $a\in O$ with 
$\Delta_O(f)\in 1\otimes a+O\otimes F_{n-1}   O$. Applying $\epsilon_O\otimes id$, one derives $f\in a+F_{n-1}   O$,
hence 
$a\in f+F_{n-1}   O\subset F_n   O$. Therefore $\Delta_O(F_n   O)\subset \mathbb C\otimes 
F_n   O+O\otimes F_{n-1}   O$. It follows that 
$\Delta_O(F_n   O)\subset O\otimes F_n   O$. 

For $k=n+1$, the proof of the statement $\Delta_O(F_n   O)\subset F_n   O\otimes O$ is similar, 
based on the statement for $k=n$. 

(c) It follows from the statements $\Delta_O(F_n   O)\subset O\otimes F_n   O$ and $\Delta_O(F_n   O)
\subset F_n   O\otimes O$ ((b) for $k=0,n$) that $\Delta_O(F_n   O)\subset F_n   O\otimes F_n   O$. 
Together with the statement (b) for $k\in[\![1,n]\!]$, this
implies that $\Delta_O(F_n   O)\subset F_{k-1}   O\otimes F_n   O+F_n   O\otimes F_{n-k}   O$ 
for any $k\in[\![1,n]\!]$. The statement then follows from 
 \begin{equation}\label{1450:2006}
\bigcap_{k=1}^n (F_{k-1}   O\otimes F_n   O+F_n   O\otimes F_{n-k}   O)=\sum_{k=0}^n 
F_k   O\otimes F_{n-k}   O,  
\end{equation}
which we now prove. For $i\in[\![0,n]\!]$, let $A_i$ be a complement of $F_i   O$ in $F_{i-1}   O$. One has 
then $F_n   O=\oplus_{i=0}^n A_i$. 
Then $F_{k-1}   O\otimes F_n   O+F_n   O\otimes F_{n-k}   O=\oplus_{(i,j)\in[\![0,n]\!]^2|
i\leq k-1\text{ or }j\leq n-k}A_i\otimes A_j$.
It follows that $\cap_{k=1}^n (F_{k-1}   O\otimes F_n   O+F_n   O\otimes F_{n-k}   O)
=\sum_{(i,j)\in S}A_i\otimes A_j$, where $S:=\{(i,j)\in[\![0,n]\!]^2\,|\,\forall\, k\in[\![1,n]\!],\text{ one 
has }i\leq k-1\text{ or }j\leq n-k\}$. One checks that 
$S=\{(i,j)\in[\![0,n]\!]^2\,|\,i+j\leq n\}$, therefore the left-hand side of \eqref{1450:2006} is equal to 
$\oplus_{(i,j)\in[\![0,n]\!]^2|i+j\leq n}A_i\otimes A_j\sum_{k=0}^n F_k   O\otimes F_{n-k}   O$, proving \eqref{1450:2006}. 

(d) It follows from (c) that 
\begin{equation}\label{equality:Deltas}
\Delta^{(n+m+1)}(F_n   O)\subset \sum_{\substack{(i_1,\ldots,i_{n+m+1})\in\mathbb Z_{\geq 0}^{n+m+1}|\\
i_1+\ldots+i_{n+m+1}=n}}
F_{i_1}   O\otimes\cdots\otimes F_{i_{n+m+1}}   O. 
\end{equation}
For $a\geq 1$ and $L\subset[\![1,a]\!]$, define $\varphi_L : [\![1,a]\!]\to\{0,1\}$
by $\varphi_L(x)=0$ if $x\in L$ and $\varphi_L(x)=1$ otherwise. Set then 
\begin{equation}\label{def;later:2106}
 O_L^{(a)}:=\otimes_{i=1}^{a}
F^{unit}_{\varphi_L(i)}O\subset O^{\otimes a},    
\end{equation}
where we recall $F^{unit}_0O=\mathbb C1$, $F^{unit}_1O=O$. 

Then for any $(i_1,\ldots,i_{n+m+1})\in\mathbb Z_{\geq 0}^{n+m+1}$, one has 
$F_{i_1}   O\otimes\cdots\otimes F_{i_{n+m+1}}   O\subset O_{\{j|i_j=0\}}^{(n+m+1)}$.
This and \eqref{equality:Deltas}, together with the fact that $|\{j\,|\,i_j=0\}|\geq m+1$ if
$(i_1,\ldots,i_{n+m+1})\in\mathbb Z_{\geq 0}^{n+m+1}$ is such that $i_1+\ldots+i_{n+m+1}\geq m+1$, imply that 
\begin{equation}\label{1521:2006}
\Delta^{(n+m+1)}(F_n   O)\subset \sum_{\substack{J\subset [\![1,n+m+1]\!]|\\|J|\geq m+1}}O_J^{(n+m+1)}. 
\end{equation}
Then 
\begin{align*}
&\Delta^{(n+m+1)}((F_n   O)\cdot(F_m   O))\subset \Delta^{(n+m+1)}(F_n   O)\cdot 
\Delta^{(n+m+1)}(F_m   O)
\\ & \subset \sum_{\substack{J,K\subset [\![1,n+m+1]\!]|\\|J|\geq m+1,|K|\geq n+1}}O_J^{(n+m+1)}\cdot O_K^{(n+m+1)}
\subset \sum_{\substack{J,K\subset [\![1,n+m+1]\!]|\\|J|\geq m+1,|K|\geq n+1}}O_{J\cap K}^{(n+m+1)}
\\ & \subset \sum_{\substack{L\subset [\![1,n+m+1]\!]|\\L\neq\emptyset}}O_{L}^{(n+m+1)}\subset \mathrm{Ker}(pr_O^{\otimes n+m+1})
\end{align*}
where the first inclusion follows from the fact that $\Delta^{(n+m+1)}$ is an algebra morphism, the second inclusion follows from 
\eqref{1521:2006}, the third inclusion follows from $O_J^{(n+m+1)}\cdot O_K^{(n+m+1)}\subset O_{J\cap K}^{(n+m+1)}$ for 
$J,K\subset [\![1,n+m+1]\!]$, the fourth inclusion follows from 
the fact that if $J,K\subset [\![1,n+m+1]\!]$ are such that $|J|\geq m+1,|K|\geq n+1$, then $J\cap K\neq\emptyset$, the last inclusion 
follows from the vanishing of the restriction of $pr_O^{\otimes n+m+1}$ to any $O_{L}$ where $\emptyset \neq L\subset [\![1,n+m+1]\!]$. 
The resulting inclusion $\Delta^{(n+m+1)}((F_n   O)\cdot(F_m   O))\subset\mathrm{Ker}(pr_O^{\otimes n+m+1})$ implies the 
statement. 
\end{proof}

\begin{prop}\label{prop:LA:filt}
(a) $F_\bullet O$ defines a Hopf algebra filtration of $O$.

(b) $F_\infty O$ is a Hopf subalgebra of $O$. The assignment $O\mapsto F_\infty O$ is an endofunctor of the category $\mathbf{HA}$
of Hopf algebras. 

(c) If $f:O_1\to O_2$ is a morphism in $\mathbf{HA}$, then $f$ is compatible with the filtrations $F_\bullet $ on both sides. 

(d) If $O$ is a $\mathbb Z_{\geq0}$-graded connected Hopf algebras, then $O=F_\infty O$. 
\end{prop}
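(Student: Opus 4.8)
The three assertions follow from Lemma \ref{lem:4:2:2106} together with one extra observation on the behaviour of the augmentation projection under Hopf algebra maps and under the antipode. Throughout I would use the reformulation obtained in the proof of Lemma \ref{lem:4:2:2106}(a), that $F_n^\heartsuit O=\mathrm{Ker}(q_O^{\otimes n+1}\circ\Delta_O^{(n+1)})$, where $q_O:=\mathrm{id}_O-\eta_O\epsilon_O$ is the augmentation projection (an endomorphism of $O$, which avoids the quotient $O/\mathbb C$). Statement (a) is then immediate: by the definition recalled in §\ref{background:filtrations}, a Hopf algebra filtration is an increasing vector space filtration which is at the same time an algebra filtration and compatible with the coproduct in the sense $\Delta_O(F_nO)\subset\sum_{p+q=n}F_pO\otimes F_qO$; these three requirements are exactly Lemma \ref{lem:4:2:2106}(a), (d) and (c), and since $F_0^\heartsuit O=\mathbb C1$ contains the unit, (a) follows with no further work.

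Next I would prove (c), as it also underlies the functoriality in (b). The key point is the intertwining relation $q_{O_2}\circ f=f\circ q_{O_1}$, valid because $f$ respects units and counits: from $f\circ\eta_{O_1}=\eta_{O_2}$ and $\epsilon_{O_2}\circ f=\epsilon_{O_1}$ one gets $q_{O_2}\circ f=f-\eta_{O_2}\epsilon_{O_1}=f\circ q_{O_1}$. Since $f$ is a coalgebra morphism, $\Delta_{O_2}^{(n+1)}\circ f=f^{\otimes n+1}\circ\Delta_{O_1}^{(n+1)}$; composing these two facts yields $q_{O_2}^{\otimes n+1}\circ\Delta_{O_2}^{(n+1)}\circ f=f^{\otimes n+1}\circ q_{O_1}^{\otimes n+1}\circ\Delta_{O_1}^{(n+1)}$, so $f$ carries the kernel $F_n^\heartsuit O_1$ of the right-hand composition into the kernel $F_n^\heartsuit O_2$ of the left-hand one, which is (c).

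Finally, for (b), stability of $O^\heartsuit=F_\infty^\heartsuit O$ under multiplication and comultiplication comes by passing to the union in Lemma \ref{lem:4:2:2106}(d) and (c). The remaining, and I expect main, point is stability under the antipode $S$, the only input not already packaged in Lemma \ref{lem:4:2:2106}. Here I would use that $q_O\circ S=S\circ q_O$ (from $\epsilon_O\circ S=\epsilon_O$ and $S\circ\eta_O=\eta_O$) and that $S$, being a coalgebra anti-endomorphism, satisfies $\Delta_O^{(n+1)}\circ S=S^{\otimes n+1}\circ\tau\circ\Delta_O^{(n+1)}$, where $\tau$ reverses the $n+1$ tensor factors. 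Since $q_O^{\otimes n+1}$ commutes with $\tau$, this gives $q_O^{\otimes n+1}\circ\Delta_O^{(n+1)}\circ S=S^{\otimes n+1}\circ\tau\circ q_O^{\otimes n+1}\circ\Delta_O^{(n+1)}$, whence $S(F_n^\heartsuit O)\subset F_n^\heartsuit O$ and $O^\heartsuit$ is a Hopf subalgebra of $O$. The functoriality of $O\mapsto O^\heartsuit$ is then immediate from (c): each morphism $f$ corestricts to a Hopf algebra morphism $f^\heartsuit:O_1^\heartsuit\to O_2^\heartsuit$, and preservation of identities and composites is clear, so $O\mapsto O^\heartsuit$ is an endofunctor of $\mathbf{HA}$.
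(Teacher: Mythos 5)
Your proof is correct and follows essentially the same route as the paper: (a) is read off from Lemma \ref{lem:4:2:2106}, (c) from the intertwining identity $f^{\otimes n+1}\circ(\mathrm{id}-\eta_{O_1}\epsilon_{O_1})^{\otimes n+1}\circ\Delta_{O_1}^{(n+1)}=(\mathrm{id}-\eta_{O_2}\epsilon_{O_2})^{\otimes n+1}\circ\Delta_{O_2}^{(n+1)}\circ f$, and (b) by passing to the union. Your explicit check that the antipode preserves $F_n^\heartsuit O$ (via $q_O\circ S=S\circ q_O$ and $\Delta_O^{(n+1)}\circ S=S^{\otimes n+1}\circ\tau\circ\Delta_O^{(n+1)}$) is a correct and welcome verification of a point the paper leaves implicit in its statement that the total space of a Hopf algebra filtration is a Hopf subalgebra.
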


\begin{proof}
(a) follows from Lem.  \ref{lem:4:2:2106}. (b) follows from the fact that $F_\infty O$ is the total space of $F_\bullet    O$ 
and from (a). The functoriality statement is obvious. (c) follows from 
$f^{\otimes n+1}\circ (id-\eta_{O_1}\epsilon_{O_1})^{\otimes n+1}\circ\Delta_{O_1}^{(n+1)}
=(id-\eta_{O_2}\epsilon_{O_2})^{\otimes n+1}\circ\Delta_{O_2}^{(n+1)}\circ f$ for any $n\geq0$. 
(d) If $n\geq 0$, then $\Delta_0^{(n+1)}(O[n])$ is contained in the sum of $O[k_1] \otimes\cdot \otimes O[k_{n+1}]$, 
where $(k_1,\ldots,k_{n+1})$ is such that $k_1+\ldots+k_{n+1}=n$. If $(k_1,\ldots,k_{n+1})$ is such a tuple, 
then there exists $i\in[\![1,n+1]\!]$ such that $k_i=0$, which by the connectedness of $O$ implies that the 
corresponding summand is contained in $O^{\otimes i-1} \otimes \mathbb C \otimes O^{\otimes n-i}$. 
Therefore $pr_O^{\otimes n+1} \circ \Delta_O^{(n+1)}(O[n])=0$, hence $O[n] \subset F_nO$. Therefore $O=F_\infty O$.   
\end{proof}
 
\subsection{A duality functor $\mathbf{HA}_{fd}\to \mathbf{HA}^{op}$, $H\mapsto H'$}\label{sect:A2:0711}

Let $H$ be a Hopf algebra with coproduct $\Delta_H$. Recall that $H_+$ is the augmentation ideal of $H$, and by 
$H_+^n$ the $n$-th power of this ideam. Set $F^nH:=H$ for $n=0$, $F^nH:=H_+^n$ for $n\geq 1$. %Set also $F_nH:=F^{-n}H$. 

\begin{lem}[see also \cite{Fr}, §8.1.1] \label{lem:elections:legislatives} % In (a)-(c), $H$ is an object in $\mathbf{HA}_{coco}$. 
 For $n,m\geq 0$, one has %$(F^nH)_{n\in \mathbb Z}$ is a decreasing Hopf algebra filtration of $H$, i.e. 
$F^nH\cdot F^mH\subset F^{n+m}H$ and 
$\Delta_{H}(F^nH)\subset\sum_{n'+n''=n}F^{n'}H\otimes F^{n''}H$. 
\end{lem}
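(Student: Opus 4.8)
The plan is to treat the two inclusions separately, the first being essentially formal and the second resting on the behaviour of the reduced coproduct on the augmentation ideal.

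For the multiplicativity statement $F^nH\cdot F^mH\subset F^{n+m}H$, I would argue directly from the definitions. When both $n,m\geq 1$ this is just the identity $J_H^n\cdot J_H^m=J_H^{n+m}$ for powers of the two-sided ideal $J_H$; the boundary cases where $n=0$ or $m=0$ reduce, using $F^0H=H$, to the fact that $J_H^m$ is stable under left and right multiplication by $H$, i.e. $H\cdot J_H^m\subset J_H^m$ and $J_H^m\cdot H\subset J_H^m$. This disposes of the first claim with no real work.

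For the coproduct, the key preliminary step is to record the reduced coproduct on $J_H$. Writing $\eta_H,\epsilon_H$ for the unit and counit of $H$, I would introduce the projection $\pi:=\mathrm{id}_H-\eta_H\epsilon_H$, whose image is $J_H$ and whose kernel is $\mathbb C1$. Using the counit axioms $(\epsilon_H\otimes\mathrm{id})\circ\Delta_H=\mathrm{id}=(\mathrm{id}\otimes\epsilon_H)\circ\Delta_H$, a short computation gives, for every $x\in H$,
\begin{equation*}
\Delta_H(x)=(\pi\otimes\pi)(\Delta_H(x))+x\otimes 1+1\otimes x-\epsilon_H(x)\,1\otimes 1.
\end{equation*}
Specializing to $x\in J_H=F^1H$ (so $\epsilon_H(x)=0$) yields $\Delta_H(x)=\tilde\Delta(x)+x\otimes 1+1\otimes x$ with $\tilde\Delta(x):=(\pi\otimes\pi)(\Delta_H(x))\in J_H\otimes J_H$. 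Since $x\otimes 1\in F^1H\otimes F^0H$, $1\otimes x\in F^0H\otimes F^1H$, and $\tilde\Delta(x)\in F^1H\otimes F^1H\subset F^1H\otimes F^0H$, this establishes the desired inclusion in the case $n=1$.

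The general case I would obtain by bootstrapping on this base case together with the first part of the lemma. The case $n=0$ is trivial, since $\Delta_H(H)\subset H\otimes H=F^0H\otimes F^0H$. For $n\geq 1$, the space $F^nH=J_H^n$ is linearly spanned by products $x_1\cdots x_n$ with $x_i\in J_H$, and since $\Delta_H$ is an algebra morphism one has $\Delta_H(x_1\cdots x_n)=\prod_{i=1}^n\Delta_H(x_i)$. Expanding each factor via the $n=1$ case as a sum of terms in $\sum_{a_i+b_i=1}F^{a_i}H\otimes F^{b_i}H$ and multiplying out, a typical summand lies in $\bigl(\prod_i F^{a_i}H\bigr)\otimes\bigl(\prod_i F^{b_i}H\bigr)$, which by the already-proved multiplicativity is contained in $F^{\sum_i a_i}H\otimes F^{\sum_i b_i}H$ with $\sum_i a_i+\sum_i b_i=n$. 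Hence every summand lands in $\sum_{n'+n''=n}F^{n'}H\otimes F^{n''}H$, proving the claim. The steps are all routine; the only point requiring care is the bookkeeping in this last expansion and the systematic use of the degenerate index $F^0H=H$ so that the boundary terms $x\otimes 1$ and $1\otimes x$ are absorbed correctly. I expect the main (minor) obstacle to be verifying the displayed reduced-coproduct identity cleanly from the counit axioms; everything downstream is formal and, notably, requires no finiteness hypothesis on $H$.
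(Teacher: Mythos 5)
Your proof is correct and follows essentially the same route as the paper: the multiplicativity is read off from the definition of the ideal powers, and the coproduct inclusion is obtained from the counit axioms applied to $J_H$ (your reduced-coproduct identity is just a sharper form of the paper's $\Delta_H(J_H)\subset H\otimes J_H+J_H\otimes H$) and then propagated to $J_H^n$ using that $\Delta_H$ is an algebra morphism together with the first inclusion. The extra detail you supply is exactly what the paper leaves implicit.
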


\begin{proof} The decreasing character of $(F^nH)_{n\in\mathbb Z}$ is obvious. The inclusion 
$F^nH\cdot F^mH\subset F^{n+m}H$ follows from definitions. 
The last statement follows from $\Delta_H(H_+)\subset H\otimes H_++H_+\otimes H$, which is itself a consequence of the compatibility 
of $\Delta_{H}$ with the augmentation of $H$.  
\end{proof}

The coalgebra structure of $H$ induces an algebra structure on $H^*$. For $n\geq 0$, set $F_nH^*:=(F^{n+1}H)^\perp$. 

\begin{lemdef}\label{lem:def:H':3010}
$F_\bullet H^*$ is an algebra filtration of $H^*$, and $H':=\cup_{n\geq 0}F_nH^*$ is a subalgebra of~$H^*$. 
\end{lemdef}

\begin{proof}
Let $n,m\geq 0$, $\alpha\in (F^{n+1}H)^\perp$, $\beta\in (F^{m+1}H)^\perp$, then if $h\in F^{n+m+1}H$, one has 
$\Delta_H(h)\in F^nH\otimes H+H\otimes F^mH$ by the second statement of \ref{lem:elections:legislatives}, therefore 
$(\alpha\cdot\beta)(h)=(\alpha\otimes\beta)(\Delta_H(h))=0$, therefore $\alpha\cdot\beta\in (F^{n+m+1}H)^\perp$. 
This proves the first statement. The second statement follows from the first, as $H'$ is the total subspace of an 
algebra filtration. 
\end{proof}

\begin{defn}
$\mathbf{HA}_{fd}$ is the full subcategory of $\mathbf{HA}$ of Hopf algebras $H$ such that 
$\mathrm{gr}^1H:=F^1H/F^2H$ is finite dimensional. 
\end{defn}

\begin{lem}\label{lem:4:8:jeu}
If $H$ is an object of $\mathbf{HA}_{fd}$, then $H'$ is equipped with a linear map $\Delta_{H'} : H'\to H'\otimes H'$, uniquely determined 
by the identity $\Delta_{H'}(\alpha)(h\otimes h')=\alpha(hh')$ for $\alpha\in H'$ and $h,h'\in H$. Then $(H',\Delta_{H'})$ is a Hopf algebra. 
The assignment $H\mapsto H'$ is a functor $\mathbf{HA}_{fd}\to \mathbf{HA}^{op}$.  
\end{lem}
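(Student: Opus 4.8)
The plan is to produce $\Delta_{H'}$ as the restriction to $H'$ of the transpose $m_H^* : H^*\to(H\otimes H)^*$ of the product $m_H$ of $H$, and then to deduce every Hopf-algebra axiom for $(H',\Delta_{H'})$ from the corresponding axiom for $H$ by transposition. The only substantial point is that $m_H^*$ carries $H'$ into the subspace $H'\otimes H'\subset(H\otimes H)^*$, and this is precisely where the hypothesis $H\in\mathbf{HA}_{fd}$ enters.

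First I would record the relevant finite-dimensionality. Since $J_H$ is a two-sided ideal, multiplication induces for each $k\geq 0$ a surjection $(J_H/J_H^2)^{\otimes k}\twoheadrightarrow J_H^k/J_H^{k+1}$ (the class of $x_1\cdots x_k$ modulo $J_H^{k+1}$ depends only on the classes of the $x_i$ modulo $J_H^2$, as replacing an $x_i$ by an element of $J_H^2$ alters the product by an element of $J_H^{k+1}$). As $\mathrm{gr}^1H=J_H/J_H^2$ is finite-dimensional by the definition of $\mathbf{HA}_{fd}$, each $J_H^k/J_H^{k+1}$ is finite-dimensional, hence so is $H/J_H^{n+1}$; therefore $F_nH^*=(J_H^{n+1})^\perp=(H/J_H^{n+1})^*$ is finite-dimensional. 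The canonical map $(H')^{\otimes k}\hookrightarrow(H^{\otimes k})^*$ is injective, being the inclusion $(H')^{\otimes k}\subset(H^*)^{\otimes k}$ followed by the always-injective map $(H^*)^{\otimes k}\to(H^{\otimes k})^*$.

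Next comes the construction of $\Delta_{H'}$. For $\alpha\in F_nH^*$ the functional $\alpha\circ m_H$ vanishes on $J_H^{n+1}\otimes H+H\otimes J_H^{n+1}$: indeed, $J_H^{n+1}$ being an ideal, $hh'\in J_H^{n+1}$ as soon as $h$ or $h'$ lies in $J_H^{n+1}$. Thus $\alpha\circ m_H$ factors through $(H/J_H^{n+1})^{\otimes 2}$, and the canonical isomorphism $\big((H/J_H^{n+1})^{\otimes 2}\big)^*\simeq F_nH^*\otimes F_nH^*$ (valid since $H/J_H^{n+1}$ is finite-dimensional) exhibits it as an element of $F_nH^*\otimes F_nH^*\subset H'\otimes H'$. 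Setting $\Delta_{H'}(\alpha):=\alpha\circ m_H$ gives the desired map; the defining identity holds by construction, and uniqueness follows from the injectivity of $H'\otimes H'\hookrightarrow(H\otimes H)^*$.

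I would then verify the axioms by transposition, using the injectivity of $(H')^{\otimes k}\hookrightarrow(H^{\otimes k})^*$ to reduce each identity to its evaluation on pure tensors. Coassociativity transposes the associativity of $m_H$ (both sides send $h\otimes h'\otimes h''$ to $\alpha(hh'h'')$); the counit $\epsilon_{H'}(\alpha):=\alpha(1_H)$ transposes the unit $\eta_H$; the compatibility $\Delta_{H'}(\alpha\beta)=\Delta_{H'}(\alpha)\Delta_{H'}(\beta)$ transposes the multiplicativity of $\Delta_H$ (a short Sweedler computation shows both sides evaluate to $\sum\alpha(h_{(1)}h'_{(1)})\beta(h_{(2)}h'_{(2)})$ on $h\otimes h'$). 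For the antipode I set $S_{H'}:=S_H^*|_{H'}$: from $\epsilon_H\circ S_H=\epsilon_H$ one gets $S_H(J_H)\subset J_H$, and since $S_H$ is an algebra anti-endomorphism $S_H(J_H^{n+1})\subset J_H^{n+1}$, so $S_H^*$ preserves each $F_nH^*$, hence $H'$, and the antipode identity for $H'$ transposes that for $H$. Finally, for functoriality a morphism $f:H_1\to H_2$ satisfies $\epsilon_{H_2}\circ f=\epsilon_{H_1}$, so $f(J_{H_1}^{n+1})\subset J_{H_2}^{n+1}$ and hence $f^*(F_nH_2^*)\subset F_nH_1^*$; thus $f':=f^*|_{H_2'}$ maps $H_2'\to H_1'$ and, being a transpose, is simultaneously an algebra and a coalgebra morphism compatible with antipodes, i.e.\ a morphism in $\mathbf{HA}$. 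With $\mathrm{id}'=\mathrm{id}$ and $(g\circ f)'=f'\circ g'$ immediate, this yields the functor $\mathbf{HA}_{fd}\to\mathbf{HA}^{op}$. The main obstacle is the single containment $m_H^*(H')\subset H'\otimes H'$ from the third step, for which the $\mathbf{HA}_{fd}$ hypothesis is indispensable; everything else is formal transposition.
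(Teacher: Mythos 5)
Your proof is correct and follows essentially the same route as the paper: both hinge on the finite-dimensionality of $H/J_H^{n+1}$ (deduced from $\mathrm{gr}^1H$ generating $\mathrm{gr}^\bullet H$) and on $J_H^{n+1}$ being an ideal, so that the dual of the algebra $H/J_H^{n+1}$ yields the coproduct on $F_nH^*$. Your formulation via the single map $m_H^*$ makes the compatibility across $n$ automatic rather than checked by hand, and you spell out the axiom verifications (including the antipode) that the paper leaves as ``one checks''; these are minor presentational differences only.
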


\begin{proof}
It follows from Lem. \ref{lem:elections:legislatives} that $F^\bullet H$ is a decreasing algebra filtration of $H$. The associated graded
algebra $\mathrm{gr}^\bullet H$ is such that $\mathrm{gr}^nH=F^nH/F^{n+1}H$ for any $n\geq 0$. Then $\mathrm{gr}^0H=\mathbb C$, 
and $\mathrm{gr}^\bullet H$ is generated by $\mathrm{gr}^1H$. As this space is finite dimensional, so is 
$\mathrm{gr}^kH$ for any $k\geq 0$. It follows that for any $n\geq 0$, $\oplus_{i=0}^{n-1}\mathrm{gr}^iH$ is finite dimensional. 
As this space is non-canonically isomorphic to the quotient space $H/F^nH$, this quotient is finite dimensional as well. 

It also follows from Lem. \ref{lem:elections:legislatives} that for any $n\geq 0$, $F^nH$ is a two-sided ideal of $H$, therefore
$H/F^nH$ is an algebra, and $H/F^{n+1}H\to H/F^nH$ is an algebra morphism. 

Since $H/F^nH$ is finite-dimensional, its associative algebra structure gives rise to a coassociative coalgebra structure
on its dual $(H/F^nH)^*=F_{n-1}H^*$ (with $F_{-1}H^*:=0$). It follows from the algebra morphism status of $H/F^{n+1}H\to H/F^nH$
that the canonical inclusion $i_{n,n+1} : F_{n-1}H^*\subset F_nH^*$ is a coalgebra morphism. 

For $n\geq 0$, define then an algebra morphism $\Delta_{H',n} : F_nH^*\to (H')^{\otimes 2}$ to be the composition 
$i_n^{\otimes2}\circ \Delta_{F_nH^*}$, where 
$\Delta_{F_nH^*}$ is the coproduct of the coalgebra structure of $F_nH^*$ and $i_n : F_nH^*\to H'$ is the canonical inclusion.  
One has 
\begin{equation}\label{compat:n:n+1}
 \Delta_{H',n+1}\circ i_{n,n+1}=\Delta_{H',n}.    
\end{equation}
Indeed, $\Delta_{H',n+1}\circ i_{n,n+1}=i_{n+1}^{\otimes2}\circ \Delta_{F_{n+1}H^*}\circ i_{n,n+1}=
i_{n+1}^{\otimes2}\circ i_{n,n+1}^{\otimes2}\circ \Delta_{F_nH^*}=
i_n^{\otimes2}\circ \Delta_{F_nH^*}=\Delta_{H',n}$, where the first and last equalities follow from the definitions of 
$\Delta_{H',n+1}$ and $\Delta_{H',n+1}$, the second equality follows from the coalgebra morphism status of $i_{n,n+1}$, and 
the third equality follows from $i_{n+1}\circ i_{n,n+1}=i_n$. It follows from \eqref{compat:n:n+1} and from 
$H'=\cup_{n\geq 0}F_nH^*$ that there is a unique linear map $\Delta_{H'} : H'\to 
(H')^{\otimes2}$, such that $\Delta_{H'}\circ i_n=\Delta_{H',n}$ for any $n\geq 0$. The identities relating 
$\Delta_{H',n}$ with the product implies that it satisfies the announced identity. The uniqueness statement 
follows from the fact that the annihilator of $H\otimes H$ in $F_nH^*\otimes F_mH^*$ is zero for any $n,m\geq 0$.
One checks that $\Delta_{H'}$ satisfies the Hopf algebra axioms as well as the functoriality statement. 
\end{proof}

If $H$ is an object of $\mathbf{HA}_{fd}$, it follows from Lem. \ref{lem:4:8:jeu} and Lem. \ref{prop:LA:filt} that $H'$ is equipped 
with a Hopf algebra filtration $F_\bullet  H'$; it is also equipped with the vector space filtration $F_\bullet H^*$
used to define it, given by $F_nH^*=(F^{n+1}H)^\perp$ for any $n\geq 0$. One has: 

\begin{lem}\label{IDFILT}
If $H$ is an object of $\mathbf{HA}_{fd}$, then $F_\bullet  H'=F_\bullet H^*$. 
One has $F_\infty H'=H'$.  
\end{lem}

\begin{proof}
Let $n \geq 0$ and let us show that $F_n  H'=F_nH^*$. Recall that $F_n  H'
=\mathrm{Ker}((id-\eta_{H'}\epsilon_{H'})^{\otimes n+1} \circ 
\Delta_{H'}^{(n+1)})$, where $\Delta_{H'},\eta_{H'}$ and $\epsilon_{H'}$ are the coproduct, unit and counit maps of $H'$. Recall that 
$H'=\cup_{m \geq 0}F_mH^*\subset H^*$ and that for each $m \geq 0$, $F_mH^*$ is a sub-coalgebra of $H'$; denote by $\Delta_{F_mH^*} : 
F_mH^*\to (F_mH^*)^{\otimes 2}$ the corresponding coproduct. 
Let $\epsilon_{F_mH^*} : F_mH^*\to\mathbb C$ be the composition of $\epsilon_{H'}$ with the inclusion $F_mH^* \subset H'$. 
The unit of $H'$ corresponds to the counit map of $H$, which as it vanishes on $F^{m+1}H$ defines an element in $(H/F^{m+1}H)^*=F_mH^*$; 
let $\eta_{F_mH^*}$ be the corresponding map $\mathbb C\to F_mH^*$. 
Then 
$$
F_n  H'\cap F_mH^*=\mathrm{Ker}\Big(F_mH^*\stackrel{\Delta_{F_mH^*}^{(n+1)}}{\longrightarrow}(F_mH^*)^{\otimes n+1}
\stackrel{(id-\eta_{F_mH^*}\epsilon_{F_mH^*})^{\otimes n+1}}{\longrightarrow}(F_mH^*)^{\otimes n+1}\Big)
$$ 
(a subspace of $F_mH^*$). Using that if $f:E\to F$ is a linear map, then 
$\mathrm{Ker}(f^* : F^*\to E^*)=\mathrm{im}(f)^\perp$, that the duals of 
$\Delta_{F_mH^*} : F_mH^*\to(F_mH^*)^{\otimes2}$, $\eta_{F_mH^*} : \mathbb C\to F_mH^*$ and $\epsilon_{F_mH^*} : F_mH^*\to\mathbb C$ 
are respectively the product map $m_{H/F^{m+1}H} : (H/F^{m+1}H)^{\otimes 2}\to H/F^{m+1}H$, the map $\epsilon_{H/F^m+1} : H/F^{m+1}\to\mathbb C$ 
induced by the counit of $H$, and the map $\eta_{H/F^{m+1}H} : \mathbb C\to H/F^{m+1}H$ induced the unit
of $H$, we see that 
$F_n  H' \cap F_mH^*$ is the annihilator (in $(H/F^{m+1}H)^*$) of the image of the map 
$$
(H/F^{m+1}H)^{\otimes n+1}\stackrel{(id-\eta_{H/F^{m+1}H} \epsilon_{H/F^{m+1}H})^{\otimes n+1}}{\longrightarrow}
(H/F^{m+1})^{\otimes 
n+1}\stackrel{m_{H/F^{m+1}H}^{(n+1)}}{\longrightarrow}
H/F^{m+1}H.
$$
Since the image of $id-\eta_H \epsilon_H : H\to H$ is $F^1H$, and since the image of $(F^1H)^{\otimes n+1}$ by 
$m_H^{(n+1)} : H^{\otimes n+1}\to H$ is $F^{n+1}H$, the said image is $(F^{n+1}H+F^{m+1}H)/F^{m+1}H$. Therefore the subspace 
$F_n  H' \cap F_mH^*$ of $(H/F^{m+1}H)^*$ is the annihilator of $(F^{n+1}H+F^{m+1}H)/F^{m+1}H$. If $m \geq n$, 
this subspace is the 
annihilator of $F^{n+1}H/F^{m+1}H$, which is the kernel of the canonical map $(H/F^{m+1}H)^*\to (F^{n+1}H/F^{m+1}H)^*$, which is the 
image of the injection $F_nH^* \hookrightarrow F_mH^*$. Therefore  $F_n  H'=F_nH^*$. 

One then has $F_\infty H'=\cup_{n \geq 0}(F_n H)^\perp=H'$, where the first equality follows from the 
previous statement and the second equality follows from the definition of $H'$. 
\end{proof}

\subsection{Hopf algebra pairings and Hopf algebra morphisms}\label{sect:A:3:3108}

Recall that if $O,H$ are Hopf algebras with coproducts $\Delta_O,\Delta_H$ and counits  $\epsilon_O,\epsilon_H$, 
then a Hopf algebra pairing between $O$ and $H$ is a linear map $p : 
O\otimes H\to\mathbb C$, such that $p(oo'\otimes h)=(p\otimes p)\circ\tau_2 (o\otimes o'\otimes \Delta_H(h))$, 
$p(o\otimes h')=(p\otimes p)\circ\tau_2(\Delta_O(o)\otimes h\otimes h')$ and $p(1\otimes h)=\epsilon_H(h)$, $p(o\otimes 1)=\epsilon_O(o)$
for $o,o'\in O$ and $h,h'\in H$ (for any $n\geq 2$, $\tau_n$ is the canonical map $O^{\otimes n}\otimes H^{\otimes n}
\to(O\otimes H)^{\otimes n}$).  

\begin{defn}\label{defn:A9}
For $O,H$ Hopf algebras, we denote by $\mathbf{Pair}(O,H)$ the set of Hopf algera pairings between $O$ and $H$. 
\end{defn}

\begin{lem}\label{lem:4:11:0307}
For $O$ an object of $\mathbf{HA}$ and $H$ an object of $\mathbf{HA}_{fd}$, there is a map
$$
\nu : \mathbf{Pair}(O,H)\to \mathbf{HA}(F_\infty O,H'). 
$$
For $p\in \mathbf{Pair}(O,H)$, $\nu(p)$ is such that the diagram 
$$
\xymatrix{F_\infty O\otimes H\ar^{\nu(p)\otimes id_H}[r]\ar_{i_O\otimes id_H}[d]&H'\otimes H\ar^{p_H}[d]\\ O\otimes H\ar_p[r]&\mathbb C}
$$
commutes, $i_O : F_\infty O\to O$ being the canonical injection and $p_H : H\otimes H'\to\mathbb C$ being the composition 
$H\otimes H'\hookrightarrow H\otimes H^*\to\mathbb C$. 
\end{lem}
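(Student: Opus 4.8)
The plan is to build $\nu(p)$ from the functional naturally attached to a pairing and then restrict it to the conilpotent part. First I would associate to $p\in\mathbf{Pair}(O,H)$ the linear map $\phi : O\to H^*$ given by $\phi(o):=p(o\otimes-)$. The first pairing axiom $p(oo'\otimes h)=(p\otimes p)\circ\tau_2(o\otimes o'\otimes\Delta_H(h))$ says precisely that $\phi(oo')=\phi(o)\phi(o')$ in the algebra $H^*$ (whose product is dual to $\Delta_H$), while $p(1\otimes h)=\epsilon_H(h)$ gives $\phi(1_O)=\epsilon_H=1_{H^*}$; hence $\phi$ is a morphism of unital algebras.

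The key step is to show that $\phi$ carries $F_\bullet^\heartsuit O$ into $F_\bullet H^*$, i.e.\ $\phi(F_n^\heartsuit O)\subset F_nH^*=(F^{n+1}H)^\perp=(J_H^{n+1})^\perp$ for every $n\geq 0$. Since $J_H^{n+1}$ is spanned by products $h_1\cdots h_{n+1}$ with $h_i\in J_H$, it suffices to prove $\phi(o)(h_1\cdots h_{n+1})=0$ for $o\in F_n^\heartsuit O$ and such $h_i$. Iterating the second pairing axiom rewrites $\phi(o)(h_1\cdots h_{n+1})$ as the pairing of $\Delta_O^{(n+1)}(o)$ against $h_1\otimes\cdots\otimes h_{n+1}$ via $p$ in each slot. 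Because $\epsilon_H(h_i)=0$, one has $p(\eta_O\epsilon_O(o_{(i)})\otimes h_i)=\epsilon_O(o_{(i)})\epsilon_H(h_i)=0$, so in each slot the Sweedler factor $o_{(i)}$ may be replaced by $(id-\eta_O\epsilon_O)(o_{(i)})$; this turns the expression into the pairing of $((id-\eta_O\epsilon_O)^{\otimes n+1}\circ\Delta_O^{(n+1)})(o)$ against $h_1\otimes\cdots\otimes h_{n+1}$. By the description of $F_n^\heartsuit O$ recalled in the proof of Lem.~\ref{lem:4:2:2106}, this element is $0$, whence the claim. Taking the union over $n$ gives $\phi(O^\heartsuit)\subset H'$, and I set $\nu(p):=\phi|_{O^\heartsuit}$; the diagram of the statement then commutes by construction, as both composites send $o\otimes h$ to $p(o\otimes h)$.

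It remains to upgrade $\nu(p)$ to a Hopf algebra morphism. Since $O^\heartsuit$ is a Hopf subalgebra of $O$ (Prop.~\ref{prop:LA:filt}(b)) and $H'$ a subalgebra of $H^*$, the restriction $\nu(p)$ is a morphism of unital algebras. For the coalgebra compatibility I would use the characterization of $\Delta_{H'}$ from Lem.~\ref{lem:4:8:jeu}: for $o\in O^\heartsuit$ and $h,h'\in H$,
\begin{align*}
\Delta_{H'}(\nu(p)(o))(h\otimes h')&=\nu(p)(o)(hh')=p(o\otimes hh')=(p\otimes p)\circ\tau_2(\Delta_O(o)\otimes h\otimes h')\\
&=\big((\nu(p)\otimes\nu(p))(\Delta_O(o))\big)(h\otimes h'),
\end{align*}
where $\Delta_O(o)\in O^\heartsuit\otimes O^\heartsuit$ since $O^\heartsuit$ is a sub-coalgebra, and both sides lie in $F_NH^*\otimes F_NH^*$ for $N$ large enough. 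As the annihilator of $H\otimes H$ in $F_NH^*\otimes F_NH^*$ is zero (used already in Lem.~\ref{lem:4:8:jeu}), this forces $\Delta_{H'}\circ\nu(p)=(\nu(p)\otimes\nu(p))\circ\Delta_{O^\heartsuit}$. Counit compatibility is immediate from $\nu(p)(o)(1_H)=p(o\otimes 1)=\epsilon_O(o)$, and antipode compatibility is then automatic for a bialgebra morphism of Hopf algebras. The main obstacle is the filtration-compatibility step $\phi(F_n^\heartsuit O)\subset(J_H^{n+1})^\perp$, which is exactly where the conilpotency filtration on the $O$-side (built from $\Delta_O^{(n+1)}$) must be matched with the augmentation-power filtration on the $H$-side (defining $H'$); the bookkeeping of replacing each $o_{(i)}$ by $(id-\eta_O\epsilon_O)(o_{(i)})$ using $\epsilon_H(h_i)=0$ is the technical heart, and some care is needed to ensure the resulting expression is genuinely the pairing against $(id-\eta_O\epsilon_O)^{\otimes n+1}\circ\Delta_O^{(n+1)}$ evaluated at $o$.
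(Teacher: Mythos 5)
Your proof is correct and follows essentially the same route as the paper's: the heart in both cases is the vanishing $p(F_n^\heartsuit O\otimes J_H^{n+1})=0$, obtained by iterating the pairing axiom, using $p(1\otimes J_H)=0$, and invoking the characterization $F_n^\heartsuit O=\mathrm{Ker}((id-\eta_O\epsilon_O)^{\otimes n+1}\circ\Delta_O^{(n+1)})$. The only difference is packaging — you define the global map $\phi:O\to H^*$ and restrict to $O^\heartsuit$, whereas the paper builds $\nu(p)_n:F_n^\heartsuit O\to F_nH^*$ level by level and glues — and you additionally write out the Hopf-morphism verifications that the paper leaves as "one checks".
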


\begin{proof}
Let $p\in \mathbf{Pair}(O,H)$. Let $n\geq 0$, then 
$p(F_n   O\otimes F^{n+1}H)=p(F_n   O\otimes H_+^{n+1})\subset p^{\otimes n+1}\circ \tau_{n+1}
(\Delta_O^{(n+1)}(F_n   O)\otimes H_+^{\otimes n+1})\subset\sum_{\emptyset\neq L\subset[\![1,n+1]\!]} 
p^{\otimes n+1}\circ \tau_{n+1}
(O_L^{(n+1)}\otimes H_+^{\otimes n+1})
=0$, where the first inclusion follows from the behavior of $p$ with respect to coproducts, the second
inclusion follows from $\Delta_O^{(n+1)}(F_n   O)\subset\sum_{\emptyset\neq L\subset[\![1,n+1]\!]}O_L^{(n+1)}$, where $O_L^{(n+1)}$ 
is as in \eqref{def;later:2106} (a consequence of the definition of $F_n   O$), the last inclusion follows from 
$p(1\otimes H_+)=0$, itself a consequence of the behavior of $p$ with respect to counits. 
It follows that $p(F_n   O\otimes F^{n+1}H)=0$. This implies that the restriction $p_{|F_n   O\otimes H}$ of $p$ to
$F_n   O\otimes H$ induces a linear map $p_n : F_n   O\otimes (H/F^{n+1}H)\to\mathbb C$. As $H/F^{n+1}H$ is 
finite dimensional, this gives rise to a linear map  $\nu(p)_n : F_n   O\to (H/F^{n+1}H)^*=F_nH^*$. 
One checks that $i_{n,n+1}^{H'}\circ \nu(p)_n=\nu(p)_{n+1}\circ i_{n,n+1}^O$, where $i_{n,n+1}^O$, $i_{n,n+1}^{H'}$
the canonical maps $F_n   O\to F_{n+1}   O$ and $F_nH^*\to F_{n+1}H^*$. It follows that there exists a unique linear map 
$\nu(p) : F_\infty O\to H'$, such that $\nu(p)\circ i_n^O=i_n^{H'}\circ \nu(p)_n$ for any $n\geq0$, where $i_n^O, i_n^{H'}$ 
are the canonical maps $F_n   O\to F_\infty O$ and $F_nH^*\to H'$. One checks that $\nu(p)$ defines a Hopf algebra morphism
as well as the announced commutative diagram. 
\end{proof}

\section{Background on Hopf algebras with (co)actions on algebras}\label{sect:B:0512}

In §\ref{sect:B1:0112}, we introduce a category $\mathbf{HACA}$ of Hopf algebras with comodule algebra (HACAs) and an endofunctor of 
that category extending that of §\ref{sect:A:1:2308}. In §\ref{B2:0112}, we introduce a category $\mathbf{HAMA}$ of Hopf algebras with 
module algebras (HAMAs), together with a diagram $\mathbf{HAMA} \supset \mathbf{HAMA}_{fd}\to\mathbf{HACA}$ extending that of 
§\ref{sect:A2:0711}. In §\ref{sect:B3:0112}, we introduce the notion of a pairing-morphism from a HAMA to a HACA and show that it gives 
rise, under a finite dimensionality assumption, to a HACA morphism extending the Hopf algebra morphism of §\ref{sect:A:3:3108}. In 
§\ref{sec:B4:0112}, we make use of the natural filtration attached to each HACA to construct two functors 
$\mathrm{gr},\Phi : \mathbf{HACA}\to\mathbf{Alg}_{gr}$ and a natural transformation $\mathrm{nat} : \mathrm{gr}\Rightarrow\Phi$ between 
these functors. §\ref{sect:B5:0112} is devoted to the main technical tool of this paper (Prop. \ref{lem:B:15:1508}), based on 
$\mathrm{nat}$ and giving a criterion for the HACA morphisms arising from pairing-morphisms of a certain type by the construction of 
§\ref{sect:B3:0112} to be isomorphisms. 

\subsection{An endofunctor $(O,A)\mapsto (F_\infty O,F_\infty A)$ of $\mathbf{HACA}$}\label{sect:B1:0112}

\begin{defn}\label{def:B1:0711}
A Hopf algebra with comodule algebra (HACA) is a pair $(O,A)$ where $O$ is a Hopf algebra and $A$ is an algebra, equipped with a left 
coaction of $O$; the coproduct map of $O$ being denoted $\Delta_O$ and the coaction by $\Delta_A : A\to O\otimes A$, 
in particular $\Delta_A$
is an algebra morphism and $(\Delta_O\otimes id_A)\circ\Delta_A=(id_O\otimes\Delta_A)\circ\Delta_A$. If $(O,A)$ and $(O',A')$
are HACAs, a morphism from $(O,A)$ to $(O',A')$ is the pair of $(f_A,f_O)$ of an algebra morphism $f_A : A\to A'$ and 
a Hopf algebra morphism $f_O : O\to O'$, such that $(f_O\otimes f_A)\circ\Delta_A=\Delta_{A'}\circ f_A$. Denote by $\mathbf{HACA}$
the resulting category.  
\end{defn}

Let $(O,A)$ be a HACA. 

\begin{defn}\label{def:FnA}
For $n\geq 0$, define $F_nA$ to be the preimage of $F_n   O\otimes A$ by the map $\Delta_A : A\to O\otimes A$. 
\end{defn}

\begin{lem}\label{lem:5:3:0307}
(a) $F_\bullet A$ is an algebra filtration of $A$. 

(b) For $n\geq 0$, $\Delta_A(F_nA)\subset F_n   O\otimes F_nA$. 

(c) For $n\geq 0$, $\Delta_A(F_nA)\subset \sum_{p+q=n}F_p   O\otimes F_qA$.

(d) The restriction of $\Delta_A$ to $F_\infty A$ corestricts to $F_\infty O\otimes F_\infty A$. Together with the structures of  
algebra of $F_\infty A$ and of Hopf algebra of $F_\infty O$, the resulting map  
$\Delta_{F_\infty A} : F_\infty A\to F_\infty O\otimes F_\infty A$ equips $(F_\infty O,F_\infty A)$ with a HACA structure.  
The assignment $(O,A)\mapsto (F_\infty O,F_\infty A)$ is an endofunctor of $\mathbf{HACA}$. 
\end{lem}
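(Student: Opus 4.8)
The plan is to establish the four assertions in the order (a), (b), (c), (d), proving (b) by a coassociativity argument and then reusing it, since the passage from the coaction to control of the filtration on $A$ is the technical heart. For (a), I would first note $1\in F_0A$ because $\Delta_A(1)=1\otimes 1\in F_0^\heartsuit O\otimes A$. The inclusion $F_nA\subset F_{n+1}A$ is immediate from $F_n^\heartsuit O\subset F_{n+1}^\heartsuit O$ (Lem. \ref{lem:4:2:2106}(a)): if $\Delta_A(a)\in F_n^\heartsuit O\otimes A$ then a fortiori $\Delta_A(a)\in F_{n+1}^\heartsuit O\otimes A$. For multiplicativity I would use that $\Delta_A$ is an algebra morphism and that the product in $O\otimes A$ is componentwise: for $a\in F_nA$, $b\in F_mA$ one has $\Delta_A(ab)=\Delta_A(a)\Delta_A(b)\in (F_n^\heartsuit O\cdot F_m^\heartsuit O)\otimes A\subset F_{n+m}^\heartsuit O\otimes A$, the last inclusion being Lem. \ref{lem:4:2:2106}(d).

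For (b), fix $a\in F_nA$ and write $\Delta_A(a)=\sum_i x_i\otimes a_i$ with the $x_i\in F_n^\heartsuit O$ linearly independent. Applying the coassociativity identity $(\Delta_O\otimes\mathrm{id}_A)\circ\Delta_A=(\mathrm{id}_O\otimes\Delta_A)\circ\Delta_A$ to $a$ yields $\sum_i x_i\otimes\Delta_A(a_i)=\sum_i\Delta_O(x_i)\otimes a_i$. By Lem. \ref{lem:4:2:2106}(c) together with (a) one has $\Delta_O(x_i)\in F_n^\heartsuit O\otimes F_n^\heartsuit O$, so the right-hand side lies in $F_n^\heartsuit O\otimes F_n^\heartsuit O\otimes A$. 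Composing with $\mathrm{id}_O\otimes q\otimes\mathrm{id}_A$, where $q:O\to O/F_n^\heartsuit O$ is the projection, then annihilates the right-hand side, while the left-hand side becomes $\sum_i x_i\otimes(q\otimes\mathrm{id}_A)(\Delta_A(a_i))$. Linear independence of the $x_i$ forces $(q\otimes\mathrm{id}_A)(\Delta_A(a_i))=0$, i.e. $\Delta_A(a_i)\in F_n^\heartsuit O\otimes A$, i.e. $a_i\in F_nA$; hence $\Delta_A(a)\in F_n^\heartsuit O\otimes F_nA$.

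For (c), I would refine the bookkeeping by choosing complements $V_p$ with $F_p^\heartsuit O=F_{p-1}^\heartsuit O\oplus V_p$, so that $F_n^\heartsuit O=\oplus_{p=0}^n V_p$ and $\sum_{p+q=n}F_p^\heartsuit O\otimes F_q^\heartsuit O=\oplus_{p=0}^n V_p\otimes F_{n-p}^\heartsuit O$. Decomposing $\Delta_A(a)=\sum_{p=0}^n\eta_p$ with $\eta_p\in V_p\otimes A$, coassociativity and Lem. \ref{lem:4:2:2106}(c) give $\sum_p(\mathrm{id}_{V_p}\otimes\Delta_A)(\eta_p)=(\Delta_O\otimes\mathrm{id}_A)\Delta_A(a)\in\oplus_p V_p\otimes F_{n-p}^\heartsuit O\otimes A$; since the first tensor legs lie in the direct summands $V_p$, I can equate the $V_p$-components to get $(\mathrm{id}_{V_p}\otimes\Delta_A)(\eta_p)\in V_p\otimes F_{n-p}^\heartsuit O\otimes A$, and the extraction of (b) applied inside $V_p$ yields $\eta_p\in V_p\otimes F_{n-p}A$, whence $\Delta_A(a)\in\sum_{p+q=n}F_p^\heartsuit O\otimes F_qA$. (Note that (b) is in fact recovered from (c) by enlarging $F_p^\heartsuit O,F_qA$ to $F_n^\heartsuit O,F_nA$.) The main obstacle is precisely this extraction: converting an inclusion for the \emph{iterated} coaction into control of the \emph{second} leg of $\Delta_A$, which is why I isolate linearly independent first-leg vectors and pass to the quotient $O/F_n^\heartsuit O$ respectively to the direct summands $V_p$.

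For (d), I would first obtain $\Delta_A(F_\infty A)\subset O^\heartsuit\otimes F_\infty A$ by taking the union over $n$ of (b). Then $F_\infty A$ is a subalgebra of $A$ as the total space of the algebra filtration of (a), $O^\heartsuit$ is a Hopf algebra by Prop. \ref{prop:LA:filt}(b), and the corestriction $\Delta_{F_\infty A}:F_\infty A\to O^\heartsuit\otimes F_\infty A$ is an algebra morphism because it is a restriction of $\Delta_A$ to the subalgebra $O^\heartsuit\otimes F_\infty A$ of $O\otimes A$; the coassociativity and counitality axioms are inherited from $(O,A)$, as $\Delta_{O^\heartsuit}$ and $\Delta_{F_\infty A}$ are restrictions of $\Delta_O$ and $\Delta_A$. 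Finally, for functoriality, a HACA morphism $(f_O,f_A)$ restricts to $O^\heartsuit\to (O')^\heartsuit$ by Prop. \ref{prop:LA:filt}(c), while $f_A(F_nA)\subset F_nA'$ follows from $\Delta_{A'}(f_A(a))=(f_O\otimes f_A)(\Delta_A(a))\in F_n^\heartsuit O'\otimes A'$; compatibility of the restricted pair and preservation of identities and composition are then immediate.
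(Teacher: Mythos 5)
Your proposal is correct and follows essentially the same route as the paper: (a) via the observation that $F_\bullet A$ is the preimage of an algebra filtration of $O\otimes A$ under the algebra morphism $\Delta_A$, and (b), (c) via coassociativity combined with extraction along a linearly independent (resp. graded-complement) decomposition of the first tensor leg — your quotient map $q:O\to O/F_n^\heartsuit O$ is just a reformulation of the paper's ``free family'' argument, and your direct decomposition $\Delta_A(a)=\sum_p\eta_p$ with $\eta_p\in V_p\otimes A$ is a mild streamlining of the paper's two-step version that first invokes (b). Part (d) is handled in the same (routine) way, with somewhat more detail than the paper provides.
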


\begin{proof} (a) $A\otimes F_\bullet    O$ is an algebra filtration of the algebra $O\otimes A$; as $\Delta_A : 
A\to O\otimes A$ is an algebra morphism, the preimage of this filtration is an algebra filtration of the source; as this preimage is 
$F_\bullet A$, the latter is an algebra filtration. (b) Let $a\in F_nA$, then $\Delta_A(a)\in F_n   O\otimes A$. 
There exist families $(o_s)_{s\in S}$, $(a_s)_{s\in S}$ of elements of $O$ and $A$, indexed by a finite set $S$, such that 
$(o_s)_{s\in S}$ is free and $\Delta_A(a)=\sum_s o_s\otimes a_s$. Since $\Delta_O(F_n   O)\subset (F_n   O)^{\otimes2}$, 
one has $(\Delta_O\otimes id_A)\circ\Delta_A(a)\in (F_n   O)^{\otimes2}\otimes A$. By the coassociativity of the 
coaction, this term is equal to $(id_O\otimes\Delta_A)\circ\Delta_A(a)$, so 
$(id_O\otimes\Delta_A)\circ\Delta_A(a)\in (F_n   O)^{\otimes2}\otimes A$. Therefore $\sum_{s\in S}o_s\otimes
\Delta_A(a_s)\in (F_n   O)^{\otimes2}\otimes A$. As $(o_s)_{s\in S}$ is free, this implies $\Delta_A(a_s)
\in F_n   O\otimes A$ for any $s$, therefore $a_s\in F_nA$. The claim follows. 

(c) Let $a\in F_nA$, then by (b) $\Delta_A(a)\in F_n   O\otimes F_nA$. 
Let $U_i$ be a complement of $F_{i-1}   O$ in $F_i   O$, so $F_i   O=U_1\oplus\cdots\oplus U_i$. 
Then $F_n   O\otimes F_nA 
=(U_0\otimes  F_nA)\oplus\cdots\oplus(U_n\otimes  F_nA)$, giving rise to a decomposition 
$\Delta_A(a)=z_0+\cdots+z_n$. 

Since $\Delta_A(a)\in  F_n   O\otimes F_nA$, Lem.  \ref{lem:4:2:2106} (c) implies that 
$(\Delta_O\otimes id_A)\circ \Delta_A(a)\in 
\sum_{p+q=n}F_p   O\otimes F_q   O \otimes F_nA \subset (F_n   O)^{\otimes 2}\otimes F_nA$. By the 
coassociativity identity, this 
term is equal to $(id_O\otimes  \Delta_A)\circ \Delta_A(a)=\sum_{i=0}^n(id_O\otimes\Delta_A)(z_i)$. It follows that 
 $\sum_{i=0}^n(id_O\otimes\Delta_A)(z_i)\in \sum_{p+q=n}F_p   O\otimes F_q   O\otimes F_nA$.
There is a direct sum decomposition $(F_n   O)^{\otimes 2}\otimes F_nA=\oplus_{i,j\in[\![0,n]\!]}U_i\otimes U_j\otimes F_nA$; then 
 $\sum_{p+q=n}F_p   O\otimes F_q   O\otimes F_nA =\oplus_{i,j\in[\![0,n]\!],i+j\leq n}U_i\otimes U_j\otimes F_nA$, 
 and $(id_O\otimes \Delta_A)(z_i)\in U_i\otimes F_n   O\otimes F_nA=\oplus_{j\in[\![0,n]\!]}U_i\otimes U_j\otimes F_nA$. 
 Decomposing $(id_O\otimes\Delta_A)(z_i)=\sum_{j\in[\![0,n]\!]}t_{ij}$ for any $i\in[\![0,n]\!]$, one obtains $\sum_{i,j\in[\![0,n]\!]}t_{ij}\in 
 \oplus_{i,j\in[\![0,n]\!],i+j\leq n}U_i\otimes U_j\otimes F_nA$, therefore $t_{ij}=0$ if $i+j>n$, therefore for any $i\in[\![0,n]\!]$, one has 
 $(id_O\otimes\Delta_A)(z_i)\in \oplus_{j=0}^{n-i} U_i\otimes U_j\otimes F_nA=U_i\otimes F_{n-i}   O\otimes F_nA$. 
 There exist families $(o_s^i)_{s\in S_i}$, $(a^i_s)_{s\in S_i}$  of elements of $U_i$ and $F_nA$, indexed by a finite set 
 $S_i$, such that $(o_s^i)_{s\in S_i}$ is free and $z_i=\sum_{s\in S_i} o_s^i\otimes a_s^i$. 
Then $\sum_{s\in S_i}o_s^i\otimes \Delta_A(a_s^i) \in U_i\otimes F_{n-i}   O \otimes F_nA$. Since 
$(o_s^i)_{s\in S_i}$ is free, this 
implies $\Delta_A(a_s^i)\in F_{n-i}   O\otimes F_nA$ for any $s\in S_i$, therefore $a_s^i\in F_{n-i}A$. Therefore $z_i\in  F_i  O\otimes F_{n-i}A$, proving the claim. 

 (d) By (b), $\Delta_A(F_nA)\subset F_\infty O \otimes F_\infty A$ for any $n \geq 0$, which implies $\Delta_A(F_\infty A) \subset 
F_\infty O \otimes F_\infty A$. One can then define the announced map $\Delta_{F_\infty A} : F_\infty A\to F_\infty O \otimes 
F_\infty A$ and check it to have the announced properties. The endofunctor statement is then immediate. 
\end{proof}

\subsection{A category $\mathbf{HAMA}$ and a functor $\mathbf{HAMA}_{fd}\to\mathbf{HACA}$}\label{B2:0112}

For $M$ a right module over an algebra $A$ and $V\subset A$, we use the notation 
$\mathrm{Ann}(V)$ to denote $\{m\in M\,|\,m\cdot V=0\}$. 

\begin{defn}\label{MISSINGDEF}
For $H$ a Hopf algebra, $M$ a right $H$-module, define $F_nM:=\mathrm{Ann}(F^{n+1}H,M)$ for $n\geq0$. 
\end{defn}

Note that $F_0M$ is equal to $M^H$, the submodule of $M$ of invariants of the action of $H$.  

\begin{lem}\label{MISSINGLEM}\label{INJLEM}
Let $H$ be a Hopf algebra.  

(a) For $M$ a right $H$-module, $F_\bullet M$ is an increasing $H$-module filtration of $M$. 

(b) Any morphism $f : M\to N$ a morphism of right $H$-modules is compatible with the filtrations. 

(c) If $M$ is a right $H$-module and $n\geq 0$, then for any $m\in F_nM$, the map $F^nH\to M$, $h\mapsto m\cdot h$
takes its values in $M^H$. The resulting map $F_nM\to \mathrm{Hom}_{\mathbb C-vec}(F^nH,M^H)$ induces a map 
$\mathrm{gr}_nM\to \mathrm{Hom}_{\mathbb C-vec}(\mathrm{gr}^nH,M^H)$, which is injective. 
\end{lem}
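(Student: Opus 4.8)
The plan is to derive all three parts directly from the definition $F_nM = \mathrm{Ann}(F^{n+1}H, M)$ (Def.~\ref{MISSINGDEF}), together with two facts about $H$: that $J_H$ is a two-sided ideal (being the kernel of the algebra map $\epsilon_H$), so each $F^{n+1}H = J_H^{n+1}$ is a two-sided ideal and the filtration $F^\bullet H$ is \emph{decreasing}; and the product inclusion $F^nH\cdot F^mH\subseteq F^{n+m}H$ of Lem.~\ref{lem:elections:legislatives}. The only genuine arithmetic is index bookkeeping between the decreasing filtration $F^\bullet H$ of $H$ and the increasing filtration $F_\bullet M$ of $M$ it induces by annihilation.

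For (a), I would first note that $F^{n+2}H = J_H^{n+2}\subseteq J_H^{n+1} = F^{n+1}H$, so passing to annihilators reverses the inclusion and gives $F_nM = \mathrm{Ann}(F^{n+1}H)\subseteq \mathrm{Ann}(F^{n+2}H) = F_{n+1}M$; hence the filtration is increasing. That each $F_nM$ is an $H$-submodule uses that $F^{n+1}H$ is a two-sided ideal: for $m\in F_nM$ and $h\in H$ one has $(m\cdot h)\cdot F^{n+1}H = m\cdot(h\cdot F^{n+1}H)\subseteq m\cdot F^{n+1}H = 0$. Part (b) is then immediate from $H$-linearity, since $m\in F_nM$ forces $f(m)\cdot F^{n+1}H = f(m\cdot F^{n+1}H) = 0$, i.e. $f(m)\in F_nN$.

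Part (c) is the substantive point, though still elementary. First I would check well-definedness of the assignment $h\mapsto m\cdot h$ into $M^H = F_0M$: for $m\in F_nM$ and $h\in F^nH$, Lem.~\ref{lem:elections:legislatives} gives $h\cdot J_H\subseteq F^nH\cdot F^1H\subseteq F^{n+1}H$, so $(m\cdot h)\cdot J_H = m\cdot(h\cdot J_H)\subseteq m\cdot F^{n+1}H = 0$, whence $m\cdot h\in\mathrm{Ann}(F^1H) = F_0M = M^H$. This yields the linear map $\Psi : F_nM\to\mathrm{Hom}(F^nH, M^H)$, $m\mapsto(h\mapsto m\cdot h)$. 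Two formal observations then finish the argument. Since $m\cdot F^{n+1}H = 0$, every $\Psi(m)$ vanishes on $F^{n+1}H$, so $\Psi$ takes values in $\mathrm{Hom}(F^nH/F^{n+1}H, M^H) = \mathrm{Hom}(\mathrm{gr}^nH, M^H)$. And its kernel is $\{m\in F_nM \mid m\cdot F^nH = 0\} = F_nM\cap\mathrm{Ann}(F^nH) = F_nM\cap F_{n-1}M = F_{n-1}M$, using the inclusion $F_{n-1}M\subseteq F_nM$ from (a) (and the convention $F_{-1}M = 0$, $\mathrm{Ann}(F^0H) = \mathrm{Ann}(H) = 0$ covering $n=0$). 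Therefore $\Psi$ descends to a well-defined \emph{and} injective map $\mathrm{gr}_nM = F_nM/F_{n-1}M\to\mathrm{Hom}(\mathrm{gr}^nH, M^H)$.

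I do not expect a real obstacle here: the computation $\ker\Psi = F_{n-1}M$ is the single load-bearing identity, and it is precisely what simultaneously produces the factorization through $\mathrm{gr}_nM$ and the injectivity, so no separate injectivity argument is needed. The only thing requiring care is tracking the index shift between $\mathrm{Ann}(F^{n+1}H)$ and $\mathrm{Ann}(F^nH)$ when identifying $F_nM$, $F_{n-1}M$, and $M^H = F_0M$.
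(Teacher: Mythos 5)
Your proposal is correct and follows essentially the same route as the paper: both use the product inclusion $F^nH\cdot F^1H\subset F^{n+1}H$ to land in $M^H$, factor through $\mathrm{gr}^nH$ because $m$ annihilates $F^{n+1}H$, and obtain injectivity by computing that the kernel of $F_nM\to\mathrm{Hom}(F^nH,M^H)$ is exactly $\mathrm{Ann}(F^nH)\cap F_nM=F_{n-1}M$. Your observation that this single kernel computation simultaneously yields the descent to $\mathrm{gr}_nM$ and the injectivity is exactly the structure of the paper's argument.
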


\begin{proof}
(a) follows from the decreasing character of $n\mapsto F^{n+1}H$ and from the ideal nature of 
$F^{n+1}H$ for $n\geq0$. (b) is immediate. (c) If $m\in F_nM$, $h\in F^nH$ and $h'\in F_1H$, then $hh'\in F^{n+1}H$ so $m\cdot(hh')=0$, 
therefore $(m\cdot h)\cdot h'=0$, so $m\cdot h\in M^H$. The map $F_nM\to\mathrm{Hom}_{\mathbb C-vec}(F^nH,M^H)$ factors through a map $F_nM\to 
\mathrm{Hom}_{\mathbb C-vec}(\mathrm{gr}^nH,M^H)$ as $m\cdot h=0$ for any $m\in F_nM$, $h\in F^{n+1}H$. The restriction of this map to 
$F_{n-1}M$ is zero as $m\cdot h=0$ for any $m\in F_{n-1}M$, $h\in F^nH$. This leads to the announced map 
$\mathrm{gr}_nM\to \mathrm{Hom}_{\mathbb C-vec}(\mathrm{gr}^nH,M^H)$. As the map $\mathrm{Hom}_{\mathbb C-vec}
(\mathrm{gr}^nH,M^H)\to \mathrm{Hom}_{\mathbb C-vec}(F^nH,M^H)$ is injective, the kernel of the map $F_nM\to 
\mathrm{Hom}_{\mathbb C-vec}(\mathrm{gr}^nH,M^H)$ is equal to the kernel of the map $F_nM\to \mathrm{Hom}_{\mathbb C-vec}
(F^nH,M^H)$, which is $\{m\in F_nM\,|\,m\cdot h=0$ for any $h\in F^nH\}$, that it $F^{n-1}H$. It follows that the map 
$\mathrm{gr}_nM\to \mathrm{Hom}_{\mathbb C-vec}(\mathrm{gr}^nH,M^H)$ is injective. 
\end{proof}

\begin{defn}\label{defn:B6}
A Hopf algebra with module algebra (HAMA) is pair $(B,H)$ where $H$ is a
Hopf algebra and $B$ is an algebra equipped with a right action of $H$, i.e. a linear map $B\otimes H\to H$, $b \otimes h\mapsto b\cdot h$,  
such that $(b\cdot h)\cdot h'=b\cdot (hh')$ and $(bb')\cdot h=(b\cdot h^{(1)})(b'\cdot h^{(2)})$ for any $b,b'\in B$ and $h,h'\in H$, the 
coproduct of $h\in H$ being denoted $h^{(1)} \otimes h^{(2)}$ (Sweedler).  
If $(B,H)$ and $(B',H')$ are HAMAs, a morphism $(B,H)\to(B',H')$ is a pair $(f_H, f_B)$, where $f_H : H' \to H$ is a a Hopf algebra morphism and 
$f_B : B\to B'$ is an algebra morphism, such that $f_B(b\cdot f_H(h'))=f_B(b)\cdot h'$ for any $b \in B$ and $h' \in H'$. Morphisms 
are composed as follows: $(g_H,g_B)\circ (f_H,f_B):=(f_H \circ g_H,g_B \circ f_B)$. 
\end{defn}

One checks that HAMAs build up a category, which will be denoted $\mathbf{HAMA}$.

\begin{defn}\label{def:B5:0709}
Let $(B,H)$ be a HAMA. 
For $n\geq 0$, set $F_nB:=\mathrm{Ann}(F^{n+1}H,B)=\{b \in B\,|\,b\cdot F^{n+1}H=0\}$.  
\end{defn}

\begin{lem}\label{1008:jeu}
Let $(B,H)$ be a HAMA. 

(a) $F_\bullet B$ is an algebra filtration of $B$.

(b) Let $n,m\geq 0$. For $n\geq m$, then $(F_n B)\cdot (F^m H) \subset F_{n-m}B$. If $n<m$, then 
$(F_n B)\cdot (F^m H)=0$.

(c) For any $n\geq0$, the action map $B\otimes H\to B$ induces a linear map $F_nB\otimes H\to F_nB$, which factorizes through a linear map $F_nB\otimes (H/F^{n+1}H)\to F_nB$. 
\end{lem}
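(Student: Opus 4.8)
The plan is to establish the three parts in order, relying throughout on the module-algebra axioms $(b\cdot h)\cdot h'=b\cdot(hh')$ and $(bb')\cdot h=(b\cdot h^{(1)})(b'\cdot h^{(2)})$, together with the two properties of the filtration $F^\bullet H$ recorded in Lem. \ref{lem:elections:legislatives}, namely $F^pH\cdot F^qH\subset F^{p+q}H$ and $\Delta_H(F^nH)\subset\sum_{p+q=n}F^pH\otimes F^qH$. For (a), the facts that $F_\bullet B$ is increasing and is an $H$-submodule filtration are already contained in Lem. \ref{MISSINGLEM}(a) applied to the right $H$-module underlying $B$ (observe that Def. \ref{def:B5:0709} coincides with Def. \ref{MISSINGDEF} once the algebra structure is forgotten). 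What remains is multiplicativity $F_nB\cdot F_mB\subset F_{n+m}B$ together with $1_B\in F_0B$. For the former, I would take $b\in F_nB$, $b'\in F_mB$ and $h\in F^{n+m+1}H$, and compute $(bb')\cdot h=(b\cdot h^{(1)})(b'\cdot h^{(2)})$; writing $\Delta_H(h)\in\sum_{p+q=n+m+1}F^pH\otimes F^qH$, every occurring pair $(p,q)$ satisfies $p\geq n+1$ or $q\geq m+1$ (otherwise $p+q\leq n+m$), so in each summand either $b\cdot h^{(1)}=0$ or $b'\cdot h^{(2)}=0$, whence $(bb')\cdot h=0$ and $bb'\in F_{n+m}B$. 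For the unit, the standard module-algebra identity $1_B\cdot h=\epsilon_H(h)1_B$ gives $1_B\cdot J_H=0$, i.e. $1_B\in F_0B$.

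For (b), I would use the first module-algebra axiom as a degree shift. Given $b\in F_nB$ and $h\in F^mH$ with $n\geq m$, for any $h'\in F^{n-m+1}H$ one has $hh'\in F^mH\cdot F^{n-m+1}H\subset F^{n+1}H$ by Lem. \ref{lem:elections:legislatives}, hence $(b\cdot h)\cdot h'=b\cdot(hh')=0$; as this holds for all such $h'$, we obtain $b\cdot h\in F_{n-m}B$. When $n<m$ one has $F^mH\subset F^{n+1}H$, so $b\cdot h=0$ outright for $b\in F_nB$ and $h\in F^mH$, giving $(F_nB)\cdot(F^mH)=0$.

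For (c), I would first check that each $F_nB$ is stable under the full action of $H$: for $b\in F_nB$, $h\in H$ and $g\in F^{n+1}H$, the element $hg$ lies in $F^{n+1}H$ because $F^{n+1}H=J_H^{n+1}$ is a (two-sided, hence left) ideal of $H$, so $(b\cdot h)\cdot g=b\cdot(hg)=0$, giving $b\cdot h\in F_nB$. Thus the action map restricts to a linear map $F_nB\otimes H\to F_nB$; since $b\cdot g=0$ for $b\in F_nB$ and $g\in F^{n+1}H$, this map annihilates $F_nB\otimes F^{n+1}H$ and therefore factors through a linear map $F_nB\otimes(H/F^{n+1}H)\to F_nB$.

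The only genuinely non-formal point is the pigeonhole step in (a), where I combine the coproduct estimate of Lem. \ref{lem:elections:legislatives} with the fact that the total degree $n+m+1$ forces each summand to vanish against one of the two annihilation conditions; I expect this to be the main obstacle, while the remaining manipulations are direct consequences of the module-algebra axioms and the ideal property of $F^\bullet H$, so the write-up should be short once (a) is settled.
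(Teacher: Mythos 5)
Your proof is correct and follows essentially the same route as the paper's: the pigeonhole combination of $\Delta_H(F^{n+m+1}H)\subset\sum_{p+q=n+m+1}F^pH\otimes F^qH$ with the two annihilation conditions for (a), the degree-shift argument via $F^mH\cdot F^{n-m+1}H\subset F^{n+1}H$ for (b), and the ideal property of $F^{n+1}H$ for (c). The only (harmless) deviations are that you dispatch the case $n<m$ of (b) more directly from $F^mH\subset F^{n+1}H$, where the paper routes through $B^H\cdot J_H=0$, and that your unit check invokes the identity $1_B\cdot h=\epsilon_H(h)1_B$, which is standard but not among the two HAMA axioms the paper lists (nor is it needed, given the paper's definition of algebra filtration).
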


\begin{proof}
(a) Let $n,m\geq0$. By the second part of Lem. \ref{lem:elections:legislatives}, one has 
$\Delta_H(F^{n+m+1}H)\subset F^{n+1}H\otimes H+H\otimes F^{m+1}H$. 
Together with the Hopf algebra action axiom, this implies the inclusion in  
$((F_nB)(F_mB))\cdot (F^{n+m+1}H) \subset ((F_nB)\cdot(F^{n+1}H))((F_mB)\cdot H)+((F_nB)\cdot H)((F_mB)\cdot(F^{m+1}H))=0$, and the equality follows from $(F_nB)\cdot(F^{n+1}H)=
(F_mB)\cdot(F^{m+1}H)=0$. The resulting equality $(F_nB)(F_mB)\cdot (F^{n+m+1}H)=0$ implies $(F_nB)(F_mB)\subset F_{n+m}B$. 

(b) Assume $n\geq m\geq 0$. Then $((F_nB)\cdot(F^mH))\cdot (F^{n-m+1}H)\subset 
(F_nB)\cdot((F^mH)(F^{n-m+1}H))\subset (F_nB)\cdot(F^{n+1}H)=0$, where the first 
inclusion follows from the right action axioms, the second inclusion follows from the first 
part of Lem. \ref{lem:elections:legislatives}, and the equality follows from the definition of 
$F_nB$. This implies $(F_nB)\cdot(F^mH)\subset F_{n-m}B$, as claimed. In particular, 
for any $n\geq 0$, $(F_nB)\cdot(F^nH)\subset F_0B=B^H$. Then if $m>n$, one has 
$(F_nB)\cdot(F^mH)=(F_nB)\cdot((F^nH)(F^{m-n}H))\subset 
((F_nB)\cdot(F^nH))\cdot(F^{m-n}H)\subset B^H\cdot (F^{m-n}H)=0$, where the 
first equality follows from $F^mH=(F^nH)(F^{m-n}H)$, the first inclusion 
follows from the right action axioms, the second inclusion follows from the inclusion
$(F_nB)\cdot(F^nH)\subset B^H$, the last equality follows from $B^H\cdot H_+=0$. 
This completes the proof of the claim. 

(c) The first statement follows from (b) for $m=0$. The second statement follows from 
$F_nB\cdot (F^{n+1}H)=0$, which follows from the definition of $F_nB$.
\end{proof}

\begin{defn}\label{def:HAMA:fd}
$\mathbf{HAMA}_{fd}$ is the full subcategory of $\mathbf{HAMA}$ of objects $(B,H)$ such that $H$ is an object of $\mathbf{HA}_{fd}$. 
\end{defn}

\begin{lem}\label{lem:5:8:0307}
Let $(B,H)$ be an object of $\mathbf{HAMA}_{fd}$. 

(a) For any $n\geq 0$, the linear map $F_nB\otimes (H/F^{n+1}H)\to F_nB$
from Lem. \ref{1008:jeu} (c) induces a linear map $\Delta_{F_nB} : F_nB\to 
(H/F^{n+1}H)^*\otimes F_nB=F_nH^*\otimes F_nB$. 

(b) There is a linear map $\Delta_{F_\infty B} : F_\infty B\to H'\otimes F_\infty B$, 
uniquely determined by the condition that $\Delta_{F_\infty B}\circ i_n^{F_\infty B}=
(i_n^{H'}\otimes i_n^{F_\infty B})\circ \Delta_{F_nB}$ for any $n\geq0$, where $\Delta_{F_nB}$
is as in (a) and $i_n^{H'} : F_nH^*\to H'$, $i_n^{F_\infty B} : F_nB\to F_\infty B$ are the 
canonical injections. 

(c) Together with the algebra structure of $F_\infty B$ and Hopf algebra structure of $H'$, 
$\Delta_{F_\infty B}$ equips $(H',F_\infty B)$ with a HACA structure. The assignment
$(B,H)\mapsto(H',F_\infty B)$ is a functor $\mathbf{HAMA}_{fd}\to\mathbf{HACA}$. 
\end{lem}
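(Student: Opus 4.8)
The plan is to construct $\Delta_{F_\infty B}$ one filtration level at a time from the action map, using finite dimensionality of the quotients $H/F^{n+1}H$, and then to verify every required identity by a single \emph{transposition principle} that reduces it to the HAMA axioms. For part (a), I start from the linear map $F_nB\otimes(H/F^{n+1}H)\to F_nB$ of Lem. \ref{1008:jeu}(c). Recall from the proof of Lem. \ref{lem:4:8:jeu} that, since $(B,H)$ lies in $\mathbf{HAMA}_{fd}$, the space $\mathrm{gr}^1H$ is finite dimensional, $\mathrm{gr}^\bullet H$ is generated in degree one, and hence each $H/F^{n+1}H$ is finite dimensional. The adjunction $\mathrm{Hom}(U\otimes V,W)\simeq\mathrm{Hom}(U,\mathrm{Hom}(V,W))$ together with $\mathrm{Hom}(V,W)\simeq V^*\otimes W$ for $V$ finite dimensional then turns this map into the desired $\Delta_{F_nB}:F_nB\to(H/F^{n+1}H)^*\otimes F_nB=F_nH^*\otimes F_nB$; concretely, $\Delta_{F_nB}(b)$ is the unique element of $F_nH^*\otimes F_nB$ whose image under $\mathrm{ev}_h\otimes\mathrm{id}$ equals $b\cdot h$ for every $h\in H$ (that $b\cdot h\in F_nB$ comes from Lem. \ref{1008:jeu}(b) with $m=0$).

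For part (b), the gluing is governed by the transposition principle: since the $c_i$ in any finite expression $\xi=\sum_i\alpha_i\otimes c_i\in H'\otimes F_\infty B$ may be taken linearly independent, $\xi$ is determined by the linear map $h\mapsto(\mathrm{ev}_h\otimes\mathrm{id})(\xi)$ from $H$ to $F_\infty B$. I then check the compatibility $(i_{n,n+1}^{H^*}\otimes i_{n,n+1}^{B})\circ\Delta_{F_nB}=\Delta_{F_{n+1}B}\circ i_{n,n+1}^{B}$ by transposing both sides to maps $H\to F_{n+1}B$: each yields $h\mapsto b\cdot h$, since $i_{n,n+1}^{H^*}$ is dual to the projection $H/F^{n+2}H\to H/F^{n+1}H$ and $b\cdot F^{n+1}H=0$. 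The universal property of the union $F_\infty B=\cup_n F_nB$ then produces the unique $\Delta_{F_\infty B}$ with the stated compatibility.

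For part (c), every HACA axiom is verified by transposing and invoking a HAMA axiom. Writing $\Delta_{F_\infty B}(b)=\sum_i\alpha_i\otimes b_i$, so that $\sum_i\alpha_i(h)b_i=b\cdot h$: the algebra-morphism property $\Delta_{F_\infty B}(bb')=\Delta_{F_\infty B}(b)\Delta_{F_\infty B}(b')$ holds because the transpose of the right-hand side, computed through the convolution product of $H'$ (its value at $h$ involving $\Delta_H(h)=h^{(1)}\otimes h^{(2)}$), equals $(b\cdot h^{(1)})(b'\cdot h^{(2)})=(bb')\cdot h$ by the module-algebra axiom; coassociativity $(\Delta_{H'}\otimes\mathrm{id})\circ\Delta_{F_\infty B}=(\mathrm{id}\otimes\Delta_{F_\infty B})\circ\Delta_{F_\infty B}$ holds since, transposing against $h\otimes h'$ and using $\Delta_{H'}(\alpha)(h\otimes h')=\alpha(hh')$ from Lem. \ref{lem:4:8:jeu}, the two sides give $b\cdot(hh')$ and $(b\cdot h)\cdot h'$, equal by the module axiom; the counit axiom follows from $\epsilon_{H'}=\mathrm{ev}_{1_H}$ and $b\cdot 1_H=b$. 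Finally, for a morphism $(f_H,f_B):(B_1,H_1)\to(B_2,H_2)$, the Hopf morphism $f_H:H_2\to H_1$ preserves augmentation ideals and their powers, so $f_B(F_nB_1)\subset F_nB_2$ via $f_B(b)\cdot h=f_B(b\cdot f_H(h))$; thus $f_B$ restricts to $F_\infty B_1\to F_\infty B_2$, and paired with the dual Hopf morphism $f_H':H_1'\to H_2'$ from \S\ref{sect:A2:0711} it gives a HACA morphism, the compatibility $(f_H'\otimes f_B)\circ\Delta_{F_\infty B_1}=\Delta_{F_\infty B_2}\circ f_B$ again collapsing under transposition to $f_B(b\cdot f_H(h))=f_B(b)\cdot h$. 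Functoriality in both arguments is then immediate.

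\textbf{Main obstacle.} I expect the only delicate points to be the finite-dimensionality bookkeeping needed to identify $\mathrm{Hom}(H/F^{n+1}H,F_nB)$ with $F_nH^*\otimes F_nB$ \emph{coherently in $n$}, and the clean setup of the transposition principle; once these are secured, each identity reduces mechanically to one of the two HAMA axioms.
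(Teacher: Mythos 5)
Your proof is correct and follows essentially the same route as the paper: dualize the action map level by level using the finite dimensionality of $H/F^{n+1}H$, glue via the compatibility of the inclusions $F_nH^*\hookrightarrow F_{n+1}H^*$, and reduce each HACA axiom to a HAMA axiom by evaluating against elements of $H$. Your ``transposition principle'' is exactly the duality mechanism the paper uses (its part (b) is the same check phrased as a commutative-diagram chase, and its part (c) verifies only coassociativity explicitly, leaving the remaining axioms and functoriality to the reader, which you carry out in full).
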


\begin{proof}
(a) follows from the finite-dimensionality of $H/F^{n+1}H$, which has been established in the proof 
of Lem. \ref{lem:4:8:jeu}. 

(b)  Let $n\geq 0$  and $i_{n,n+1}^{F_\infty B} : F_nB\hookrightarrow F_{n+1}B$ 
and $i_{n,n+1}^{H'} : F_nH^*\hookrightarrow F_{n+1}H^*$ be the canonical inclusions. Recall that 
$i_{n,n+1}^{H'}$ is the dual of the projection map $p_{n+1,n}^H : H/F^{n+2}H\to H/F^{n+1}H$. 
The following diagram 
$$
\xymatrix{ &F_nB \otimes H\ar[r]\ar[ld]\ar[d] & F_nB \otimes (H/F^{n+1}B)\ar[dd]\\ F_{n+1}B\otimes H\ar[r]\ar[d]& B\otimes H\ar[rd]& \\ F_{n+1}B\otimes (H/F^{n+2}B)\ar[rr]& & B}
$$
is built up of two commutative square and a commutative triangle, therefore is commutative; it is the composition of the square whose commutativity expresses the equality  
$\Delta_{F_\infty B}\circ i_{n,n+1}^{F_\infty B}=
(i_{n,n+1}^{F_\infty B}\otimes i_{n,n+1}^{H'})\circ \Delta_{F_nB}$
with the morphisms $F_{n+1}B\to B$ and $F_nB\otimes H\to F_nB\otimes(H/F^{n+2}H)$, 
which are respectively injective and surjective. This implies the commutativity of this square, therefore 
the said equality, from which one derives the statement. 

(c) Let us show the coassociativity of $(H',F_\infty B)$. Recall that for $\xi\in H'$ and $b\in F_\infty B$, one has 
$\xi \cdot b=\langle id \otimes \xi,\Delta_B(b)\rangle$. Let $b \in F_\infty B$ and $\xi,\eta \in H'$, then 
$\xi\cdot(\eta\cdot b)=\langle id \otimes \xi,\Delta_B(\eta\cdot b)\rangle
=\langle id \otimes \xi,\Delta_B(\langle id \otimes \eta,\Delta_B(b)\rangle)\rangle
=\langle id \otimes \xi \otimes \eta,(\Delta_B \otimes id) \circ \Delta_B(b)\rangle
=\langle id \otimes \xi \otimes \eta,(id \otimes \Delta_H) \circ \Delta_B(b)\rangle
=\langle id \otimes (\xi\cdot \eta),\Delta_B(b)\rangle=(\xi\cdot\eta)\cdot b$. 
\end{proof}

\begin{lem}\label{lem:for:local:expansion}
(a) Let $(B,H)$ be a HAMA and let $H'\subset H$ be a Hopf subalgebra. Let us denote by $F_\bullet^HB$, $F_\bullet^{H'}B$ the 
algebra filtrations of $B$ attached to the actions of $H,H'$. Then $F_\infty^HB\subset F_\infty^{H'}B$. 

(b) Let $f : B\to C$ be an algebra morphism; let $H$ be a Hopf algebra right acting on~$B$ and~$C$; assume that $f$ is $H$-equivariant. 
Then $(f,id) : (B,H)\to (C,H)$ is a HAMA morphism, and $f(F_\infty B)\subset F_\infty C$. 
\end{lem}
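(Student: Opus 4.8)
The plan is to establish both statements at the level of each individual filtration step and then pass to the union, so that the total-space assertions about $F_\infty$ follow formally.

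For (a), I would first record that a Hopf subalgebra inclusion $H'\subset H$ respects counits, so that $\epsilon_{H'}=\epsilon_H|_{H'}$ and hence the augmentation ideals satisfy $J_{H'}=J_H\cap H'\subset J_H$. Since $H'$ is in particular a subalgebra of $H$, taking $(n+1)$-th powers preserves the inclusion, giving $F^{n+1}H'=J_{H'}^{\,n+1}\subset J_H^{\,n+1}=F^{n+1}H$ for every $n\geq 0$ (with $F^\bullet$ as in \S\ref{sect:A2:0711}). Now if $b\in F_n^HB=\mathrm{Ann}(F^{n+1}H,B)$, then $b$ annihilates the larger set $F^{n+1}H$, hence \emph{a fortiori} the smaller set $F^{n+1}H'$, so $b\in F_n^{H'}B$. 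This yields the level-wise inclusion $F_n^HB\subset F_n^{H'}B$, and taking the union over $n\geq 0$ gives $F_\infty^HB\subset F_\infty^{H'}B$, which is the desired statement.

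For (b), the HAMA-morphism claim is a direct check against the definition of morphism in $\mathbf{HAMA}$ (see \S\ref{B2:0112}): the pair with algebra component $f$ and Hopf component $\mathrm{id}_H$ has its former entry an algebra morphism by hypothesis and its latter entry a Hopf algebra morphism, while the required compatibility $f(b\cdot \mathrm{id}_H(h))=f(b)\cdot h$ is precisely the assumed $H$-equivariance of $f$. For the filtration inclusion, I would observe that the HAMA filtrations $F_\bullet B$ and $F_\bullet C$ from Def.~\ref{def:B5:0709} coincide with the filtrations of $B$ and $C$ regarded as right $H$-modules in the sense of Def.~\ref{MISSINGDEF}, since both are defined as $\mathrm{Ann}(F^{n+1}H,-)$; as $f$ is then a morphism of right $H$-modules, Lem.~\ref{MISSINGLEM}(b) gives $f(F_nB)\subset F_nC$ for every $n$, and the union yields $f(F_\infty B)\subset F_\infty C$. (Alternatively, one argues directly: for $b\in F_nB$ and $h\in F^{n+1}H$, equivariance gives $f(b)\cdot h=f(b\cdot h)=f(0)=0$, so $f(b)\in F_nC$.)

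Both parts are essentially formal consequences of the definitions, so I do not anticipate a genuine obstacle. The only points requiring care are the identification $J_{H'}\subset J_H$ coming from the Hopf-subalgebra hypothesis, together with the resulting inclusion of ideal powers, in (a), and the recognition in (b) that the HAMA filtration of $B$ is literally its filtration as an $H$-module, which lets me invoke Lem.~\ref{MISSINGLEM}(b) rather than reprove compatibility from scratch.
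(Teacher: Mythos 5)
Your proposal is correct and follows essentially the same route as the paper: in (a) the inclusion $F^{n+1}H'\subset F^{n+1}H$ (which you justify slightly more explicitly via $J_{H'}\subset J_H$) reverses to a levelwise inclusion of annihilators, and in (b) the paper gives precisely your direct computation $f(b)\cdot h=f(b\cdot h)=0$. No issues.
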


\begin{proof}
(a) For $n \geq 0$, one has $F_n^HB=\{b \in B\,|\,h\cdot b=0$ for $h\in F^{n+1}H\} \subset \{b \in B\,|\,h\cdot b=0$ for $h\in F^{n+1}H'\}
=F_n^{H'}B$, where the inclusion follows from $F^{n+1}H' \subset F^{n+1}H$. This implies $F_\infty^HB \subset F_\infty^{H'}B$. 

(b) For $n\geq 0$, $b\in F_nB$ and $h\in F^{n+1}H$, one has $f(b)\cdot h=f(b\cdot h)=f(0)=0$ where the first equality follows 
the equivariance of $f$ and the second equality follows from $b\in F_nB$. Therefore $f(b)\in F_nC$. So $f(F_nB)\subset F_nC$, hence 
$f(F_\infty B)\subset F_\infty C$. 
\end{proof}

\subsection{Pairing-morphisms from a HACA to a HAMA and HACA morphisms}\label{sect:B3:0112}

\begin{defn}\label{def:5:9:toto}
Let $(O,A)$ be a HACA and $(B,H)$ be a HAMA. A pairing-morphism from $(O,A)$ to $(B,H)$
is a pair $(p,f)$, where $p : O\otimes H\to\mathbb C$ is a Hopf algebra pairing, and 
$f : A\to B$ is an algebra morphism, such that the following diagram commutes
\begin{equation}\label{DIAG29}
\xymatrix{A\otimes H\ar^{f\otimes id}[rr]\ar_{\Delta_A\otimes id}[d] && B\otimes H\ar[d]\\ O\otimes A\otimes H
\ar_{\sigma_{OA}\otimes id}[r]& A\otimes O\otimes H
\ar_{f\otimes p}[r] &B}
\end{equation}
where $\Delta_A : A\to O\otimes A$ is the coaction morphism of $(O,A)$, $\sigma_{OA} : O\otimes A\to A\otimes O$ is the permutation 
isomorphism, and the right vertical map is the action map of $(B,H)$. Equivalently, one requests the identity 
$$
f(a)\cdot h=f(a^{(2)})p(a^{(1)} \otimes h)
$$
to be satisfied for any $a\in A$ and $h\in H$, where one denotes $\Delta_A(a):=a^{(1)} \otimes a^{(2)}$. 

We denote by $\mathbf{PM}((O,A),(B,H))$ the set of pairing-morphisms from $(O,A)$ to $(B,H)$. 
\end{defn}

\begin{lem}\label{lem:5:10:0407}
Let $(O,A)$ an object in $\mathbf{HACA}$ and $(B,H)$ an object in $\mathbf{HAMA}_{fd}$. 
If $(p,f)\in \mathbf{PM}((O,A),(B,H))$, then $f(F_\bullet A)\subset F_\bullet B$, so that 
$f$ induces an algebra morphism $F_\infty f : F_\infty A\to F_\infty B$. The assignment
$(p,f)\mapsto(\nu(p),F_\infty f)$ defines a map 
$$
\tilde\nu :  \mathbf{PM}((O,A),(B,H))\to \mathbf{HACA}((F_\infty O,F_\infty A),(H',F_\infty B)). 
$$
\end{lem}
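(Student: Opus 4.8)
The plan is to proceed in two stages: first establishing that $f$ respects the filtrations (so that $F_\infty f$ is defined), then checking that the pair $(\nu(p),F_\infty f)$ satisfies the comodule-algebra compatibility required of a HACA morphism. For the first stage I would fix $n\geq 0$ and $a\in F_nA$, so that by Def. \ref{def:FnA} one has $\Delta_A(a)\in F_n^\heartsuit O\otimes A$; writing $\Delta_A(a)=a^{(1)}\otimes a^{(2)}$ with the convention that the first tensor legs lie in $F_n^\heartsuit O$, for any $h\in F^{n+1}H$ the pairing-morphism identity of Def. \ref{def:5:9:toto} gives $f(a)\cdot h=f(a^{(2)})p(a^{(1)}\otimes h)$. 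The computation in the proof of Lem. \ref{lem:4:11:0307} shows $p(F_n^\heartsuit O\otimes F^{n+1}H)=0$, so every factor $p(a^{(1)}\otimes h)$ vanishes and hence $f(a)\cdot h=0$. As this holds for all $h\in F^{n+1}H$, Def. \ref{def:B5:0709} yields $f(a)\in F_nB$. Thus $f(F_nA)\subset F_nB$ for every $n$, whence $f(F_\infty A)\subset F_\infty B$, and since $f$ is an algebra morphism it corestricts to an algebra morphism $F_\infty f:F_\infty A\to F_\infty B$.

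For the second stage, $\nu(p):O^\heartsuit\to H'$ is a Hopf algebra morphism by Lem. \ref{lem:4:11:0307}, so it remains only to verify
$$
(\nu(p)\otimes F_\infty f)\circ\Delta_{F_\infty A}=\Delta_{F_\infty B}\circ F_\infty f
$$
as maps $F_\infty A\to H'\otimes F_\infty B$, both sides being well-defined because $\Delta_{F_\infty A}$ takes values in $O^\heartsuit\otimes F_\infty A$ by Lem. \ref{lem:5:3:0307}(d). Since $H'\subset H^*$, the evaluation map $H'\otimes F_\infty B\hookrightarrow\mathrm{Hom}_{\mathbb C-vec}(H,F_\infty B)$ is injective, so it suffices to show the two sides agree after pairing the $H'$-leg against an arbitrary $h\in H$. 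For $a\in F_\infty A$ with $\Delta_A(a)=a^{(1)}\otimes a^{(2)}$, pairing the left-hand side against $h$ gives $\langle\nu(p)(a^{(1)}),h\rangle\, f(a^{(2)})$, which by the defining diagram of $\nu(p)$ in Lem. \ref{lem:4:11:0307} equals $p(a^{(1)}\otimes h)\,f(a^{(2)})$. On the right-hand side, the coaction $\Delta_{F_\infty B}$ of Lem. \ref{lem:5:8:0307} is by construction dual to the $H$-action on $F_\infty B$, so pairing its $H'$-leg against $h$ recovers $f(a)\cdot h$. The pairing-morphism identity $f(a)\cdot h=f(a^{(2)})p(a^{(1)}\otimes h)$ then equates the two expressions, proving the compatibility. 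This shows $(\nu(p),F_\infty f)\in\mathbf{HACA}((O^\heartsuit,F_\infty A),(H',F_\infty B))$, and as the construction is manifestly well-defined on each $(p,f)$, it furnishes the desired map $\tilde\nu$.

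The step I expect to require the most care is the reduction carried out in the second stage: one must justify cleanly that an element of $H'\otimes F_\infty B$ is determined by pairing its first leg with all $h\in H$ (this rests on the injectivity of $H^*\otimes F_\infty B\to\mathrm{Hom}_{\mathbb C-vec}(H,F_\infty B)$, valid for any vector space), and one must carefully match the conventions of Lem. \ref{lem:5:8:0307} so that $\Delta_{F_\infty B}$ encodes precisely the assignment $b\mapsto(h\mapsto b\cdot h)$ used above. Everything else is a direct combination of the pairing-morphism identity with the two vanishing facts $p(F_n^\heartsuit O\otimes F^{n+1}H)=0$ and $\Delta_{F_nA}(F_nA)\subset F_n^\heartsuit O\otimes F_nA$.
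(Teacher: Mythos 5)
Your proof is correct, and its overall strategy coincides with the paper's: first show $f(F_nA)\subset F_nB$ using the vanishing of $p$ on $F_n^\heartsuit O\otimes F^{n+1}H$, then observe that the comodule compatibility of $(\nu(p),F_\infty f)$ is exactly the dual of the pairing-morphism identity. The differences are in execution, and in both stages yours is the more streamlined route. In stage one, you invoke the single-element identity $f(a)\cdot h=f(a^{(2)})\,p(a^{(1)}\otimes h)$ together with the already-established fact $p(F_n^\heartsuit O\otimes F^{n+1}H)=0$ from the proof of Lem.~\ref{lem:4:11:0307}, whereas the paper re-derives that vanishing inline via the iterated coproduct $\Delta_A^{(n+1)}$ and the subspaces $O_L^{(n+1)}$; your shortcut is legitimate since that vanishing is genuinely proved there. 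In stage two, the paper descends level by level to the finite-dimensional quotients $H/F^{n+1}H$, dualizes to get a commutative square at each $n$, and glues; you instead embed $H'\otimes F_\infty B$ into $\mathrm{Hom}_{\mathbb C}(H,F_\infty B)$ and check equality of the two composites after evaluation against an arbitrary $h\in H$, which collapses the diagram chase to a one-line computation. Note that finite-dimensionality has not actually been avoided: it is still what makes $\Delta_{F_\infty B}$ exist in the first place (Lem.~\ref{lem:5:8:0307}), and your identification of $\Delta_{F_\infty B}$ with $b\mapsto(h\mapsto b\cdot h)$ is exactly its defining property, so the conventions do match as you hoped. The only point I would insist you spell out is the one you already flag: the injectivity of $H'\otimes F_\infty B\to\mathrm{Hom}_{\mathbb C}(H,F_\infty B)$, which follows from $H'\subset H^*$ and the injectivity of $V^*\otimes W\to\mathrm{Hom}(V,W)$ for arbitrary vector spaces. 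With that stated, the argument is complete.
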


\begin{proof}
Let $n\geq 0$ and $a\in F_nA$. Then $\Delta_A(a)\in F_n   O \otimes A$. So $( 
\Delta_O^{(n+1)}\otimes id_A) \circ \Delta_A(a) \in (\sum_{\emptyset \neq L \subset [\![1,n+1]\!]}O_L^{(n+1)}) \otimes A  
\subset A \otimes O^{\otimes n+1}$. Then for $h_1,\ldots,h_{n+1} \in H_+$, one has 
\begin{align*}
    & f(a)\cdot (h_1\cdots h_{n+1})=\langle(id\otimes f)\circ\Delta_A(a), (h_1\cdots h_{n+1})\otimes id\rangle
    \\ & =\langle(id^{\otimes n+1}\otimes f)\circ\Delta_A^{(n+1)}(a), (h_1\otimes \cdots \otimes h_{n+1})\otimes id\rangle=0, 
\end{align*}
where the first equality follows from Def. \ref{def:5:9:toto}, the second equality follows from the Hopf algebra pairing axiom, the last equality 
follows from $\langle O_L^{(n+1)},H_+^{\otimes n+1}\rangle=0$, which follows from $\langle 1,H_+\rangle=0$. 
It follows that $f(a) \in F_nB$. 

Let us prove that $(\nu(p),F_\infty f)$ is a morphism in $\mathbf{HACA}$.
Let $n \geq 0$. Since $f(F_nA)\subset F_nB$, $f$ induces a linear map $F_nf : F_nA\to F_nB$. 

By Lem. \ref{lem:5:3:0307}(b), the coaction map $\Delta_A : A\to O \otimes A$ induces a map $\Delta_{F_nA} : F_nA\to 
F_n   O\otimes F_nA$. By Lem. \ref{1008:jeu}(c), the action map $\mathrm{act}_B : B \otimes H\to B$ induces a map 
$\mathrm{act}_{F_nB} : F_nB\otimes H \to F_nB$.    
Therefore \eqref{DIAG29} induces a commutative diagram 
\begin{equation}
    \xymatrix{F_nA \otimes H\ar_{\Delta_{F_nA} \otimes id}[d]\ar^{F_nf \otimes id}[r]&F_nB \otimes H\ar^{\mathrm{act}_{F_nB}}[d]\\
    F_n   O \otimes F_nA \otimes  H\ar_{(F_nf \otimes p)\circ(\sigma_{OA}\otimes id)}[r]&F_nB}
\end{equation}
By the proof of Lem. \ref{lem:4:11:0307}, the restriction of $p$ to $F_n   O \otimes F^{n+1}H$ is zero, which induces a pairing 
$p_n : F_n   O \otimes (H/F^{n+1}H)\to\mathbb C$. By Lem. \ref{1008:jeu}(c), the restriction of $\mathrm{act}_{F_nB}$ to 
$F_nB \otimes (H/F^{n+1}H)$ is zero, inducing a linear map $\underline{\mathrm{act}}_{F_nB} : F_nB \otimes (H/F^{n+1}H)\to F_nB$. 
The above diagram therefore gives rise to a commutative diagram
$$
\xymatrix{F_nA \otimes (H/F^{n+1}H)\ar^{F_nf \otimes id}[r]\ar_{\Delta_{F_nA} \otimes id}[d]&F_nB \otimes (H/F^{n+1}H)
\ar^{\underline{\mathrm{act}}_{F_nB}}[d]\\
F_n   O \otimes F_nA \otimes (H/F^{n+1}H)\ar_{(F_nf \otimes p_n)\circ(\sigma_{OA}\otimes id)}[r]&F_nB}
$$
Since $H/F^{n+1}H$ is finite dimensional, the map $p_n$ gives rise to a linear map $\nu(p)_n : F_n   O\to F_nH^*$ (see proof of 
Lem. \ref{lem:4:11:0307}) 
and the map $\underline{\mathrm{act}}_{F_nB}$ gives rise to the map $\Delta_{F_nB} : F_nB\to F_nH^*\otimes F_nB$ (see Lem. 
\ref{lem:5:8:0307}(a)). The above commutative diagram then gives rise to a commutative diagram  
$$
\xymatrix{F_nA \ar^{F_nf}[r]\ar_{\Delta_{F_nA}}[d]&F_nB
\ar^{\Delta_{F_nB}}[d]\\
F_n   O \otimes F_nA  \ar_{\nu(p)_n \otimes F_nf}[r]&F_nH^*\otimes F_nB}
$$
This commutativity means that the restriction to $F_nA$ of the two composed maps of the diagram 
$$
\xymatrix{F_\infty A \ar^{F_\infty f}[r]\ar_{\Delta_{F_\infty A}}[d]&F_\infty B
\ar^{\Delta_{F_\infty B}}[d]\\
F_\infty O \otimes F_\infty A \ar_{ \nu(p)\otimes F_\infty f}[r]&H'\otimes F_\infty B}
$$
are equal. Since $F_\infty A=\cup_{n\geq0}F_nA$, this diagram is commutative, therefore the pair $(\nu(p),F_\infty f)$ is a morphism in 
$\mathbf{HACA}$. 
\end{proof}

\subsection{Two functors $\mathbf{HACA}\to\mathbf{Alg}_{gr}$ and a natural transformation} \label{sec:B4:0112}

\begin{defn}
$\mathbf{Alg}_{gr}$ is the category of $\mathbb Z_{\geq0}$-graded associative algebras. 
\end{defn}

By Prop. \ref{prop:LA:filt}(a), a Hopf algebra $O$ is naturally equipped with a filtration $F_\bullet O$. Set 
$\mathrm{gr}(O):=\oplus_{i\geq0}\mathrm{gr}_i(O)$, where $\mathrm{gr}_i(O):=F_i   O/F_{i-1}   O$ for $i\geq 0$, 
then $\mathrm{gr}(O)$ is a $\mathbb Z_{\geq0}$-graded Hopf algebra, therefore also 
a $\mathbb Z_{\geq0}$-graded associative algebra. Recall that an object $(O,A)$ of $\mathbf{HACA}$ gives rise to an algebra filtration 
$F_\bullet A$ on $A$ (see Lem. \ref{lem:5:3:0307}(a)). Then $\mathrm{gr}(A):=\oplus_{i\geq0}\mathrm{gr}_i(A)$, where 
$\mathrm{gr}_i(A):=F_iA/F_{i-1}A$ for $i\geq0$ is a $\mathbb Z_{\geq0}$-graded associative algebra. 

\begin{lem}
(a) The assignment $(O,A)\mapsto\mathrm{gr}(A)$ defines a functor $\mathrm{gr}:\mathbf{HACA}\to\mathbf{Alg}_{gr}$.  

(b) For $(O,A)$ an object in $\mathbf{HACA}$, equip $\mathrm{gr}(O)\otimes F_0A$ with the tensor product $\mathbb Z_{\geq 0}$-graded 
associative algebra structure, where $F_0A$ is concentrated in degree $0$. Then the assignment $(O,A)\mapsto \mathrm{gr}(O)\otimes
F_0A$ defines a functor $\Phi:\mathbf{HACA}\to\mathbf{Alg}_{gr}$.  
\end{lem}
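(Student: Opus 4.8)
The plan is to verify the two functoriality claims directly from the constructions, treating each part as a routine check that the relevant assignments respect identity morphisms and composition, and that the induced maps land in the stated graded algebras.

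For part (a), the key observation is that a morphism $(f_O, f_A) : (O,A) \to (O',A')$ in $\mathbf{HACA}$ consists of an algebra morphism $f_A : A \to A'$ compatible with the coactions via $(f_O \otimes f_A) \circ \Delta_A = \Delta_{A'} \circ f_A$. First I would show that $f_A$ is compatible with the filtrations $F_\bullet A$ and $F_\bullet A'$ from Def. \ref{def:FnA}: if $a \in F_nA$, then $\Delta_A(a) \in F_n^\heartsuit O \otimes A$, and applying $f_O \otimes f_A$ and using Prop. \ref{prop:LA:filt}(c) (which gives $f_O(F_n^\heartsuit O) \subset F_n^\heartsuit O'$) together with the coaction-compatibility yields $\Delta_{A'}(f_A(a)) \in F_n^\heartsuit O' \otimes A'$, i.e. $f_A(a) \in F_nA'$. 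Thus $f_A$ induces a graded algebra morphism $\mathrm{gr}(f_A) : \mathrm{gr}(A) \to \mathrm{gr}(A')$, since $f_A$ is already an algebra morphism and respects the filtrations; functoriality (preservation of identities and composition) is then immediate because $\mathrm{gr}$ of a composite is the composite of the graded pieces.

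For part (b), the assignment sends $(O,A)$ to $\mathrm{gr}(O) \otimes F_0A$, equipped with the tensor product grading where $F_0A$ sits in degree $0$. On morphisms, I would send $(f_O, f_A)$ to $\mathrm{gr}(f_O) \otimes F_0(f_A)$, where $\mathrm{gr}(f_O)$ is the graded Hopf algebra morphism induced by $f_O$ (again via Prop. \ref{prop:LA:filt}(c)) and $F_0(f_A)$ is the restriction of $f_A$ to $F_0A = A^O$, landing in $F_0A' = (A')^{O'}$. The compatibility $f_A(F_0A) \subset F_0A'$ follows from the $n=0$ case of the filtration-compatibility established in (a). Since $\mathrm{gr}(f_O)$ is a morphism of graded algebras and $F_0(f_A)$ is an algebra morphism into a degree-$0$ piece, their tensor product is a morphism in $\mathbf{Alg}_{gr}$; functoriality follows because both $\mathrm{gr}(-)$ on Hopf algebras and $F_0(-)$ on the comodule-algebra respect composition and identities.

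I do not expect a serious obstacle here, as all the required inputs—the Hopf algebra filtration of $O$ (Prop. \ref{prop:LA:filt}), the algebra filtration of $A$ and its graded (Lem. \ref{lem:5:3:0307}), and the fact that $A^O = F_0A$—are already in place. The one point deserving a moment of care is checking that the tensor product algebra structure in (b) is genuinely graded and that placing $F_0A$ in degree $0$ is consistent with $F_0(f_A)$ being a degree-preserving map; this is a formal verification using the definition of the tensor product of graded algebras, where $\mathrm{gr}(O)$ supplies the grading and $F_0A$ contributes only in degree $0$. The remainder is bookkeeping that the two assignments on objects and morphisms define genuine functors.
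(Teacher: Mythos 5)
Your proposal is correct and follows essentially the same route as the paper: the paper phrases (a) as the composition of a functor $\mathbf{HACA}\to\mathbf{Alg}_{fil}$ with the associated-graded functor, and (b) as a composition of the functors $(O,A)\mapsto F_0A$ and $(O,A)\mapsto\mathrm{gr}(O)$ with the degree-$0$ and tensor-product functors, while you simply unfold these compositions and verify the key filtration-compatibility $f_A(F_nA)\subset F_nA'$ explicitly (which the paper leaves implicit). No gaps.
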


\begin{proof}
(a) The assignment $(O,A)\mapsto(A,F_\bullet A)$ defines a functor $\mathbf{HACA}\to\mathbf{Alg}_{fil}$ where $\mathbf{Alg}_{fil}$ is the 
category of filtered algebras (in the sense of \S\ref{background:filtrations}). The said assignment is the composition of this functor with the 
``associated-graded functor'' 
$\mathbf{Alg}_{fil}\to\mathbf{Alg}_{gr}$. (b) The assignments $(O,A)\mapsto F_0A$ and $(O,A)\mapsto\mathrm{gr}(O)$ define functors 
$\mathbf{HACA}\to \mathbf{Alg}$ and $\mathbf{HACA}\to \mathbf{Alg}_{gr}$, where $\mathbf{Alg}$ is the category of associative  
algebras. The said assignment is the composition of the product of these functors, of the ``degree-0 functor'' $\mathbf{Alg}\to\mathbf{Alg}_{gr}$, 
and of the tensor product functor $\mathbf{Alg}_{gr}^2\to\mathbf{Alg}_{gr}$.  
\end{proof}

Let $(O,A)$ be an object in $\mathbf{HACA}$. By Lem. \ref{lem:5:3:0307}(c), the coaction map $\Delta_A:A\to O\otimes A$ is 
compatible with the filtrations on both 
sides, therefore gives rise to a morphism of graded algebras $\mathrm{gr}_\bullet(\Delta_A):\mathrm{gr}_\bullet(A)\to
\mathrm{gr}_\bullet(O\otimes A)=\mathrm{gr}_\bullet (O)\otimes\mathrm{gr}_\bullet(A)$. There is a unique morphism 
of graded algebras $pr_0:\mathrm{gr}_\bullet(A)\to F_0A$ given by the identity in degree 0 and 0 on all degree components of degree $>0$. 

\begin{defn}
$\mathrm{nat}_{(O,A)}:\mathrm{gr}_\bullet(A)\to \mathrm{gr}_\bullet (O)\otimes F_0A$ is the composition of 
$\mathrm{gr}_\bullet(\Delta_A)$ with $id\otimes pr_0$. 
\end{defn}

\begin{lem}\label{lem:5:14:0407}
(a) The assignment $(O,A)\mapsto \mathrm{nat}_{(O,A)}\in\mathbf{Alg}_{gr}(\mathrm{gr}(A),\Phi(O,A))$ is a natural transformation relating the functors 
$\mathrm{gr},\Phi:\mathbf{HACA}\to\mathbf{Alg}_{gr}$. 

(b) For any object $(O,A)$ of $\mathbf{HACA}$, the morphism $\mathrm{nat}_{(O,A)}$ is injective. 
\end{lem}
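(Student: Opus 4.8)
The plan is to prove the two parts of Lemma \ref{lem:5:14:0407} in sequence, treating the naturality statement (a) as a formal diagram-chasing exercise and reserving the real work for the injectivity statement (b).

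\textbf{Part (a).} First I would verify that $\mathrm{nat}_{(O,A)}$ is well-defined as a morphism in $\mathbf{Alg}_{gr}$: it is a composition of the graded algebra morphism $\mathrm{gr}_\bullet(\Delta_A)$ (which exists by Lem. \ref{lem:5:3:0307}(c), since $\Delta_A$ respects the filtrations) with the morphism $id\otimes pr_0$, and $pr_0$ is a graded algebra morphism by construction, so the composite is a graded algebra morphism. For naturality, I would take a morphism $(f_O,f_A):(O,A)\to(O',A')$ in $\mathbf{HACA}$ and chase the square relating $\mathrm{nat}_{(O,A)}$ and $\mathrm{nat}_{(O',A')}$. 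The key inputs are that $f_A$ is filtration-compatible (hence $\mathrm{gr}(f_A)$ is defined), that $f_O$ is filtration-compatible for $F^\heartsuit_\bullet$ by Prop. \ref{prop:LA:filt}(c), and that the HACA morphism condition $(f_O\otimes f_A)\circ\Delta_A=\Delta_{A'}\circ f_A$ holds. Passing to associated graded, this last identity becomes $(\mathrm{gr}(f_O)\otimes\mathrm{gr}(f_A))\circ\mathrm{gr}(\Delta_A)=\mathrm{gr}(\Delta_{A'})\circ\mathrm{gr}(f_A)$; composing with $id\otimes pr_0$ and using that $pr_0$ commutes with the degree-$0$ restriction of $f_A$ gives the desired commuting square. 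This is entirely routine.

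\textbf{Part (b).} This is where the substance lies. I would fix $n\geq 0$ and show that the degree-$n$ component $\mathrm{gr}_n(\mathrm{nat}_{(O,A)}):\mathrm{gr}_n(A)\to\bigoplus_{p+q=n,\,q=0}\mathrm{gr}^\heartsuit_p(O)\otimes F_0A=\mathrm{gr}^\heartsuit_n(O)\otimes F_0A$ is injective. Unwinding the definition, $\mathrm{gr}_n(\mathrm{nat}_{(O,A)})$ sends the class of $a\in F_nA$ to the image in $\mathrm{gr}^\heartsuit_n(O)\otimes F_0A$ of $\Delta_A(a)$ under the projection picking out the $(n,0)$-bidegree piece. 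Suppose $a\in F_nA$ has $\mathrm{nat}_{(O,A)}([a])=0$; I must show $a\in F_{n-1}A$. The plan is to use Lem. \ref{lem:5:3:0307}(c), which gives $\Delta_A(F_nA)\subset\sum_{p+q=n}F_p^\heartsuit O\otimes F_qA$, so that in $\mathrm{gr}^\heartsuit_\bullet(O)\otimes\mathrm{gr}_\bullet(A)$ the element $\mathrm{gr}(\Delta_A)([a])$ decomposes as $\sum_{p+q=n}z_{p,q}$ with $z_{p,q}\in\mathrm{gr}^\heartsuit_p(O)\otimes\mathrm{gr}_q(A)$. Applying $id\otimes pr_0$ kills all terms with $q>0$ and retains only $z_{n,0}$, which is precisely $\mathrm{nat}_{(O,A)}([a])$; the hypothesis says this vanishes.

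The main obstacle — and the crux of the argument — is to upgrade the vanishing of the single top term $z_{n,0}$ into the vanishing of $[a]$ itself. The idea is to exploit the counit: the composition $(\varepsilon_O\otimes id_A)\circ\Delta_A=id_A$ expresses $a$ as recoverable from $\Delta_A(a)$, but $\varepsilon_O$ only sees the degree-$0$ part $F_0^\heartsuit O=\mathbb C1$. I would therefore argue that the $(n,0)$-component $z_{n,0}$ is the \emph{only} part of $\mathrm{gr}(\Delta_A)([a])$ that records the leading behavior of $a$ modulo $F_{n-1}A$: concretely, using that $\mathrm{gr}^\heartsuit(O)$ is a connected graded Hopf algebra with $\mathrm{gr}^\heartsuit_0(O)=\mathbb C$, the counit $\varepsilon_O$ restricted to $\mathrm{gr}^\heartsuit_n(O)$ for $n>0$ vanishes, so applying $\varepsilon_O\otimes id$ to $z_{n,0}$ recovers exactly the class $[a]\in\mathrm{gr}_n(A)$ when $n>0$ (and $z_{0,0}=[a]$ when $n=0$). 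Hence $z_{n,0}=0$ forces $[a]=0$, i.e.\ $a\in F_{n-1}A$, giving injectivity of $\mathrm{gr}_n(\mathrm{nat}_{(O,A)})$. Since $\mathrm{nat}_{(O,A)}$ is a direct sum over $n$ of these injective graded components, it is injective. I expect the delicate point to be tracking the counit identity through the associated-graded and bidegree decomposition cleanly; this is the step I would write out most carefully.
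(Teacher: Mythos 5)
Your part (a) is fine and matches the paper, which treats naturality as routine. The problem is in part (b), where your key step is internally inconsistent and attacks the wrong bidegree component. Writing $\mathrm{gr}(\Delta_A)([a])=\sum_{p+q=n}z_{p,q}$ with $z_{p,q}\in\mathrm{gr}^\heartsuit_p(O)\otimes\mathrm{gr}_q(A)$, the map $\mathrm{nat}_{(O,A)}$ retains $z_{n,0}$ (the component with $O$-degree $n$ and $A$-degree $0$), as you correctly say. But the counit of the connected graded Hopf algebra $\mathrm{gr}^\heartsuit(O)$ is the projection onto degree $0$, so applying $\varepsilon\otimes id$ to the sum kills every $z_{p,q}$ with $p>0$ and retains only $z_{0,n}$. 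The counit identity $(\varepsilon_O\otimes id_A)\circ\Delta_A=id_A$ therefore yields $z_{0,n}=[a]$ — a statement that is always true and carries no information about $z_{n,0}$. Your sentence ``applying $\varepsilon_O\otimes id$ to $z_{n,0}$ recovers exactly the class $[a]$'' contradicts the line just before it (where you note $\varepsilon_O$ vanishes on $\mathrm{gr}^\heartsuit_n(O)$ for $n>0$): that application gives $0$, not $[a]$. So the implication $z_{n,0}=0\Rightarrow[a]=0$ is not established.

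The missing idea is that the filtration $F_\bullet A$ is \emph{defined} by the coaction: by Def.~\ref{def:FnA}, $F_nA=\Delta_A^{-1}(F_n^{\heartsuit}O\otimes A)$. This is what the paper uses. If $z_{n,0}=0$, then $\Delta_A(a)\in\sum_{p=1}^{n}F_{n-p}^{\heartsuit}O\otimes F_pA+F_{n-1}(O\otimes A)$, where $F_{n-1}(O\otimes A)=\sum_{i=0}^{n-1}F_{n-1-i}^{\heartsuit}O\otimes F_iA$; both summands are contained in $F_{n-1}^{\heartsuit}O\otimes A$, so $\Delta_A(a)\in F_{n-1}^{\heartsuit}O\otimes A$ and hence $a\in F_{n-1}A$ directly from the definition, i.e.\ $[a]=0$ in $\mathrm{gr}_n(A)$. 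No counit is needed. With this replacement your outline for (b) goes through; without it, the argument fails at its crux.
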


\begin{proof}
(a) $\mathrm{nat}_{(O,A)}$ is a morphism of graded algebras as it is a composition of such morphisms. The naturality is obvious. (b) Let us prove 
that for any $n\geq 0$, the degree $n$ component $\mathrm{nat}_{(O,A)}^n$ of $\mathrm{nat}_{(O,A)}$ is injective. The double inclusion of vector spaces of 
$O\otimes A$
$$
F_{n-1}(O\otimes A)\subset \sum_{p=1}^n F_{n-p}   O\otimes F_pA\subset F_n(O\otimes A)
$$
where $F_k(O\otimes A):=\sum_{i=0}^k F_{k-i}   O\otimes F_iA$, 
gives rise to the map  
$$
F_n(O\otimes A)/F_{n-1}(O\otimes A)\to F_n(O\otimes A)/(\sum_{p=1}^n F_{n-p}   O\otimes  F_pA), %\to 0, 
$$
which can be identified with the projection 
$id\otimes pr_0 : \mathrm{gr}_n(O\otimes A)\to \mathrm{gr}_n (O)\otimes F_0A$.  It follows 
that the map $\mathrm{nat}_{(O,A)}^n$ is the vertical cokernel of the diagram 
$$
\xymatrix{F_nA \ar^{\Delta_A}[r]& F_n(O\otimes A)\\ F_{n-1}A\ar[r]\ar@{^{(}->}[u]& \sum_{p=1}^n 
F_{n-p}   O\otimes F_pA\ar@{^{(}->}[u]}
$$
therefore that its kernel is the image in $F_nA/F_{n-1}A$ of the preimage by $\Delta_A : F_nA\to F_n(O\otimes A)$ of the subspace  
$\sum_{p=1}^n F_{n-p}   O\otimes F_pA$ of $F_n(O\otimes A)$. This subspace is contained $F_{n-1}   O\otimes A$, which 
together with Def. \ref{def:FnA} implies that this preimage is contained in $F_{n-1}A$. This implies the vanishing of 
the kernel of $\mathrm{nat}_{(O,A)}^n$ and therefore the injectivity of $\mathrm{nat}_{(O,A)}^n$. 
\end{proof}

\subsection{Isomorphisms in $\mathbf{HACA}$}\label{sect:B5:0112}

\begin{prop}\label{lem:B:15:1508}
(a) A pair $(O,\mathbf a)$ of a Hopf algebra $O$ and an associative algebra $\mathbf a$ gives rise to an object $(O,O\otimes 
\mathbf a)$ of $\mathbf{HACA}$, with coaction morphism given by $\Delta_{O\otimes \mathbf a}:=\Delta_O\otimes id_{\mathbf a}$ 
(where $\Delta_O$ is the coproduct of $O$). 
Its image by the endofunctor of $\mathbf{HACA}$ from Lem. \ref{lem:5:3:0307}(d) is the pair $(F_\infty O,F_\infty O\otimes 
\mathbf a)$, where the coaction morphism is $\Delta_{F_\infty O\otimes \mathbf a}:=\Delta_{F_\infty O}\otimes id_{\mathbf a}$ 
(where $\Delta_{F_\infty O}$ is the coproduct of $F_\infty O$). 

(b) If $(O,\mathbf a)$ is a pair of a Hopf algebra and an associative algebra, if $(B,H)$ is an object of $\mathbf{HAMA}_{fd}$, 
and if $(p,f)\in\mathbf{PM}((O,O\otimes\mathbf a),(B,H))$ is a pairing-morphism such that $\nu(p) : F_\infty O\to H'$ is a Hopf 
algebra isomorphism and the algebra morphism $f : O\otimes\mathbf a\to B$ induces an algebra isomorphism 
$\mathbb C\otimes \mathbf a\stackrel{\sim}{\to} B^H$ between the 
subalgebras $\mathbb C\otimes\mathbf a$ and $B^H$ of both sides, then the morphism 
$(\nu(p),F_\infty f) : (F_\infty O,F_\infty O\otimes\mathbf a)\to (H',F_\infty B)$ in $\mathbf{HACA}$ is an isomorphism; 
in particular, $F_\infty f : F_\infty O\otimes\mathbf a\to F_\infty B$ is an filtered algebra isomorphism.

(c) In the situation of (b), the algebra morphism $f:O\otimes\mathbf a\to B$ is compatible with the filtrations $F _\bullet O\otimes
\mathbf a$ and $F_\bullet B$, and induces an isomorphism $\mathrm{gr} _\bullet(O)\otimes\mathbf a\to\mathrm{gr}_\bullet(B)$ 
in $\mathbf{Alg}_{gr}$.  
\end{prop}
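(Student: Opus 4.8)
The plan is to deduce part (c) from part (b), the real content being a clean identification of the relevant associated graded algebras, after which the isomorphism statement is inherited from (b). First I would pin down the algebra filtration $F_\bullet(O \otimes \mathbf a)$ attached to the HACA $(O,O\otimes\mathbf a)$ of part (a), showing that $F_n(O \otimes \mathbf a) = F_n^\heartsuit O \otimes \mathbf a$ for all $n\geq 0$. By Def. \ref{def:FnA} this space is the preimage of $F_n^\heartsuit O \otimes (O \otimes \mathbf a)$ under the coaction $\Delta_{O\otimes\mathbf a} = \Delta_O \otimes \mathrm{id}_{\mathbf a}$. The inclusion $\supseteq$ follows from $\Delta_O(F_n^\heartsuit O) \subset F_n^\heartsuit O \otimes O$ (Lem. \ref{lem:4:2:2106}(b) with $k=n+1$), while $\subseteq$ follows by applying $\mathrm{id}\otimes\epsilon_O\otimes\mathrm{id}_{\mathbf a}$ and using the counit axiom $(\mathrm{id}\otimes\epsilon_O)\circ\Delta_O=\mathrm{id}$. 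The asserted compatibility of $f$ with the filtrations $F^\heartsuit_\bullet O \otimes \mathbf a$ and $F_\bullet B$ is then exactly the statement $f(F_n(O\otimes\mathbf a))\subset F_nB$, which is provided by Lem. \ref{lem:5:10:0407}.

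Next I would identify the two graded algebras. As $\mathbf a$ is a (flat) $\mathbb C$-vector space, the computation above gives $\mathrm{gr}_n(O\otimes\mathbf a) = (F_n^\heartsuit O \otimes \mathbf a)/(F_{n-1}^\heartsuit O \otimes \mathbf a) = \mathrm{gr}^\heartsuit_n(O)\otimes \mathbf a$, compatibly with products, so that $\mathrm{gr}_\bullet(O\otimes\mathbf a)=\mathrm{gr}^\heartsuit_\bullet(O)\otimes\mathbf a$ in $\mathbf{Alg}_{gr}$. On the target side one has $\mathrm{gr}_\bullet(F_\infty B)=\mathrm{gr}_\bullet(B)$. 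Since $F_\infty(O\otimes\mathbf a)=\cup_n F_n^\heartsuit O\otimes\mathbf a = O^\heartsuit\otimes\mathbf a$, the restriction $F_\infty f$ of part (b) is literally $f$ on $O^\heartsuit\otimes\mathbf a$, and under these identifications the associated graded of $f$ on the filtration $F_\bullet(O\otimes\mathbf a)$ coincides with $\mathrm{gr}(F_\infty f)$.

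Finally I would invoke part (b) itself. It asserts that $(\nu(p),F_\infty f)$ is an isomorphism in $\mathbf{HACA}$; hence its inverse is again a HACA morphism, and by the functoriality of the filtration (Lem. \ref{lem:5:3:0307}(d), via Def. \ref{def:FnA}) the inverse algebra component satisfies $(F_\infty f)^{-1}(F_nB)\subset F_n(O\otimes\mathbf a)$. Combined with $F_\infty f(F_n(O\otimes\mathbf a))\subset F_nB$ this gives strictness, so $\mathrm{gr}(F_\infty f)$ is an isomorphism of graded algebras; through the identifications of the previous paragraph this is precisely the claimed isomorphism $\mathrm{gr}^\heartsuit_\bullet(O)\otimes\mathbf a \to \mathrm{gr}_\bullet(B)$.

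I expect the only mild subtlety — the main obstacle — to be the last paragraph: extracting strictness of $F_\infty f$ from the mere fact that it underlies a HACA isomorphism, for which one must use that the filtration $F_\bullet$ is genuinely functorial under HACA morphisms so that the inverse respects it. If the phrase ``filtered algebra isomorphism'' in (b) is read as an isomorphism of \emph{filtered} algebras in the sense of \S\ref{background:filtrations}, then the induced graded map is an isomorphism outright and this step is immediate. Everything else is a transcription of the filtration computation together with a direct appeal to part (b).
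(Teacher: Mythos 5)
Your proposal establishes the filtration computation $F_n(O\otimes\mathbf a)=F_n^\heartsuit O\otimes\mathbf a$ (which is the substance of part (a), and your argument via Lem. \ref{lem:4:2:2106} and the counit matches the paper's), and it would correctly pass from (b) to (c): a HACA isomorphism has an inverse that is again a HACA morphism, HACA morphisms respect the filtrations of Def. \ref{def:FnA} (by Prop. \ref{prop:LA:filt}(c) applied to the Hopf component), and strictness then yields that $\mathrm{gr}(F_\infty f)$ is an isomorphism.

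The genuine gap is that you never prove part (b), and (b) is the core of the proposition. Your entire argument for (c) begins with ``invoke part (b) itself,'' but (b) is among the statements to be proved, and the hypotheses of (b) --- that $\nu(p)$ is a Hopf algebra isomorphism and that $f$ restricts to an isomorphism $\mathbb C\otimes\mathbf a\to B^H$ --- do not by themselves hand you an inverse of $F_\infty f$; producing one is exactly the work to be done. The paper's logic runs in the opposite direction from yours: it first proves the graded statement of (c) directly, by composing $\mathrm{gr}_\bullet(f)$ with the injective natural transformation $\mathrm{nat}_{(H',F_\infty B)}$ of Lem. \ref{lem:5:14:0407} and verifying the key identity $\mathrm{nat}_{(H',F_\infty B)}\circ\mathrm{gr}_\bullet(f)=\mathrm{gr}_\bullet^\heartsuit(\nu(p))\otimes f_0$ (checked on the generating subalgebras $\mathrm{gr}_\bullet^\heartsuit(O)\otimes\mathbb C$ and $\mathbb C\otimes\mathbf a$); since the right-hand side is an isomorphism and $\mathrm{nat}$ is injective, both $\mathrm{nat}$ and $\mathrm{gr}_\bullet(f)$ are isomorphisms, and only then does Lem. \ref{graded:crit:iso} (exhaustiveness of the filtrations) deliver that $F_\infty f$ is an isomorphism, i.e.\ part (b). Without some independent argument of this kind --- in particular without the injectivity of $\mathrm{nat}_{(H',F_\infty B)}$, which is the ingredient that converts the two ``boundary'' hypotheses into control of the whole graded algebra --- your proof is circular as a proof of the full proposition.
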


\begin{proof} (a) If $o\in O$ is such that $\Delta_O(o)\in F_n   O \otimes O$, then 
$o=(id\otimes \epsilon_O)\circ \Delta_O(o)\in F_n   O$, where~$\epsilon_O$ is the counit of $O$, therefore 
$\{o\in O\,|\,\Delta_O(o)\in F_n   O\otimes O\}\subset F_n   O$. On the other hand, Lem. \ref{lem:4:2:2106}(c)
implies $F_n   O\subset\{o\in O\,|\,\Delta_O(o)\in F_n   O\otimes O\}$, therefore $F_n   O
=\{o\in O\,|\,\Delta_O(o)=F_n   O\otimes O\}$. Then $F_n(O\otimes \mathbf a)=\{o\in O\,|\,
\Delta_O(o)\in F_n   O\otimes O\}\otimes\mathbf a=F_n   O\otimes \mathbf a$. It follows that 
$F_\infty(O\otimes \mathbf a)=\cup_{n\geq0}F_n   O\otimes \mathbf a=F_\infty O\otimes \mathbf a$. The fact 
that $(F_\infty O,F_\infty O\otimes \mathbf a)\to(O,O\otimes \mathbf a)$ is a morphism in $\mathbf{HACA}$ (see 
Lem. \ref{lem:5:3:0307}(d)) implies that the coaction morphism of $F_\infty O\otimes \mathbf a$ has the claimed value.  

(b) and (c) Lem. \ref{lem:5:10:0407} applied to the pairing-morphism $(p,f)$ implies that $f : O \otimes \mathbf a\to B$ is a morphism of 
filtered algebras. By (a), there is an isomorphism $\mathrm{gr}_\bullet(O\otimes \mathbf a)
=\mathrm{gr}_\bullet (O)\otimes \mathbf a$. Composing it with the associated graded algebra morphism 
$\mathrm{gr}_\bullet(f) : \mathrm{gr}_\bullet(O\otimes \mathbf a)\to\mathrm{gr}_\bullet(B)$, one obtains a morphism 
$$
\mathrm{gr}_\bullet(f) : \mathrm{gr}_\bullet (O)\otimes \mathbf a\to\mathrm{gr}_\bullet(B)
$$
of graded algebras. Since $(B,H)$ is an object of $\mathbf{HAMA}_{fd}$, one may apply to it the functor from Lem. 
\ref{lem:5:8:0307}(c) to obtain the object $(H',F_\infty B)$ of $\mathbf{HACA}$. By Lem. \ref{lem:5:14:0407}, the 
latter object gives rise to an injective morphism of graded algebras 
$$
\mathrm{gr}_\bullet(B)=\mathrm{gr}_\bullet(F_\infty B)\stackrel{\mathrm{nat}_{(H',F_\infty B)}}{\to} 
\Phi(H',F_\infty B)=\mathrm{gr}_\bullet (H')\otimes(F_\infty B)^H
=\mathrm{gr}_\bullet (H')\otimes B^H. 
$$
The composition of these morphisms is the morphism 
$$
\mathrm{nat}_{(H',F_\infty B)}\circ \mathrm{gr}_\bullet(f) : \mathrm{gr}_\bullet (O)\otimes\mathbf a\to 
\mathrm{gr}_\bullet (H')\otimes B^H
$$
of graded algebras. As $\nu(p) : F_\infty O\to H'$ is a Hopf algebra isomorphism, it induces an isomorphism 
between the filtrations $F_\bullet $ on both sides (see Prop. \ref{prop:LA:filt}(c)), and an associated graded isomorphism 
$\mathrm{gr}_\bullet (\nu(p)) : \mathrm{gr}_\bullet (F_\infty O)\to \mathrm{gr}_\bullet (H')$, 
which we identify with its composition with 
the equality $\mathrm{gr}_\bullet (O)=\mathrm{gr}_\bullet (F_\infty O)$. Let us show that  
\begin{equation}\label{key:0407}
\mathrm{nat}_{(H',F_\infty B)}\circ \mathrm{gr}_\bullet(f)=\mathrm{gr}_\bullet (\nu(p))\otimes f_0, 
\end{equation}
(equality of morphisms of graded algebras $\mathrm{gr}_\bullet (O)\otimes\mathbf a\to 
\mathrm{gr}_\bullet (H')\otimes B^H$), 
where $f_0$ is the isomorphism $f_0 : \mathbb C\otimes \mathbf a\to B^H$. The composition 
$\mathbb C \otimes\mathbf a=F_0(O\otimes \mathbf a)\stackrel{\mathrm{gr}_0(f)}{\to} F_0B=B^H$ is $f_0$, which together with 
$\mathrm{gr}_0 (\nu(p))=1$ proves the degree 0 part of \eqref{key:0407}. Let $n\geq 0$ and $o\in \mathrm{gr}_n (O)$.
The image of $o$ under $\mathrm{nat}_{(H',F_\infty B)}\circ \mathrm{gr}_n(f)$ is equal to the image of $\tilde o\otimes1$
by the horizontal composition  
$$
\xymatrix{
&&F_n  H' \otimes B\ar[r]& \mathrm{gr}_n (H') \otimes B\\
F_n   O \otimes \mathbf a\ar_{F_nf}[r]&
F_nB\ar_{\!\!\!\!\!\!\!\!\!\!\!\!\!\!\!\!\!\!\!\!\!\!\!\!\!\!\!\Delta_{F_nB}}[r]&
\sum_{k=0}^n F_{n-k}  H' \otimes F_kB\ar[r]\ar@{^{(}->}[u]& 
\mathrm{gr}_n (H')\otimes F_0B\ar@{^{(}->}[u]
}
$$
where $\tilde o\in F_n   O$ is a lift of $o$. Since $F_nf$ is compatible with the coaction maps, the image of this element in 
$\sum_{k=0}^n  F_{n-k}  H' \otimes F_kB$ is equal to $\nu(p)(\tilde o^{(1)}) \otimes f(\tilde o^{(2)} \otimes 1)$, where
$\tilde o^{(1)}\otimes \tilde o^{(2)}=\Delta_O(\tilde o)\in \sum_{k=0}^n F_{n-k}   O\otimes F_k   O\subset O^{\otimes2}$. 
One has $\tilde o^{(1)}\otimes \tilde o^{(2)}\in \tilde o\otimes 1+F_{n-1}   O \otimes O$, therefore 
$\nu(p)(\tilde o^{(1)}) \otimes f(\tilde o^{(2)} \otimes 1)\in \nu(p)(\tilde o)\otimes 1+F_{n-1}  H' \otimes B$ (inclusion in 
$F_n  H' \otimes B$). It follows that the image of $\tilde o \otimes 1$ in $\mathrm{gr}_n (H')\otimes B$ is 
$\mathrm{im}(\nu(p)(\tilde o)\in F_n  H'\to\mathrm{gr}_n  H')\otimes 1=\mathrm{gr}_n (\nu(p))(o)\otimes1$. 
This implies that both sides of \eqref{key:0407} coincide when restricted to $\mathrm{gr}_n (O)\otimes\mathbb C$
for any $n$, therefore to $\mathrm{gr}_\bullet (O)\otimes\mathbb C$. \eqref{key:0407} then follows from the fact that 
each of its sides is an algebra morphism, and that they agree of $\mathrm{gr}_\bullet (O)\otimes\mathbb C$ and 
$\mathbb C \otimes \mathbf a$, which generate the source algebra. 

It follows from \eqref{key:0407} that $\mathrm{nat}_{(H',F_\infty B)}\circ \mathrm{gr}_\bullet(f)$ is an isomorphism of graded 
algebras. The injectivity of $\mathrm{nat}_{(H',F_\infty B)}$ then implies that both $\mathrm{nat}_{(H',F_\infty B)}$ and $\mathrm{gr}_\bullet(f)$ 
are isomorphisms of graded algebras. By Lem. \ref{lem:5:10:0407}, $f$ induces a morphism of filtered algebras 
$F_\infty f : F_\infty O\otimes\mathbf a \to F_\infty B$, and $\mathrm{gr}_\bullet(f)=\mathrm{gr}_\bullet(F_\infty f)$. It follows that 
$\mathrm{gr}_\bullet(F_\infty f)$ is an isomorphism of graded algebras, which together with the fact that the filtrations of the source and 
target of $F_\infty f$ are exhaustive, implies that $F_\infty f$ is an isomorphism of filtered algebras (see Lem. \ref{graded:crit:iso}). 
\end{proof}

\section{Filtered formality for Hopf algebras and HACAs}\label{sect:C:0112}

In §\ref{sect:C1:0112}, we introduce a notion of filtered formality for Hopf algebras; this notion is related in §\ref{sect:D:0112} 
to the similar notion for discrete groups, introduced in \cite{SW}. We extend this to a notion of filtered formality for HACAs in 
§\ref{B'2:3110}. The main result of this section is Prop. \ref{prop:10:14:3110}, which shows that a HACA constructed in the context of 
Prop. \ref{lem:B:15:1508} is filtered formal. 

\subsection{Filtered formality for Hopf algebras}\label{sect:C1:0112}

In Prop. \ref{prop:LA:filt}(a), we attach to a Hopf algebra~$O$ a filtration $F_\bullet  O$. 
Let $\mathrm{gr}(O)$ be the associated graded vector space. 

\begin{lem}\label{lem:10:1:a:2909}
 (a) If $O$ is a Hopf algebra, then $\mathrm{gr}(O)$ is a graded Hopf algebra, which is commutative if $O$ is, and such that 
 for each $n\geq0$, $F _n(\mathrm{gr} (O))=\mathrm{gr} _{\leq n}(O)$. 
 
 (b) The assignment $O\mapsto \mathrm{gr}(O)$ defines an endofunctor of the category 
$\mathbf{HA}_{comm}$ of commutative Hopf algebras. 
\end{lem}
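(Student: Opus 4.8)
*(a) If $O$ is a Hopf algebra, then $\mathrm{gr}^\heartsuit O$ is a graded Hopf algebra, which is commutative if $O$ is, and such that for each $n\geq0$, $F^\heartsuit_n(\mathrm{gr}^\heartsuit(O))=\mathrm{gr}^\heartsuit_{\leq n}(O)$.*

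*(b) The assignment $O\mapsto \mathrm{gr}^\heartsuit O$ defines an endofunctor of the category $\mathbf{HA}_{comm}$ of commutative Hopf algebras.*

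The plan is to prove (a) in three steps — the graded Hopf algebra structure, its commutativity, and the identification of the $\heartsuit$-filtration — and then read off (b) from functoriality already in hand. For the structure, by Prop. \ref{prop:LA:filt}(a) the collection $F_\bullet^\heartsuit O$ is a Hopf algebra filtration of $O$, so by the generalities of \S\ref{background:filtrations} the associated graded $\mathrm{gr}^\heartsuit O$ is a graded Hopf algebra; since $F_0^\heartsuit O=\mathbb C1$, it is connected, i.e. $\mathrm{gr}^\heartsuit_0 O=\mathbb C$. If $O$ is commutative, then its product, which induces the product of $\mathrm{gr}^\heartsuit O$, forces the latter to be commutative as well.

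The core of the statement is the equality $F_n^\heartsuit(\mathrm{gr}^\heartsuit O)=\mathrm{gr}^\heartsuit_{\le n}(O)$. Write $G:=\mathrm{gr}^\heartsuit O$, $G_d:=\mathrm{gr}^\heartsuit_d O$, and $\pi_C^{(m)}:=pr_C^{\otimes m+1}\circ\Delta_C^{(m+1)}$ for a Hopf algebra $C$, so that $F_m^\heartsuit C=\mathrm{Ker}(\pi_C^{(m)})$ by Def. \ref{conilp:filtra}. For the inclusion $\mathrm{gr}^\heartsuit_{\le n}(O)\subseteq F_n^\heartsuit G$ I would argue by degrees: $\pi_G^{(n)}$ is a graded map and $pr_G$ annihilates $G_0=\mathbb C$, so for homogeneous $x$ of degree $d\le n$ the image $\pi_G^{(n)}(x)$ lies in the span of the tensors $G_{i_0}\otimes\cdots\otimes G_{i_n}$ with all $i_j\ge1$ and $\sum_j i_j=d$, an empty condition since $n+1$ positive integers cannot sum to $d\le n$. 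For the converse, $F_n^\heartsuit G$ is the kernel of a graded map and hence is graded, so it suffices to prove that $\pi_G^{(d-1)}$ is injective on $G_d$ for every $d\ge1$: this yields $F_{d-1}^\heartsuit G\cap G_d=0$, and therefore $F_n^\heartsuit G\cap G_d=0$ for all $n<d$, giving $F_n^\heartsuit G\subseteq\mathrm{gr}^\heartsuit_{\le n}(O)$.

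To prove the injectivity I would lift $x\in G_d$ to $\tilde x\in F_d^\heartsuit O$ and analyse $\pi_O^{(d-1)}(\tilde x)$. Iterating Lem. \ref{lem:4:2:2106}(c) gives $\Delta_O^{(d)}(F_d^\heartsuit O)\subseteq\sum_{i_1+\cdots+i_d=d,\,i_j\ge0}F_{i_1}^\heartsuit O\otimes\cdots\otimes F_{i_d}^\heartsuit O$; applying $pr_O^{\otimes d}$, which kills $F_0^\heartsuit O=\mathbb C$, leaves only the term with all $i_j=1$, so $\pi_O^{(d-1)}(\tilde x)\in(F_1^\heartsuit O/\mathbb C)^{\otimes d}=G_1^{\otimes d}$. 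Because $\Delta_G$ is the associated graded of $\Delta_O$, this element coincides, under the canonical identifications, with the $G_1^{\otimes d}$-component of $\pi_G^{(d-1)}(x)$; and it vanishes precisely when $\tilde x\in\mathrm{Ker}(\pi_O^{(d-1)})=F_{d-1}^\heartsuit O$, i.e. when $x=0$ in $G_d$. Hence $\pi_G^{(d-1)}|_{G_d}$ is injective, which completes the proof of the filtration identity and of (a).

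For (b), a morphism $f:O_1\to O_2$ in $\mathbf{HA}_{comm}$ is compatible with the filtrations $F_\bullet^\heartsuit$ by Prop. \ref{prop:LA:filt}(c), hence induces a morphism $\mathrm{gr}^\heartsuit f:\mathrm{gr}^\heartsuit O_1\to\mathrm{gr}^\heartsuit O_2$ of graded commutative Hopf algebras; compatibility with identities and composition is immediate, so $O\mapsto\mathrm{gr}^\heartsuit O$ is an endofunctor of $\mathbf{HA}_{comm}$. I expect the main obstacle to be the clean identification in the third paragraph of $\pi_O^{(d-1)}$ restricted to $F_d^\heartsuit O$ with the top-degree part of $\pi_G^{(d-1)}$ — that is, checking that forming iterated reduced coproducts commutes with passing to the associated graded — together with the bookkeeping of the degree-$1$ projections.
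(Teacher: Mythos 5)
Your proof is correct and follows essentially the same route as the paper's: both establish the hard inclusion by computing the iterated reduced coproduct of $\mathrm{gr}^\heartsuit O$ on a lift in $O$ and using that $F_{d-1}^{\heartsuit}O=\mathrm{Ker}(pr_O^{\otimes d}\circ\Delta_O^{(d)})$ detects the filtration degree. Your reduction to the single critical case (injectivity of $pr_G^{\otimes d}\circ\Delta_G^{(d)}$ on $\mathrm{gr}^\heartsuit_d O$) is a mild streamlining of the paper's argument, which handles each intersection $F_n^\heartsuit(\mathrm{gr}^\heartsuit O)\cap\mathrm{gr}^\heartsuit_{n+k}O$ separately via a two-stage composition of reduced coproducts and a disjointness-of-subspaces argument; the identification you flag as the main obstacle does hold and is routine to verify.
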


\begin{proof} Let us show (a). 
The first statement follows from the fact that $F_\bullet  O$ is a Hopf algebra filtration (see Prop. \ref{prop:LA:filt}(a)). 
Let us show the second statement. Let $n\geq0$. The inclusion $F_n (\mathrm{gr}(O))\supset 
\mathrm{gr} _{\leq n} (O)$ follows from the fact that $\mathrm{gr}(O)$ is a graded and connected Hopf algebra. 
Let us show the opposite inclusion. Since $\mathrm{gr}(O)$ is a graded Hopf algebra, $F_n (\mathrm{gr}(O))$
is the direct sum of its intersection with the homogeneous components of $\mathrm{gr}(O)$. 
If $k\geq1$, the intersection $F_n (\mathrm{gr}(O))\cap\mathrm{gr} _{n+k}(O)$ is contained in $F_n (\mathrm{gr}(O))$, which is the kernel of 
$(id-\eta_{\mathrm{gr}(O)}\epsilon_{\mathrm{gr}(O)})^{\otimes n+1}\circ\Delta_{\mathrm{gr}(O)}^{(n+1)} : 
\mathrm{gr}(O)\to(\mathrm{gr}(O))^{\otimes n+1}=\mathrm{gr} (O^{\otimes n+1})$, therefore
$F_n (\mathrm{gr}(O))\cap\mathrm{gr} _{n+k}(O)$ is contained in the kernel of the map 
$\mathrm{gr}_{n+k}(O)\to\mathrm{gr}_{n+k} (O^{\otimes n+1})$ induced by 
$(id-\eta_{\mathrm{gr}(O)}\epsilon_{\mathrm{gr}(O)})^{\otimes n+1}\circ\Delta_{\mathrm{gr}(O)}^{(n+1)}$. 
This map is the degree $n+k$ part of the associated graded of the map 
$(id-\eta_O\epsilon_O)^{\otimes n+1}\circ\Delta_O^{(n+1)} : O\to O^{\otimes n+1}$. 
It follows that if $V$ is the preimage of $F_n (\mathrm{gr}(O))\cap\mathrm{gr} _{n+k}(O)$
in $F_{n+k}  O$, one has $(id-\eta_O\epsilon_O)^{\otimes n+1}\circ\Delta_O^{(n+1)}(V)\subset 
F_{n+k-1} (O^{\otimes n+1})$. The map $((id-\eta_O\epsilon_O)^{\otimes k}\circ\Delta_O^{(k)})\otimes id^{\otimes n}$
maps $F_{n+k-1} (O^{\otimes n+1})$ to $F_{n+k-1} (O^{\otimes n+k})$, therefore 
$(id-\eta_O\epsilon_O)^{\otimes n+k}\circ\Delta_O^{(n+k)}(V)\subset 
F_{n+k-1} (O^{\otimes n+k})$. Now $O=\mathbb C1\oplus F_1  O$; one has 
$(id-\eta_O\epsilon_O)^{\otimes n+k}\circ\Delta_O^{(n+k)}(V)\subset (F_1  O)^{\otimes n+k}$
while $F_{n+k-1} (O^{\otimes n+k})\subset \sum_{i=1}^{n+k}O^{\otimes i-1}\otimes\mathbb C1\otimes 
O^{\otimes n+k-i}$. As the second terms of both inclusions have zero intersection in $O^{\otimes n+k}$, one has 
$(id-\eta_O\epsilon_O)^{\otimes n+k}\circ\Delta_O^{(n+k)}(V)=0$. Therefore $V\subset F_{n+k-1}  O$, which 
implies $F_n (\mathrm{gr}(O))\cap\mathrm{gr} _{n+k}(O)=0$.  Therefore 
$F_n (\mathrm{gr}(O))=\mathrm{gr} _{\leq n}(O)$. 

(b) follows from the naturality of the assignment $O\mapsto F_\bullet  O$. 
\end{proof}

\begin{defn}\label{def:FF:HA:3110}
The Hopf algebra $\mathcal O$ is called {\it filtered formal} if there is an isomorphism of Hopf algebra 
$O\to\mathrm{gr} (O)$ whose associated graded for the grading $\mathrm{gr} $ is the identity. 
\end{defn}
This terminology is justified by Prop. \ref{prop:C3:3110}.

\subsection{Filtered formality for HACAs}\label{B'2:3110}

Let $\mathbf{HACA}_{gr}$ be the category of $\mathbb Z_{\geq0}$-graded HACAs.  

\begin{lem}\label{lem:10:12:2410}
The assignment $(O,A)\mapsto(\mathrm{gr}(O),\mathrm{gr}(A))$ is a functor 
$(\mathrm{gr} ,\mathrm{gr}) : \mathbf{HACA}\to\mathbf{HACA}_{gr}$.  
\end{lem}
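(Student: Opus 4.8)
The plan is to assemble the functor from the two endofunctors already constructed, namely $O \mapsto \mathrm{gr}^\heartsuit O$ on $\mathbf{HA}_{comm}$ (Lem.~\ref{lem:10:1:a:2909}(b)) and the filtered-to-graded passage $A \mapsto \mathrm{gr}A$ underlying the functor $\mathrm{gr} : \mathbf{HACA} \to \mathbf{Alg}_{gr}$. The only genuinely new content is to verify that the \emph{comodule-algebra structure} survives the passage to the associated graded, compatibly on both factors. First I would recall that, given an object $(O,A)$ of $\mathbf{HACA}$, the coaction $\Delta_A : A \to O \otimes A$ is compatible with the filtrations $F_\bullet A$ and $F_\bullet^\heartsuit O \otimes F_\bullet A$ in the sense made precise by Lem.~\ref{lem:5:3:0307}(c), which gives the inclusion $\Delta_A(F_nA) \subset \sum_{p+q=n} F_p^\heartsuit O \otimes F_q A$. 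This is exactly the statement that $\Delta_A$ is a morphism of filtered vector spaces from $F_\bullet A$ to the tensor-product filtration, so it induces a graded linear map $\mathrm{gr}(\Delta_A) : \mathrm{gr}A \to \mathrm{gr}^\heartsuit O \otimes \mathrm{gr}A$.

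Next I would check that $\mathrm{gr}(\Delta_A)$ makes $\mathrm{gr}A$ into a comodule algebra over the graded Hopf algebra $\mathrm{gr}^\heartsuit O$. The algebra-morphism property of $\mathrm{gr}(\Delta_A)$ follows from that of $\Delta_A$ by passing to associated gradeds, using that $\mathrm{gr}$ is functorial on filtered algebras. The coassociativity identity $(\Delta_{\mathrm{gr}^\heartsuit O} \otimes \mathrm{id}) \circ \mathrm{gr}(\Delta_A) = (\mathrm{id} \otimes \mathrm{gr}(\Delta_A)) \circ \mathrm{gr}(\Delta_A)$ is obtained by applying the functor $\mathrm{gr}$ to the coassociativity of $\Delta_A$, together with the identification $\mathrm{gr}^\heartsuit(O \otimes O) = \mathrm{gr}^\heartsuit O \otimes \mathrm{gr}^\heartsuit O$ and the compatibility of $\mathrm{gr}^\heartsuit$ with the coproduct of $O$ (both already implicit in Lem.~\ref{lem:10:1:a:2909}(a), which asserts $\mathrm{gr}^\heartsuit O$ is a graded Hopf algebra). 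Since $\mathrm{gr}^\heartsuit O$ is a graded Hopf algebra and $\mathrm{gr}A$ a graded algebra with a graded coaction satisfying the two axioms, the pair $(\mathrm{gr}^\heartsuit O, \mathrm{gr}A)$ is an object of $\mathbf{HACA}_{gr}$.

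Finally I would address functoriality. Given a morphism $(f_O, f_A) : (O,A) \to (O',A')$ in $\mathbf{HACA}$, the component $f_O$ is compatible with $F_\bullet^\heartsuit$ by Prop.~\ref{prop:LA:filt}(c) and $f_A$ is compatible with $F_\bullet$ (it is filtered because it is a $\mathbf{HACA}$-morphism, by the same argument as in Lem.~\ref{lem:5:3:0307}), so the pair induces $(\mathrm{gr}^\heartsuit f_O, \mathrm{gr}\, f_A)$ on associated gradeds; the intertwining relation $(f_O \otimes f_A) \circ \Delta_A = \Delta_{A'} \circ f_A$ passes to the graded level by functoriality of $\mathrm{gr}$. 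Preservation of composition and identities is inherited from that of $\mathrm{gr}^\heartsuit$ and $\mathrm{gr}$. I expect the main obstacle to be purely bookkeeping: one must be careful that the tensor-product filtration on $O \otimes A$ used to define $\mathrm{gr}A$ matches the tensor-product grading on $\mathrm{gr}^\heartsuit O \otimes \mathrm{gr}A$ under the canonical map $\mathrm{gr}(O \otimes A) \to \mathrm{gr}^\heartsuit O \otimes \mathrm{gr}A$, which need not be an isomorphism in general but is a well-defined graded algebra map sufficient to carry the coaction; since everything here is $\mathbb{Z}_{\geq 0}$-graded and connected, no convergence or completeness subtleties arise, so the verification is routine once the identifications are fixed.
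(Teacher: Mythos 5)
Your proof is correct and takes essentially the same approach as the paper: the paper's own proof is the one-line observation that $(F_\bullet^\heartsuit O,F_\bullet A)$ is a HACA filtration (i.e.\ Lem.~\ref{lem:5:3:0307}), and your argument simply spells out why that fact yields the graded coaction and the functoriality. One minor remark: since everything is a vector space over $\mathbb C$, the canonical map $\bigoplus_{p+q=n}\mathrm{gr}_p^\heartsuit O\otimes\mathrm{gr}_qA\to\mathrm{gr}_n(O\otimes A)$ for the tensor-product filtration is in fact an isomorphism (choose complements), so the caveat in your final paragraph is not needed.
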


\begin{proof} Follows from the fact that for $(O,A)$ a HACA, $(F_\bullet  O,F_\bullet A)$ is a HACA filtration. 
\end{proof}

\begin{defn}\label{def:C4:0711}
The HACA $(O,A)$ is called {\it filtered formal} if there exists a isomorphism $(O,A)\to(\mathrm{gr}(O),\mathrm{gr}(A))$
in $\mathbf{HACA}$, which is compatible with the filtrations $(F_\bullet  O,F_\bullet A)$ in the source and induced by the 
grading in the target, and whose image by the functor $(\mathrm{gr} ,\mathrm{gr}) : \mathbf{HACA}\to\mathbf{HACA}_{gr}$ is the 
identity endomorphism of $(\mathrm{gr}(O),\mathrm{gr}(A))$. 
\end{defn}

If the HACA $(O,A)$ is filtered formal, then the Hopf algebra $O$ is filtered formal in the sense of Def. \ref{def:FF:HA:3110}. 

\begin{prop}\label{prop:10:14:3110}
Let $(O,\mathbf a),(B,H),(p,f)$ be as in the hypothesis of Prop. \ref{lem:B:15:1508}(b),(c). Then the HACA $(H',F_\infty B)$
is filtered formal. 
\end{prop}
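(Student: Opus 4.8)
The plan is to reduce the statement to the filtered formality of the ``model'' HACA $(O^\heartsuit,O^\heartsuit\otimes\mathbf a)$ and then to transport it along the isomorphism furnished by Prop.~\ref{lem:B:15:1508}. First I would record that $\Phi:=(\nu(p),F_\infty f)$ is not merely a HACA isomorphism $(O^\heartsuit,O^\heartsuit\otimes\mathbf a)\to(H',F_\infty B)$ (Prop.~\ref{lem:B:15:1508}(b)) but an isomorphism of \emph{filtered} HACAs: indeed $\nu(p)$ is compatible with the filtrations $F_\bullet^\heartsuit$ by Prop.~\ref{prop:LA:filt}(c), and $F_\infty f$ is a filtered algebra isomorphism by Prop.~\ref{lem:B:15:1508}(b). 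Applying the associated-graded functor $(\mathrm{gr}^\heartsuit,\mathrm{gr})$ of Lem.~\ref{lem:10:12:2410} then yields a graded HACA isomorphism $\mathrm{gr}(\Phi)=(\mathrm{gr}^\heartsuit\nu(p),\mathrm{gr}(F_\infty f)):(\mathrm{gr}^\heartsuit O,\mathrm{gr}^\heartsuit O\otimes\mathbf a)\to(\mathrm{gr}^\heartsuit H',\mathrm{gr}(F_\infty B))$, the identification of the source resting on Prop.~\ref{lem:B:15:1508}(a) (so that $\mathrm{gr}(O^\heartsuit\otimes\mathbf a)=\mathrm{gr}^\heartsuit O\otimes\mathbf a$) together with Prop.~\ref{lem:B:15:1508}(c) for the algebra part.

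Next I would prove the general transport principle: filtered formality (Def.~\ref{def:C4:0711}) is invariant under isomorphisms of filtered HACAs. Concretely, if $\Psi:(O^\heartsuit,O^\heartsuit\otimes\mathbf a)\to(\mathrm{gr}^\heartsuit O,\mathrm{gr}^\heartsuit O\otimes\mathbf a)$ is a filtered formal isomorphism of the model, then $\Psi':=\mathrm{gr}(\Phi)\circ\Psi\circ\Phi^{-1}$ is an isomorphism $(H',F_\infty B)\to(\mathrm{gr}^\heartsuit H',\mathrm{gr}(F_\infty B))$ in $\mathbf{HACA}$, compatible with the filtrations. Since $\mathrm{gr}(\Phi)$ is already graded, its associated graded is itself, whence $\mathrm{gr}(\Psi')=\mathrm{gr}(\Phi)\circ\mathrm{gr}(\Psi)\circ\mathrm{gr}(\Phi)^{-1}=\mathrm{id}$, using $\mathrm{gr}(\Psi)=\mathrm{id}$. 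Thus $\Psi'$ witnesses the filtered formality of $(H',F_\infty B)$, and it remains only to construct $\Psi$.

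For the model, Prop.~\ref{lem:B:15:1508}(a) gives $F_n(O^\heartsuit\otimes\mathbf a)=F_n^\heartsuit O\otimes\mathbf a$, so its associated graded HACA is $(\mathrm{gr}^\heartsuit O,\mathrm{gr}^\heartsuit O\otimes\mathbf a)$ with coaction $\Delta_{\mathrm{gr}^\heartsuit O}\otimes\mathrm{id}_{\mathbf a}$ and $\mathbf a$ placed in degree $0$. Given a filtered formal isomorphism $\phi:O^\heartsuit\to\mathrm{gr}^\heartsuit O$ of the Hopf algebra $O^\heartsuit$ (Def.~\ref{def:FF:HA:3110}), I would set $\Psi:=(\phi,\phi\otimes\mathrm{id}_{\mathbf a})$; then $\phi\otimes\mathrm{id}_{\mathbf a}$ is an algebra isomorphism, it intertwines the two coactions precisely because $\phi$ is a coalgebra morphism, it is filtration-compatible, and its associated graded is $\mathrm{gr}^\heartsuit(\phi)\otimes\mathrm{id}=\mathrm{id}$. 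This is the desired model isomorphism, completing the argument modulo the existence of $\phi$.

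The one genuinely nontrivial input, and the main obstacle, is exactly the existence of $\phi$, i.e.\ the filtered formality of the Hopf algebra $O^\heartsuit$ itself (which, by the remark following Def.~\ref{def:C4:0711}, is in any case forced by the desired conclusion, so it cannot be bypassed). In the setting where the proposition is applied, namely Prop.~\ref{3:17:0711} with $O=\mathrm{Sh}(\mathrm H_C^{\mathrm{dR}})$, this obstacle disappears: by Lem.~\ref{lem:comp:Sh:heart:1628} one has $O^\heartsuit=O$ and $F_\bullet^\heartsuit O$ equals the grading filtration, so $O$ already coincides with $\mathrm{gr}^\heartsuit O$ and one may take $\phi=\mathrm{id}$, giving $\Psi=\mathrm{id}$ and $\Psi'=\mathrm{gr}(\Phi)\circ\Phi^{-1}$.
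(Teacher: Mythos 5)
Your proposal is essentially correct and follows the same underlying strategy as the paper: both proofs transport the (trivially) filtered formal model $(O^\heartsuit,O^\heartsuit\otimes\mathbf a)$ along the HACA isomorphism $(\nu(p),F_\infty f)$ of Prop.~\ref{lem:B:15:1508}(b),(c). The packaging differs. The paper writes a formality isomorphism $(\mathrm{gr}^\heartsuit H',\mathrm{gr}B)\to(H',F_\infty B)$ explicitly as a composition of three maps, the first built from $\mathrm{nat}_{(H',F_\infty B)}$, and verifies the associated-graded condition by invoking \eqref{key:0407}; you instead conjugate by $\Phi$ and $\mathrm{gr}(\Phi)$ and deduce $\mathrm{gr}(\Psi')=\mathrm{id}$ formally. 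The two constructions are in fact mutually inverse: since \eqref{key:0407} gives $\mathrm{nat}_{(H',F_\infty B)}\circ\mathrm{gr}(f)=\mathrm{gr}^\heartsuit(\nu(p))\otimes f_0$, the paper's algebra-level map collapses to $F_\infty f\circ\mathrm{gr}(F_\infty f)^{-1}$, which is the inverse of your $\mathrm{gr}(F_\infty f)\circ(F_\infty f)^{-1}$. Your transport lemma and your construction of $\Psi=(\phi,\phi\otimes\mathrm{id}_{\mathbf a})$ from a Hopf-algebra formality isomorphism $\phi$ are both sound.

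The obstacle you isolate --- the need for an isomorphism $\phi:O^\heartsuit\to\mathrm{gr}^\heartsuit O$ with associated graded the identity, which is not among the stated hypotheses --- is genuinely there, and it is present in the paper's own proof as well, where it is absorbed silently: composing the second and third isomorphisms of that proof requires identifying the codomain $(\mathrm{gr}^\heartsuit O,\mathrm{gr}^\heartsuit O\otimes\mathbf a)$ of one with the domain $(O^\heartsuit,O^\heartsuit\otimes\mathbf a)$ of the other, which is exactly the datum of $\phi$. As you observe, filtered formality of $(H',F_\infty B)$ forces that of $H'\simeq O^\heartsuit$, so some such input cannot be bypassed; and in the only application (Prop.~\ref{3:17:0711}, where $O=\mathrm{Sh}(\mathrm H_C^{\mathrm{dR}})$ and Lem.~\ref{lem:comp:Sh:heart:1628} gives $O=O^\heartsuit=\mathrm{gr}^\heartsuit O$ with $F_\bullet^\heartsuit$ the grading filtration) one may take $\phi=\mathrm{id}$, as you do. Your proof is therefore correct exactly to the extent the paper's is, and your explicit flagging of the hidden hypothesis is an improvement in rigor rather than a defect.
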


\begin{proof} Recall from Lem. \ref{lem:10:12:2410} that $(\tilde O,A)\mapsto(\mathrm{gr}(\tilde O),\mathrm{gr}(A))$ is a 
functor $\mathbf{HACA}\to\mathbf{HACA}_{gr}$; one checks that the same is true of $(\tilde O,A)\mapsto(\mathrm{gr}(\tilde O),
\Phi(\tilde O,A))$. For $(\tilde O,A)$ a HACA, $(id_{\tilde O},\mathrm{nat}_{(\tilde O,A)})$ is a morphism $(\mathrm{gr} 
(\tilde O),\mathrm{gr}(A))\to(\mathrm{gr}(\tilde O),\Phi(\tilde O,A))$ in $\mathbf{HACA}_{gr}$, which is an isomorphism if and only if 
$\mathrm{nat}_{(\tilde O,A)}$ is. 

The proof of Prop. \ref{lem:B:15:1508}(c) implies that $\mathrm{nat}_{(H',F_\infty B)}$ is 
an isomorphism in $\mathbf{Alg}_{gr}$, therefore
\begin{equation}\label{(*):0610:GAL}
(id_{\mathrm{gr}(H')},\mathrm{nat}_{(H',F_\infty B)}) 
: (\mathrm{gr}(H'),\mathrm{gr}(B))
\to (\mathrm{gr}(H'),\mathrm{gr}(H') \otimes B^H) 
\end{equation}
is an isomorphism in $\mathbf{HACA}_{gr}$. 

By the assumptions of Prop. \ref{lem:B:15:1508}(b), $\nu(p) : F_\infty O\to H'$ is an isomorphism in $\mathbf{HA}$. 
Prop. \ref{lem:10:1:a:2909} then implies that $\mathrm{gr}(\nu(p)) : \mathrm{gr} (O)\to \mathrm{gr} (H')$
is an isomorphism in $\mathbf{HA}_{gr}$. The assumptions of Prop. \ref{lem:B:15:1508}(b) also imply that 
$f$ restricts to an algebra isomorphism $\mathbb C\otimes \mathbf a\to B^H$, we denote by $f_0$ the composed isomorphism 
$\mathbf a\simeq\mathbb C\otimes \mathbf a\to B^H$. Both isomorphisms $\mathrm{gr}(\nu(p))$ and $f_0$ induce an isomorphism 
\begin{equation}\label{(**):0610:GAL}
    (\mathrm{gr}(\nu(p))^{-1},\mathrm{gr}(\nu(p))^{-1}\otimes f_0^{-1}) : 
    (\mathrm{gr} (H'),\mathrm{gr} (H')\otimes B^H)\to(\mathrm{gr} (O),\mathrm{gr} (O)\otimes\mathbf a). 
\end{equation}
in $\mathbf{HACA}_{gr}$. 
By Prop. \ref{lem:B:15:1508}(b) and (c), the pair 
\begin{equation}\label{(***):0610:GAL}
(\nu(p),F_\infty f) : (F_\infty O,F_\infty O\otimes\mathbf a)\to(H',F_\infty B)
\end{equation}
is an isomorphism in $\mathbf{HACA}$. The composition of \eqref{(*):0610:GAL}, \eqref{(**):0610:GAL} and \eqref{(***):0610:GAL}
gives rise to an isomorphism 
\begin{align}\label{(****):0610:GAL}
%& \nonumber
\big(\nu(p) \circ (\mathrm{gr}(\nu(p))^{-1}),F_\infty f \circ (\mathrm{gr} (\nu(p))^{-1} 
\otimes f_0^{-1}) \circ \mathrm{nat}_{(\mathrm{gr}(H'),F_\infty B)}\big) :  
%\\ & 
(\mathrm{gr}(H'),\mathrm{gr}(B))\to (H',F_\infty B) 
\end{align}
in $\mathbf{HACA}$. 

By Prop. \ref{prop:LA:filt}(c), $\nu(p)$ is compatible with the filtrations $F_\bullet $; it follows that 
$\nu(p) \circ \mathrm{gr} (\nu(p))^{-1}$ is compatible with these filtrations as well, and one computes 
$\mathrm{gr} (\nu(p)\circ \mathrm{gr} (\nu(p))^{-1})=id$. 

The algebra morphisms in \eqref{(*):0610:GAL} and \eqref{(**):0610:GAL} are compatible with the filtrations since they are graded, 
and the algebra morphism in \eqref{(***):0610:GAL} is compatible with the filtrations. It follows that the algebra morphism in 
\eqref{(****):0610:GAL} is compatible with the filtrations. Its associated graded is the composed morphism 
\begin{equation}\label{MAP:2010:GAL}
\mathrm{gr}(B)
\stackrel{\mathrm{nat}_{(H',F_\infty B)}}{\to}
\mathrm{gr}(H') \otimes B^H
\stackrel{\mathrm{gr} (\nu(p))^{-1}\otimes f_0^{-1}}{\to}
F_\infty O \otimes\mathbf a
\stackrel{\mathrm{gr}f}{\to}
\mathrm{gr}(B). 
\end{equation}
It follows from the proof of Prop. \ref{lem:B:15:1508}(b) that relation \eqref{key:0407} holds, and that $\mathrm{nat}_{(H',F_\infty B)}$ and 
$\mathrm{gr}f$ are both isomorphisms. The combination of these facts then implies that the map \eqref{MAP:2010:GAL} is the identity. 
Therefore the algebra morphism in \eqref{(****):0610:GAL} is compatible with the filtrations, and its associated graded is the identity. 
\end{proof}

\section{Hopf algebra duality and prounipotent completions}\label{sect:D:0112}

§\ref{app:C:1:2308} relates the Hopf algebraic constructions of §\ref{sect:A:0112} to complete Hopf algebras (CHAs) in the sense of \cite{Q}. 
This is applied in §\ref{sect:D:2:0711} to obtain an expression of the function algebra of the prounipotent completion of a finitely generated 
group in the terms of the functors of §\ref{sect:A:0112}. The main result of §\ref{sect:D3:0112} is Prop. \ref{prop:C3:3110}, which 
relates the notion of filtered formality for a finitely generated group (\cite{SW}) with the similar notion for Hopf algebras, introduced 
in §\ref{sect:C:0112}. 

\subsection{Completion and duality of Hopf algebras}\label{app:C:1:2308}

Recall from \cite{Q}, A1 and A2 the categories $\mathbf{CAA}$ of complete augmented algebras (CAAs) and $\mathbf{CHA}$ of complete Hopf 
algebra (CHAs): $\mathbf{CAA}$ is the full subcategory, in the category of pairs 
$(A,F^\bullet A)$ of an augmented algebra~$A$ and a decreasing algebra filtration $A=F^0 A\supset F^1A\supset\ldots$, of CAAs, i.e. 
of pairs such that $A$ is complete and Hausdorff for the topology of $F^\bullet A$, such that $F^1A$ is the augmentation ideal of $A$ and 
such that 
$\mathrm{gr}(A)$ is generated by $\mathrm{gr}^1(A)$; a monoidal structure is defined on $\mathbf{CAA}$, given at the level of objects  
by $((A,F^\bullet A),(B,F^\bullet B))\mapsto (A\hat\otimes B,F^\bullet(A\hat\otimes B))$, where $A\hat\otimes B:=\varprojlim_n 
(A/F^nA)\otimes (B/F^nB)$ and $F^n(A\hat\otimes B)=\varprojlim_m \mathrm{im}(\sum_{n'+n''=n}
F^{n'}A\otimes F^{n''}B\to (A/F^mA)\otimes (B/F^mB))$;  
a CHA is a pair $(A,\Delta_A)$, where $A$ is a CAA, 
$\Delta_A : A\to A \hat\otimes A$ is a cocommutative and coassociative algebra morphism, which admits the augmentation of $A$ 
as a counit, and a morphism in $\mathbf{CHA}$ is a morphism in $\mathbf{CAA}$ which is compatible with coproducts. 

\begin{lem}\label{lem:HA:CHA}
(a)  Let $H$ be a cocommutative Hopf algebra with coproduct $\Delta_H$ and counit $\epsilon_H$. Recall $H_+=\mathrm{Ker}(\epsilon_H)$. 
 Set $H^\wedge:=\varprojlim_m H/H_+^m$ and for $n\geq 0$, $F^nH^\wedge:=\varprojlim_m F^nH/F^{\mathrm{max}(n,m)}H$. Then $H^\wedge$ 
 is a complete augmented algebra. There is a unique continuous extension of $\Delta_H$ to a map $\hat\Delta_H : H^\wedge\to H^\wedge 
 \hat\otimes H^\wedge$, which equips $H^\wedge$ with the structure of a complete Hopf algebra. The assignment $H\mapsto H^\wedge$ is a 
 functor $\mathbf{HA}_{coco}\to\mathbf{CHA}$, where $\mathbf{HA}_{coco}$ is the category of cocommutative Hopf algebras. 

(b) If $V$ is a vector space and $H$ is the cocommutative Hopf algebra $T(V)$ with coproduct defined by the condition that the elements of 
$V$ are primitive, then $T(V)^\wedge=\hat T(V)$ (the degree completion of $T(V)$).  
\end{lem}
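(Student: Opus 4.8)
The plan is to verify part (a) by unwinding the definitions of $\mathbf{CAA}$ and $\mathbf{CHA}$ recalled from \cite{Q} at the top of this subsection, and then to obtain (b) by an explicit computation of the $J$-adic completion of $T(V)$.

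First I would record the properties of the filtration $F^\bullet H$ established in Lem. \ref{lem:elections:legislatives}: since $F^1 H = J_H$ and $F^n H = J_H^n$ for $n \geq 1$, one has $F^n H \cdot F^m H \subset F^{n+m} H$ and $\Delta_H(F^n H) \subset \sum_{n'+n''=n} F^{n'} H \otimes F^{n''} H$. The first inclusion shows that the multiplication of $H$ descends to compatible multiplications on the quotients $H/J_H^m$, hence passes to the inverse limit $H^\wedge$; by construction $H^\wedge$ is complete and Hausdorff for the topology of $F^\bullet H^\wedge$, the augmentation $\epsilon_H$ (which vanishes on $J_H$) extends to a continuous augmentation whose kernel is $F^1 H^\wedge$, and the associated graded satisfies $\mathrm{gr}^n(H^\wedge) = \mathrm{gr}^n(H) = J_H^n/J_H^{n+1}$. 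Because $J_H^n = (J_H)^n$, the space $\mathrm{gr}^n(H^\wedge)$ is spanned by products of $n$ classes in $\mathrm{gr}^1$, so $\mathrm{gr}(H^\wedge)$ is generated in degree $1$; this establishes that $(H^\wedge, F^\bullet H^\wedge)$ is an object of $\mathbf{CAA}$.

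Next I would extend the coproduct. The compatibility $\Delta_H(F^n H) \subset F^n(H \otimes H)$, for the tensor-product filtration, means that $\Delta_H$ induces compatible maps $H/J_H^m \to (H \otimes H)/F^m(H \otimes H)$, and passing to inverse limits yields a continuous algebra morphism $\hat\Delta_H : H^\wedge \to H^\wedge \hat\otimes H^\wedge$ restricting to $\Delta_H$ on the dense image of $H$; continuity together with this density forces uniqueness. The coassociativity, cocommutativity and counit axioms for $\hat\Delta_H$ then follow because both sides of each axiom are continuous extensions of the corresponding maps for $\Delta_H$, which already agree on $H$; thus $(H^\wedge, \hat\Delta_H)$ is a CHA. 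For functoriality, a morphism $\phi : H_1 \to H_2$ in $\mathbf{HA}_{coco}$ satisfies $\phi(J_{H_1}) \subset J_{H_2}$ by compatibility with counits, hence $\phi(J_{H_1}^n) \subset J_{H_2}^n$, so $\phi$ is filtered and extends continuously to $\phi^\wedge : H_1^\wedge \to H_2^\wedge$; compatibility with coproducts again holds by density, and the assignment visibly respects identities and composition.

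For part (b), I would first note that for $H = T(V)$ with $V$ primitive the counit kills $V$, from $v = (\epsilon \otimes \mathrm{id})\Delta(v) = \epsilon(v) + v$, so $J_{T(V)} = \bigoplus_{n \geq 1} V^{\otimes n}$ and $J_{T(V)}^k = \bigoplus_{n \geq k} V^{\otimes n}$. Therefore $T(V)/J^k = \bigoplus_{n < k} V^{\otimes n}$, and the inverse limit over $k$ is $\prod_{n \geq 0} V^{\otimes n}$, which is the degree completion $\hat T(V)$; the filtration $F^n T(V)^\wedge = \prod_{k \geq n} V^{\otimes k}$ agrees with the standard one on $\hat T(V)$, and under this identification $\hat\Delta_{T(V)}$ is the unique continuous extension of $\Delta_{T(V)}$, which keeps $V$ primitive, matching the CHA structure of $\hat T(V)$ used in Lem. \ref{lem:iso:1530:2807}. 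I do not expect a serious obstacle; the only point requiring care is the bookkeeping around the completed tensor product $\hat\otimes$ in checking that $\hat\Delta_H$ takes values in $H^\wedge \hat\otimes H^\wedge$ rather than a coarser completion, and that the CHA axioms genuinely transfer across the completion by continuity and density.
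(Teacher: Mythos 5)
Your proposal is correct and follows essentially the same route as the paper: the coproduct is extended by observing $\Delta_H(J_H^n)\subset\sum_{p+q=n}J_H^p\otimes J_H^q$ and passing to compatible maps on the quotients, uniqueness comes from density of $H$ and the Hausdorff property of the target, and (b) is the same identification of $J_{T(V)}^k$ with the degree $\geq k$ part. The only difference is that you verify the complete augmented algebra axioms directly where the paper delegates them to \cite{Q}, A1, Example 1.2; your cofinality remark about the two inverse systems defining $H^\wedge\hat\otimes H^\wedge$ is exactly the bookkeeping the paper leaves implicit.
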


\begin{proof}
(a) The first statement follows from \cite{Q}, A1, Example 1.2. One has $\Delta_H(H_+)\subset H_+\otimes H+H\otimes H_+$, 
which implies that for any $n\geq 0$, one has $\Delta_H(H_+^n)\subset\sum_{p,q|p+q=n} H_+^p\otimes H_+^q$. It follows that 
$\Delta_H$ defines a collection of maps $H/H_+^{2n}\to (H/H_+^n)^{\otimes 2}$ indexed by $n\geq 0$, which are compatible for various 
$n$ and which therefore induce a map 
$\hat\Delta_H : H^\wedge\to H^\wedge\hat\otimes H^\wedge$, which is a continuous extension of $\Delta_H$. The uniqueness of this 
extension follows from the Hausdorff property of $H^\wedge$. The verification of the other properties of $\hat\Delta_H$ is standard.  
(b) follows from the fact that $T(V)_+^n$ is the part of $T(V)$ of degree $\geq n$, and that $\hat T(V)$ is the 
completion w.r.t. the corresponding topology. 
\end{proof}

\begin{defn}\label{def:c2:2509}
(a) $\mathbf{CHA}_{fd}$ is the full subcategory of $\mathbf{CHA}$ of CHAs $A$ 
such that $\mathrm{gr}^1A$ is finite dimensional. 

(b) For $H$ a Hopf algebra, set $\mathrm{gr}^1(H):=H_+/H_+^2$; where we recall $H_+=\mathrm{Ker}(\epsilon_H)$, and $\epsilon_H$
is the counit of $H$. 

(c) $\mathbf{HA}_{coco,fd}$ is the full subcategory of $\mathbf{HA}_{coco}$ of all cocommutative Hopf algebras $H$ 
such that $\mathrm{gr}^1H$ is finite dimensional. 

(d) $\mathbf{Gp}_{fg}$ is the full subcategory of $\mathbf{Gp}$ of finitely generated groups. 
\end{defn}

\begin{lem}\label{lem:1954:1107}
(a) The functor $\mathbf{Gp}\to\mathbf{HA}_{coco}$ given by $\Gamma\mapsto \mathbb C\Gamma$ induces a functor 
$\mathbf{Gp}_{fg}\to\mathbf{HA}_{coco,fd}$. 

(b) The functor $\mathbf{HA}_{coco}\to\mathbf{CHA}$, $H\mapsto H^\wedge$ from Lem. \ref{lem:HA:CHA} induces a functor 
$\mathbf{HA}_{coco,fd}\to\mathbf{CHA}_{fd}$. 
\end{lem}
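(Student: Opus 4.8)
The plan rests on one structural observation: by Def. \ref{def:c2:2509}, both $\mathbf{HA}_{coco,fd}$ and $\mathbf{CHA}_{fd}$ are \emph{full} subcategories of $\mathbf{HA}_{coco}$ and $\mathbf{CHA}$ respectively. Consequently, to prove that a given functor restricts to a functor between the smaller categories, I only have to check that it carries objects to objects, i.e. that the relevant $\mathrm{gr}^1$ spaces are finite-dimensional; compatibility with morphisms is then automatic from fullness. So in both (a) and (b) the task reduces to a single finite-dimensionality statement.

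For (a), I would first recall that $\Gamma\mapsto\mathbb C\Gamma$ is the group-algebra functor into $\mathbf{HA}_{coco}$ (the elements of $\Gamma$ being group-like, so that the coproduct is cocommutative, and group morphisms inducing Hopf algebra morphisms). It then remains to show that $\mathrm{gr}^1(\mathbb C\Gamma)=J/J^2$ is finite-dimensional when $\Gamma$ is finitely generated, where $J=\mathrm{Ker}(\epsilon)$ is the augmentation ideal. Here I would invoke the classical identification $J/J^2\simeq\Gamma^{ab}\otimes_{\mathbb Z}\mathbb C$ (the same fact already used in the proof of Lem. \ref{lem:basic:pairing}): the key point is that $J$ is spanned by the elements $\gamma-1$ and that $\gamma\gamma'-1=(\gamma-1)+(\gamma'-1)+(\gamma-1)(\gamma'-1)$ with the last term in $J^2$, so that $\gamma\mapsto(\gamma-1)+J^2$ is a homomorphism $\Gamma\to J/J^2$ into an abelian group, factoring through $\Gamma^{ab}$. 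Since $\Gamma$ is finitely generated, $\Gamma^{ab}$ is a finitely generated abelian group, hence $\Gamma^{ab}\otimes_{\mathbb Z}\mathbb C$ and its quotient $J/J^2$ are finite-dimensional, giving $\mathbb C\Gamma\in\mathbf{HA}_{coco,fd}$.

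For (b), I would use that $H\mapsto H^\wedge$ is already known to be a functor $\mathbf{HA}_{coco}\to\mathbf{CHA}$ (Lem. \ref{lem:HA:CHA}(a)), and reduce to showing $\mathrm{gr}^1(H^\wedge)$ finite-dimensional for $H\in\mathbf{HA}_{coco,fd}$. The strategy is to prove the stronger and cleaner statement that completion does not change the associated graded, namely $\mathrm{gr}^n(H^\wedge)\simeq\mathrm{gr}^n(H)$ for every $n\geq0$; taking $n=1$ then gives $\mathrm{gr}^1(H^\wedge)\simeq\mathrm{gr}^1(H)=J_H/J_H^2$, which is finite-dimensional by hypothesis. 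To establish this, let $\pi_n:H^\wedge\to H/F^nH$ be the canonical projection coming from $H^\wedge=\varprojlim_m H/F^mH$. I would identify $\mathrm{Ker}(\pi_n)=F^nH^\wedge$ (a compatible sequence with vanishing $n$-th component has its $m$-th component in $F^nH/F^mH$ for $m\geq n$, which is exactly the defining inverse limit of $F^nH^\wedge$) and note that $\pi_n$ is surjective because its composite with the canonical map $H\to H^\wedge$ is the surjection $H\to H/F^nH$. Hence $H^\wedge/F^nH^\wedge\simeq H/F^nH$ compatibly in $n$, and passing to the kernels of the transition maps $H^\wedge/F^{n+1}H^\wedge\to H^\wedge/F^nH^\wedge$ versus $H/F^{n+1}H\to H/F^nH$ yields $\mathrm{gr}^n(H^\wedge)\simeq\mathrm{gr}^n(H)$, so $H^\wedge\in\mathbf{CHA}_{fd}$.

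The routine steps are the group-algebra functoriality and the classical $J/J^2$ computation in (a); the only point demanding genuine care is the identification $\mathrm{Ker}(\pi_n)=F^nH^\wedge$ together with the surjectivity of $\pi_n$ in (b), i.e. the assertion that the $J_H$-adic completion is exact on associated gradeds. Everything else follows directly from the fullness of the target subcategories.
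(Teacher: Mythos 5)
Your proposal is correct and follows essentially the same route as the paper: both parts reduce, via fullness of the target subcategories, to checking finite-dimensionality of $\mathrm{gr}^1$, with (a) resting on $\mathrm{gr}^1(\mathbb C\Gamma)\simeq\Gamma^{ab}\otimes\mathbb C$ and (b) on the invariance of $\mathrm{gr}^1$ under completion. You merely supply the details (the $J/J^2$ computation and the identification $H^\wedge/F^nH^\wedge\simeq H/F^nH$) that the paper cites to \cite{Q2} or asserts without proof.
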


\begin{proof}
(a) If $\Gamma$ is a group, then $\mathrm{gr}^1(\mathbb C\Gamma)=\Gamma^{ab}\otimes\mathbb C$ (see \cite{Q2}). 
Moreover, if $\Gamma$ is finitely generated, 
then $\Gamma^{ab}$ is a finitely generated abelian group, which together with the above equality implies the finite dimensionality of 
$\mathrm{gr}^1(\mathbb C\Gamma)$. (b) follows from the fact that the natural map $\mathrm{gr}^1H\to\mathrm{gr}^1(H^\wedge)$ is a linear
isomorphism. 
\end{proof}

\begin{defn}\label{def:dual:CHAfd}
For $H$ a CHA with associated filtration $H=F^0H\supset\cdots$, set $H':=\cup_{n\geq 0}F^nH^\perp \subset H^*$. 
\end{defn}

\begin{lem}\label{2237:1107}
(a) If $H$ is a CHA, then $H'$ has an commutative and associative algebra structure. 

(b) If $H$ is an object in $\mathbf{CHA}_{fd}$, then the algebra structure on $H'$ can be upgraded to a commutative Hopf algebra structure; 
the resulting assignment $H\mapsto H'$ is a contravariant functor $\mathbf{CHA}_{fd}\to \mathbf{HA}_{comm}$. 
\end{lem}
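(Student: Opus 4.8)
The plan is to equip $H'$ with a product dual to the coproduct $\Delta_H$ and, in the finite-dimensional case, with a coproduct dual to the multiplication $m_H$ of $H$; the whole argument runs parallel to the discrete case treated in Lem. \ref{lem:4:8:jeu}, with completeness handled level by level. For part (a), I would first observe that every $\alpha\in H'$ lies in some $(F^nH)^\perp$ and is therefore continuous, so that for $\alpha,\beta\in H'$ the functional $\alpha\otimes\beta$ extends uniquely and continuously to $H\hat\otimes H$, allowing me to set $\alpha\cdot\beta:=(\alpha\otimes\beta)\circ\Delta_H$. The key verification is that this lands in $H'$: if $\alpha\in(F^nH)^\perp$ and $\beta\in(F^mH)^\perp$, then $\alpha\cdot\beta\in(F^{n+m}H)^\perp$. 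This holds because $\Delta_H$, being a morphism in $\mathbf{CAA}$, is filtration-preserving, so $\Delta_H(F^{n+m}H)$ lies in the closure of $\sum_{p+q=n+m}F^pH\otimes F^qH$; in each summand with $p+q=n+m$ one has $p\geq n$ or $q\geq m$ (if $p<n$ then $q>m$), so $\alpha\otimes\beta$ annihilates it, and continuity propagates the vanishing to $\Delta_H(F^{n+m}H)$. Associativity and commutativity of this product are then immediate from the coassociativity and cocommutativity of $\Delta_H$, and the augmentation $\epsilon_H\in(F^1H)^\perp\subset H'$ is the unit by the counit axiom; this settles (a).

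For part (b), the issue is to build a coproduct $\Delta_{H'}:H'\to H'\otimes H'$ valued in the \emph{uncompleted} tensor product, which is exactly where the hypothesis $H\in\mathbf{CHA}_{fd}$ enters. Since $\mathrm{gr}^1H$ is finite dimensional and $\mathrm{gr}(H)$ is generated by $\mathrm{gr}^1(H)$, each $\mathrm{gr}^kH$ is finite dimensional, so every quotient $H/F^nH$ is a finite-dimensional algebra, and its dual $(F^nH)^\perp=(H/F^nH)^*$ is a finite-dimensional coalgebra. The inclusions $(F^nH)^\perp\hookrightarrow(F^{n+1}H)^\perp$, being transposes of the algebra projections $H/F^{n+1}H\twoheadrightarrow H/F^nH$, are coalgebra morphisms. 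Passing to the colimit $H'=\cup_{n\geq0}(F^nH)^\perp=\varinjlim_n(H/F^nH)^*$ produces a coproduct $\Delta_{H'}$, uniquely characterized by $\Delta_{H'}(\alpha)(h\otimes h')=\alpha(hh')$ for $\alpha\in H'$ and $h,h'\in H$, exactly as in Lem. \ref{lem:4:8:jeu}. The compatibility of $\Delta_{H'}$ with the product of (a), making $H'$ a bialgebra, is the transpose of the statement that $\Delta_H$ is an algebra morphism for $m_H$.

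Finally, the filtration $(F^nH)^\perp$ is exhaustive with smallest nontrivial term $(F^1H)^\perp\cong(H/J_H)^*=\mathbb C$, so $H'$ is a connected filtered commutative bialgebra and hence admits an antipode by the standard recursive construction (alternatively, by dualizing the antipode of the complete Hopf algebra $H$); thus $H'$ is an object of $\mathbf{HA}_{comm}$. For functoriality, a morphism $f:H_1\to H_2$ in $\mathbf{CHA}_{fd}$ is continuous and filtration-preserving, so its transpose restricts to $f':H_2'\to H_1'$; being dual to a coalgebra morphism it is an algebra morphism, and being dual to an algebra morphism it is a coalgebra morphism, hence a morphism in $\mathbf{HA}_{comm}$, and the assignment is contravariant. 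I expect the main obstacle to be the well-definedness of the coproduct in (b): one must confirm that the completeness of $H$ does not force $\Delta_{H'}$ into a completed tensor product, and this is resolved precisely by the finite-dimensionality of the quotients $H/F^nH$ together with the colimit presentation of $H'$.
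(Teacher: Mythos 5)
Your proposal is correct and follows essentially the same route as the paper: the product on $H'$ is obtained by pulling $\alpha\otimes\beta$ back along $H\hat\otimes H\to (H/F^nH)\otimes(H/F^mH)$ and composing with $\Delta_H$, and the coproduct in (b) comes from dualizing the finite-dimensional algebras $H/F^nH$ level by level and passing to the colimit, exactly as in the paper's proof (which defers this step to the argument of Lem.~\ref{lem:4:8:jeu}). The only differences are cosmetic: you use the finer bound $\Delta_H(F^{n+m}H)\subset\overline{\sum_{p+q=n+m}F^pH\otimes F^qH}$ where the paper uses $\Delta_H(F^{2n}H)\subset F^nH\otimes H+H\otimes F^nH$, and you spell out the antipode and functoriality checks that the paper leaves implicit.
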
 

\begin{proof}
(a) Let $\xi,\eta\in H'$. Let $n\geq 0$ be such that $\xi,\eta\in F^nH^\perp$, then $\xi\otimes\eta$ is a linear form on $(H/F^nH)^{\otimes 2}$, 
which can be pulled back by the map $H^{\hat\otimes2}\to(H/F^nH)^{\otimes 2}$ to define a linear form on $H^{\hat\otimes 2}$; it can be checked 
to be independent on the choice of $n$ and will be denoted $\ell_{\xi,\eta}$. Then $\ell_{\xi,\eta}$ vanishes on $F^nH\otimes H+H\otimes F^nH$. 
The assignment $\xi\cdot \eta : h\mapsto \ell_{\xi,\eta}\circ\Delta_H(h)$ is then a linear form on $H$, i.e. an element of $H^*$. Since 
$\Delta_H(F^{2n}H)\subset 
F^nH \otimes H+H \otimes F^nH$, one has $\xi\cdot\eta \in F^{2n}H^\perp$, therefore $\xi\cdot\eta \in H'$. One checks that this defines 
a commutative and associative algebra structure on $H'$. 

(b) If $n\geq 0$, then $H/F^nH$ is noncanonically isomorphic to $\oplus_{k=0}^n\mathrm{gr}^k(H)$, which is finite dimensional since it is 
generated by $\mathrm{gr}^1H$ and the latter space is finite dimensional. The coproduct $\Delta_H$ defines a coproduct $\Delta_{F^nH} : H/F^nH\to (H/F^nH)^{\otimes2}$ for any $n\geq 0$. Since $H/F^nH$ is finite dimensional, this coproduct can be dualized and defines a product 
$m_{(F^nH)^\perp} : ((F^nH)^\perp)^{\otimes2}\to(F^nH)^\perp$. As in the proof of Lem. \ref{lem:4:8:jeu}, one checks that the collection of these 
products is compatible, which gives rise to a product  $m_{H'} : H'^{\otimes 2}\to H'$, which can be shown to define a Hopf structure. 
\end{proof}

The following lemma is an topological analogue of Lem. \ref{IDFILT}. 
\begin{lem}\label{IDFILT:CHA}
If $H$ is a CHA, then for any $n\geq0$ one has $F_n (H')=(F^{n+1}H)^\perp$ (equality of subspaces of $H^*$). 
\end{lem}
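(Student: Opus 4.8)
The plan is to mirror exactly the proof of Lem.~\ref{IDFILT}, which is the algebraic counterpart of the present statement, taking care that every finite-dimensionality input used there is now supplied by the complete setting. I work with $H$ an object of $\mathbf{CHA}_{fd}$, so that by Lem.~\ref{2237:1107}(b) the space $H'$ carries a commutative Hopf algebra structure (whence $F_n^\heartsuit(H')$ is defined), and so that, as in the proof of that lemma, each quotient $H/F^{m+1}H$ is finite-dimensional, its dual being naturally identified with $F_mH^*:=(F^{m+1}H)^\perp$. Recall from \emph{loc. cit.} that $H'=\cup_{m\geq 0}F_mH^*$, that each $F_mH^*$ is a finite-dimensional sub-coalgebra of $H'$ with coproduct $\Delta_{F_mH^*}$ dual to the product $m_{H/F^{m+1}H}$, and that the inclusions $F_mH^*\hookrightarrow F_{m'}H^*$ for $m\leq m'$ are coalgebra morphisms. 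Fixing $n\geq 0$, the goal is $F_n^\heartsuit(H')=(F^{n+1}H)^\perp=F_nH^*$, and since $F_n^\heartsuit(H')\subset H'=\cup_m F_mH^*$ it suffices to compute $F_n^\heartsuit(H')\cap F_mH^*$ for all $m\geq n$.

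Next I would carry out the dualization step. Using the reformulation $F_n^\heartsuit(H')=\mathrm{Ker}\big((id-\eta_{H'}\epsilon_{H'})^{\otimes n+1}\circ\Delta_{H'}^{(n+1)}\big)$ from Lem.~\ref{lem:4:2:2106}(a) and the fact that $F_mH^*$ is a sub-coalgebra, the intersection $F_n^\heartsuit(H')\cap F_mH^*$ is the kernel of the composite
$$
F_mH^*\stackrel{\Delta_{F_mH^*}^{(n+1)}}{\longrightarrow}(F_mH^*)^{\otimes n+1}
\stackrel{(id-\eta_{F_mH^*}\epsilon_{F_mH^*})^{\otimes n+1}}{\longrightarrow}(F_mH^*)^{\otimes n+1}.
$$
Applying the identity $\mathrm{Ker}(f^*)=\mathrm{im}(f)^\perp$ together with the dualities already recorded in the proof of Lem.~\ref{2237:1107}(b) (the dual of $\Delta_{F_mH^*}$ being $m_{H/F^{m+1}H}$, and the duals of $\eta_{F_mH^*},\epsilon_{F_mH^*}$ being the counit and unit maps of $H/F^{m+1}H$), this kernel is the annihilator in $(H/F^{m+1}H)^*$ of the image of
$$
(H/F^{m+1}H)^{\otimes n+1}\stackrel{(id-\eta\epsilon)^{\otimes n+1}}{\longrightarrow}(H/F^{m+1}H)^{\otimes n+1}\stackrel{m_{H/F^{m+1}H}^{(n+1)}}{\longrightarrow}H/F^{m+1}H.
$$

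The final and genuinely new step is to identify this image, and I expect it to be the main obstacle, since it is precisely where the CAA axiom replaces the definitional equality $F^nH=J_H^n$ available in Lem.~\ref{IDFILT}. The image of $id-\eta\epsilon$ on $H/F^{m+1}H$ is $F^1H/F^{m+1}H$, so the image above is $\big(m_H^{(n+1)}((F^1H)^{\otimes n+1})+F^{m+1}H\big)/F^{m+1}H=\big((F^1H)^{n+1}+F^{m+1}H\big)/F^{m+1}H$. Here I must show $(F^1H)^{n+1}+F^{m+1}H=F^{n+1}H+F^{m+1}H$: the inclusion $\subseteq$ is clear, and for $\supseteq$ I would use that $\mathrm{gr}(H)$ is generated by $\mathrm{gr}^1(H)$, which gives $F^kH=(F^1H)^k+F^{k+1}H$ for all $k$; combined with $(F^1H)^{k+1}\subseteq(F^1H)^{n+1}$ for $k\geq n$ (as $F^1H$ is an ideal), iterating from $k=n$ up to $k=m$ yields $F^{n+1}H\subseteq(F^1H)^{n+1}+F^{m+1}H$. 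For $m\geq n$ one has $F^{m+1}H\subseteq F^{n+1}H$, so the image is exactly $F^{n+1}H/F^{m+1}H$; its annihilator in $(H/F^{m+1}H)^*$ is the kernel of the restriction $(H/F^{m+1}H)^*\to(F^{n+1}H/F^{m+1}H)^*$, which equals the image of the injection $F_nH^*\hookrightarrow F_mH^*$. Hence $F_n^\heartsuit(H')\cap F_mH^*=F_nH^*$ for every $m\geq n$, and taking the union over $m$ gives $F_n^\heartsuit(H')=F_nH^*=(F^{n+1}H)^\perp$, as claimed.
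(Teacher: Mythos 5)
Your proof is correct and follows essentially the same route as the paper: the paper also reduces to the argument of Lem.~\ref{IDFILT} and isolates, as the only new input, the identity $(F^1H)^{n}+F^{n+k}H=F^{n}H$, which it proves by induction on $k$ using that $\mathrm{gr}(H)$ is generated by $\mathrm{gr}^1(H)$ — the same fact you exploit by iterating $F^kH=(F^1H)^k+F^{k+1}H$. Your explicit restriction to $\mathbf{CHA}_{fd}$ (needed for the coproduct on $H'$, hence for $F_n^\heartsuit(H')$ to be defined) is a reasonable reading of the implicit hypotheses under which the lemma is applied.
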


\begin{proof}
Let us show that if $H$ is a CHA and $k,n\geq 1$, then the image of the product map $(F^1H/F^{n+k}H)^{\otimes n}\to H/F^{n+k}H$ is 
$F^nH/F^{n+k}H$. We proceed by induction on $k$. For $k=1$, the statement follows from the surjectivity of the product map 
$(F^1H/F^2H)^{\otimes n}\to F^nH/F^{n+1}H$. Assume the statement at order $k$ and let us show it at order $k+1$. 
The product map $(F^1H/F^2H)^{\otimes n+k}\to F^{n+k}H/F^{n+k+1}H$ is surjective, therefore $F^{n+k}H/F^{n+k+1}H$ is equal to the image of 
a subspace of $(F^1H/F^{n+k+1}H)^{\otimes n}$, namely 
$$
(F^1H/F^{n+k+1}H)^{\otimes n-1}\otimes \mathrm{im}((F^1H/F^{n+k+1}H)^{\otimes k+1}\to F^1H/F^{n+k+1}H). 
$$
Combining this with the statement at order $k$, one obtains the statement at order $k+1$ and therefore the induction. 
The proof of the lemma is then similar to that of Lem.~\ref{IDFILT}. 
\end{proof}

\begin{lem}\label{lem:5:12:1927}
The functors $\mathbf{HA}_{coco,fd}\to\mathbf{CHA}_{fd}\to\mathbf{HA}_{comm}$ (see Lem. \ref{lem:1954:1107}(b) and Lem. \ref{2237:1107}(b)) 
and $\mathbf{HA}_{coco,fd}\to\mathbf{HA}_{comm}$ (see Lem. \ref{lem:4:8:jeu}) are naturally equivalent. 
\end{lem}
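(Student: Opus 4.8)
The two functors $\mathbf{HA}_{coco,fd}\to\mathbf{HA}_{comm}$ — namely $H\mapsto (H^\wedge)'$ (completion followed by CHA-duality, Lem. \ref{lem:1954:1107}(b) and Lem. \ref{2237:1107}(b)) and $H\mapsto H'$ (the direct duality functor of Lem. \ref{lem:4:8:jeu}) — are naturally equivalent.

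Let me sketch the proof.

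The plan is to construct, for each object $H$ of $\mathbf{HA}_{coco,fd}$, a natural isomorphism $\eta_H : H' \to (H^\wedge)'$ in $\mathbf{HA}_{comm}$, and then check naturality. The key observation is that both target Hopf algebras live inside the same ambient dual space. On the one hand, $H'=\cup_{n\geq 0}F_nH^*$ with $F_nH^*=(F^{n+1}H)^\perp\subset H^*$, where $F^\bullet H$ is the $J_H$-adic filtration. On the other hand, $(H^\wedge)'=\cup_{n\geq 0}(F^{n+1}H^\wedge)^\perp\subset (H^\wedge)^*$ by Def. \ref{def:dual:CHAfd}. First I would identify these two filtered pieces. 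Since $H^\wedge=\varprojlim_m H/J_H^m$ and $F^nH^\wedge=\varprojlim_m F^nH/F^{\max(n,m)}H$ (Lem. \ref{lem:HA:CHA}(a)), a continuous linear form on $H^\wedge$ vanishing on $F^{n+1}H^\wedge$ factors through the finite-dimensional quotient $H^\wedge/F^{n+1}H^\wedge = H/F^{n+1}H$; and a linear form on $H$ vanishing on $F^{n+1}H$ factors through the same finite-dimensional quotient $H/F^{n+1}H$. Thus both $(F^{n+1}H^\wedge)^\perp$ and $(F^{n+1}H)^\perp$ are canonically identified with $(H/F^{n+1}H)^*$, giving a canonical linear bijection $\eta_{H,n}: (F^{n+1}H)^\perp \stackrel{\sim}{\to}(F^{n+1}H^\wedge)^\perp$. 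These bijections are compatible with the inclusions as $n$ grows (both sides being restrictions of the dual of the projection $H/F^{n+2}H\twoheadrightarrow H/F^{n+1}H$), so passing to the union yields a linear bijection $\eta_H: H'\to (H^\wedge)'$.

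Next I would verify that $\eta_H$ is a morphism of Hopf algebras. The crucial point is that the product and coproduct on both $H'$ and $(H^\wedge)'$ are defined by dualizing, on each finite level, the coproduct and product of the \emph{same} finite-dimensional quotient $H/F^{n+1}H$. Concretely, the algebra structure on $H'$ comes (Lem. \ref{lem:4:8:jeu}) from dualizing the coalgebra structure of $H/F^{n+1}H$ induced by $\Delta_H$, while the algebra structure on $(H^\wedge)'$ comes (Lem. \ref{2237:1107}(b)) from dualizing the coalgebra structure of $H^\wedge/F^{n+1}H^\wedge$ induced by $\hat\Delta_H$. But $\hat\Delta_H$ is by construction the continuous extension of $\Delta_H$ (Lem. \ref{lem:HA:CHA}(a)), so the two induced coalgebra structures on the common quotient $H/F^{n+1}H$ coincide. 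The same remark applies to the coproducts of $H'$ and $(H^\wedge)'$, which dualize the products of $H/F^{n+1}H$ and $H^\wedge/F^{n+1}H^\wedge$ respectively — again the same map. Hence $\eta_H$ intertwines products and coproducts, and is therefore a Hopf algebra isomorphism.

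Finally I would check naturality: given a morphism $f:H_1\to H_2$ in $\mathbf{HA}_{coco,fd}$, one must show the square relating $\eta_{H_1},\eta_{H_2}$ and the duals $f',(f^\wedge)'$ commutes. Since $f$ is filtration-preserving ($f(J_{H_1})\subset J_{H_2}$, hence $f(F^nH_1)\subset F^nH_2$), it induces compatible maps on the finite quotients $H_i/F^{n+1}H_i$, and $f^\wedge$ induces the same maps on $H_i^\wedge/F^{n+1}H_i^\wedge$ under the identifications above; both $f'$ and $(f^\wedge)'$ are then the transpose of this single quotient map, so the square commutes level by level and hence in the colimit. I expect the main obstacle to be purely bookkeeping rather than conceptual: one must be careful that the perp spaces $(F^{n+1}H)^\perp$ and $(F^{n+1}H^\wedge)^\perp$ really are identified \emph{canonically} and not just abstractly isomorphic — this rests on the exact description of $F^nH^\wedge$ from Lem. \ref{lem:HA:CHA}(a) and on the finite-dimensionality (from $H\in\mathbf{HA}_{coco,fd}$, so that $\mathrm{gr}^1H$ and hence each $H/F^{n+1}H$ is finite dimensional, as established in Lem. \ref{lem:1954:1107}(b) and the proof of Lem. \ref{lem:4:8:jeu}). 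Once the canonicity at each finite level is secured, everything else follows by taking unions/colimits and invoking that products and coproducts on both sides are defined by the identical finite-level (co)dualization.
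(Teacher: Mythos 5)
Your proposal is correct and follows essentially the same route as the paper: identify $(F^{n+1}H)^\perp$ and $(F^{n+1}H^\wedge)^\perp$ via the canonical isomorphism $H/F^{n+1}H\simeq H^\wedge/F^{n+1}H^\wedge$ at each finite level, pass to the union, and check compatibility with the Hopf structures. The only difference is that you spell out the Hopf-compatibility and naturality checks that the paper leaves to the reader, and you build the isomorphism in the direction $H'\to(H^\wedge)'$ rather than $(H^\wedge)'\to H'$; neither affects the argument.
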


\begin{proof}
For each $n\geq 0$, the algebra morphism $H\to H^\wedge$ induces an algebra isomorphism $H/F^nH\stackrel{\sim}{\to}H^\wedge/F^nH^\wedge$, 
which fits in a commutative diagram 
$$
\xymatrix{H\ar[r]\ar[d]&H^\wedge\ar[d]\\H/F^nH\ar[r]&H^\wedge/F^nH^\wedge}
$$
Dualizing and using the equalities $(H/F^nH)^*=F^nH^\perp$, $(H^\wedge/F^nH^\wedge)^*=(F^nH^\wedge)^\perp$, one obtains a commutative diagram
\begin{equation}\label{*1207}
\xymatrix{(F^nH^\wedge)^\perp\ar^\sim[r]\ar@{^{(}->}[d]&(F^nH)^\perp\ar@{^{(}->}[d]\\
(H^\wedge)^*\ar[r]&H^*}
\end{equation}
If $V,W$ are filtered vector spaces and $f : V\to W$ is a linear map such that for any $n\geq 0$, $f$ induces a linear isomorphism 
$F_nV\to F_nW$, then $f$ induces an isomorphism $F_\infty V\to F_\infty W$. This and \eqref{*1207} imply that the map $(H^\wedge)^*\to H^*$  
restricts to a linear isomorphism $(H^\wedge)'\to H'$. One then checks this isomorphism to be compatible with the Hopf algebra structures. 
\end{proof}

\subsection{Isomorphism $\mathbb C\Gamma'\simeq\mathcal O(\Gamma_{unip})$}\label{sect:D:2:0711}

Let $\Gamma$ be a group. A pro-unipotent completion of a group $\Gamma$ is the pair $(U,c)$ 
of a prounipotent $\mathbb C$-group scheme $U$ and a morphism $\Gamma\to U(\mathbb C)$ 
satisfying a universal property (see \cite{BGF}, Def. 3.217). 

If $\Gamma$ is an object in $\mathbf{Gp}_{fg}$, then $\mathbb C\Gamma$ is an object in $\mathbf{HA}_{fd}$ by Lem. \ref{lem:1954:1107}(a), 
therefore $(\mathbb C\Gamma)'$ is a commutative Hopf algebra (see Lem. \ref{lem:4:8:jeu}). Since 
$\Gamma^{\mathrm{ab}}\otimes\mathbb C$ is finite dimensional, one may use 
\cite{BGF}, Thm. 3.224, to obtain that a pro-unipotent completion exists and is unique up to isomorphism, and can be constructed
as the pair $(\Gamma_{unip},c_{unip})$, where $\Gamma_{unip}$ is the spectrum of the commutative 
Hopf algebra $\mathcal O(\Gamma_{unip}):=((\mathbb C\Gamma)^\wedge)'$, which is isomorphic to $(\mathbb C\Gamma)'$ by Lem. 
\ref{lem:5:12:1927},  and $c_{unip}$ is induced by the commutative 
algebra morphism $\mathcal O(\Gamma_{unip})=(\mathbb C\Gamma)'\hookrightarrow (\mathbb C\Gamma)^*=\mathbb C^\Gamma$, where 
$\mathbb C^\Gamma$ is the algebra of all functions $\Gamma\to\mathbb C$.  

\subsection{Relation between the filtered formalities of $\Gamma$ and $(\mathbb C\Gamma)'$}\label{sect:D3:0112}

\begin{defn}[see \cite{Q}, \S A.2] (a) A Malcev Lie algebra (MLA) is a Lie algebra $\mathfrak g$, equipped with a decreasing 
Lie algebra filtration $\mathfrak g=F^1\mathfrak g\supset F^2\mathfrak g\supset\cdots$ (i.e., $[F^{i}\mathfrak g,F^{j}\mathfrak g]
\subset F^{i+j}\mathfrak g$ for $i,j\geq 1$) for which it is complete and Hausdorff, and such that the associated graded Lie algebra
$\mathrm{gr}\mathfrak g$ is generated by $\mathrm{gr}^1\mathfrak g$. 

(b) A morphism between two MLAs is a Lie algebra morphism which is compatible with the filtrations. We denote 
by $\mathbf{MLA}$ the category of MLAs. 
\end{defn}

\begin{lem}[see \cite{Q}, Thm. 3.3] \label{lem:Q:2909}
The functors $\hat U : \mathbf{MLA}\leftrightarrow \mathbf{CHA} : \mathcal P$, where $\hat U(\mathfrak g):=\varprojlim_i U(\mathfrak g/F^i\mathfrak g)$ and $\mathcal P$ is the 
``primitive-elements functor'', taking $(A,\Delta_A)$ to $\mathcal P(A):=\{a\in A\,|\,\Delta_A(a)=a\otimes1+1\otimes a\}$, equipped with the
filtration given by $F_n\mathcal P(A) := \mathcal P(A) \cap  F_nA$ for $n \geq 0$, are quasi-inverse to one another. 
\end{lem}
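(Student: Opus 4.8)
The plan is to construct the unit and counit of the adjunction between $\hat U$ and $\mathcal P$ and to prove that both are isomorphisms, in each case reducing to the classical structure theory of graded connected cocommutative Hopf algebras over $\mathbb C$ by passing to associated graded objects.

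First I would verify that the two functors are well defined. Given an MLA $\mathfrak g$, the coproducts on the algebras $U(\mathfrak g/F^i\mathfrak g)$ determined by declaring $\mathfrak g$ primitive are compatible with the transition maps, hence assemble into a cocommutative coproduct on $\hat U(\mathfrak g)=\varprojlim_i U(\mathfrak g/F^i\mathfrak g)$; a Poincaré--Birkhoff--Witt argument (valid as $\mathrm{char}\,\mathbb C=0$) identifies the associated graded $\mathrm{gr}(\hat U(\mathfrak g))$ with the enveloping algebra $U(\mathrm{gr}\,\mathfrak g)$ of the graded Lie algebra $\mathrm{gr}\,\mathfrak g$, which is generated in degree $1$ precisely because $\mathfrak g$ is an MLA; this shows $\hat U(\mathfrak g)\in\mathbf{CHA}$. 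Conversely, for $A\in\mathbf{CHA}$ the commutator bracket makes $\mathcal P(A)$ a Lie algebra, closed in $A$ and therefore complete and Hausdorff for the induced filtration, and the remaining condition on $\mathrm{gr}\,\mathcal P(A)$ follows from the interchange isomorphism $\mathrm{gr}\,\mathcal P(A)\simeq\mathcal P(\mathrm{gr}\,A)$ together with the graded Milnor--Moore theorem.

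Next I would take the unit $\eta_{\mathfrak g}:\mathfrak g\to\mathcal P(\hat U(\mathfrak g))$ to be the canonical embedding of $\mathfrak g$ into its enveloping algebra (whose image is primitive), and the counit $\varepsilon_A:\hat U(\mathcal P(A))\to A$ to be the unique continuous algebra morphism extending the inclusion $\mathcal P(A)\hookrightarrow A$, furnished by the universal property of $\hat U$; both are natural and compatible with the filtrations. To see they are isomorphisms I would pass to associated graded. Using the interchange $\mathrm{gr}\,\mathcal P(A)\simeq\mathcal P(\mathrm{gr}\,A)$ and the identification $\mathrm{gr}\,\hat U(\mathfrak g)\cong U(\mathrm{gr}\,\mathfrak g)$ from the previous step, $\mathrm{gr}(\eta_{\mathfrak g})$ becomes the inclusion $\mathrm{gr}\,\mathfrak g\to\mathcal P(U(\mathrm{gr}\,\mathfrak g))$ and $\mathrm{gr}(\varepsilon_A)$ becomes the canonical map $U(\mathcal P(\mathrm{gr}\,A))\to\mathrm{gr}\,A$, both of which are isomorphisms by the Milnor--Moore and PBW theorems for graded connected cocommutative Hopf algebras in characteristic $0$. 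The standard completeness criterion---that a filtered morphism of complete Hausdorff decreasingly filtered vector spaces which induces an isomorphism on associated graded is itself an isomorphism, the decreasing-filtration analogue of Lem.~\ref{graded:crit:iso}---then upgrades these to isomorphisms of the filtered objects, proving that $\hat U$ and $\mathcal P$ are quasi-inverse.

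The step I expect to be the main obstacle is the interchange $\mathrm{gr}\,\mathcal P(A)\simeq\mathcal P(\mathrm{gr}\,A)$: since $\mathcal P$ is cut out by a kernel condition on the reduced coproduct, it need not a priori commute with passage to the infinitely many filtration quotients, and establishing it requires a careful degree-by-degree comparison of the reduced coproduct on $A$ with that on $\mathrm{gr}\,A$, controlling the contribution of higher-filtration terms by means of the completeness and Hausdorff properties built into the definition of a CHA. Once this compatibility is secured, the remaining assertions are formal consequences of the graded structure theorems and the filtered-isomorphism criterion.
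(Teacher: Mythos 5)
The paper does not prove this lemma: it is quoted verbatim from Quillen's ``Rational homotopy theory'' (Appendix~A, Thm.~3.3), so there is no internal proof to compare against. Your sketch is a correct reconstruction of Quillen's argument --- unit and counit, reduction to associated graded, PBW and Milnor--Moore for graded connected cocommutative Hopf algebras in characteristic $0$, and the complete-Hausdorff filtered-isomorphism criterion --- and you rightly isolate the interchange $\mathrm{gr}\,\mathcal P(A)\simeq\mathcal P(\mathrm{gr}\,A)$ as the genuinely delicate step; note that the paper itself treats that interchange as a separate black box, citing it as Quillen A.2, Thm.~2.14 in the proof of Lem.~\ref{*:2509}.
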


\begin{defn} \label{def:Lie:2909}
The composed functor $\mathbf{Gp}\to\mathbf{CHA}\stackrel{\mathcal P}{\to}\mathbf{MLA}$, where the first functor is $\Gamma\mapsto (\mathbb C\Gamma)^\wedge$ (see Lem. \ref{lem:HA:CHA}), is denoted $\Gamma\mapsto\mathrm{Lie}(\Gamma)$. 
\end{defn}

\begin{lemdef}
The assignment $\mathfrak g\mapsto (\hat{\mathrm{gr}}(\mathfrak g),F^\bullet\hat{\mathrm{gr}}(\mathfrak g))$, where 
$\hat{\mathrm{gr}}(\mathfrak g):=\prod_{i\geq1}\mathrm{gr}^i(\mathfrak g)$ and for $n\geq1$, 
$F^n\hat{\mathrm{gr}}(\mathfrak g):=\prod_{i\geq n}\mathrm{gr}^i(\mathfrak g)$, is an endofunctor of $\mathbf{MLA}$. 
For $\mathfrak g$ an object in $\mathbf{MLA}$, one has $\mathrm{gr}(\hat{\mathrm{gr}}(\mathfrak g))\simeq\mathrm{gr}(\mathfrak g)$.   
\end{lemdef}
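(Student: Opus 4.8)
The plan is to verify the two claims of the lemma-definition separately: first that $\mathfrak g\mapsto(\hat{\mathrm{gr}}(\mathfrak g),F^\bullet\hat{\mathrm{gr}}(\mathfrak g))$ is a well-defined endofunctor of $\mathbf{MLA}$, and second that the associated-graded recovery $\mathrm{gr}(\hat{\mathrm{gr}}(\mathfrak g))\simeq\mathrm{gr}(\mathfrak g)$ holds. First I would check that $\hat{\mathrm{gr}}(\mathfrak g)$ is again a Malcev Lie algebra. The bracket on $\mathrm{gr}(\mathfrak g)=\oplus_{i\geq1}\mathrm{gr}^i(\mathfrak g)$ induced from $\mathfrak g$ satisfies $[\mathrm{gr}^i,\mathrm{gr}^j]\subset\mathrm{gr}^{i+j}$, so it extends to the product $\hat{\mathrm{gr}}(\mathfrak g)=\prod_{i\geq1}\mathrm{gr}^i(\mathfrak g)$ by continuity; the filtration $F^n\hat{\mathrm{gr}}(\mathfrak g)=\prod_{i\geq n}\mathrm{gr}^i(\mathfrak g)$ is then a decreasing Lie algebra filtration because $[F^i,F^j]\subset F^{i+j}$ follows termwise. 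Completeness and Hausdorffness are immediate from the product structure, since $\bigcap_n F^n\hat{\mathrm{gr}}(\mathfrak g)=0$ and every Cauchy sequence converges componentwise.

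Next I would establish that $\mathrm{gr}(\hat{\mathrm{gr}}(\mathfrak g))$ is generated by its degree-one part, which is the remaining MLA axiom. The key computation is that $\mathrm{gr}^n(\hat{\mathrm{gr}}(\mathfrak g))=F^n\hat{\mathrm{gr}}(\mathfrak g)/F^{n+1}\hat{\mathrm{gr}}(\mathfrak g)\simeq\mathrm{gr}^n(\mathfrak g)$ canonically, since quotienting the product $\prod_{i\geq n}\mathrm{gr}^i(\mathfrak g)$ by $\prod_{i\geq n+1}\mathrm{gr}^i(\mathfrak g)$ picks out exactly the factor $\mathrm{gr}^n(\mathfrak g)$. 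This identification is compatible with brackets, so it assembles into a graded Lie algebra isomorphism $\mathrm{gr}(\hat{\mathrm{gr}}(\mathfrak g))\simeq\mathrm{gr}(\mathfrak g)$, which is precisely the second assertion of the statement. Since $\mathrm{gr}(\mathfrak g)$ is generated by $\mathrm{gr}^1(\mathfrak g)$ by the MLA hypothesis on $\mathfrak g$, this isomorphism transports the generation property, completing the verification that $\hat{\mathrm{gr}}(\mathfrak g)$ is an object of $\mathbf{MLA}$.

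For functoriality, given a morphism $\varphi:\mathfrak g\to\mathfrak h$ in $\mathbf{MLA}$, its compatibility with the filtrations yields $\mathrm{gr}(\varphi):\mathrm{gr}(\mathfrak g)\to\mathrm{gr}(\mathfrak h)$, a graded Lie algebra morphism, which extends to the completions $\hat{\mathrm{gr}}(\varphi):\hat{\mathrm{gr}}(\mathfrak g)\to\hat{\mathrm{gr}}(\mathfrak h)$ and is clearly compatible with the filtrations $F^\bullet\hat{\mathrm{gr}}$. Preservation of identities and composites is immediate from the corresponding properties of $\mathrm{gr}$. I would then note that naturality of the isomorphism $\mathrm{gr}(\hat{\mathrm{gr}}(\mathfrak g))\simeq\mathrm{gr}(\mathfrak g)$ in $\mathfrak g$ follows by tracking the same componentwise identification through $\hat{\mathrm{gr}}(\varphi)$.

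I expect no serious obstacle here, as all the content is the bookkeeping of product-completions of graded objects; the only point requiring a little care is confirming that the bracket, a priori defined only on the algebraic direct sum $\mathrm{gr}(\mathfrak g)$, extends unambiguously and continuously to the product $\hat{\mathrm{gr}}(\mathfrak g)$. The mildest subtlety is verifying that the bracket of two elements with infinitely many nonzero homogeneous components still lands in $\hat{\mathrm{gr}}(\mathfrak g)$ with each homogeneous component a \emph{finite} sum; this holds because the degree-$n$ component of $[x,y]$ receives contributions only from pairs $(x_i,y_j)$ with $i+j=n$, of which there are finitely many. Once this is noted, the graded identification $\mathrm{gr}^n(\hat{\mathrm{gr}}(\mathfrak g))\simeq\mathrm{gr}^n(\mathfrak g)$ is the crux, and everything else is formal.
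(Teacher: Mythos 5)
Your proof is correct; the paper itself dispatches this Lemma-Definition with the single word ``Immediate,'' and your writeup is just the natural expansion of that, with the same (and essentially only possible) line of reasoning. The one point you rightly single out --- that the degree-$n$ component of a bracket of two elements of the product receives contributions from only finitely many pairs $(i,j)$ with $i+j=n$, so the bracket extends to $\prod_{i\geq 1}\mathrm{gr}^i(\mathfrak g)$ --- is exactly the detail that makes the statement immediate.
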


\begin{proof}
Immediate. 
\end{proof}

\begin{defn}[see \cite{SW}] A group $\Gamma$ is called filtered formal if there exists an isomorphism 
$\mathrm{Lie}(\Gamma)\to \hat{\mathrm{gr}}\mathrm{Lie}(\Gamma)$ in $\mathbf{MLA}$, whose associated graded is the identity. 
\end{defn}

One checks that $A\mapsto\hat{\mathrm{gr}}(A):=\prod_{i\geq 0}F^iA/F^{i+1}A$ is an endofunctor of $\mathbf{CHA}$.
\begin{lem}\label{*:2509}
The category equivalence $\mathbf{CHA}\leftrightarrow\mathbf{MLA}$
quasi-intertwines the endofunctors $\hat{\mathrm{gr}}$ on both sides.
\end{lem}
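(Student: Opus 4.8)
The plan is to establish a natural isomorphism between the two composite endofunctors obtained by transporting $\hat{\mathrm{gr}}$ across the category equivalence $\hat U : \mathbf{MLA}\leftrightarrow\mathbf{CHA} : \mathcal P$ of Lem.~\ref{lem:Q:2909}. Since $\hat U$ and $\mathcal P$ are quasi-inverse, it suffices to produce a natural isomorphism of functors $\mathbf{CHA}\to\mathbf{MLA}$
$$
\mathcal P\circ\hat{\mathrm{gr}}_{\mathbf{CHA}}\;\simeq\;\hat{\mathrm{gr}}_{\mathbf{MLA}}\circ\mathcal P,
$$
after which applying $\hat U$ and using $\hat U\circ\mathcal P\simeq\mathrm{id}$ yields the claimed intertwining.

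First I would reduce the statement to a degreewise comparison. For a CHA $A$, the coproduct $\Delta_A$ is compatible with the filtration $F^\bullet A$, so the symbol of a primitive element lying in $F^iA$ is a primitive homogeneous element of the graded Hopf algebra $\hat{\mathrm{gr}}(A)=\prod_{i\geq0}F^iA/F^{i+1}A$; this produces a natural graded Lie algebra morphism
$$
\mathrm{sym}_A:\hat{\mathrm{gr}}(\mathcal P(A))\longrightarrow\mathcal P(\hat{\mathrm{gr}}(A)),
$$
where the source carries the filtration $F_n\mathcal P(A)=\mathcal P(A)\cap F^nA$ from Lem.~\ref{lem:Q:2909}. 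The content of the lemma is then that $\mathrm{sym}_A$ is an isomorphism, and naturality will be automatic from the naturality of the symbol construction.

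The key step is a filtered Poincaré--Birkhoff--Witt argument. Since $A$ is a CHA, the equivalence gives $A\simeq\hat U(\mathfrak g)$ with $\mathfrak g:=\mathcal P(A)$ an object of $\mathbf{MLA}$. I would invoke the filtered PBW theorem: the associated graded of the completed enveloping algebra, taken for the augmentation filtration induced by the Malcev filtration of $\mathfrak g$, is the enveloping algebra of the associated graded Lie algebra, i.e.
$$
\mathrm{gr}\,\hat U(\mathfrak g)\;\simeq\;U(\mathrm{gr}\,\mathfrak g)
$$
as graded cocommutative Hopf algebras. Combining this with the Milnor--Moore identification of the primitives of an enveloping algebra over $\mathbb C$ gives $\mathcal P(\mathrm{gr}\,A)\simeq\mathrm{gr}\,\mathfrak g=\mathrm{gr}\,\mathcal P(A)$ compatibly with the gradings; passing to completions turns the direct sums $\bigoplus_i\mathrm{gr}^i$ into the products $\prod_i\mathrm{gr}^i$ defining $\hat{\mathrm{gr}}$ on both sides, and identifies the resulting degreewise map with $\mathrm{sym}_A$. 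This proves that $\mathrm{sym}_A$ is an isomorphism.

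I expect the main obstacle to be the Hopf-compatible form of the filtered PBW isomorphism in the completed setting: one must check that the associated graded is computed for the augmentation filtration coming from the Lie filtration (rather than the naive PBW degree filtration), that the comparison map respects the coproduct so that primitives are matched, and that the two completions (product versus coproduct, together with the Hausdorff-completeness built into the $\mathbf{MLA}$ and $\mathbf{CHA}$ axioms) are handled consistently so that $\hat{\mathrm{gr}}$ on the two sides genuinely corresponds. Once these compatibilities are in place, the degreewise isomorphism assembles into the required natural isomorphism, and the equivalence quasi-intertwines the two functors $\hat{\mathrm{gr}}$.
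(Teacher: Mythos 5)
Your proposal is correct and follows essentially the same route as the paper: both reduce the statement to the natural isomorphism of graded Lie algebras $\mathrm{gr}(\mathcal P A)\simeq\mathcal P(\mathrm{gr}(A))$ for $A$ a CHA, which the paper simply cites from Quillen (A1, Thm.~2.14) while you sketch its proof via filtered PBW and Milnor--Moore. Your additional care about the product-versus-sum completion and the Hopf-compatibility of the PBW isomorphism is exactly what is packaged into Quillen's cited result, so there is no gap.
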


\begin{proof}
Follows from the natural isomorphism of graded Lie algebras
\begin{equation}\label{eq:quillen:2509}
\mathrm{gr}(\mathcal PA) \simeq \mathcal P(\mathrm{gr}(A))
\end{equation}
for $A$ on object of $\mathbf{CHA}$ from \cite{Q}, A1, Thm. 2.14.   
\end{proof}

\begin{defn}
$\mathbf{MLA}_{fd}$ is the full subcategory of $\mathbf{MLA}$ of MLAs $\mathfrak g$ such that $\mathrm{gr}^1\mathfrak g$ is finite dimensional.
\end{defn}

Recall the full subcategory $\mathbf{CHA}_{fd}$ of $\mathbf{CHA}$ (see Def. \ref{def:c2:2509}). 

\begin{lem}\label{lem:comm:gr:2909}
(a) The equivalence $\mathbf{CHA}\leftrightarrow\mathbf{MLA}$ induces an equivalence $\mathbf{CHA}_{fd}\leftrightarrow
\mathbf{MLA}_{fd}$.

(b) The endofunctors $\hat{\mathrm{gr}}$ of $\mathbf{CHA}$ and $\mathbf{MLA}$ induce endofunctors (still denoted 
$\hat{\mathrm{gr}}$) of $\mathbf{CHA}_{fd}$ and $\mathbf{MLA}_{fd}$. 

(c) The category equivalence $\mathbf{CHA}_{fd}\leftrightarrow\mathbf{MLA}_{fd}$
quasi-intertwines the endofunctors $\hat{\mathrm{gr}}$ on both sides.
\end{lem}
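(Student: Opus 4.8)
The plan is to deduce all three statements from the unrestricted equivalence of Lem.~\ref{lem:Q:2909} together with Quillen's natural isomorphism \eqref{eq:quillen:2509}, by tracking the single invariant $\mathrm{gr}^1$ through the quasi-inverse functors $\hat U$ and $\mathcal P$. Since $\mathbf{CHA}_{fd}$ and $\mathbf{MLA}_{fd}$ are the full subcategories cut out by finiteness of $\mathrm{gr}^1$ (see Def.~\ref{def:c2:2509} and the definition of $\mathbf{MLA}_{fd}$), each claim reduces to checking that this invariant is preserved, and that the relevant identifications are natural.

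First I would prove (a). Because $\hat U$ and $\mathcal P$ are mutually quasi-inverse, it suffices to show that $A$ and $\mathcal P(A)$ have naturally isomorphic degree-one graded pieces. The isomorphism $\mathrm{gr}(\mathcal P A)\simeq\mathcal P(\mathrm{gr} A)$ of \eqref{eq:quillen:2509} gives in degree $1$ the identity $\mathrm{gr}^1(\mathcal P A)\simeq\mathcal P^1(\mathrm{gr} A)$. Here $\mathrm{gr}(A)$ is a connected graded Hopf algebra, so every element of its degree-one component is primitive (the coproduct of a degree-one element can have no mixed term in strictly positive lower degrees); hence $\mathcal P^1(\mathrm{gr} A)=\mathrm{gr}^1(A)$, and therefore $\mathrm{gr}^1(\mathcal P A)\simeq\mathrm{gr}^1(A)$ naturally. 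Consequently $A$ lies in $\mathbf{CHA}_{fd}$ precisely when $\mathcal P(A)$ lies in $\mathbf{MLA}_{fd}$, and symmetrically for $\hat U$, so the equivalence restricts to one between $\mathbf{CHA}_{fd}$ and $\mathbf{MLA}_{fd}$.

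For (b) I would use that $\hat{\mathrm{gr}}$ leaves the associated graded unchanged. On the $\mathbf{CHA}$ side, the filtration of $\hat{\mathrm{gr}}(A)$ has the same successive quotients as that of $A$, so $\mathrm{gr}(\hat{\mathrm{gr}} A)\simeq\mathrm{gr}(A)$ and in particular $\mathrm{gr}^1(\hat{\mathrm{gr}} A)=\mathrm{gr}^1(A)$; on the $\mathbf{MLA}$ side the analogous identity $\mathrm{gr}(\hat{\mathrm{gr}}\mathfrak g)\simeq\mathrm{gr}(\mathfrak g)$ is exactly the content of the preceding Lemma-Definition. Thus $\hat{\mathrm{gr}}$ preserves finiteness of $\mathrm{gr}^1$ on each side and restricts to endofunctors of $\mathbf{CHA}_{fd}$ and $\mathbf{MLA}_{fd}$. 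Finally, (c) is then immediate: the restricted equivalence of (a) and the restricted endofunctors of (b) are literally restrictions of the unrestricted data, so the natural isomorphism quasi-intertwining $\hat{\mathrm{gr}}$ on $\mathbf{CHA}$ and $\mathbf{MLA}$ furnished by Lem.~\ref{*:2509} restricts to the sought natural isomorphism over the full subcategories.

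The only genuinely substantive step is the degree-one identification $\mathrm{gr}^1(\mathcal P A)\simeq\mathrm{gr}^1(A)$ used in (a); once this naturality is in hand, parts (b) and (c) are pure bookkeeping, consisting of restricting already-constructed functors and natural transformations to full subcategories that are stable under them.
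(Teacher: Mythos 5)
Your proposal is correct and follows essentially the same route as the paper: part (a) via the degree-one consequence of Quillen's isomorphism \eqref{eq:quillen:2509} together with the fact that degree-one elements of a connected graded Hopf algebra are primitive, part (b) via the invariance of $\mathrm{gr}^1$ under $\hat{\mathrm{gr}}$ on both sides, and part (c) by restricting the natural isomorphism of Lem.~\ref{*:2509}.
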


\begin{proof}
(a) For $A$ an object in $\mathbf{CHA}$, \eqref{eq:quillen:2509} implies the vector space isomorphism $\mathrm{gr}^1(\mathcal PA)\simeq\mathcal 
P(\mathrm{gr}(A))\cap\mathrm{gr}^1A$; moreover, $\mathrm{gr}(A)$ is a graded connected Hopf algebra, 
therefore $\mathrm{gr}^1(A)\subset \mathcal P(\mathrm{gr}(A))$; therefore $\mathrm{gr}^1(\mathcal PA) \simeq \mathrm{gr}^1A$.  

(b) follows from $\mathrm{gr}^1(\hat{\mathrm{gr}}(A))=\mathrm{gr}^1(A)$ for $A$ a CHA and 
$\mathrm{gr}^1(\hat{\mathrm{gr}}(\mathfrak g))=\mathrm{gr}^1(\mathfrak g)$ for $\mathfrak g$ an MLA. 

(c) follows from Lemma \ref{*:2509}. 
\end{proof}

\begin{lem}\label{lem:1010:2909}
Let $H$ be an object in $\mathbf{CHA}_{fd}$.

(a) There is a natural vector space isomorphism $(\hat{\mathrm{gr}}H)'\simeq\oplus_{n\geq0}\mathrm{gr}^n(H)^*$; it induces a
natural isomorphism  $F_n ((\hat{\mathrm{gr}}H)')\simeq \oplus_{k\leq n} \mathrm{gr}^k(H)^*$ for any 
$n\geq 0$; 

(b) There is a natural isomorphism  
\begin{equation}\label{***:2509}
(\hat{\mathrm{gr}}H)'\simeq \mathrm{gr} (H')    
\end{equation}
in $\mathbf{HA}$.  

(c) The composed isomorphism  
$\oplus_{k\leq n}\mathrm{gr}^k(H)^* 
\simeq F_n ((\hat{\mathrm{gr}}H)') 
\simeq F_n (\mathrm{gr} (H')) 
\simeq \mathrm{gr}_{\leq n} (H')$, where the first isomorphism arises from (a), the 
second from the image by $F_n $ of \eqref{***:2509} and the third isomorphism arises from 
Lem. \ref{lem:10:1:a:2909}(a), is the direct sum over $k\leq n$ of the isomorphisms $\mathrm{gr}^k(H)^*\simeq
\mathrm{gr}_{\leq n} (H')$ arising from Lem. \ref{IDFILT:CHA}. 
\end{lem}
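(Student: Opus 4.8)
The plan is to prove the three parts of Lemma \ref{lem:1010:2909} in the stated order, establishing the vector-space isomorphisms first and then promoting them to Hopf-algebra and filtration-compatible isomorphisms.

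\textbf{Part (a).} First I would unwind the definition of the duality functor on $\hat{\mathrm{gr}}H$. Since $\hat{\mathrm{gr}}H=\prod_{i\geq 0}\mathrm{gr}^i(H)$ is a graded CHA with associated filtration $F^n(\hat{\mathrm{gr}}H)=\prod_{i\geq n}\mathrm{gr}^i(H)$, Definition \ref{def:dual:CHAfd} gives $(\hat{\mathrm{gr}}H)'=\cup_{n\geq 0}(F^n(\hat{\mathrm{gr}}H))^\perp$. The key observation is that $(F^{n+1}(\hat{\mathrm{gr}}H))^\perp$, as a subspace of $(\hat{\mathrm{gr}}H)^*$, consists exactly of those functionals supported on $\oplus_{k\leq n}\mathrm{gr}^k(H)$, so it is canonically $\oplus_{k\leq n}\mathrm{gr}^k(H)^*$ (this uses that each $\mathrm{gr}^k(H)$ is finite-dimensional, which holds since $H\in\mathbf{CHA}_{fd}$, as noted in the proof of Lem. \ref{2237:1107}(b)). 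Taking the union over $n$ yields $(\hat{\mathrm{gr}}H)'\simeq\oplus_{n\geq 0}\mathrm{gr}^n(H)^*$. The induced statement on $F_n^\heartsuit$ then follows by applying Lem. \ref{IDFILT:CHA} to the CHA $\hat{\mathrm{gr}}H$, which identifies $F_n^\heartsuit((\hat{\mathrm{gr}}H)')$ with $(F^{n+1}(\hat{\mathrm{gr}}H))^\perp=\oplus_{k\leq n}\mathrm{gr}^k(H)^*$.

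\textbf{Part (b).} Here I would combine two earlier functorial facts. By Lem. \ref{2237:1107}(b), $H\mapsto H'$ is a functor $\mathbf{CHA}_{fd}\to\mathbf{HA}_{comm}$, and by Lem. \ref{lem:10:1:a:2909}(b), $O\mapsto\mathrm{gr}^\heartsuit O$ is an endofunctor of $\mathbf{HA}_{comm}$. The target isomorphism \eqref{***:2509} should be read as the statement that the duality functor intertwines the operation $\hat{\mathrm{gr}}$ on $\mathbf{CHA}_{fd}$ (Lem. \ref{lem:comm:gr:2909}(b)) with the operation $\mathrm{gr}^\heartsuit$ on $\mathbf{HA}_{comm}$. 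Concretely, both sides are graded Hopf algebras whose underlying graded vector space is $\oplus_n\mathrm{gr}^n(H)^*$: the left side by part (a), and the right side because $\mathrm{gr}^\heartsuit(H')=\oplus_n F_n^\heartsuit(H')/F_{n-1}^\heartsuit(H')$, where $F_n^\heartsuit(H')=(F^{n+1}H)^\perp$ by Lem. \ref{IDFILT:CHA}, so that $\mathrm{gr}_n^\heartsuit(H')\simeq(F^{n+1}H/F^nH)^*=\mathrm{gr}^n(H)^*$. I would then check that the resulting identification is an algebra and coalgebra morphism by tracing through the definitions of the products and coproducts: the product on $(\hat{\mathrm{gr}}H)'$ is dual to the coproduct of $\hat{\mathrm{gr}}H$, which is the associated graded of the coproduct of $H$, matching the product on $\mathrm{gr}^\heartsuit(H')$, which is induced from the product on $H'$ (itself dual to the coproduct of $H$); dually for coproducts.

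\textbf{Part (c).} Finally I would verify the compatibility of the three named isomorphisms by a direct diagram chase in degree $\leq n$. Each of the three isomorphisms—the one from part (a), the image under $F_n^\heartsuit$ of \eqref{***:2509}, and the one from Lem. \ref{lem:10:1:a:2909}(a) realizing $F_n^\heartsuit(\mathrm{gr}^\heartsuit(H'))=\mathrm{gr}_{\leq n}^\heartsuit(H')$—is already expressed through the common identification $\mathrm{gr}^k(H)^*\simeq F_k^\heartsuit(H')/F_{k-1}^\heartsuit(H')$ supplied by Lem. \ref{IDFILT:CHA}. The claim is that their composite is the evident direct sum of these degreewise isomorphisms. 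This is a matter of checking that no extraneous twist is introduced at each stage, which reduces to the fact that all the isomorphisms in question are natural and degree-preserving. I expect this bookkeeping in part (c)—keeping the three layers of identifications consistent so that the composite is visibly block-diagonal—to be the main obstacle, since the individual isomorphisms are routine but their composite must be pinned down precisely; the work is to confirm that the grading on $\mathrm{gr}^\heartsuit(H')$ inherited from $F_\bullet^\heartsuit$ agrees with the grading on $(\hat{\mathrm{gr}}H)'$ inherited from the grading of $H$ under \eqref{***:2509}.
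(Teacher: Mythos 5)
Your proposal is correct and follows essentially the same route as the paper: part (a) by unwinding the definition of the dual of $\hat{\mathrm{gr}}H$ together with Lem. \ref{IDFILT:CHA}, part (b) via the chain of identifications $F_n^\heartsuit H'=(F^{n+1}H)^\perp$ and $\mathrm{gr}_n^\heartsuit(H')\simeq(F^nH/F^{n+1}H)^*$, and part (c) by observing that all three isomorphisms are degreewise instances of the same identification. (Minor slip: you wrote $(F^{n+1}H/F^nH)^*$ where the decreasing filtration requires $(F^nH/F^{n+1}H)^*$, but this does not affect the argument.)
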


\begin{proof} (a) $\hat{\mathrm{gr}}(H)$ is $\prod_{n\geq 0}\mathrm{gr}^nH$, equipped with the filtration $(\prod_{i\geq n}
\mathrm{gr}^iH)_{n\geq 0}$. It follows that $\hat{\mathrm{gr}}(H)'=\oplus_{n\geq 0}\mathrm{gr}^n(H)^*$.
One then has $F_n (\hat{\mathrm{gr}}(H)')=(F^{n+1}\hat{\mathrm{gr}}(H))^\perp\simeq \oplus_{k\leq n} \mathrm{gr}^k(H)^*$ for any 
$n\geq 0$, where the first equality follows from Lem. \ref{IDFILT:CHA}.  

(b) follows from the sequence of equalities 
\begin{align*}
&\mathrm{gr} (H')=\oplus_{n\geq0}F_{n+1}  H'/F_n  
H'=\oplus_{n\geq0}(F^{n+1}H)^\perp/(F^nH)^\perp=\oplus_{n\geq0}(F^nH/F^{n+1}H)^*
\\ & =\oplus_{n\geq0}\mathrm{gr}^n(H)^*=(\hat{\mathrm{gr}}H)', 
\end{align*}
where the second equality follows from $F_\bullet  H'=(F^{\bullet+1}H)^\perp$ (see Lem. \ref{IDFILT:CHA})
and the last equality follows from (a).

(c) follows from the identification of the said isomorphism with the degree $\leq k$ part of the above sequence of equalities. 
\end{proof}

\begin{lem}\label{lem:SW:1512}
Let $\Gamma$ be group. If $\Gamma$ is filtered formal, then there exists an isomorphism 
\begin{equation}\label{toto:2909}
iso_\Gamma :     (\mathbb C\Gamma)^\wedge \simeq \hat{\mathrm{gr}}((\mathbb C\Gamma)^\wedge)
\end{equation} 
in $\mathbf{CHA}$, such that $\mathrm{gr}(iso_\Gamma)=id$. 
\end{lem}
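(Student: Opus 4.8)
The plan is to translate the group-theoretic notion of filtered formality of $\Gamma$ (an isomorphism of Malcev Lie algebras) into the CHA statement \eqref{toto:2909} by transporting along the category equivalence $\mathbf{CHA}\leftrightarrow\mathbf{MLA}$ established in Lem. \ref{lem:Q:2909}. By Def. \ref{def:Lie:2909}, $\mathrm{Lie}(\Gamma)=\mathcal P((\mathbb C\Gamma)^\wedge)$, so filtered formality of $\Gamma$ supplies an isomorphism $\phi:\mathrm{Lie}(\Gamma)\to\hat{\mathrm{gr}}\,\mathrm{Lie}(\Gamma)$ in $\mathbf{MLA}$ whose associated graded is the identity. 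First I would apply the functor $\hat U:\mathbf{MLA}\to\mathbf{CHA}$ to $\phi$, obtaining a morphism $\hat U(\phi):\hat U(\mathrm{Lie}(\Gamma))\to \hat U(\hat{\mathrm{gr}}\,\mathrm{Lie}(\Gamma))$ in $\mathbf{CHA}$.

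The key step is to identify the source and target of $\hat U(\phi)$ with the two sides of \eqref{toto:2909}. Since $\hat U$ and $\mathcal P$ are quasi-inverse (Lem. \ref{lem:Q:2909}), the source $\hat U(\mathcal P((\mathbb C\Gamma)^\wedge))$ is naturally isomorphic to $(\mathbb C\Gamma)^\wedge$ itself. For the target, I would invoke Lem. \ref{*:2509}, which asserts that the equivalence $\mathbf{CHA}\leftrightarrow\mathbf{MLA}$ quasi-intertwines the two $\hat{\mathrm{gr}}$ endofunctors; this gives a natural isomorphism $\hat U(\hat{\mathrm{gr}}\,\mathcal P(A))\simeq\hat{\mathrm{gr}}(A)$ for any CHA $A$, which applied to $A=(\mathbb C\Gamma)^\wedge$ identifies the target of $\hat U(\phi)$ with $\hat{\mathrm{gr}}((\mathbb C\Gamma)^\wedge)$. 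Composing $\hat U(\phi)$ with these two natural isomorphisms yields the desired $iso_\Gamma$ in $\mathbf{CHA}$.

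It remains to check the normalization $\mathrm{gr}(iso_\Gamma)=\mathrm{id}$. Here I would use functoriality of $\mathrm{gr}$ together with the fact that $\mathrm{gr}(\phi)=\mathrm{id}$ by hypothesis. Passing to associated graded commutes with $\hat U$ and with the identifications above (this is precisely the content of the natural isomorphism \eqref{eq:quillen:2509} relating $\mathrm{gr}(\mathcal P A)$ and $\mathcal P(\mathrm{gr} A)$, which underlies Lem. \ref{*:2509}); since all the transported natural isomorphisms induce the identity on associated graded, the graded of the composite is the graded of $\hat U(\phi)$, which equals $\hat U(\mathrm{gr}(\phi))=\mathrm{id}$. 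Note that all objects in sight lie in the finite-dimensional subcategories $\mathbf{CHA}_{fd}$, $\mathbf{MLA}_{fd}$ by Lem. \ref{lem:1954:1107} and Lem. \ref{lem:comm:gr:2909}(a), so the equivalence and its compatibility with $\hat{\mathrm{gr}}$ (Lem. \ref{lem:comm:gr:2909}(c)) apply without restriction.

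The main obstacle I anticipate is bookkeeping: one must verify that the chain of natural isomorphisms is genuinely compatible with the filtrations and that taking $\mathrm{gr}$ of the composite really collapses to the identity rather than merely to an isomorphism. The subtlety is that "associated graded is the identity" is a rigid normalization, not just an isomorphism statement, so each natural isomorphism in the composition must be checked to be filtered with identity associated graded — this follows from naturality and the explicit form of \eqref{eq:quillen:2509}, but assembling these coherences carefully is where the real work lies. Everything else reduces to invoking the quasi-inverse property and the intertwining lemmas already at hand.
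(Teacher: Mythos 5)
Your proposal is correct and follows essentially the same route as the paper: apply $\hat U$ to the Malcev-level isomorphism, identify source and target via the quasi-inverse property of $(\hat U,\mathcal P)$ and the intertwining of the $\hat{\mathrm{gr}}$ endofunctors, and check the associated graded collapses to the identity. The only cosmetic difference is that you invoke the general intertwining lemma where the paper cites its finite-dimensional refinement — your choice is arguably cleaner here since the statement does not assume $\Gamma$ finitely generated, so your parenthetical appeal to the $fd$ subcategories is unnecessary.
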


\begin{proof} Let $\Gamma$ be a filtered formal group. There is an isomorphism  
\begin{equation}\label{iso:FF:2909}
\mathrm{Lie}(\Gamma) \simeq \hat{\mathrm{gr}}\mathrm{Lie}(\Gamma)
\end{equation}
in $\mathbf{MLA}$ with associated graded the identity. There is a sequence of isomorphisms in $\mathbf{CHA}$
given by  
$$
(\mathbb C\Gamma)^\wedge \simeq \hat U(\mathrm{Lie}(\Gamma)) \simeq \hat U(\hat{\mathrm{gr}}\mathrm{Lie}(\Gamma)) 
\simeq \hat{\mathrm{gr}}(\hat U(\mathrm{Lie}(\Gamma)))=\hat{\mathrm{gr}}((\mathbb C\Gamma)^\wedge)
$$
where the first and last isomorphisms come from Lem. \ref{lem:Q:2909} and Def. \ref{def:Lie:2909}, 
the second isomorphism arises from applying the 
functor $\hat U$ to the isomorphism \eqref{iso:FF:2909}, and the third isomorphism arises from Lem. \ref{lem:comm:gr:2909}(c). 
This results in an isomorphism with the claimed properties. 
\end{proof}

\begin{prop}\label{prop:C3:3110}
If $\Gamma$ is a filtered formal finitely generated group, then the Hopf algebra
$(\mathbb C\Gamma)'$ is filtered formal (in the sense of Def. \ref{def:FF:HA:3110}). 
\end{prop}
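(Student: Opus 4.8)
The plan is to transfer the filtered formality of $\Gamma$, which lives in the category $\mathbf{MLA}$ of Malcev Lie algebras, into the statement about the commutative Hopf algebra $(\mathbb C\Gamma)'$, which lives in $\mathbf{HA}$. The bridge is the functor $H \mapsto H'$ relating $\mathbf{CHA}_{fd}$ to $\mathbf{HA}_{comm}$, together with the compatibility results developed in the preceding lemmas. First I would invoke Lem.~\ref{lem:SW:1512}: since $\Gamma$ is filtered formal, there is an isomorphism $iso_\Gamma : (\mathbb C\Gamma)^\wedge \simeq \hat{\mathrm{gr}}((\mathbb C\Gamma)^\wedge)$ in $\mathbf{CHA}$ whose associated graded is the identity. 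Because $\Gamma$ is finitely generated, Lem.~\ref{lem:1954:1107} guarantees that $(\mathbb C\Gamma)^\wedge$ is an object of $\mathbf{CHA}_{fd}$, so the duality functor of Lem.~\ref{2237:1107}(b) applies.

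Next I would apply the contravariant functor $H \mapsto H'$ from $\mathbf{CHA}_{fd}$ to $\mathbf{HA}_{comm}$ to the isomorphism $iso_\Gamma$. This yields an isomorphism in $\mathbf{HA}$ of the form
\begin{equation}\label{dualized:iso}
(\hat{\mathrm{gr}}((\mathbb C\Gamma)^\wedge))' \xrightarrow{\ \sim\ } ((\mathbb C\Gamma)^\wedge)'.
\end{equation}
The target $((\mathbb C\Gamma)^\wedge)'$ is identified with $(\mathbb C\Gamma)'$ by Lem.~\ref{lem:5:12:1927}. The source is handled by Lem.~\ref{lem:1010:2909}(b), which provides a natural isomorphism $(\hat{\mathrm{gr}}H)' \simeq \mathrm{gr}^\heartsuit(H')$ in $\mathbf{HA}$, applied with $H = (\mathbb C\Gamma)^\wedge$. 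Composing these identifications, \eqref{dualized:iso} becomes an isomorphism of Hopf algebras
$$
\mathrm{gr}^\heartsuit((\mathbb C\Gamma)') \xrightarrow{\ \sim\ } (\mathbb C\Gamma)'.
$$
Inverting it gives a candidate for the isomorphism $O \to \mathrm{gr}^\heartsuit(O)$ required by Def.~\ref{def:FF:HA:3110}, with $O = (\mathbb C\Gamma)'$.

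The remaining and most delicate step is verifying that the associated graded of this isomorphism for the filtration $F_\bullet^\heartsuit$ is the identity, as Def.~\ref{def:FF:HA:3110} demands. The hard part will be tracking the grading through the chain of identifications. I would argue as follows: the condition $\mathrm{gr}(iso_\Gamma) = id$ from Lem.~\ref{lem:SW:1512} means $iso_\Gamma$ induces the identity on $\mathrm{gr}^\bullet$ of the filtration $F^\bullet$ of the CHA. Applying the duality functor turns the filtration $F^\bullet H$ into the filtration $F_\bullet H^*$, and Lem.~\ref{IDFILT:CHA} identifies this with $F_\bullet^\heartsuit H'$. The precise compatibility of these identifications at the graded level is exactly the content of Lem.~\ref{lem:1010:2909}(c), which states that the composed isomorphism $\oplus_{k\leq n}\mathrm{gr}^k(H)^* \simeq \mathrm{gr}_{\leq n}^\heartsuit(H')$ is the direct sum of the dualities from Lem.~\ref{IDFILT:CHA}. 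I would use this to conclude that, since $iso_\Gamma$ is the identity on $\mathrm{gr}$, its dual is the identity on $\mathrm{gr}^\heartsuit$, and hence the resulting isomorphism of $(\mathbb C\Gamma)'$ with $\mathrm{gr}^\heartsuit((\mathbb C\Gamma)')$ has associated graded equal to the identity. This establishes that $(\mathbb C\Gamma)'$ is filtered formal in the sense of Def.~\ref{def:FF:HA:3110}.
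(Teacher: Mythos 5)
Your proposal is correct and follows essentially the same route as the paper: dualize the isomorphism $iso_\Gamma$ from Lem.~\ref{lem:SW:1512} via the functor $H\mapsto H'$ on $\mathbf{CHA}_{fd}$, identify the two ends using Lem.~\ref{lem:5:12:1927} and Lem.~\ref{lem:1010:2909}(b), and control the associated graded via Lem.~\ref{IDFILT:CHA} and Lem.~\ref{lem:1010:2909}(c). The only cosmetic difference is that you write the chain of isomorphisms in the contravariant direction and invert at the end, whereas the paper writes it from $(\mathbb C\Gamma)'$ to $\mathrm{gr}^\heartsuit((\mathbb C\Gamma)')$ directly.
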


\begin{proof}
Let $\Gamma$ be a filtered formal finitely generated group. Then \eqref{toto:2909}
is an isomorphism in $\mathbf{CHA}_{fd}$. There is a sequence of isomorphisms in $\mathbf{HA}$ 
\begin{equation}\label{SEQUENCE:2909}
(\mathbb C\Gamma)' \simeq ((\mathbb C\Gamma)^\wedge)' \simeq (\hat{\mathrm{gr}}((\mathbb C\Gamma)^\wedge)))' \simeq 
\mathrm{gr} (((\mathbb C\Gamma)^\wedge)') \simeq \mathrm{gr} ((\mathbb C\Gamma)')
\end{equation}
where the first and last isomorphisms follow from Lem. \ref{lem:5:12:1927}, the second isomorphism  arises from applying the functor 
$\mathbf{CHA}_{fd}\to\mathbf{HA}$, $H\mapsto H'$ (see Def. \ref{def:dual:CHAfd}) to \eqref{toto:2909}, and the third isomorphism comes from 
\eqref{***:2509}. 
It follows from $\mathrm{gr}(iso_\Gamma)=id$ and Lem. \ref{IDFILT:CHA} that the second isomorphism in \eqref{SEQUENCE:2909} is compatible with
the filtration $F _\bullet$ and that the associated graded is the identity. The third isomorphism in \eqref{SEQUENCE:2909} 
induces an isomorphism between the $F _n$ of both sides for any $n\geq0$. By Lem. \ref{lem:1010:2909}(c), the associated 
graded morphism coincides with the composition of natural isomorphisms 
$\mathrm{gr}_n ((\hat{\mathrm{gr}}((\mathbb C\Gamma)^\wedge)))')
\simeq \mathrm{gr}^n(\mathbb C\Gamma)^* \simeq \mathrm{gr}_n (\mathrm{gr} (((\mathbb C\Gamma)^\wedge)'))$.  
This implies that the image by $\mathrm{gr}_n $ of the isomorphism $(\mathbb C\Gamma)' \simeq\mathrm{gr} ((\mathbb C\Gamma)')$
induced by \eqref{SEQUENCE:2909} is the identification $\mathrm{gr}_n (\mathbb C\Gamma)' \simeq
\mathrm{gr}_n \mathrm{gr} ((\mathbb C\Gamma)')$ induced by Lem. \ref{lem:10:1:a:2909}(a). 
\end{proof}

\begin{rem}
Lem. \ref{lem:SW:1512} relates as follows to \cite{SW2}: combining Cors. 6.2 and 2.7 of {\it loc. cit.,} one obtains the equivalence
of the filtered formality of $\Gamma$ with the conclusion of Lem. \ref{lem:SW:1512} for any finitely generated group $\Gamma$.    
\end{rem}

\medskip\noindent{\bf Acknowledgements.} The research of B.E. has been partially funded by ANR grant “Project HighAGT ANR20-CE40-0016”. 
The research 
of F.Z. has been funded by the European Union’s Horizon 2020 research and innovation programme under the Marie Sklodowska-Curie grant agreement 
No 843960 for the project “HIPSAM”, and by the Royal Society,
under the grant URF\textbackslash R1\textbackslash201473. Both authors express their thanks to F. Brown, J. Burgos Gil, J. Fres\'an, R. Hain, N. 
Matthes and E. Panzer for fruitful exchanges on various aspects of this work.

\end{document}